\newtheorem{thm}[equation]{Theorem}
\newtheorem{lem}[equation]{Lemma}
\newtheorem{prop}[equation]{Proposition}
\newtheorem{cor}[equation]{Corollary}
\newtheorem{defi}[equation]{Definition}
\newtheorem{rem}[equation]{Remark}
\numberwithin{equation}{section}
\newcommand{\1}{\mathbf{1}}
\newcommand{\R}{\mathbb{R}}
\newcommand{\X}{\mathcal{X}}
\newcommand{\N}{\mathbb{N}}
\newcommand{\T}{\mathbf{T}}
\newcommand{\E}{\mathbb{E}}
\renewcommand{\P}{\mathbb{P}}
\newcommand{\ent}{\operatorname{Ent}}
\newcommand{\Tw}{\widetilde{\mathcal{T}}}
\newcommand{\Th}{\widehat{\mathcal{T}}}
\newcommand{\Tb}{\overline{\mathcal{T}}}
\begin{document}

\title[Kantorovich duality for general costs and applications]
{Kantorovich duality for general transport costs and applications}

\author[N. Gozlan, C. Roberto, P.-M. Samson, P. Tetali]{Nathael Gozlan, Cyril Roberto, Paul-Marie Samson, Prasad Tetali}

\date{\today}

\thanks{Supported by the grants ANR 2011 BS01 007 01, ANR 10 LABX-58, ANR11-LBX-0023-01; the last author is supported 
by the NSF grants DMS 1101447 and 1407657, and is also grateful for the hospitality of Universit\'e Paris Est 
Marne La Vall\'ee. All authors acknowledge the kind support of the American Institute of Mathematics (AIM, California).}

\address{Universit\'e Paris Est Marne la Vall\'ee - Laboratoire d'Analyse et de Math\'e\-matiques Appliqu\'ees (UMR CNRS 8050), 5 bd Descartes, 77454 Marne la Vall\'ee Cedex 2, France}
\address{Universit\'e Paris Ouest Nanterre La D\'efense - Modal'X, 200 avenue de la R\'epublique 92000 Nanterre, France}
\address{School of Mathematics \& School of Computer Science, Georgia Institute of Technology,
Atlanta, GA 30332}
\email{nathael.gozlan@univ-mlv.fr, croberto@math.cnrs.fr, paul-marie.samson@univ-mlv.fr,\linebreak tetali@math.gatech.edu}
\keywords{Duality, Transport inequalities, logarithmic-Sobolev inequalities, metric spaces}
\subjclass{60E15, 32F32 and 26D10}

\begin{abstract}
We introduce a general notion of  transport cost that encompasses many costs used in the literature (including the classical one and weak transport costs introduced by Talagrand and Marton in the 90's), and prove a Kantorovich type duality theorem. As a by-product we obtain various applications in different directions: we give a short proof of a result by Strassen on the existence of a martingale with given marginals, we characterize the associated transport-entropy inequalities together with the log-Sobolev inequality restricted to convex/concave functions. Some explicit examples of discrete measures satisfying weak transport-entropy inequalities are also given.
\end{abstract}

\maketitle


\section{Introduction}

Concentration of measure phenomenon was introduced in the seventies by V. Milman \cite{Mil71} in his study of asymptotic geometry of Banach spaces. It was then studied in depth by many authors including Gromov \cite{GM83,Gro99}, Talagrand \cite{Tal95}, Maurey \cite{Mau91}, Ledoux \cite{Led95,BL97}, Bobkov \cite{Bob97,BL00} and many others and played a decisive role in analysis, probability and statistics in high dimensions. We refer to the monographs \cite{Led01} and \cite{BLM13} for an overview of the field. 

One classical example of such phenomenon can be observed for the standard Gaussian measure $\gamma_m$ on $\R^m.$ It follows from the well known Sudakov-Tsirelson-Borell isoperimetric result in Gauss space \cite{ST74,Bor75} that if $X_1,\ldots,X_n$ are $n$ i.i.d random vectors with law $\gamma_m$ and $f : (\R^m)^n \to \R$ is a $1$-Lipschitz function (with respect to the Euclidean norm), then
\begin{equation}\label{eq:Conc-Gauss}
\P(f(X_1,\ldots,X_n) > m+ t) \leq e^{-(t-t_o)^2/(2a)},\qquad \forall t\ge t_o,
\end{equation}
with $a=1$ and $t_o=0$, and where $m$ denotes the median of the random variable $f(X_1,\ldots,X_n)$.
The remarkable feature of this inequality is that it does not depend on the sample size $n$. This property was used in numerous applications \cite{Led01}.

The standard Gaussian measure is far from being the only example of a probability distribution satisfying such a bound. In this introduction, we will say that a probability $\mu$ on some metric space $(X,d)$ satisfies the Gaussian dimension-free concentration of measure phenomenon if \eqref{eq:Conc-Gauss} holds true, with a constant $a$ independent on $n$, when the $X_i$'s are distributed according to $\mu$ and $f$ is a function which is $1$-Lipschitz with respect to the distance $d_2$ defined on $X^n$ by
\[
d_2(x,y) = \left[\sum_{i=1}^n d(x_i,y_i)^2\right]^{1/2},\qquad x,y \in X^n.
\]
This is also equivalent to the following property: for all positive integers $n$, and all Borel set $A \subset X^n$ such that $\mu^n(A) \geq 1/2$, it holds
\begin{equation}\label{eq:Conc-Gauss2}
\mu^n(A_t) \geq 1-e^{-(t-t_o)^2/(2a)},\qquad \forall t\geq t_o,
\end{equation}
where $A_t=\{y \in X^n : \exists x \in A, d_2(x,y) \leq t\}$.

For instance if $\mu$ is a probability measure on $\R^m$, or even more generally on a smooth Riemannian manifold $M$ equipped with its geodesic distance $d$ and has a density of the form $e^{-V}$, where $V$ is some smooth function on $M$ such that the so-called Bakry-\'Emery curvature condition holds
\begin{equation}\label{eq:BE}
\mathrm{Ric}+\mathrm{Hess}\, V \geq K\mathrm{Id}\,,
\end{equation}
for some $K>0$, then the Gaussian dimension-free concentration of measure phenomenon holds with the constant $a=K$ (a direct proof can be found in \cite{Led01}).

Another very classical sufficient condition for the Gaussian concentration of measure property \eqref{eq:Conc-Gauss} is the Logarithmic Sobolev inequality introduced by Gross \cite{G75} (see also Stam \cite{Sta59} and Federbush \cite{Fed69}). If, for some $C>0$, $\mu$ satisfies 
\begin{equation}\label{eq:LSI}
\ent_\mu(f^2) \leq 2C \int |\nabla f|^2\,d\mu,
\end{equation}
for all smooth functions $f: M \to \R$, then it satisfies \eqref{eq:Conc-Gauss} with $a = C$ (a proof of this classical result due to Herbst can be found in \cite{Led01}). 
We recall that the entropy functional of a positive function $g$ is defined by $\ent_\mu(g) = \int g\log \left(\frac{g}{\int g\,d\mu}\right)\,d\mu$. Condition \eqref{eq:LSI} - denoted $\mathbf{LSI}(C)$ in the sequel - is less restrictive since, according to the famous Bakry-\'Emery criterion \eqref{eq:BE} implies \eqref{eq:LSI}.

It turns out that Condition \eqref{eq:LSI} can be further relaxed. Indeed, in \cite{Tal96a}, Talagrand introduced another remarkable functional inequality involving the Wasserstein distance $W_2$ defined, for all probability mesures $\mu,\nu$ on $M$ by
\begin{equation}\label{eq:Tr2}
W_2^2(\nu,\mu) = \inf_{(X,Y)} \E[d^2(X,Y)],
\end{equation}
where the infimum runs over all pairs of random variables $(X,Y)$, with $X$ distributed according to $\mu$ and $Y$ according to $\nu.$ A probability measure $\mu$ satisfies Talagrand's transport inequality $\T_2(D)$ for some $D>0$, if
\begin{equation}\label{eq:T2}
W_2^2(\nu,\mu) \leq 2D H(\nu|\mu),
\end{equation}
for all probability measure $\nu$ on $M$, where $H(\nu|\mu)$ denotes the relative entropy defined by $H(\nu|\mu) = \ent_\mu (h)$, $h = d\nu/d\mu$ if $\nu \ll \mu$ (i.e., $\nu$ absolutely continuous with respect to $\mu$) and $+\infty$ otherwise.
A nice argument first discovered by Marton \cite{Mar86} shows that \eqref{eq:T2} is a sufficient condition for the Gaussian dimension-free concentration property \eqref{eq:Conc-Gauss} with $a=D$. 
One crucial ingredient to derive dimension-free concentration from $\T_2$ is the tensorization property enjoyed by this inequality (the same property holds for $\mathbf{LSI}$): if $\mu$ satisfies $\T_2(C)$, then for any positive integer $n$, the product measure $\mu^n$ also satisfies $\T_2(C).$ 
Condition \eqref{eq:T2} is again an improvement upon Condition \eqref{eq:LSI} since it was proved by Otto and Villani \cite{OV00} (see \cite{BGL01} for an alternative proof and \cite{GRS14} and the references therein for extensions to more general spaces) that \eqref{eq:LSI} implies \eqref{eq:T2} with 
$D=C$. It was then shown by the first author \cite{Goz09} that Condition \eqref{eq:T2} was not only sufficient but also necessary for Gaussian dimension-free concentration. More precisely, if \eqref{eq:Conc-Gauss} holds true with some $a$ (and all $n$), then $\mu$ satisfies $\T_2(a)$.

One of the main motivations behind this work, and a few satellite papers by the same authors and Y. Shu \cite{GRST14,Shu, GRSST15, GRST16}, is to understand what can replace each term in the chain of implications:
\[
\eqref{eq:BE}\Rightarrow \eqref{eq:LSI}\Rightarrow \eqref{eq:T2} \Leftrightarrow \eqref{eq:Conc-Gauss}
\] 
in a \emph{discrete} setting (for instance, when the space is a graph, finite or otherwise).

While several useful variants of the logarithmic Sobolev inequality are well identified in discrete (involving different natural discrete gradients, see \textit{e.g.} \cite{SC97, BT06}), the other terms are far from being understood.

After the works by Lott-Villani  \cite{LV09} and Sturm \cite{Stu06} extending \eqref{eq:BE} to non-smooth geodesic spaces through convexity properties of the entropy functional on the space of probability measures equipped with the Wasserstein distance $W_2$, the question of generalizing the Bakry-\'Emery condition in a discrete setting attracted in recent years a lot of attention. We refer to the works by Ollivier \cite{Oll09}, Bonciocat-Sturm \cite{BS09}, Ollivier-Villani \cite{OV12}, Erbar-Mass \cite{EM12}, Hillion \cite{Hil12} and the work \cite{GRST14} by the authors for different attempts to give a meaning to the notion of ``discrete curvature''.

In the present paper, the focus is put on the rightmost terms of our chain of implications: namely, our purpose is to find out what type of dimension-free concentration results we can hope for in a discrete setting and what type of transport inequalities can be related to it. At this stage, it is worth noting that, unfortunately, Talagrand's inequality is \emph{never} satisfied in discrete (except of course by a Dirac mass). For instance, it is proven in full generality in \cite{GRS14}, that if $\mu$ is a probability measure on a metric space $(X,d)$ which satisfies $\T_2$, then its support is connected. It follows from the equivalence \eqref{eq:T2} $\Leftrightarrow$ \eqref{eq:Conc-Gauss}, that Gaussian dimension-free concentration is also never true in discrete.

One thus looks for a transport-cost sufficiently weaker than $W_2^2$, to allow discrete measures to satisfy the related transport inequality, but sufficiently strong to make the transport inequality stable under tensor products. A natural candidate would be the $W_1$ distance: $W_1(\nu,\mu) := \inf \{ \E[d(X,Y)] : \mathrm{Law}(X)=\mu, \mathrm{Law}(Y)=\nu\}.$ Although transport inequalities involving  $W_1^2$ instead of $W_2^2$ (the so called $\T_1$ inequalities) make perfectly sense in discrete (see Bobkov-G\"otze \cite{BG99}, Djelout-Guillin-Wu \cite{DGW04}, Bobkov-Houdr\'e-Tetali \cite{BHT06}), these inequalities tensorize only with a constant depending on the dimension! So the $W_1$ distance does not fulfill the second requirement. 

The present paper is devoted to the study of a family of weak transport costs, one typical element of which is the following weak version of the cost $W_2^2$ defined as follows. If $\mu$ and $\nu$ are probability measures on a metric space $(X,d)$, one defines the weak cost $\Tw_2(\nu|\mu)$ as follows
\[
\Tw_2(\nu|\mu) = \inf_{(X,Y)}\E \left[\,\E[d(X,Y) | X]^2\,\right]
\]
where again the infimum runs over all pairs ($X,Y$) of random variables such that $X$ follows the law $\mu$ and $Y$ the law $\nu.$
Jensen inequality immediately shows that 
\[
W_1^2(\nu,\mu) \leq \Tw_2(\nu|\mu) \leq W_2^2(\nu,\mu).
\]
Two weak versions of Talagrand's inequality are naturally associated to this cost: a probability $\mu$ is said to satisfy $\widetilde{\T}_2^-(C)$ for some $C>0$ if
\[
\Tw_2(\mu|\nu) \leq CH(\nu|\mu),\qquad \forall \nu 
\]
and to satisfy $\widetilde{\T}_2^+(C)$ if 
\[
\Tw_2(\nu|\mu) \leq CH(\nu|\mu),\qquad \forall \nu.
\]
Since $\Tw_2$ is not symmetric these two inequalities are not equivalent in general. Both are of course implied by the usual $\T_2(C)$ inequality. As we shall see in Theorem~\ref{gozlan}, which is one of our main results, a probability measure $\mu$ on $X$ satisfies the two inequalities $\widetilde{\T}^{\pm}_2(C)$ for some $C>0$ if and only if it satisfies the following dimension-free concentration of measure property: for all positive integer $n$ and all set $A \subset X^n$ such that $\mu^n(A)>0$, it holds
\begin{equation}\label{eq:Conc-tilde}
\mu^n(\widetilde{A}_t) \geq 1-\frac{1}{\mu^n(A)}e^{-t^2/D},\qquad \forall t\geq 0,
\end{equation}
for some $D$ related to $C$.  In this concentration inequality the enlargement $\widetilde{A}_t$ of $A$ is defined as follows 
\begin{align*}
\widetilde{A}_t = \{  y \in X^n : \exists p \in \mathcal{P}(X^n)   \text{ with }  p(A)& =1  \text{ such that }  \sum_{i=1}^n\left(\int d(x_i,y_i)\,p(dx)\right)^2 \leq t^2 \},
\end{align*}
where $\mathcal{P}(X^n)$ is the set of all Borel probability measures on $X^n.$ Taking $p = \delta_x$ with $x \in A$, we see immediately that $A_t \subset \widetilde{A}_t$ and therefore \eqref{eq:Conc-tilde} is less demanding than \eqref{eq:Conc-Gauss2}.

Before going further into the presentation of our results, let us make some bibliographical comments on these weak transport costs and on the concentration property \eqref{eq:Conc-tilde}. First of all, this way of enlarging sets first appeared in the papers \cite{Tal95,Tal96b} by Talagrand, in the particular case where $d(x,y) = \1_{x\neq y}$ is the Hamming distance (see \cite[Theorem 4.1.1]{Tal95} and \cite[(1.2)]{Tal96b}). It was shown by Talagrand that \emph{any} probability measure $\mu$ on a polish space $X$ satisfies the concentration inequality \eqref{eq:Conc-tilde} with some universal constant $D$ (and with the Hamming distance). This deep result known as Talagrand's convex hull concentration inequality has had a lot of interesting applications in probability theory and combinatorics \cite{Tal95,Led01,AS08}. The result was given another proof by Marton in \cite{Mar96a}, where she introduced (again with $d$ being the Hamming distance) the weak transport cost $\Tw_2$ (denoted $\bar{d}_2$ in her work) and proved that any probability measure $\mu$ satisfies 
\[
\Tw_2(\nu_1|\nu_2)^{1/2} \leq \left(2H(\nu_1|\mu)\right)^{1/2} + \left(2H(\nu_2 |\mu)\right)^{1/2},
\]
for all probability measures $\nu_1,\nu_2.$
Then she proved the tensorization property for this transport inequality and derived from it, using an argument that will be recalled in Section \ref{sec:concentration}, Talagrand's concentration result. A similar strategy was then developed by Dembo in \cite{Dem97} in order to recover the sharp form of other concentration results by Talagrand involving a control by $q$ points. Finally the third named author extended in \cite{Sam00} the tensorization technique of Marton to some classes of dependent random variables. In \cite{Sam07} he improved Marton's transport inequality to recover yet another sharp concentration inequality by Talagrand (discovered in \cite{Tal96b}) related to deviation inequalities for empirical processes. Besides the Hamming case, almost nothing is known on the inequalities $\widetilde{\T}^\pm_2$. Note that Marton's result shows at least that any probability measure on a bounded metric space $(X,d)$ (for instance a finite graph equipped with graph distance) satisfies $\widetilde{\T}_2^{\pm}(C)$ for $C = 2 \mathrm{Diam}(X)$, but the optimal constant $C$ can  be much smaller. Note also that if our primary motivation was to consider these inequalities on discrete spaces, the continuous case is also of interest since the inequalities $\widetilde{\T}^\pm_2$ could be a good substitute for probability measures not satisfying the usual $\T_2$. The aim of the paper is thus to provide different tools that can be useful in the study of these weak transport cost inequalities and to exhibit some new examples of such inequalities (mostly on unbounded spaces).

The main new tool we introduce is a version of the Kantorovich duality theorem suitable for the weak transport cost $\Tw_2$. Actually this duality result holds for a large family of transport costs that we shall now describe. To each cost function $c:X \times \mathcal{P}(X) \to [0,\infty]$, (where $\mathcal{P}(X)$ is the set of Borel probability measures on $X$) we associate the optimal transport cost $\mathcal{T}_c$ defined, for all probability measures $\mu,\nu$ on $X$, by
\begin{equation}\label{eq:introOT}
\mathcal{T}_c(\nu|\mu) = \inf_p \int c(x,p_x)\,\mu(dx),
\end{equation}
where the infimum runs over the set of all probability kernels $p : X \to \mathcal{P}(X): x \mapsto p_x(\,\cdot\,)$ such that $\mu p =\nu$. Note that the usual cost $W_2^2$ corresponds to $c(x,p) = \int d^2(x,y)\,p(dy)$ and the weak cost $\Tw_2$ to $c(x,p)=\left(\int d(x,y)\,p(dy)\right)^2$. Under some easily satisfiable technical assumptions on $c$ (an important one being that $c$ be \emph{convex} with respect to its second variable $p$), we prove in Theorem \ref{Kantorovich general} that 
\[
\mathcal{T}_c(\nu|\mu) = \sup\left\{\int R_c\varphi (x)\,\mu(dx) - \int \varphi(y)\,\nu(dy) \right\},
\]
where the supremum runs over the set of bounded continuous functions, and 
\[
R_c\varphi(x) = \inf_{p \in \mathcal{P}(X)} \left\{ \int \varphi(y)\,p(dy) + c(x,p)\right\},\qquad x\in X.
\]
Note that, when $c(x,p)=\int d^2(x,y)p(dy)$, then $R_c\varphi(x) = \inf_{y\in X}\{\varphi(y) + d^2(x,y)\}$ and the result reduces to the classical Kantorovich duality for $W_2^2$ (see \textit{e.g.} \cite{Vil03,Vil09}). Up to our best knowledge, this class of cost functionals has not been considered before in the literature on Optimal Transport but we think that it may find interesting applications in this field. For example, denoting by $\overline{\mathcal{T}}_p$ the weak cost associated to the cost function $c(x,p) = \|x - \int y\,p(dy)\|^p$ defined on $\R^m \times \mathcal{P}(\R^m)$ where $\|\,\cdot\,\|$ is some norm on $\R^m$, it turns out that the duality formula for $\overline{\mathcal{T}}_1$ immediately gives back a well known result by Strassen \cite{Str65} about the existence of martingales with given marginals. This is detailed in Section \ref{Sec:Strassen}.

The paper is organized as follows.

Section 2 introduces a general definition of optimal transport costs and presents in detail three particular families of costs (all variants of Marton's costs $\Tw_2$ defined above) which will play a role in the rest of the paper. In particular, we state a Kantorovich duality formula for each of these transport costs.

Section 3 is dedicated to the proof of Strassen's theorem on the existence of martingales with given marginals.

Section 4 introduces the general definition of transport-entropy inequalities (involving general transport costs of the form \eqref{eq:introOT}) and presents their basic properties such as their dual formulation and their tensorization.

Section 5 deals with the links between concentration of measure and transport-entropy inequalities. We recall in particular the argument due to Marton that enables to deduce concentration estimates from transport-entropy inequalities. We also extend to this general framework a result by the first author and show that in great generality dimension-free concentration gives back transport-entropy inequalities. In particular, we give a characterization (in terms of a transport-entropy inequality involving the cost $\Tb_2$ defined above) of dimension-free Gaussian concentration \eqref{eq:Conc-Gauss} restricted to Lipschitz convex (or concave) functions.

In Section 6 we recall the universal transport-entropy inequalities developed by Marton \cite{Mar96a, Mar96b}, Dembo \cite{Dem97} and Samson \cite{Sam03,Sam07} in order to recover some of Talagrand's concentration inequalities for product measures. We take advantage of our duality theorems to revisit and bring some simplifications in the proof of \cite{Sam07}.

In Section 7 we present the examples of Bernoulli, Binomial and Poisson laws  for which  some sharp transport-entropy are proved in \cite{GRST16}.

In Section 8 we show the equivalence between transport-entropy inequalities involving the transport cost $\Tb_2$ and the logarithmic-Sobolev inequality restricted to the class of log-convex or log-concave functions. This enables us to get other examples for these transport inequalities.

Finally, Section 9 contains the proof of our general Kantorovich duality result, Theorem~\ref{Kantorovich general}, for a transport cost of the form \eqref{eq:introOT}.

\tableofcontents


\section{Optimal transport costs and duality}
In this section, we introduce a general class of optimal transport costs and describe an associated Kantorovich type duality formula.
 
\subsection{Notations}\label{Notation}
Throughout the paper $(X,d)$ is a complete separable metric space. The Borel $\sigma$-field will be denoted by $\mathcal{B}.$
The space of all Borel probability measures on $X$ is denoted by $\mathcal{P}(X)$. 

If $\gamma:\R_+ \to \R_+$ is a lower-semicontinuous function satisfying 
\begin{equation}\label{gamma}
\gamma(0)=0 \quad \text{ and } \quad \gamma(u+v)\leq C(\gamma(u)+\gamma(v)), \qquad u,v \in \mathbb{R}_+,
\end{equation}
for some constant $C$, then we set
\[
\mathcal{P}_\gamma(X):= \left\{\mu \in \mathcal{P}(X) ; \int \gamma(d(x,x_o))\,\mu(dx)<\infty\right\}
\] 
for some (hence all) $x_o \in X$. In the specific cases where $\gamma_r(u):=u^r$, $u\geq 0$, $r>0$, we use the simpler notation  $\mathcal{P}_r(X) := \mathcal{P}_{\gamma_r}(X)$. We shall also consider the limit case $\gamma_0(u):=\1_{u\neq  0}$, $u \geq 0$, for which $\mathcal{P}_{\gamma_0}(X) = \mathcal{P}(X).$

We also denote by $\Phi_\gamma(X)$ (resp. $\Phi_{\gamma,b}(X)$) the set of continuous  (resp. continuous and bounded from below) functions $\varphi:X\to \R$ satisfying the growth condition 
\begin{equation}\label{eq:growth condition}
|\varphi(x)|\leq a + b \gamma(d(x,x_o)),\qquad \forall x \in X\,,
\end{equation}
for some $a,b \geq0$ and some (hence all) $x_o\in X$. 

The spaces $\Phi_\gamma(X\times X)$ and $\mathcal{P}_\gamma (X\times X)$ are defined accordingly, with $X \times X$ equipped with (say) the $\ell_1$ product metric. 

The space $\mathcal{P}_\gamma(X)$ will always be equipped with the $\sigma$-field $\mathcal{F}_\gamma$ generated by the maps 
\[
\mathcal{P}_\gamma(X) \to [0,1] : \nu \mapsto \nu(A),
\]
where $A$ is a Borel set of $X.$
In particular, one says that $p: X \to \mathcal{P}_\gamma(X) : x \mapsto p_x$ is a kernel if it is measurable with respect to the Borel $\sigma$-field $\mathcal{B}$ on $X$ and the $\sigma$-field $\mathcal{F}_\gamma$ on $\mathcal{P}_\gamma(X)$. This amounts to requiring that, for all $A \in \mathcal{B}$, the map $X \to [0,1] : x \mapsto p_x(A)$ be Borel measurable.

%
%
%
%
%

%
%

\subsection{Costs functions, couplings and weak optimal transport costs}

In this paper, a \emph{cost function} will be a measurable function $c \colon X \times \mathcal{P}_\gamma(X) \to [0,\infty]$,
for some fixed $\gamma$ satisfying \eqref{gamma}. For all $\pi \in \mathcal{P}_\gamma(X\times X)$, we set
\[
I_c[\pi] = \int c(x,p_x)\,\pi_1(dx),
\]
where $\pi_1$ is the first marginal of $\pi$ and $x\mapsto p_x$ the ($\pi_1$-almost everywhere, uniquely determined) probability kernel such that 
\[
\pi(dxdy)= \pi_1(dx)p_x(dy).
\]
Note that if $\pi \in \mathcal{P}_\gamma(X\times X)$, then $p_x \in \mathcal{P}_\gamma(X)$ for $\pi_1$ almost all $x\in X$ and thus the preceding definition makes sense.

Given two probability measures $\mu$ and $\nu$ on $X$, we denote by
\[
\Pi(\mu,\nu)=\{\pi\in \mathcal{P}(X\times X) ; \pi(dx\times X)=\mu(dx)\text{ and } \pi(X\times dy)=\nu(dy)\}
\]
the set of all \emph{couplings} $\pi$ whose first marginal is $\mu$ and whose second marginal is $\nu$. Note also that if, $\mu,\nu \in \mathcal{P}_\gamma(X)$, then $\Pi(\mu,\nu) \subset \mathcal{P}_\gamma(X\times X).$

Using the above notations, we introduce an extension of the well-known Monge-Kantorovich optimal transport costs as follows.
\begin{defi}\label{OTnew}
Let $c \colon X \times \mathcal{P}_\gamma(X)\to [0,\infty]$ and $\mu,\nu \in \mathcal{P}_\gamma(X)$. The optimal transport cost $\mathcal{T}_c(\nu|\mu)$ between $\mu$ and $\nu$ is defined by
\[
\mathcal{T}_c(\nu|\mu):=\inf_{\pi \in \Pi(\mu,\nu)} I_c[\pi]= \inf_{\pi \in \Pi(\mu,\nu)} \int c(x,p_x)\,\mu(dx) .
\]
\end{defi}

Let us first remark that optimal transport costs in the classical sense (see e.g. \cite{Vil03,Vil09}) enter the framework of this definition. Namely, if $\omega : X \times X \to [0,\infty]$ is some measurable cost function, and $c(x,p) = \int \omega(x,y)\,p(dy)$, for all $x \in X$ and $p \in \mathcal{P}(X)$, then it is clear that
\[
\mathcal{T}_c(\nu |\mu) = \inf \left\{\iint \omega(x,y)\,\pi(dxdy) : \pi \in \Pi(\mu,\nu)\right\},
\]
which is the usual optimal transport cost related to the cost function $\omega.$ In the sequel, we will denote by $\mathcal{T}_\omega(\nu,\mu)$ the usual Monge-Kantorovich optimal transport cost, defined by the right hand side above.
One sees that while in the usual definition every elementary transport of mass from $\mu$ to $\nu$ represented by $p_x$ is penalized by its mean cost $\int \omega (x,y)\,p_x(dy)$, our definition allows other types of penalization. See Section \ref{sec:particular cases} below for some examples.

\subsection{A Kantorovich type duality}
If $\omega : X \times X \to [0,\infty]$ is lower semi-continuous, then according to the well known Kantorovich duality theorem (see for instance \cite[Theorem 5.10]{Vil09}), it holds
\[
\mathcal{T}_\omega(\nu,\mu) = \sup\left\{ \int \psi(x)\,\mu(dx) - \int \varphi(y)\,\nu(dy)\right\},
\]
where the supremum runs over the class of pairs $(\psi,\varphi)$ of bounded continuous functions  on $X$ such that
\[
\psi(x) - \varphi(y) \leq \omega(x,y),\qquad \forall x,y \in X.
\]
A classical and simple argument shows that one can always replace $\psi$ by the function $Q_\omega \varphi$ defined by
\[
Q_\omega \varphi(x) = \inf_{y \in X}\{ \varphi(y) +\omega(x,y)\},\qquad x\in X.
\]
Therefore, the duality formula above can be restated as follows
\[
\mathcal{T}_\omega(\nu,\mu) = \sup\left\{ \int Q_\omega \varphi(x)\,\mu(dx) - \int \varphi(y)\,\nu(dy)\right\},
\]
where the supremum runs over the class of bounded continuous functions $\varphi$. In case the function 
$Q_\omega \varphi$ is not measurable, then we understand $\int Q_\omega\varphi(x)\mu(dx)$ as the integral with respect to the inner measure $\mu_*$ induced by $\mu$. Recall that if $g: X \to \R$ is a function bounded from below, then
\[
\int g(x)\,\mu_*(dx) = \sup \int f(x)\,\mu(dx),
\]
where the supremum runs over the set of bounded \emph{measurable} functions $f$ such that $f \leq g.$

Under some semi-continuity and convexity assumptions on the cost function $c$, this duality formula generalizes to our optimal transport costs in the sense of Definition \ref{OTnew}. This duality property is described in the following 
definition.
\begin{defi}\label{defi:duality}
Let $\gamma : \R_+\to\R_+$ satisfy \eqref{gamma} and $c: X \times \mathcal{P}_\gamma(X) \to [0,+\infty]$ be a measurable cost function. One says that \emph{duality holds for the cost function $c$}, if for all $\mu,\nu \in \mathcal{P}_\gamma(X)$, it holds
\[
\mathcal{T}_c(\nu|\mu) = \sup_{\varphi \in \Phi_{\gamma,b}(X)}\left\{ \int R_c\varphi(x)\,\mu_*(dx) - \int \varphi(y)\,\nu(dy)\right\},
\]
where
\[
R_c\varphi(x):=\inf_{p\in \mathcal{P}_\gamma(X)}\left\{ \int \varphi(y)\,p(dy) + c(x,p)\right\},\quad  x\in X,\quad  \varphi\in \Phi_{\gamma,b}(X).
\]
\end{defi}
Section \ref{sec:duality} is devoted to the proof of a general result showing that duality holds under mild regularity conditions on $c$. Among these conditions, the main requirement is that $c$ is convex with respect to the $p$ variable.
We refer to Theorem \ref{Kantorovich general} for a precise statement. Since we do not know whether the conditions of Theorem \ref{Kantorovich general} are minimal, we prefer to postpone its statement to Section \ref{sec:duality} and to focus on particular families of cost functions (which are especially relevant for the applications we have in mind) for which the duality holds.

\subsection{Particular cases}\label{sec:particular cases}
As we already observed, if $\omega : X \times X \to [0,\infty]$ is a measurable function and $c(x,p)=\int \omega(x,y)\,p(dy)$, $x \in X,p \in \mathcal{P}(X)$, then the associated optimal transport cost corresponds the usual Monge-Kantorovich optimal transport cost $\mathcal{T}_\omega$ defined by
\[
\mathcal{T}_\omega(\nu,\mu) = \inf_{\pi \in \Pi(\mu,\nu)} \iint \omega(x,y)\,\pi(dxdy).
\] 
Among these costs a popular choice consists of taking, for $x,y \in X$, $\omega(x,y) = \alpha(d(x,y))$,  where $\alpha : \R^+ \to \R^+$ is a convex function.

The simple idea that leads from this classical family of cost functions to the family of cost functions described below, is to weaken $c$ by applying Jensen inequality:
\[
c(x,p)=\int \alpha\left(d(x,y)\right)\,p(dy) \geq \alpha\left(\int d(x,y)\,p(dy)\right):= \tilde{c}(x,p).
\]
Cost functions of the form $\tilde{c}$ as above appeared (in the particular case of the Hamming distance) in papers by Marton \cite{Mar96a, Mar96b}, Dembo \cite{Dem97}, Samson \cite{Sam00,Sam03,Sam07} in their studies of transport-type inequalities related to Talagrand's universal concentration inequalities for independent random variables. See Section~\ref{sectionhamming} for more information on the topic.

\subsubsection{Marton's cost functions}
Fix a function $\gamma :\R_+ \to \R_+$ satisfying \eqref{gamma} and a convex function $\alpha: \R_+ \to [0,+\infty]$.
The optimal transport cost associated to the cost function
\begin{equation}\label{eq:Marton-cost}
c(x,p)= \alpha\left(\int \gamma(d(x,y))\,p(dy)\right),\qquad  x\in X,\qquad  p \in \mathcal{P}_\gamma(X),
\end{equation}
will be denoted by $\Tw_\alpha$ and is defined by
\begin{equation}\label{eq:Marton-Ocost}
\Tw_\alpha (\nu |\mu)= \inf_{\pi \in \Pi(\mu,\nu)}\int \alpha\left(\int \gamma(d(x,y))\,p_x(dy)\right) \mu(dx)
\end{equation}
where $x\mapsto p_x$ is the probability kernel defined as usual by $\pi(dxdy)=\mu(dx)p_x(dy)$.
We will refer to this family of cost functions / optimal transport costs as \emph{Marton's costs} since they were first considered in \cite{Mar96a} for $\gamma=\gamma_0$ and $\alpha$ being the quadratic function, and therefore $c(x,p)= \left(\int\1_{x\neq y}\,p(dy)\right)^2 = p(X\setminus\{x\})^2$. 

Note that, in general, $\Tw_\alpha$ is not symmetric in $\mu,\nu$. Moreover, as we already observed above, if $\omega(x,y) = \alpha(\gamma(d(x,y)))$, then by Jensen's inequality, 
\[
\Tw_\alpha(\nu|\mu) \leq \mathcal{T}_\omega(\nu,\mu).
\] 
Finally, using probabilistic notations, one has
\[
\Tw_\alpha(\nu |\mu) = \inf_{(X,Y)} \E\Big[\, \alpha\big(\,\E\left[\gamma(d(X,Y)) \,|\, Y\right]\,\big)\,\Big],
\]
where the infimum runs over the set of all pairs of random variables $(X,Y)$ where $X$ has law $\mu$ and $Y$ has law $\nu.$
The following result gives sufficient conditions for duality for Marton's costs.
\begin{thm}\label{Kantorovich T tilda} 
Assume either that  
\begin{itemize}
\item $(X,d)$ is a complete separable metric space,
$\alpha:\R_+\to\R_+$ is a convex continuous function with $\alpha(0)=0$ and  $\gamma:\R_+\to\R_+$ is continuous, 
\item or $(X,d)$ is either a compact space or a countable set of isolated points, $\alpha:\R_+\to[0,+\infty]$ is a convex lower-semicontinuous function with $\alpha(0)=0$ and $\gamma:\R_+\to\R_+$ is lower-semicont\-inuous. 
\end{itemize}
Then, duality holds for the cost function $c$ defined in \eqref{eq:Marton-cost}. More precisely, 
\begin{equation} \label{kantotilde}
\Tw_\alpha(\nu|\mu)=\sup_{\varphi\in \Phi_{\gamma,b}(X)} \left\{\int \widetilde{Q}_\alpha \varphi(x)\,\mu(dx) - \int \varphi(y)\,\nu(dy)\right\},\;  \mu,\nu \in \mathcal{P}_\gamma(X),
\end{equation}
where
\[
\widetilde{Q}_\alpha \varphi(x)=\inf_{p\in \mathcal{P}_\gamma(X)} \left\{ \int \varphi(y)\,p(dy) + \alpha\left(\int \gamma(d(x,y))\,p(dy)\right)\right\}, 
\]
for $x\in X$, $\varphi\in \Phi_{\gamma,b}(X)$.
\end{thm}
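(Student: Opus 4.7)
The plan is to derive Theorem \ref{Kantorovich T tilda} as a specialization of the general Kantorovich duality Theorem \ref{Kantorovich general} (proved in Section \ref{sec:duality}) to the Marton-type cost $c(x,p)=\alpha\bigl(\int \gamma(d(x,y))\,p(dy)\bigr)$, and then to upgrade the inner-measure formulation of Definition \ref{defi:duality} to an integration against $\mu$ itself. Thus the work decomposes into (i) verifying the structural hypotheses of the general theorem for this particular $c$ in each of the two regimes, and (ii) establishing measurability of $\widetilde{Q}_\alpha \varphi$ so that $\mu_\ast$ can be replaced by $\mu$.

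The three requirements of the general theorem I expect to verify are convexity of $c$ in $p$, lower semicontinuity of $c$ in $p$ in the relevant topology on $\mathcal{P}_\gamma(X)$, and sufficient joint measurability. Convexity is immediate: the map $p \mapsto \int \gamma(d(x,y))\,p(dy)$ is affine on $\mathcal{P}_\gamma(X)$, and since $\alpha$ is convex, the composition $c(x,\cdot)$ is convex. For the semicontinuity, I would treat the two cases separately. In case 2 (compact ambient space or countable set of isolated points), lower semicontinuity of $\gamma$ combined with the Portmanteau characterization of weak convergence gives the lsc of $p \mapsto \int \gamma(d(x,y))\,p(dy)$, and composing with the lsc convex $\alpha$ preserves this property; compactness of $\mathcal{P}_\gamma(X)=\mathcal{P}(X)$ (or of the underlying discrete set) then makes the remaining conditions essentially trivial. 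In case 1 (complete separable, $\alpha, \gamma$ continuous), I would equip $\mathcal{P}_\gamma(X)$ with its natural Wasserstein-type topology and use continuity of $\gamma$ together with a tightness / truncation argument to get continuity of $p \mapsto \int \gamma(d(x,y))\,p(dy)$, after which continuity of $\alpha$ yields continuity (in particular lsc) of $c(x,\cdot)$. Measurability of $c$ in the $x$ variable is routine since $x\mapsto d(x,y)$ is continuous.

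Having invoked the general theorem, the Marton cost satisfies
\[
\Tw_\alpha(\nu|\mu) = \sup_{\varphi \in \Phi_{\gamma,b}(X)} \left\{ \int \widetilde{Q}_\alpha \varphi\,d\mu_\ast - \int \varphi\,d\nu \right\},
\]
so to obtain \eqref{kantotilde} it suffices to show $\int \widetilde{Q}_\alpha\varphi\,d\mu_\ast = \int \widetilde{Q}_\alpha \varphi\,d\mu$, which in turn follows from $\mu$-measurability of $\widetilde{Q}_\alpha \varphi$. For fixed $p\in \mathcal{P}_\gamma(X)$ and $\varphi\in \Phi_{\gamma,b}(X)$, the function $x \mapsto \int \varphi\,dp + \alpha\bigl(\int \gamma(d(x,y))\,p(dy)\bigr)$ is continuous in $x$ in case 1 and lsc in $x$ in case 2. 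Hence $\widetilde{Q}_\alpha \varphi$ is a pointwise infimum of continuous (resp. lsc) functions; in the separable setting this makes it upper semianalytic and therefore universally measurable, which is enough to drop the inner measure.

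The main obstacle I expect is the verification of lower semicontinuity of $c(x,\cdot)$ on $\mathcal{P}_\gamma(X)$ in case 1 when $\gamma$ is unbounded: weak convergence of probability measures does not by itself ensure convergence of $\int \gamma\,dp_n$, so the precise Wasserstein-type topology on $\mathcal{P}_\gamma(X)$ must be used and must match the topology under which the general duality theorem is formulated. A secondary technical point is the passage from $\mu_\ast$ to $\mu$, which relies on the semicontinuity properties of the inner optimization in $x$ and is the reason the hypotheses of the two cases are split the way they are.
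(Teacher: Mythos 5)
Your overall strategy is the same as the paper's: specialize the general duality Theorem~\ref{Kantorovich general} to the Marton cost and check the hypotheses. However, there is a genuine gap in which hypotheses you are checking. You enumerate ``convexity of $c$ in $p$, lower semicontinuity of $c$ in $p$, and sufficient joint measurability,'' but Theorem~\ref{Kantorovich general} under Condition $(C)$ or $(C')$ requires the stronger Condition $(C_1)$, namely lower semicontinuity of the \emph{functional} $\pi \mapsto I_c[\pi] = \int c(x,p_x)\,\pi_1(dx)$ on the fiber $\Pi(\mu,\cdot)$, not merely pointwise lower semicontinuity of $p\mapsto c(x,p)$. The paper explicitly warns that it is not known, outside the discrete setting, whether pointwise lower semicontinuity of $c(x,\cdot)$ implies $(C_1)$, so your verification stops short of what the general theorem actually needs in the compact case and in case~1.

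The way the paper actually secures $(C_1)$ is the step your proposal omits entirely: since $\alpha$ is convex and lower semicontinuous, Fenchel--Legendre biconjugacy gives $\alpha(t)=\sup_{(s,\tau)\in\mathrm{epi}(\alpha^*)}\{st-\tau\}$, and combining this (in case~2, also with an increasing sequence $\gamma_N\uparrow\gamma$ of Lipschitz functions plus monotone convergence) yields a representation $c(x,p)=\sup_{k\in\N}\int\varphi_k(x,y)\,p(dy)$ with $\varphi_k\in\Phi_\gamma(X\times X)$, $\varphi_0\equiv 0$. Then Proposition~\ref{prop:C1} is invoked to conclude $(C_1)$ for any cost of this affine-supremum form. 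Without some version of this sup-of-linear-functionals representation (or a direct argument for $(C_1)$), your plan would only establish hypotheses that the paper shows to be \emph{a priori} weaker than what the general theorem requires. In the purely discrete case your plan does work, because there $(C_3'')$ is shown in the paper to imply $(C_1)$, but you apply the same ``pointwise lsc suffices'' reasoning in the compact and full complete-separable cases, where it does not. Finally, two secondary points: in case~1 no tightness/truncation is needed — the topology $\sigma(\mathcal{P}_\gamma(X))$ is by definition the coarsest making $p\mapsto\int\psi\,dp$ continuous for $\psi\in\Phi_\gamma(X)$, and $y\mapsto\gamma(d(x,y))$ belongs to $\Phi_\gamma(X)$, so continuity of $c(x,\cdot)$ is immediate; and you should also verify $(C_4)$, which here follows from Jensen's inequality, another omission.
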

We observe that, anticipating  the present paper, the duality formula \eqref{kantotilde} 
was already put to use in \cite{GRST14}, in connection with displacement convexity of the relative entropy functional on graphs.

\subsubsection{A barycentric variant of Marton's cost functions}
When $X\subset \R^m$ (equipped with an arbitrary norm $\|\,\cdot\,\|$) is a closed set, a variant of Marton's costs functions is obtained by choosing 
\begin{equation}\label{eq:Bar-cost}
c(x,p)=\theta\left(x- \int y\,p(dy)\right), \qquad x \in X,\qquad  p \in\mathcal{P}_1(X),
\end{equation}
where $\theta:\R^m\to [0,\infty]$ is a lower-semicontinuous convex function.
The corresponding transport cost is denoted by $\Tb_{\theta}$ and defined by
\begin{equation}\label{eq:Bar-Ocost}
\Tb_{\theta}(\nu|\mu) = \inf_{\pi \in \Pi(\mu,\nu)} \int \theta\left( x - \int y\,p_x(dy)\right)\,\mu(dx).
\end{equation}
We use the notation $\Tb_\theta$ with a \emph{bar} in reference to the \emph{barycenter} entering its definition.

Using probabilistic notations, we have the following alternative definition
\[
\Tb_\theta(\nu|\mu) = \inf_{(X,Y)} \E\big[\,\theta\left(X - \E[Y|X]\right)\,\big],
\]
where the infimum runs over the set of all pairs of random variables $(X,Y)$, with $X$ having law $\mu$ and $Y$ having law $\nu.$ Moreover, if $\omega(x,y) = \alpha(\|x-y\|)$, $x,y\in \R^m$, where $\alpha:\R_+ \to \R_+$ is convex, and $\theta(u) = \alpha(\|u\|)$, $u \in \R^m$, then the following holds:
\[
\mathcal{T}_\omega(\nu,\mu) \geq \Tw_\alpha(\nu|\mu) \geq \Tb_\theta(\nu|\mu).
\]

As we shall see below, this family of transport costs has strong connections with convex functions, and convex ordering of probability measures. In particular, the transport cost corresponding to $\theta(x)=|x|$, $x\in \R$, will be involved in a new proof of a result by Strassen on the existence of a martingale with given marginals (see Section \ref{Sec:Strassen}).

Duality for this family of costs functions is established in the following result. Note that for the ``bar'' transport cost, the duality formula for $\Tb_\theta$ can be expressed using only convex functions. This fact will repeatedly be used in the applications.
\begin{thm}\label{Kantorovich T bar}
Let $X\subset \R^m$ be a closed subset of $\R^m$ equipped with a norm $\|\cdot \|$ and  $\theta \colon \R^m \to \R_+$ be a convex function such that $\theta(x)\geq a\|x\|+b$, for all $x\in \R^m$ and for some $a>0$ and $b \in  \R$. Then duality holds for the cost function defined in \eqref{eq:Bar-cost}. More precisely:
\begin{enumerate}
\item The following duality identity holds
\[
\Tb_{\theta}(\nu|\mu)=\sup_{\varphi\in \Phi_{1,b}(X)}\left\{\int \overline{Q}_{\theta} \varphi(x)\,\mu(dx) - \int \varphi(y)\,\nu(dy)\right\},\;\;  \mu,\nu \in \mathcal{P}_1(X),
\]
where for all $x\in \R^m$ and all $\varphi\in \Phi_{1,b}(X)$,
\[
\overline{Q}_{\theta} \varphi(x)=\inf_{p\in \mathcal{P}_1(X)}\left\{\int \varphi(y) \,p(dy) + \theta\left(x-\int y\,p(dy)\right) \right\}.
\]
Since $\mathcal{P}_1(X)\subset \mathcal{P}_1(\R^m)$, the same conclusion holds replacing  
$\Phi_{1,b}(X)$ by $\Phi_{1,b}(\R^m)$ in the dual expression of $\Tb_{\theta}(\nu|\mu)$, and $\mathcal{P}_1(X)$ by $\mathcal{P}_1(\R^m)$ in the definition of $ \overline{Q}_{\theta} \varphi$.
\item For all $\varphi \in \Phi_{1,b}(\R^m)$ and all $x\in \R^m$,  it holds
\[
\overline{Q}_{\theta} \varphi(x):=\inf_{p\in \mathcal{P}_1(\R^m)}\left\{\int \varphi(y) \,p(dy) + \theta\left(x-\int y\,p(dy)\right) \right\}= Q_\theta \overline{\varphi}(x),
\]
where $\overline{\varphi}$ denotes the greatest  convex function $h:\R^m \to \R$ such that $h\leq \varphi$, and we recall that $Q_\theta g(x)=\inf_{y\in \R^m} \{g(y)+\theta(x-y)\}$,  $g\in \Phi_{1,b}(\R^m)$, $x\in \R^m$.
\item For all $\mu,\nu \in \mathcal{P}_1(X)$, it holds
\begin{align*}
\Tb_\theta(\nu|\mu)= \sup\Big\{ \int Q_{\theta} \varphi\,d\mu - \int  \varphi \,  d\nu &; \varphi:\R^m  \to \R, \text{ convex}, \text{Lipschitz}, \text{bounded from below} \Big\}.
\end{align*}
\end{enumerate}
The  results $\it{(1),(2),(3)}$ also hold when $\theta: \R^m \to [0,+\infty]$ is a lower semi-continuous convex function and $X$ is either  compact  or  a countable set of isolated points. 
\end{thm}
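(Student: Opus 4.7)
Part (1) will be obtained as a direct application of the general duality Theorem \ref{Kantorovich general} to the cost $c(x,p) := \theta(x - \int y\, p(dy))$ on $X \times \mathcal{P}_1(X)$. Convexity of $c$ in $p$ is immediate: the barycenter map $p \mapsto \int y\, p(dy)$ is affine, and post-composition with the convex function $\theta$ preserves convexity. Lower semi-continuity of $c$ follows from that of $\theta$ combined with the weak continuity of the barycenter on $\mathcal{P}_1$ (which is precisely the reason $\mathcal{P}_1$ is the right ambient space here). The coercivity $\theta(u) \geq a\|u\|+b$ matches exactly the growth function $\gamma_1(u)=u$, so the hypotheses of Theorem \ref{Kantorovich general} are met and (1) follows.

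For part (2) I will prove the two inequalities separately. The inequality $\overline{Q}_\theta \varphi(x) \geq Q_\theta \overline{\varphi}(x)$ is a one-line Jensen argument: for any $p \in \mathcal{P}_1(\R^m)$ with barycenter $\bar{p}$,
\[
\int \varphi\, dp + \theta(x - \bar{p}) \;\geq\; \int \overline{\varphi}\, dp + \theta(x - \bar{p}) \;\geq\; \overline{\varphi}(\bar{p}) + \theta(x - \bar{p}) \;\geq\; Q_\theta \overline{\varphi}(x).
\]
For the converse I will invoke the classical variational formula
\[
\overline{\varphi}(z) \;=\; \inf\Bigl\{\textstyle\int \varphi\, dp \;:\; p \in \mathcal{P}(\R^m),\ \int y\, p(dy) = z\Bigr\},
\]
a consequence of Carathéodory's theorem which, for $\varphi$ continuous and bounded below, is achieved up to $\epsilon$ by discrete probability measures supported on at most $m+1$ points (hence lying in $\mathcal{P}_1$). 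Given $y \in \R^m$ and $\epsilon > 0$, pick such a $p$ with barycenter $y$ and $\int \varphi\, dp \leq \overline{\varphi}(y)+\epsilon$; then $\overline{Q}_\theta \varphi(x) \leq \int \varphi\, dp + \theta(x-y) \leq \overline{\varphi}(y) + \theta(x-y) + \epsilon$. Optimizing over $y$ and sending $\epsilon\downarrow 0$ gives the conclusion.

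Part (3) follows by combining (1) and (2). The key regularity observation is that for $\varphi \in \Phi_{1,b}(\R^m)$ the envelope $\overline{\varphi}$ satisfies $\inf\varphi \leq \overline{\varphi} \leq \varphi$, hence is finite-valued and therefore convex, continuous, and globally Lipschitz on $\R^m$: indeed, any convex function bounded above by $a + b\|\cdot\|$ has subgradients of norm at most $b$, as one sees by applying the subgradient inequality along rays and letting the parameter tend to infinity. Since $\overline{\varphi} \leq \varphi$, substituting $\overline{\varphi}$ for $\varphi$ in the dual expression of (1) only increases the term $-\int \varphi\, d\nu$, and by (2) the objective becomes $\int Q_\theta \overline{\varphi}\, d\mu - \int \overline{\varphi}\, d\nu$, where the test function $\overline{\varphi}$ is convex, Lipschitz, bounded from below, and satisfies $\overline{Q}_\theta \overline{\varphi} = Q_\theta \overline{\varphi}$ (as $\overline{\varphi}$ is its own envelope). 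This yields (3). The compact / discrete case with extended-real valued $\theta$ follows along identical lines via the corresponding variant of Theorem \ref{Kantorovich general}.

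The step I expect to be the main obstacle is the Carathéodory-type representation of $\overline{\varphi}$ in part (2): one must ensure that the approximating discrete measures both lie in $\mathcal{P}_1(\R^m)$ and realize the infimum to within $\epsilon$, and also carefully track the regularity hypotheses of Theorem \ref{Kantorovich general} across the two settings ($\theta$ real-valued continuous versus extended-real lower semi-continuous).
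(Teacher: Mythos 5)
Your treatment of parts (2) and (3) is sound and essentially parallels the paper's argument. For (2) the paper introduces $g(z):=\inf\{\int\varphi\,dp : p\in\mathcal{P}_1(\R^m),\ \int y\,p(dy)=z\}$, observes that $g$ is convex and a minorant of $\varphi$ (so $g\le\overline\varphi$), and then applies Jensen to $\overline\varphi$ to get $g\ge\overline\varphi$; your Carath\'eodory-based representation of $\overline\varphi$ by finitely supported measures is a more explicit way to establish the same identity, and your one-line Jensen argument for the reverse inequality is exactly the paper's. For (3), the observation that $\overline\varphi$ inherits the linear growth bound from $\varphi$ and is therefore Lipschitz (via boundedness of subgradients) is the same regularity fact the paper invokes. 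A minor point: the variational formula you write for $\overline\varphi$ should run over $\mathcal{P}_1(\R^m)$, since the barycenter constraint is not meaningful on all of $\mathcal{P}(\R^m)$; your subsequent restriction to finitely supported measures effectively handles this, but the formula as written is loose.

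The genuine gap is in part (1). You treat lower semicontinuity of the cost function $c(x,p)=\theta(x-\int y\,p(dy))$ as if it were the hypothesis needed to invoke Theorem~\ref{Kantorovich general}, but the relevant condition $(C_1)$ is lower semicontinuity of the \emph{functional} $\pi\mapsto I_c[\pi]=\int c(x,p_x)\,\pi_1(dx)$ on $\Pi(\mu,\cdot)$ with respect to $\sigma(\mathcal{P}_\gamma(X\times X))$, which the paper explicitly notes is not known to follow from lower semicontinuity of $c$ alone (except in the discrete case, where $(C_3'')$ suffices). The paper secures $(C_1)$ by representing $c$ via Fenchel--Legendre duality of $\theta$: writing $\theta(u)=\sup_{(s,t)\in\mathrm{epi}(\theta^*)}\{s\cdot u - t\}$ and taking a countable dense subset $(s_k,t_k)$ of $\mathrm{epi}(\theta^*)$, one gets $c(x,p)=\sup_{k}\int\varphi_k(x,y)\,p(dy)$ with $\varphi_0\equiv 0$ and $\varphi_k(x,y)=s_k\cdot(x-y)-t_k\in\Phi_1(X\times X)$, and then applies Proposition~\ref{prop:C1}, which is tailored precisely to costs of this ``sup of affine-in-$p$ functionals'' form. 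Without this step your verification of the hypotheses of the general duality theorem is incomplete, and (1) does not yet follow. Once this is added, the rest of your outline goes through, including the extended-real $\theta$ case in the compact and discrete settings via the monotone approximation $\gamma_N$ and conditions $(C_3')$, $(C_3'')$.
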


\subsubsection{Samson's cost functions}
Let $\beta:\R_+\to [0,+\infty]$ be a lower-semicontinuous convex function and  $\mu_0$ be a reference probability measure on $X$. The choice 
\begin{equation}\label{eq:Hat-cost}
c(x,p)=\int \beta\left(\gamma(d(x,y)) \frac{dp}{d\mu_0}(y)\right)\,\mu_0(dy), \qquad x \in X, 
\end{equation}
if $p\in\mathcal{P}$ is absolutely continuous with respect to $\mu_0$ on $X\setminus\{x\}$, and $c(x,p)=+\infty$ otherwise,
yields the family of weak transport $\Th_\beta$ defined by 
\begin{equation}\label{eq:Hat-Ocost}
\Th_{\beta}(\nu|\mu) = \inf_{\pi \in \Pi(\mu,\nu)} \iint \beta\left( \gamma(d(x,y)) \frac{dp_x}{d\mu_0}(dy)\right)\,\mu_0(dy)\,\mu(dx),
\end{equation}
for all measures $\mu,\nu\in\mathcal{P}_1(X)$, absolutely continuous with respect to $\mu_0$. Cost functions of this type were introduced by the third named author in \cite{Sam07}.

Again, if $\beta=\alpha$ is convex, then Jensen inequality gives
\[
\Tw_\beta (\nu|\mu) \leq \Th_\beta(\nu|\mu),
\]
but there is no clear comparison between $\Th_\beta(\nu|\mu)$ and $\mathcal{T}_\omega(\nu|\mu)$ with $\omega(x,y) = \alpha(d(x,y))$, $x,y \in X.$

Finally we state a duality theorem for the ``hat" transport cost.

\begin{thm}\label{Kantorovich T hat} Let $(X,d)$ be a compact metric space or a countable set of isolated points.  Let  $\beta:\R_+\to [0,+\infty]$ be a  lower-semicontinuous  convex function with $ \beta(0)=0$ and $\lim_{x\to \infty}\beta(x)/x=+\infty$. Assume that $\gamma:\R_+\to \R_+$ is lower-semicontinuous with $\gamma(0)=0$ and $\gamma(u)>0$ for all $u>0$. Let $\mu_0$ be a reference probability measure on $X$.
Then duality holds for the cost function defined in \eqref{eq:Hat-cost}. More precisely, for all $\mu,\nu \in \mathcal{P}_\gamma(X)$ absolutely continuous with respect to $\mu_0$, it holds
\[
\Th_\beta(\nu|\mu)=\sup_{\varphi\in \Phi_{\gamma,b}(X)} \left\{\int \widehat{Q}_\beta \varphi(x)\,\mu(dx) - \int \varphi(y)\,\nu(dy)\right\},
\]
where for $x\in X$ and $\varphi\in \Phi_{\gamma,b}(X)$,
\begin{align*}
\widehat{Q}_\beta \varphi(x):=\inf_{p\in \mathcal{P}_\gamma(X),\ p\ll\mu_0 \,on\, X\setminus\{x\} }
\Big\{ & \int \varphi(y)\,p(dy) + \int \beta\left( \gamma(d(x,y))\,\frac{dp}{d\mu_0}(y)\right)d\mu_0(y)\Big\}.
\end{align*}
\end{thm}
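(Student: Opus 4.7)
The plan is to deduce this statement from the general Kantorovich duality result (Theorem~\ref{Kantorovich general}) of Section~\ref{sec:duality}, whose hypotheses one must check for Samson's cost
\[
c(x,p)=\int \beta\!\left(\gamma(d(x,y))\,\tfrac{dp}{d\mu_0}(y)\right)d\mu_0(y),
\]
set to $+\infty$ when $p$ is not absolutely continuous with respect to $\mu_0$ on $X\setminus\{x\}$. The two substantive properties to verify are convexity and lower semi-continuity of $p\mapsto c(x,p)$ in its second argument.

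Convexity is immediate: if $p_1,p_2\ll\mu_0$ with densities $\rho_1,\rho_2$, then $tp_1+(1-t)p_2$ has density $t\rho_1+(1-t)\rho_2$, and the convexity of $\beta$ applied pointwise yields $c(x,tp_1+(1-t)p_2)\le tc(x,p_1)+(1-t)c(x,p_2)$ after integrating against $\mu_0$.

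For lower semi-continuity with respect to the weak topology on $\mathcal{P}_\gamma(X)$, I would use the Fenchel dual representation $\beta(u)=\sup_{s\ge 0}\{su-\beta^*(s)\}$, valid because $\beta$ is convex and lsc with $\beta(0)=0$, noting that $\beta^*$ is finite everywhere by the superlinearity hypothesis $\beta(u)/u\to\infty$. A measurable selection argument allows swapping $\sup$ with the integral, yielding
\[
c(x,p) = \sup_{\psi}\left\{\int \gamma(d(x,y))\,\psi(y)\,dp(y) - \int \beta^*(\psi(y))\,d\mu_0(y)\right\},
\]
where $\psi$ ranges over bounded continuous nonnegative functions. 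Because $y\mapsto \gamma(d(x,y))\psi(y)$ is lsc and bounded below, each pairing $p\mapsto \int \gamma(d(x,\cdot))\psi\,dp$ is lsc under weak convergence, so $c(x,\cdot)$ is a supremum of lsc affine functionals, hence lsc. In the countable discrete case, $\mathcal{P}_\gamma(X)$ reduces to a space of sequences of masses and one obtains lsc directly from Fatou's lemma applied term-by-term in the defining sum.

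With convexity and lsc established, Theorem~\ref{Kantorovich general} yields the desired identity, modulo a final measurability check on $\widehat{Q}_\beta\varphi$ permitting one to replace the inner-measure integral by an ordinary integral against $\mu$. This measurability holds because the infimum defining $\widehat{Q}_\beta\varphi$ can be restricted to a countable family of densities (for instance, simple functions with rational coefficients supported on a countable dense subset of $X$), making $\widehat{Q}_\beta\varphi$ a countable infimum of measurable functions. The main obstacle throughout is the lower-semi-continuity step, since $c$ pairs $p$ with its Radon-Nikodym derivative rather than linearly; the Fenchel-dual representation combined with the superlinearity hypothesis on $\beta$ is precisely what overcomes it.
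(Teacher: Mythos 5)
Your strategy (reduce to Theorem \ref{Kantorovich general} after checking convexity and semicontinuity of the cost) is the paper's strategy, and your Fenchel-dual representation of $c(x,p)$ is exactly the paper's Lemma \ref{duality cost}. However, there is a genuine gap in the compact case: Theorem \ref{Kantorovich general} there requires Condition $(C')$, which includes $(C_1)$, namely lower semicontinuity of the \emph{coupling} functional $\pi\mapsto I_c[\pi]=\int c(x,p_x)\,\pi_1(dx)$ on $\Pi(\mu,\cdot)$, and not merely lower semicontinuity of $p\mapsto c(x,p)$ for each fixed $x$. Your argument only establishes the latter (and, with a bit more care since $\gamma$ is just lsc, the joint lower semicontinuity $(C_3')$), but the paper explicitly notes that it is unknown whether lsc of $c$ implies $(C_1)$ outside the discrete setting, and $(C_1)$ is what drives the proof of the general duality theorem. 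The fix is the extra step the paper takes: starting from your representation $c(x,p)=\sup_{h\ge 0,\,h\in\Phi_0(X)}\bigl\{\int h(y)\gamma(d(x,y))\,p(dy)-\int\beta^*(h)\,d\mu_0\bigr\}$, approximate $\gamma$ from below by Lipschitz functions $\gamma_N\uparrow\gamma$, and use separability of $\Phi_0(X)$ (for $X$ compact) together with continuity of $h\mapsto\int\beta^*(h)\,d\mu_0$ (superlinearity of $\beta$ makes $\beta^*$ finite, hence continuous) to rewrite $c(x,p)=\sup_{k}\int\varphi_k(x,y)\,p(dy)$ over a \emph{countable} family with $\varphi_0=0$ and $\varphi_k\in\Phi_\gamma(X\times X)$; Proposition \ref{prop:C1} then yields $(C_1)$. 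In the countable discrete case your argument is essentially complete, since Condition $(C'')$ only asks for lsc of $c(x,\cdot)$ and $(C_1)$ is derived from it inside the proof of Theorem \ref{Kantorovich general}.

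Two smaller points. You never verify $(C_4)$ (that $\int c(x,p_x)\,d\mu<\infty$ forces $\mu p\in\mathcal{P}_\gamma(X)$); the paper gets it from Jensen's inequality, and it is needed to invoke the general theorem. Finally, your concluding measurability patch for $\widehat{Q}_\beta\varphi$ is both unnecessary and, as stated, unconvincing: densities that are ``simple functions supported on a countable dense subset of $X$'' are $\mu_0$-a.e.\ zero for a general $\mu_0$, so restricting the infimum to such a family does not recover $\widehat{Q}_\beta\varphi$. In fact no such reduction is needed: under $(C_3')$ (compact case) $R_c\varphi=\widehat{Q}_\beta\varphi$ is lower semicontinuous because $\mathcal{P}(X)$ is compact, and in the discrete case every function is measurable, so the duality formula holds with the ordinary integral against $\mu$.
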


\subsubsection{Notation}\label{Notation T_p}
We end this section by introducing notations for the optimal transport costs related to power functions.
When $\alpha(x)=x^p$, $x\geq0$, $p>0$, we will use the notation $\mathcal{T}_p$ and $\Tw_p$ to denote the costs above. Accordingly, if $X=\R^m$ is equipped with a norm $\|\,\cdot\,\|$ and $\theta(x)=\|x\|^p$, we will denote the third transport cost by $\Tb_{p}$.

\medskip



\section{Proof of a result by Strassen}\label{Sec:Strassen}
In this short section, we show that the transport cost $\Tb_\theta$ can be used to recover an old result by Strassen \cite{Str65} about the existence of a martingale with given marginals.

In the sequel, we equip $\R^m$ with an arbitrary norm $\|\cdot\|$.
Let $\mu,\nu \in \mathcal{P}_1(\R^m)$;  one says that $\mu$ is dominated by $\nu$ in the convex order sense, and one writes $\mu\preceq_C \nu$, if 
\[
\int f\,d\mu \leq \int f\,d\nu,
\]
for all convex\footnote{Note that since $\mu,\nu \in \mathcal{P}_1(\R^m)$, any affine map is integrable with respect to $\mu$ and $\nu$. Since a convex function is always positive up to the addition of some affine map, we see that the integral of convex functions with respect to $\mu$ and $\nu$ makes sense.}
 $f:\R^m \to \R$. Note that, in particular, this implies that $\int f\,d\mu = \int f\,d\nu$ for all affine maps $f:\R^m \to \R$. 
 
 It is not difficult to check that $\mu\preceq_C \nu$ if and only if $\int f\,d\mu \leq \int f\,d\nu$ for all $1$-Lipschitz and convex $f:\R^m \to \R$ bounded from below\footnote{One possible way to prove this is to use the fact that if $f \colon \R^m\to \R$ is convex, then the classical inf-convolution operator 
 $Q_tf(x):=\inf_{y\in \R^m}\{f(y) + \frac{1}{t} \|x-y\|\}$ is convex, $1/t$-Lipschitz and $Q_t f(x) \uparrow f(x)$ when $t \to 0$ for all $x\in \R^m$.}.
 
 The following result goes back at least to the work of Strassen \cite{Str65}.
 \begin{thm}\label{thm:Strassen}
 Let $\mu,\nu \in \mathcal{P}(\R^m)$; there exists a martingale $(X,Y)$, where $X$ follows the law $\mu$ and $Y$ the law $\nu$ if and only if $\mu \preceq_C\nu.$ 
 \end{thm}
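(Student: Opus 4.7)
The direction ``martingale coupling implies convex order'' is immediate from conditional Jensen's inequality: for any convex $f:\R^m\to\R$, the martingale property gives $f(X) = f(\E[Y|X]) \leq \E[f(Y)|X]$ almost surely, and integrating yields $\int f\,d\mu \leq \int f\,d\nu$.

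For the converse, assuming $\mu\preceq_C\nu$ (which in particular forces $\mu,\nu\in\mathcal{P}_1(\R^m)$ by testing against $f(x)=\|x\|$), my plan is to show that the barycentric cost $\Tb_1(\nu|\mu)$ associated with $\theta(u)=\|u\|$ vanishes, and then to produce a martingale coupling as a weak limit of a minimizing sequence. To prove the vanishing, I invoke the duality formula of Theorem~\ref{Kantorovich T bar}\textit{(3)}:
\[
\Tb_1(\nu|\mu) = \sup_{\varphi} \left\{ \int Q_\theta\varphi\,d\mu - \int \varphi\,d\nu\right\},
\]
where $Q_\theta\varphi(x) = \inf_{y\in\R^m}\{\varphi(y)+\|x-y\|\}$ and the supremum runs over convex, Lipschitz, bounded-below $\varphi$. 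The crux is that for any such $\varphi$, the inf-convolution $Q_\theta\varphi$ is itself convex (inf-convolution of two convex functions), $1$-Lipschitz, bounded below (by $\inf\varphi$), and satisfies $Q_\theta\varphi\leq\varphi$ pointwise. The characterization of convex order via $1$-Lipschitz convex bounded-below test functions (stated just before the theorem) therefore yields
\[
\int Q_\theta\varphi\,d\mu \leq \int Q_\theta\varphi\,d\nu \leq \int \varphi\,d\nu,
\]
so the supremum is non-positive and $\Tb_1(\nu|\mu)=0$.

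It remains to produce a coupling actually attaining the infimum. Take $(\pi_n)\subset\Pi(\mu,\nu)$ with $I_c[\pi_n]\to 0$. Since the marginals are fixed in $\mathcal{P}_1(\R^m)$, the set $\Pi(\mu,\nu)$ is tight and weakly sequentially compact, so we may assume $\pi_n\to\pi^*\in\Pi(\mu,\nu)$ weakly. For any continuous bounded $\phi:\R^m\to\R^m$ with $\|\phi\|_\infty\leq 1$, writing the disintegration $\pi_n(dx\,dy)=\mu(dx)p^n_x(dy)$,
\[
\left| \int \langle \phi(x),x-y\rangle\,\pi_n(dx\,dy)\right| = \left| \int \left\langle \phi(x),\,x - \int y\,p^n_x(dy)\right\rangle \mu(dx)\right| \leq I_c[\pi_n]\to 0.
\]
Since the bound $|\langle\phi(x),x-y\rangle|\leq \|x\|+\|y\|$ together with $\mu,\nu\in\mathcal{P}_1$ gives uniform integrability of the integrands along $\{\pi_n\}$, passing to the weak limit yields $\int \langle \phi(x),x-y\rangle\,\pi^*(dx\,dy)=0$ for every such $\phi$. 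A standard density argument then forces $\E_{\pi^*}[Y|X]=X$ almost surely, so $\pi^*$ is the desired martingale coupling with marginals $\mu$ and $\nu$.

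The principal technical obstacle is this last extraction step: the cost $I_c$ is not obviously lower semicontinuous for weak convergence of couplings because it depends on the barycenter map $p_x\mapsto \int y\,p_x(dy)$, which is not weakly continuous in general. The argument above sidesteps any direct lsc statement by characterizing the martingale property through integrals against continuous bounded functions of the form $\phi(x)(x-y)$, and using uniform integrability inherited from the $\mathcal{P}_1$ marginals to pass such integrals to the weak limit.
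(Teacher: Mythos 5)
Your proof is correct, and it takes essentially the same approach as the paper for the core of the argument (duality for $\Tb_1$, characterization of convex order via $1$-Lipschitz convex test functions, conditional Jensen for the easy direction). The one genuinely new contribution is that you make explicit the step the paper leaves implicit: the paper asserts that there exists a martingale coupling if and only if $\Tb_1(\nu|\mu)=0$, but the ``if'' direction requires the infimum over $\Pi(\mu,\nu)$ to be \emph{attained}, and the paper does not say so in the proof of Theorem~\ref{thm:Strassen}. In the paper's framework, attainment follows from condition $(C_1)$ (lower semicontinuity of $\pi\mapsto I_c[\pi]$, verified for the barycentric cost in the proof of Theorem~\ref{Kantorovich T bar}) together with weak compactness of $\Pi(\mu,\nu)$; this is only stated in passing, in the remark after Lemma~\ref{chainrule} in the appendix. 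Your weak-limit argument provides a self-contained alternative that avoids invoking $(C_1)$: rather than proving lower semicontinuity of the cost (which is delicate precisely because the barycenter map is not weakly continuous), you test the limit coupling against functions of the form $(x,y)\mapsto\langle\phi(x),x-y\rangle$, use the fact that the cost dominates all such integrals, and pass to the limit by uniform integrability guaranteed by the fixed $\mathcal{P}_1$-marginals. This is a valid and rather elegant work-around.

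Two cosmetic points. First, for the vanishing of the cost you re-derive the key facts about $Q_\theta\varphi$ (convexity, $1$-Lipschitz, $Q_\theta\varphi\le\varphi$), which is exactly the content of the Proposition stated just before Theorem~\ref{thm:Strassen}; citing it directly would shorten the argument. Second, the condition $\|\phi\|_\infty\le 1$ on a vector-valued test function should be stated as a dual-norm bound $\sup_x\|\phi(x)\|_*\le 1$ so that the estimate $|\langle\phi(x),z\rangle|\le\|z\|$ is actually valid, and the uniform integrability claim (while true) deserves a line explaining that the bound $\|x\|+\|y\|$ has fixed integral against every $\pi_n$ because the marginals are the same. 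Neither affects the correctness of the proof.
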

 
Below we obtain Strassen's theorem as a consequence of the duality formula for the cost $\Tb_1$ given in the following proposition.
\begin{prop}
For all $\mu,\nu \in \mathcal{P}_1(\R^m)$,
\begin{align*}
\Tb_1(\nu|\mu) = \sup\Big\{ \int \varphi\,d\mu - \int \varphi\,d\nu;  \varphi  \text{ convex}, & \text{ 1-Lipschitz},\text{bounded from below}\Big\} .
\end{align*}
\end{prop}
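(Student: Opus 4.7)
The plan is to obtain the identity directly from Theorem~\ref{Kantorovich T bar}(3) applied with $\theta(u)=\|u\|$, by analyzing what the inf-convolution operator $Q_\theta\varphi(x)=\inf_{y\in\R^m}\{\varphi(y)+\|x-y\|\}$ does on the relevant class of test functions. The key remark is that $Q_\theta\varphi$ coincides with $\varphi$ precisely when $\varphi$ is $1$-Lipschitz, so the duality formula from Theorem~\ref{Kantorovich T bar}(3) will collapse to the claimed one once we verify that the operator $Q_\theta$ preserves (and in fact moves us into) the class of convex, $1$-Lipschitz, bounded-below functions.

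First I would record three elementary properties of $Q_\theta$ when $\theta(u)=\|u\|$. (i)~If $\varphi$ is $1$-Lipschitz, then $\varphi(x)\le \varphi(y)+\|x-y\|$ for every $y$, whence $\varphi(x)\le Q_\theta\varphi(x)$, and the bound $Q_\theta\varphi(x)\le\varphi(x)$ (take $y=x$) is trivial; hence $Q_\theta\varphi=\varphi$. (ii)~For an arbitrary function $\varphi$, the function $Q_\theta\varphi$ is always $1$-Lipschitz by the standard triangle-inequality argument. (iii)~If $\varphi$ is convex, then $Q_\theta\varphi$, being the inf-convolution of two convex functions (one of them the norm), remains convex; and if $\varphi\ge c$, then $Q_\theta\varphi\ge c$ as well.

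Armed with these observations, one inequality is immediate: for any convex, $1$-Lipschitz, bounded-below $\varphi$, property (i) gives $Q_\theta\varphi=\varphi$, so such $\varphi$ lies in the class used in Theorem~\ref{Kantorovich T bar}(3), and consequently
\[
\int\varphi\,d\mu-\int\varphi\,d\nu=\int Q_\theta\varphi\,d\mu-\int\varphi\,d\nu\le \Tb_1(\nu|\mu).
\]
Taking the supremum over all such $\varphi$ proves the $\ge$ part of the claim.

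For the converse inequality, I would start from an arbitrary convex, Lipschitz, bounded-below $\varphi$ appearing in Theorem~\ref{Kantorovich T bar}(3) and set $\psi:=Q_\theta\varphi$. By (ii) and (iii), $\psi$ is convex, $1$-Lipschitz and bounded from below, and $\psi\le\varphi$ so $-\int\varphi\,d\nu\le-\int\psi\,d\nu$. Hence
\[
\int Q_\theta\varphi\,d\mu-\int\varphi\,d\nu=\int\psi\,d\mu-\int\varphi\,d\nu\le\int\psi\,d\mu-\int\psi\,d\nu,
\]
which is dominated by the right-hand side of the proposition. Taking the supremum on the left and invoking Theorem~\ref{Kantorovich T bar}(3) yields the reverse inequality, completing the proof. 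There is no genuine obstacle here; the only point requiring a line of care is the preservation of the lower bound and of convexity by $Q_\theta$, which is why I would record (i)--(iii) explicitly before assembling the two estimates.
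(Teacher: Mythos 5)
Your proof is correct and follows essentially the same route as the paper: both invoke Theorem~\ref{Kantorovich T bar}(3), observe that $Q_1$ preserves convexity, the lower bound, and $1$-Lipschitzness, and that $Q_1\psi=\psi$ for $1$-Lipschitz $\psi$, then assemble the two inequalities by replacing $\varphi$ with $\psi:=Q_1\varphi$ in one direction and exploiting $Q_1\psi=\psi$ in the other. (Incidentally, you correctly cite Theorem~\ref{Kantorovich T bar}(3), whereas the paper's proof contains a typo pointing to Theorem~\ref{Kantorovich T hat}.)
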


\proof
We already know from Point (3) of Theorem \ref{Kantorovich T hat} that for all $\mu,\nu \in \mathcal{P}_1(\R^m)$ it holds
\begin{align*}
\Tb_1(\nu|\mu) = \sup\Big\{ \int Q_1 \varphi\,d\mu - \int \varphi\,d\nu;  \varphi  \text{ convex}, & \text{ Lipschitz},\text{bounded from below}\Big\} .
\end{align*}
with $Q_1\varphi(x) = \inf_{y\in \R^m}\{\varphi(y) + \|x-y\|\}$, $x\in \R^m.$ 
It is easy to check that if $\varphi:\R^m\to\R$ is convex and bounded from below, so is 
$Q_1\varphi \colon \R^m \to \R$. 
Being an infimum of $1$-Lipschitz functions, $Q_1\varphi$ is itself $1$-Lipschitz. Moreover, if $\psi:\R^m \to\R$ is some $1$-Lipschitz convex function, then $Q_1\psi=\psi$; namely, for all $x\in\R^m$, one has 
\[
0\geq Q_1\psi(x)-\psi(x)\geq \inf_{y\in \R^m}\{\psi(y)-\psi(x) + \|x-y\|\}\geq 0.
\]
From these considerations, we conclude that
\begin{align*}
\Tb_1(\nu|\mu) &= \sup \left\{\int Q_1 \varphi \,d\mu - \int \varphi\,d\nu ; \varphi \text{ convex Lipschitz bounded below}\right\}\\
&\leq \sup \left\{\int \psi \,d\mu - \int \psi\,d\nu ; \psi \text{ convex, 1-Lipschitz, bounded below}\right\}\\
& = \sup \left\{\int Q_1\psi \,d\mu - \int \psi\,d\nu ; \psi \text{ convex, 1-Lipschitz, bounded below}\right\}\\
& \leq  \sup \left\{\int Q_1\varphi \,d\mu - \int \varphi\,d\nu ; \varphi \text{ convex, Lipschitz, bounded below}\right\}.
\end{align*}
This concludes the proof.
\endproof

\proof[Proof of Theorem \ref{thm:Strassen}]
If $\pi \in \Pi(\mu,\nu)$ denotes the law of $(X,Y)$, the condition that $(X,Y)$ is a martingale is expressed by
 \begin{equation}\label{martingale}
 \int y\,p_x(dy) = x,\qquad \text{ for } \mu \text{ almost every } x \in \R^m.
 \end{equation}
Recall that $\Tb_1(\nu|\mu) = \inf_{\pi \in \Pi(\mu,\nu)} \int \|x - \int y\,p_x(dy)\| \mu(dx).$ Therefore, there exists some $\pi \in \Pi(\mu,\nu)$ satisfying \eqref{martingale} if and only if $\Tb_1(\nu|\mu)=0$.
Since, by Corollary \ref{Kantorovich T bar},
\begin{align*}
\Tb_1(\nu|\mu) = \sup \Big\{ \int f\,d\mu - \int f\,d\nu ;  f \colon  \R^m & \to\R, 1- \text{Lipschitz}, \text{convex and bounded below} \Big \},
\end{align*}
the expected result follows.
\endproof

\begin{rem}
Let us note that we obtained in fact the following slightly more general result: Let $\varepsilon>0$ ; two probability measures $\mu,\nu \in \mathcal{P}_1(\R^m)$ satisfy $\int f\,d\mu \leq \int f\,d\nu + \varepsilon$, for all $1$-Lipschitz convex functions $f:\R^m\to \R$, if and only if there exists a pair $(X,Y)$ of random variables, with $X$ of law $\mu$ and $Y$ of law $\nu$, such that
\[
\E[ \|X - \E[Y|X]\|] \leq \varepsilon.
\]
\end{rem}


\section{Transport-entropy inequalities: definitions, tensorization, and dual formulation}

In this section, we introduce a general notion of transport-entropy inequalities of Talagrand-type and investigate them.

\subsection{Definitions}
We recall that if $\mu,\nu$ are two probability measures on some space $X$, the relative entropy of $\nu$ with respect to $\mu$ is defined by
\[
H(\nu|\mu) = \int \log \left(\frac{d\nu}{d\mu}\right)\,d\nu\in \R_+\cup\{+\infty\}\,,
\]
if $\nu\ll \mu$. Otherwise, ones sets $H(\nu|\mu)=+\infty.$
\begin{defi}[Transport-entropy inequalities $\mathbf{T}_c(a_1,a_2)$ and $\mathbf{T}_c(b)$] \label{def:te}\ \\
Let  $c \colon X\times \mathcal{P}_\gamma(X)\to [0,\infty]$ be a measurable cost function associated to some lower-semicontinuous function $\gamma \colon \R_+\to \R_+$ satisfying \eqref{gamma}, and $\mu \in \mathcal{P}_\gamma(X).$
\begin{itemize}
\item The probability measure $\mu$ is said to satisfy $\mathbf{T}_c(a_1,a_2)$, for some $a_1,a_2>0$ if 
\begin{equation} \label{eq:te}
\mathcal{T}_c(\nu_1 | \nu_2) \leq a_1 H(\nu_1 | \mu) + a_2 H(\nu_2 | \mu),\qquad \forall \nu_1, \nu_2 \in \mathcal{P}_\gamma(X).
\end{equation}
\item The probability measure $\mu$ is said to satisfy $\mathbf{T}^+_c(b)$ for some $b>0$,
if 
\begin{equation} \label{eq:te+}
\mathcal{T}_c(\nu | \mu) \leq b H(\nu | \mu),\qquad \forall \nu \in \mathcal{P}_\gamma(X).
\end{equation}
\item The probability measure $\mu$ is said to satisfy $\mathbf{T}^-_c(b)$ for some $b>0$, if
\begin{equation} \label{eq:te-}
\mathcal{T}_c(\mu | \nu) \leq b H(\nu | \mu),\qquad \forall \nu \in \mathcal{P}_\gamma(X).
\end{equation}
\end{itemize}
For the specific transport costs $\Tw_p$ and $\Tb_p$ introduced in Section \ref{Notation T_p} we may use the corresponding notations $\mathbf{\widetilde{T}}_p(a_1,a_2)$, $\mathbf{\widetilde{T}}^\pm_p(b)$,
respectively $\mathbf{\overline{T}}_p(a_1,a_2)$, $\mathbf{\overline{T}}^\pm_p(b)$.
\end{defi}

Let us comment on this definition. 
First we note, that when $c(x,p) = \int \omega (x,y)\,p(dy)$, \eqref{eq:te+} and $\eqref{eq:te-}$ give back the usual transport-entropy inequalities of Talagrand type (see \cite{Led01}, \cite{Vil09} or \cite{GL10} for a general introduction on the subject).
Also, we observe that $\mathbf{T}_c(a_1,0)$ or $\mathbf{T}_c(a_2,0)$ (which are not considered in the above definition, since $a_1,a_2>0$) has no meaning. Indeed, if $\mathbf{T}_c(a_1,0)$ holds, then $\mathcal{T}_c(\nu_1 | \nu_2) \leq a_1 H(\nu_1 | \mu)$ for all $\nu_1, \nu_2$ which in turn implies $\mathcal{T}_c(\mu | \nu_2) = 0$ for all $\nu_2$ which is impossible.
Finally, using the convention that $0 \cdot \infty = 0$, we observe that $\mathbf{T}^+_c(b)$ is formally equivalent to $\mathbf{T}_c(b,\infty)$, and $\mathbf{T}^-_c(b)$ is equivalent to $\mathbf{T}^-_c(\infty,b)$.

As for the classical inequality, $\mathbf{T}_c(a_1,a_2)$ does enjoy the tensorization property.
Moreover, if duality holds for the cost function $c$ (in the sense of Definition \ref{defi:duality}), we can state a dual characterization of $\mathbf{T}_c(a_1,a_2)$ in the spirit of Bobkov-G\"otze dual formulation \cite{BG99}.

We now state these properties and characterizations.

\subsection{Bobkov-G\"otze dual characterization}

The following characterization extends, thanks to the dual formulation of the transport cost \cite{BG99}; see also \cite{GL10}.

\begin{prop}[Dual formulation] \label{prop:bg}
Let  $c \colon X\times \mathcal{P}_\gamma(X)\to [0,\infty]$  be a measurable cost function associated to some lower-semicontinuous function $\gamma \colon \R_+\to \R_+$ satisfying \eqref{gamma}. Assume that 
$c(x,\delta_x)=0$ for all $x\in X$ and that duality holds for the cost function $c$.
For $\mu \in \mathcal{P}_\gamma(X)$ and $a_1, a_2, b>0$, Items $(i)$'s and $(ii)$'s are equivalent:
\begin{itemize}
\item 
\begin{itemize}
\item[$(i)$] $\T_c(a_1,a_2)$ holds;
\item[$(ii)$] for all $ \varphi \in \Phi_{\gamma,b}(X)$ (resp. for all non-negative $\varphi\in \Phi_\gamma$), it holds
\begin{equation} \label{eq:bg}
\left( \int \exp\left\{\frac{R_c \varphi}{a_2}\right\}d\mu \right)^{a_2}
\left( \int \exp\left\{-\frac{\varphi}{a_1}\right\} d\mu \right)^{a_1} 
 \leq 1;
\end{equation}
\end{itemize}
\item 
\begin{itemize}
\item[$(i')$] $\T^+_c(b)$ holds;
\item[$(ii')$] for all $ \varphi \in \Phi_{\gamma,b}(X)$ (resp. for all non-negative $\varphi\in \Phi_\gamma$), it holds
\begin{equation} \label{eq:bg+}
\exp \left\{ \int R_c \varphi\,d\mu \right\}  
\left( \int \exp \left\{\frac{ -\varphi}{b} \right\} d\mu \right)^{b}
\leq 1 ;
\end{equation}
\end{itemize}
\item
\begin{itemize}
\item[$(i'')$] $\T^-_c(b)$ holds;
\item[$(ii'')$] for all $ \varphi \in \Phi_{\gamma,b}(X)$ (resp. for all non-negative $\varphi\in \Phi_\gamma$), it holds
\begin{equation} \label{eq:bg-}
\left(\int \exp\left\{\frac{R_c \varphi}{b}\right\} d\mu \right)^{b}
\exp\left\{-\int \varphi \,d\mu  \right\}
\leq 1\,,
\end{equation}
\end{itemize}
\end{itemize}
where we recall that $R_c \varphi(x)=\inf_{p \in \mathcal{P}_\gamma(X)}\left\{ \int \varphi(y)\,p(dy) + c(x,p) \right\}$, $x \in X$.

Moreover, specializing to the ``bar" cost $\Tb_\theta$, one can replace, in $(ii)$, $(ii')$ and $(ii'')$,
$R_c \varphi$ by $Q_\theta \varphi:= \inf_{y \in \mathbb{R}^m} \left\{ \varphi(y) + \theta(\,\cdot\,-y)\right\}$
and restrict to the set of functions $\varphi$ that are convex, Lipschitz and bounded from below.

\end{prop}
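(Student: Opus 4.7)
The proof combines the assumed Kantorovich duality for $\mathcal{T}_c$ with the Donsker--Varadhan variational formula for the relative entropy,
\[
H(\nu|\mu)=\sup_g\left\{\int g\,d\nu-\log\int e^g\,d\mu\right\},\qquad \log\int e^g\,d\mu=\sup_\nu\left\{\int g\,d\nu-H(\nu|\mu)\right\},
\]
applied at the two test functions $-\varphi/a_1$ and $R_c\varphi/a_2$ that naturally appear in the dual formula for $\mathcal{T}_c$. The algebra is essentially identical to the classical Bobkov--Götze derivation, but one has to check that the shape of $R_c\varphi$ does not create any new difficulty.

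For $(i)\Rightarrow(ii)$, I fix $\varphi\in\Phi_{\gamma,b}(X)$ and combine the duality inequality $\int R_c\varphi\,d\nu_2-\int\varphi\,d\nu_1\le \mathcal{T}_c(\nu_1|\nu_2)$ with the assumed $\mathbf{T}_c(a_1,a_2)$ to get, for every $\nu_1,\nu_2\in\mathcal{P}_\gamma(X)$,
\[
\int R_c\varphi\,d\nu_2-a_2H(\nu_2|\mu)\le\int\varphi\,d\nu_1+a_1H(\nu_1|\mu).
\]
Taking the supremum over $\nu_2$ on the left and the infimum over $\nu_1$ on the right via Donsker--Varadhan produces $a_2\log\int e^{R_c\varphi/a_2}\,d\mu\le -a_1\log\int e^{-\varphi/a_1}\,d\mu$, i.e.\ \eqref{eq:bg}. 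For $(ii)\Rightarrow(i)$, the identities are run backwards: Donsker--Varadhan at $-\varphi/a_1$ and $R_c\varphi/a_2$ gives, for every $\varphi$ and $\nu_1,\nu_2$, a lower bound on $a_1H(\nu_1|\mu)+a_2H(\nu_2|\mu)$ by $\int R_c\varphi\,d\nu_2-\int\varphi\,d\nu_1$ plus two $\log$-terms whose sum is non-negative exactly by \eqref{eq:bg}; taking a supremum over $\varphi$ and invoking the duality formula once more returns $\mathcal{T}_c(\nu_1|\nu_2)$ on the right-hand side. The cases $(i')\Leftrightarrow(ii')$ and $(i'')\Leftrightarrow(ii'')$ follow by specializing one marginal, using $H(\mu|\mu)=0$: the $\nu_2=\mu$ choice in the $\mathbf{T}_c^+$ case collapses the $\nu_2$-side to a pointwise integration against $\mu$ (producing the factor $\exp\{\int R_c\varphi\,d\mu\}$ in \eqref{eq:bg+}), while $\nu_1=\mu$ produces the symmetric collapse in the $\mathbf{T}_c^-$ case.

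The equivalence between the ``$\varphi\in\Phi_{\gamma,b}(X)$'' and ``non-negative $\varphi\in\Phi_\gamma(X)$'' versions of $(ii),(ii'),(ii'')$ rests on the elementary shift-invariance $R_c(\varphi+k)=R_c\varphi+k$ (immediate from the definition of $R_c$), which makes each of \eqref{eq:bg}, \eqref{eq:bg+}, \eqref{eq:bg-} invariant under $\varphi\mapsto\varphi+k$. Any $\varphi\in\Phi_{\gamma,b}(X)$ is therefore equivalent to the non-negative function $\varphi-\inf\varphi\in\Phi_\gamma(X)$, while an unbounded non-negative $\varphi\in\Phi_\gamma$ is reached from the bounded case by truncation $\varphi_n=\min(\varphi,n)\in\Phi_{\gamma,b}(X)$ and monotone convergence in both factors of the dual inequalities. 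The final assertion concerning $\overline{\mathcal{T}}_\theta$ is then obtained by reusing verbatim the argument above with Theorem~\ref{Kantorovich T bar}(3) in place of the general duality, since that statement identifies $R_c\varphi$ with $Q_\theta\varphi$ and restricts the supremum to $\varphi$ convex, Lipschitz and bounded below.

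I expect the algebraic manipulations to be entirely routine; the only point requiring care is to verify that the Donsker--Varadhan supremum is saturated by measures in $\mathcal{P}_\gamma(X)$ (rather than over arbitrary probability measures), which follows from the growth condition built into $\Phi_{\gamma,b}$ and from the fact that $R_c\varphi$ inherits both a lower bound ($R_c\varphi\ge\inf\varphi$) and the growth control $R_c\varphi\le\varphi$ coming from the assumption $c(x,\delta_x)=0$.
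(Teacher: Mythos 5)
Your proposal is correct and follows essentially the same route as the paper: rewrite $\mathcal{T}_c$ by the assumed duality, then take the two independent suprema over $\nu_1$ and $\nu_2$ via the variational formula for relative entropy (which the paper packages as Lemma~\ref{lem-dual} and you call Donsker--Varadhan), using $R_c\varphi\le\varphi$ (from $c(x,\delta_x)=0$) to check the growth hypothesis, and shift-invariance $R_c(\varphi+k)=R_c\varphi+k$ to pass between the bounded-below and non-negative test classes. The handling of $(i')$, $(i'')$ by fixing one marginal at $\mu$ and the specialization to the bar cost via Theorem~\ref{Kantorovich T bar}(3) also match the paper's (more terse) treatment.
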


\begin{rem}\ 
\begin{itemize}
\item The preceding result thus applies to the cost functions defined in Section \ref{sec:particular cases} under the assumptions of Theorems \ref{Kantorovich T tilda}, \ref{Kantorovich T bar} and \ref{Kantorovich T hat} and more generally to all the cost functions satisfying the assumptions of our general duality Theorem \ref{Kantorovich general}.
\item In the result above, we implicitly assumed that functions $R_c\varphi$ were measurable. If it is not the case, then integrals of $R_c\varphi$ with respect to $\mu$ have to be replaced by integrals with respect to the inner measure $\mu_*$.
\item When $c(x,p)=\theta\left(x - \int y\,p(dy)\right)$, $x\in \R^m$, $p\in \mathcal{P}_1(\R^m)$, for some convex function $\theta:\R^m\to\R_+$, the inequality $\T_c(a_1,a_2)$ is thus equivalent to the following exponential type inequality first introduced by Maurey \cite{Mau91} (the so-called convex  $(\tau)$-property):
\[
\left(\int e^{\frac{Q_\theta\varphi}{a_2}}\, d\mu\right)^{a_2} \left(\int e^{-\frac{\varphi}{a_1}}\,d\mu\right)^{a_1}\leq 1,\qquad \forall \varphi:\R^m\to\R_+ \text{ convex.}
\]
\end{itemize}
\end{rem}

\begin{proof}
By duality (\textit{i.e.}\ using Definition \ref{defi:duality}), $\T_c(a_1,a_2)$ is equivalent to have
\[
a_2\left( \int \frac{R_c \varphi}{a_2}\,d\nu_2 - H(\nu_2|\mu) \right) 
+
a_1\left( \int - \frac{\varphi}{a_1}\,d\nu_1 - H(\nu_1|\mu) \right) 
\leq 0\,,
\]
for all $\varphi \in \Phi_{\gamma,b}(X)$ and all $\nu_1, \nu_2 \in \mathcal{P}_\gamma(X)$ with finite relative entropy with respect to $\mu.$
The expected result follows by taking the (two independent) suprema, on the left hand side, over $\nu_1$ and $\nu_2$, and by using Lemma \ref{lem-dual} below. Note that since $c(x,\delta_x)=0$ for all $x\in X$, one always has $R_c\varphi \leq \varphi$, for all $\varphi \in \Phi_{\gamma,b}(X)$ and so the function $\psi = R_c\varphi /a_2$ satisfies the assumption of Lemma \ref{lem-dual}. This completes the proof of the equivalence $(i) \Leftrightarrow (ii)$. 

Note that \eqref{eq:bg} is invariant under translations $\varphi \mapsto \varphi+a$ and so the functions $\varphi$ can be assumed non-negative.

The two last equivalences follow the same line (and the details are left to the reader). Similarly, the specialization to the ``bar" cost is identical, one just needs to apply Item $(3)$ of Theorem \ref{Kantorovich T bar}.
\end{proof}

\begin{lem} \label{lem-dual}
Let $\mu \in \mathcal{P}_\gamma(X)$ for some lower-semicontinuous function $\gamma:\R_+\to\R_+$ satisfying \eqref{gamma}; for all measurable function $\psi:\X \to \R$ such that $\psi \leq \varphi$ for some $\varphi \in \Phi_\gamma(X)$, it holds
\[
\sup_{\nu \in \mathcal{P}_\gamma(X)}\left\{ \int \psi\, d\nu - H(\nu|\mu) \right\}
=
\log \int e^{\psi}\,d\mu .
\]
\end{lem}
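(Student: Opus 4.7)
The plan is to prove the two inequalities separately; the upper bound is the classical Gibbs (Donsker–Varadhan) variational formula, while the lower bound requires an approximation argument because the natural extremizer $e^{\psi}\mu/\!\int e^{\psi}\,d\mu$ need not belong to $\mathcal{P}_\gamma(X)$.

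For the direction $\leq$, I would fix $\nu \in \mathcal{P}_\gamma(X)$ and reduce to the case $\nu \ll \mu$ with $H(\nu|\mu) < \infty$ (otherwise the left-hand side is $-\infty$ and there is nothing to prove). Writing $h := d\nu/d\mu$, the growth condition on $\varphi$ combined with $\nu \in \mathcal{P}_\gamma(X)$ gives $\int \psi^+\,d\nu \leq \int \varphi^+\,d\nu < \infty$, so $\int \psi\,d\nu$ is well-defined in $[-\infty,+\infty)$. The change of variables $\int e^{\psi}\,d\mu \geq \int_{\{h>0\}} e^{\psi}\,d\mu = \int (e^{\psi}/h)\,d\nu$ combined with Jensen's inequality for the concave logarithm then yields
\[
\log \int e^{\psi}\,d\mu \;\geq\; \int \log(e^{\psi}/h)\,d\nu \;=\; \int \psi\,d\nu - H(\nu|\mu),
\]
where the final equality is legitimate because the elementary bound $-h\log h \leq e^{-1}$ on $[0,1]$ ensures that $(\log h)^-$ is $\nu$-integrable, so $\int \log h\,d\nu = H(\nu|\mu)$.

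For the direction $\geq$, I would truncate: set $\psi_n := \psi \wedge n$, $Z_n := \int e^{\psi_n}\,d\mu$ (finite, since $\leq e^n$), and $\nu_n := (e^{\psi_n}/Z_n)\,\mu$. Since $\nu_n$ has density bounded by $e^n/Z_n$ with respect to $\mu \in \mathcal{P}_\gamma(X)$, it also lies in $\mathcal{P}_\gamma(X)$. A direct computation, using $H(\nu_n|\mu) = \int \psi_n\,d\nu_n - \log Z_n$, gives
\[
\int \psi\,d\nu_n - H(\nu_n|\mu) \;=\; \int (\psi - \psi_n)\,d\nu_n + \log Z_n \;\geq\; \log Z_n,
\]
since $\psi - \psi_n = (\psi-n)_+ \geq 0$. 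Monotone convergence then yields $Z_n \uparrow \int e^{\psi}\,d\mu$, so $\log Z_n \to \log \int e^{\psi}\,d\mu$ in $[-\infty,+\infty]$, handling both the integrable case and the divergent case $\int e^{\psi}\,d\mu = +\infty$. The main obstacle is precisely that the Gibbs measure $e^{\psi}\mu/Z$ may escape $\mathcal{P}_\gamma(X)$; the truncation above resolves this cleanly at the cost of equality, but the bound is sharp in the limit.
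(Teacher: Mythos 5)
Your proof is correct, and the upper bound is obtained by a genuinely different argument from the paper's. You apply Jensen's inequality for the concave logarithm directly to $\int (e^{\psi}/h)\,d\nu$, whereas the paper uses the Fenchel--Young inequality for $U(x)=x\log x$ (getting $\int \psi\,d\nu \leq \int e^{\psi-1}\,d\mu + H(\nu|\mu)$), then applies it to $\psi+u$ and optimizes over the translation $u\in\R$. The two routes give identical bounds; your Jensen route is a bit more economical since it delivers the optimal constant in one step, at the cost of the careful bookkeeping you supply about why $\int\log h\,d\nu$ is well-defined and equals $H(\nu|\mu)$ (the $-h\log h\le e^{-1}$ remark), a point the paper does not spell out. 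For the lower bound, both you and the paper truncate to ensure a bounded density so that the approximating Gibbs measure lies in $\mathcal{P}_\gamma(X)$; the paper truncates by restricting $\mu$ to $\{\psi\le k\}$ and obtains an exact identity $\int\psi\,d\nu_k - H(\nu_k|\mu)=\log\int e^{\psi}\1_{A_k}\,d\mu$, whereas you cap $\psi$ at $n$ and settle for the one-sided bound $\ge \log Z_n$; both converge to $\log\int e^{\psi}\,d\mu$ by monotone convergence, including the divergent case, so this is only a cosmetic difference.
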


\proof[Proof of Lemma \ref{lem-dual}]
Consider the function $U(x)=x\log(x)$, $x>0$. A simple calculation shows that $U^*(t):=\sup_{x>0}\{tx-U(x)\}=e^{t-1}$, $t\in \R$. Since $\psi \leq \varphi$, for some $\varphi \in \Phi_\gamma(X)$, one concludes that $\int [\psi]_+\,d\nu$ is finite for all $\nu \in \mathcal{P}_\gamma(X)$, and thus $\int \psi\,d\nu$ is well-defined in $\R\cup\{-\infty\}$. Let $\nu\ll \mu$ ; applying Young's inequality $xy \leq U(x) + U^*(y)$, $x>0,y\in \R$, one gets
\[
\int \psi\,d\nu \leq\int U^*(\psi)\,d\mu  + \int U\left(\frac{d\nu}{d\mu }\right)\,d\mu  = \int e^{\psi-1}\,d\mu  + H(\nu|\mu ).
\]
Applying this inequality to $\psi+u$, where $u\in \R$, we get
\[
\int \psi\,d\nu - H(\nu| \mu)\leq e^{u-1}\int e^\psi\,d\mu -u,
\]
and this inequality is still true, even if $\nu$ is not absolutely continuous with respect to $\mu.$
Optimizing over $u\in \R$ and over $\nu \in \mathcal{P}_\gamma(X)$ yields:
\[
\sup_{\nu \in \mathcal{P}_\gamma(X)}\left\{\int \psi\,d\nu - H(\nu|\mu )\right\}\leq \log \int e^\psi\,d\mu .
\]
To get the converse inequality, consider $A_k=\{ x \in X ;  \psi(x)\leq k\}$, for $k\geq 0$ large enough, $\nu_k(dx)=\frac{e^{\psi(x)}}{\int e^{\psi}\1_{A_k}\,d\mu}\1_{A_k}(x)\,\mu(dx)$\,. Since $\mu$ belongs to $\mathcal{P}_\gamma(X)$ and $\nu_k$ has a bounded density with respect to $\mu$, $\nu_k$ also belongs to $\mathcal{P}_\gamma(X).$
Furthermore
\[
\int \psi\,d\nu_k-H(\nu_k|\mu)=\log\left(\int e^\psi \1_{A_k}\,d\mu\right)\to \log\left( \int e^{\psi}\,d\mu\right),
\]
when $k\to\infty.$ This completes the proof.
\endproof

\subsection{Tensorization}

In this section, we collect two important properties which will allow us to deal with one-dimensional measures in applications.

\begin{thm}[Tensoring property]\label{tensorization}
Let  $\gamma \colon \R_+\to \R_+$ be a lower-semicontinuous function satisfying \eqref{gamma},
 $(X_1,d_1),\ldots,(X_n,d_n)$ be complete separable metric spaces equipped with measurable cost functions $c_i:X_i \times \mathcal{P}_\gamma(X_i) \to [0,\infty]$, $i\in\{1,\ldots,n\}$ such that $c_i(x_i,\delta_{x_i})=0$  and $p_i \mapsto c_i(x_i,p_i)$ is convex for all $x_i\in X_i$.
For all $i\in\{1,\ldots,n\}$, let $\mu_i \in \mathcal{P}_\gamma(X_i)$ satisfying the transport inequality $\T_{c_i}(a_1^{(i)},a_2^{(i)})$ for some $a_1^{(i)},a_2^{(i)}>0.$ Then the product probability measure $\mu_1\otimes \cdots\otimes \mu_n$ satisfies the transport inequality $\T_{c}(a_1,a_2)$,
with $a_1:=\max_i a_1^{(i)}$, $a_2:= \max_i a_2^{(i)}$, for the cost function $c \colon X_1 \times \cdots\times X_n \times \mathcal{P}_\gamma(X_1\times \cdots \times X_n)\to [0,\infty)$ defined by
\[
c(x,p) = c_1(x_1,p_1) +\cdots + c_n(x_n,p_n),
\]
for all $x=(x_1,\ldots,x_n) \in X_1 \times\cdots \times X_n,$ and for all $p \in \mathcal{P}_\gamma(X_1 \times\cdots \times X_n)$, where $p_i$ denotes the $i$-th marginal distribution of $p$.
\end{thm}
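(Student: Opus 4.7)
The approach is standard tensorization, by induction on $n$. The inductive step reduces everything to the case $n=2$: if the result is known for the product $\mu_1 \otimes \cdots \otimes \mu_{n-1}$ with the cost $c^Y(y,p) := \sum_{i=1}^{n-1} c_i(y_i,p_i)$ (which is itself convex in $p$, since each $p \mapsto p_i$ is linear and each $c_i(y_i,\cdot)$ is convex, and vanishes at Dirac masses), then applying the $n=2$ conclusion to the pair of spaces $(X_1 \times \cdots \times X_{n-1}, X_n)$ with costs $(c^Y, c_n)$ gives the result for the full product. So the substantive content is $n=2$; I describe that case, writing $\mu = \mu_1 \otimes \mu_2$.

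Fix $\nu_1, \nu_2 \in \mathcal{P}_\gamma(X_1 \times X_2)$ and assume both have finite relative entropy with respect to $\mu$ (otherwise the inequality is trivial). Disintegrate $\nu_j(dx_1 dx_2) = \nu_j^1(dx_1) \nu_j^{2|1}(x_1, dx_2)$ for $j=1,2$. Given $\varepsilon>0$, apply $\T_{c_1}(a_1^{(1)}, a_2^{(1)})$ on $X_1$ to the pair $(\nu_1^1, \nu_2^1)$ to select a kernel $x_1 \mapsto q^1_{x_1}$ such that $\pi^1(dx_1 dy_1) := \nu_2^1(dx_1) q^1_{x_1}(dy_1)$ lies in $\Pi(\nu_2^1, \nu_1^1)$ and $\int c_1(x_1, q^1_{x_1}) \nu_2^1(dx_1) \leq a_1^{(1)} H(\nu_1^1|\mu_1) + a_2^{(1)} H(\nu_2^1|\mu_1) + \varepsilon$. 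Then, for each pair $(x_1,y_1)$, apply $\T_{c_2}(a_1^{(2)}, a_2^{(2)})$ on $X_2$ to the pair of conditionals $(\nu_1^{2|1}(y_1,\cdot), \nu_2^{2|1}(x_1,\cdot))$, selecting (jointly measurably in $(x_1,y_1)$) a kernel $x_2 \mapsto q^2_{x_1,y_1,x_2}$ such that the associated coupling lies in $\Pi(\nu_2^{2|1}(x_1,\cdot), \nu_1^{2|1}(y_1,\cdot))$ and satisfies the $\T_{c_2}$-bound up to $\varepsilon$. Glue these into
\[
\pi(dx_1 dx_2 dy_1 dy_2) := \nu_2(dx_1 dx_2) \, q^1_{x_1}(dy_1) \, q^2_{x_1,y_1,x_2}(dy_2),
\]
and a straightforward Fubini check using the marginal identities above shows $\pi \in \Pi(\nu_2, \nu_1)$.

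Write $\pi = \nu_2(dx) p_x(dy)$ with $x = (x_1, x_2)$. The first marginal of $p_x$ is $q^1_{x_1}$ (depending only on $x_1$), while the second is $(p_x)_2(dy_2) = \int q^2_{x_1, y_1, x_2}(dy_2) q^1_{x_1}(dy_1)$, a $q^1_{x_1}$-mixture. The convexity of $c_2(x_2, \cdot)$, which is the essential hypothesis, yields
\[
c_2(x_2, (p_x)_2) \leq \int c_2(x_2, q^2_{x_1, y_1, x_2}) \, q^1_{x_1}(dy_1).
\]
Integrating $c(x, p_x) = c_1(x_1, q^1_{x_1}) + c_2(x_2, (p_x)_2)$ against $\nu_2(dx_1 dx_2)$, using this bound together with Fubini and the two $\varepsilon$-optimal estimates, gives
\begin{align*}
\mathcal{T}_c(\nu_1|\nu_2) \leq \int c(x, p_x) \nu_2(dx) &\leq a_1^{(1)} H(\nu_1^1|\mu_1) + a_2^{(1)} H(\nu_2^1|\mu_1) \\
&\quad + a_1^{(2)} \int H(\nu_1^{2|1}(y_1,\cdot)|\mu_2) \, \nu_1^1(dy_1) \\
&\quad + a_2^{(2)} \int H(\nu_2^{2|1}(x_1,\cdot)|\mu_2) \, \nu_2^1(dx_1) + O(\varepsilon),
\end{align*}
where the weighting $\nu_1^1(dy_1)$ arises because $\nu_2^1 q^1 = \nu_1^1$. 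Bounding $a_k^{(i)} \leq a_k$ and applying the entropy chain rule $H(\nu_j|\mu) = H(\nu_j^1|\mu_1) + \int H(\nu_j^{2|1}(x_1,\cdot)|\mu_2) \nu_j^1(dx_1)$ telescopes the right-hand side into $a_1 H(\nu_1|\mu) + a_2 H(\nu_2|\mu) + O(\varepsilon)$. Letting $\varepsilon \to 0$ concludes.

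The main technical obstacle I anticipate is the joint measurability of the selection $(x_1, y_1) \mapsto q^2_{x_1, y_1, \cdot}$; this is handled by a standard measurable selection theorem (Kuratowski--Ryll-Nardzewski type) applied to the set-valued map of $\varepsilon$-optimal couplings, once one verifies measurability of the relevant cost and entropy functional in the kernel (Polishness of the spaces is used here). Apart from this measure-theoretic point, the argument is the usual Marton--Talagrand tensorization, with the convexity hypothesis on $c_i(x_i, \cdot)$ entering precisely through Jensen's inequality to control the cost contribution of the averaged second-coordinate marginal.
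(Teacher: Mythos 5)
Your proof is correct and follows essentially the same route as the paper: the paper also reduces to $n=2$, glues an $\varepsilon$-optimal kernel for the first coordinates with $\varepsilon$-optimal kernels for the conditional laws, controls the cost of the mixed second marginal by Jensen's inequality using the convexity of $c_2(x_2,\cdot)$, and concludes with the chain rule for relative entropy before letting $\varepsilon\to 0$ (the paper merely packages the cost estimate as a separate ``chain rule for the transport cost'' lemma, Lemma~\ref{chainrule}). Your explicit remark on the measurable selection of the conditional kernels addresses a point the paper's proof passes over silently, so it is a welcome addition rather than a deviation.
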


The following is an immediate corollary of Theorem \ref{tensorization}.
\begin{cor} \label{cor:tensorization}
Let  $\gamma \colon \R_+\to \R_+$ be a lower-semicontinuous function satisfying \eqref{gamma} and assume that $\mu \in \mathcal{P}_\gamma(X)$ satisfies the transport inequality $\T_c(a_1,a_2)$ for some $a_1,a_2>0$ and some cost function $c:X \times \mathcal{P}_\gamma(X) \to [0,\infty]$ that satisfies $c(x,\delta_x) =0$ and $p \mapsto c(x,p)$ convex for all $x \in X$. Then for all positive integers $n$, the product probability measure $\mu^n \in \mathcal{P}_\gamma(X^n)$ satisfies the inequality $\T_{c^n}(a_1,a_2)$, where $c^n:X^n \times \mathcal{P}_r(X^n)\to [0,\infty)$ is the cost function defined by
\[
c^n(x,p):=\sum_{i=1}^n c(x_i,p_i),\qquad  x=(x_1,\dots,x_n) \in X^n, \qquad  p \in \mathcal{P}_\gamma(X^n),
\] 
where  $p_i$ denotes the $i$-th marginal distribution of $p$.
\end{cor}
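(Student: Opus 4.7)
The statement is an immediate specialization of Theorem \ref{tensorization}. The plan is to apply the theorem with $X_i = X$, $d_i = d$, $\mu_i = \mu$ and $c_i = c$ for every $i \in \{1,\dots,n\}$. The structural hypotheses transfer verbatim: $c(x,\delta_x)=0$ and convexity of $p\mapsto c(x,p)$ are exactly what the theorem asks of each $c_i$, while the assumption that $\mu$ satisfies $\T_c(a_1,a_2)$ provides $\T_{c_i}(a_1^{(i)},a_2^{(i)})$ with $a_1^{(i)}=a_1$ and $a_2^{(i)}=a_2$ for every $i$. The two maxima in the theorem's conclusion therefore collapse to $a_1$ and $a_2$, and the assembled cost $\sum_i c_i(x_i,p_i)$ is precisely $c^n(x,p)$ in the notation of the corollary, so the theorem yields $\T_{c^n}(a_1,a_2)$ for $\mu^n$ as required.

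Since the reduction is formal, the substance lies in Theorem \ref{tensorization}, whose proof I would approach by induction on $n$, the nontrivial case being $n=2$. Given $\nu_1,\nu_2\in\mathcal{P}_\gamma(X_1\times X_2)$ with finite entropy against $\mu_1\otimes\mu_2$, I would disintegrate each $\nu_j(dx_1,dx_2)=m_j(dx_1)\,k_j(x_1,dx_2)$ and exploit the chain rule for relative entropy
\[
H(\nu_j\mid \mu_1\otimes\mu_2)=H(m_j\mid \mu_1)+\int H(k_j(x_1,\cdot)\mid \mu_2)\,m_j(dx_1).
\]
I would then build a coupling of $\nu_1,\nu_2$ in two stages: first choose an almost-optimal coupling $\eta$ of $(m_1,m_2)$ witnessing $\T_{c_1}(a_1^{(1)},a_2^{(1)})$ for $\mu_1$, then for each $(x_1,y_1)$ drawn under $\eta$ pick an almost-optimal coupling of $k_1(x_1,\cdot)$ and $k_2(y_1,\cdot)$ witnessing $\T_{c_2}(a_1^{(2)},a_2^{(2)})$ for $\mu_2$. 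Summing the two contributions and recombining via the chain rule should deliver the target bound with constants $\max(a_1^{(1)},a_1^{(2)})$ and $\max(a_2^{(1)},a_2^{(2)})$.

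The main subtlety I expect is that $c(x,p)$ depends only on the marginals $p_i$ of a joint kernel $p$ on the product space, not on the full kernel. When patching the first-coordinate coupling with the conditional family of second-coordinate couplings, one must check that the induced marginals of the glued kernel are controlled by the one-dimensional costs; this is precisely where the convexity of $c_i(x_i,\cdot)$ enters, via Jensen's inequality applied to an averaged second-coordinate kernel. The two-parameter form $\T_c(a_1,a_2)$ (rather than just $\T_c^+$ or $\T_c^-$) requires simultaneous control of the entropies of both $\nu_1$ and $\nu_2$ at each stage, and replacing a sum of constants by a maximum is exactly what decouples the two entropy contributions once the chain rule has been applied at the correct level. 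Once $n=2$ is settled, iterating along $X_1\times\cdots\times X_n = X_1\times(X_2\times\cdots\times X_n)$ finishes the induction and, via the first paragraph, the corollary.
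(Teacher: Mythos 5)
Your reduction of the corollary to Theorem \ref{tensorization} (taking all $X_i=X$, $c_i=c$, $\mu_i=\mu$, so both maxima collapse to $a_1$ and $a_2$) is exactly the paper's proof, which indeed treats the corollary as an immediate specialization. Your sketch of the theorem itself also matches the paper's argument in Appendix \ref{appendix-tensorization}: a chain-rule inequality for the transport cost built from a two-stage (almost optimal) coupling, Jensen's inequality via convexity of $c_i(x_i,\cdot)$ to control the glued kernel's marginals, and the entropy chain rule combined with the max of the constants.
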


The proof of Theorem \ref{tensorization} is postponed to Appendix \ref{appendix-tensorization}.


\section{Transport-entropy inequalities : link with dimension-free concentration}\label{sec:concentration}
In this section, extending \cite{Goz09}, we characterize the transport-entropy inequality $\T_c(a_1,a_2)$
in terms of a dimension-free concentration property. We recall first (and introduce) some notation.

Let  $\gamma \colon \R_+\to \R_+$ be a lower-semicontinuous function satisfying \eqref{gamma} and
$c : X \times \mathcal{P}_\gamma(X) \to [0,\infty)$ such that $c(x,\delta_x)=0$ for all $x\in X$.
Recall from Corollary \ref{cor:tensorization} that
for all integers $n \geq 1$,
\[
c^n(x,p):=\sum_{i=1}^n c(x_i,p_i),\qquad  x=(x_1,\dots,x_n) \in X^n,  \qquad p \in \mathcal{P}_r(X^n),
\] 
where  $p_i$ denotes the $i$-th marginal distribution of $p$. 
For all $\varphi \in \Phi_\gamma(X^n)$, define as before
\[
R_{c^n}\varphi (x) = \inf_{p\in \mathcal{P}_\gamma(X^n)} \left\{ \int \varphi\,dp + c^n(x,p)\right\},
\qquad  x\in \X^n.
\] 
Finally for all Borel sets $A \subset X^n$, let
\[
c^n_A(x):=\inf_{p \in \mathcal{P}_\gamma(X^n): p(A)=1} c^n(x,p), \qquad x \in X^n,
\]
and, for $t \geq 0$, 
\[
A^n_t : = \left\{x \in X^n : c^n_A(x) \leq t \right\} .
\]

\subsection{A general equivalence}
We are now in a position to state our theorem.

\begin{thm} \label{gozlan}
Let  $\gamma \colon \R_+\to \R_+$ be a lower-semicontinuous function satisfying \eqref{gamma} and $c : X \times \mathcal{P}_\gamma(X) \to [0,\infty)$ a measurable cost function such that $c(x,\delta_x)=0$ for all $x\in X$,
and for which duality holds in the sense of Definition \ref{defi:duality}.
For $\mu \in \mathcal{P}_\gamma(X)$ and $a_1, a_2 >0$, the following are equivalent:
\begin{itemize}
\item[$(i)$] $\mu$ satisfies $\T_c(a_1,a_2)$;
\item[$(ii)$] there exists a numerical constant $K$ such that  for all integers $n \geq 1$, for all Borel sets $A \subset X^n$, it holds
\begin{equation} \label{eq:concentration}
\mu^n(X^n\setminus A^n_t)^{a_2} \mu^n(A)^{a_1} \leq Ke^{-t} \qquad \forall t \geq 0 .
\end{equation}
\item[$(iii)$] there exists a numerical constant $K$ such that for all integers $n \geq 1$, for all non-negative $\varphi \in \Phi_\gamma(X^n)$, it holds
\[
\mu^n(R_{c^n} \varphi > u)^{a_2} \mu^n(\varphi \leq v)^{a_1} \leq K e^{-u+v}
 \qquad \forall u, v \in \R .
\]
\end{itemize}
\end{thm}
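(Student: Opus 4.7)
The plan is to establish the cyclic chain $(i) \Rightarrow (iii) \Rightarrow (ii) \Rightarrow (i)$. The first two implications are essentially soft, combining tensorization, the Bobkov--Götze dual characterization, and Markov's inequality. The last implication carries the genuine content and hinges on Sanov's theorem, following the strategy pioneered by the first author in \cite{Goz09} for the quadratic Wasserstein cost.

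For $(i) \Rightarrow (iii)$, I would first tensorize: by Corollary \ref{cor:tensorization}, $\mu^n$ satisfies $\T_{c^n}(a_1, a_2)$ for the cost $c^n(x,p) = \sum_i c(x_i, p_i)$. This tensor cost inherits convexity in $p$ and the duality assumption from $c$, so Proposition \ref{prop:bg} at level $n$ gives
\[
\Bigl(\int e^{R_{c^n}\varphi/a_2}\,d\mu^n\Bigr)^{a_2}\Bigl(\int e^{-\varphi/a_1}\,d\mu^n\Bigr)^{a_1}\le 1
\]
for every non-negative $\varphi \in \Phi_\gamma(X^n)$. Two Markov-type inequalities, applied separately to $R_{c^n}\varphi$ at level $u$ and to $-\varphi$ at level $-v$, deliver (iii) with constant $K=1$.

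For $(iii) \Rightarrow (ii)$, the idea is to encode the set $A$ through a continuous potential. By inner regularity of $\mu^n$ one may assume $A$ closed. Pick $\psi \colon X^n \to [0,1]$ bounded continuous with $\{\psi = 0\} = A$, for instance $\psi(x) = \min(d_{\ell_1}(x,A),1)$, and set $\varphi_M := M\psi \in \Phi_\gamma(X^n)$. Applying (iii) with $u = t$, $v = 0$ yields, since $\{\varphi_M \leq 0\} = A$,
\[
\mu^n(R_{c^n}\varphi_M > t)^{a_2}\,\mu^n(A)^{a_1}\le K e^{-t}.
\]
The proof of (ii) then reduces to showing $R_{c^n}\varphi_M(x) \uparrow c^n_A(x)$ as $M \to \infty$: any near-minimizing kernel $p_M$ satisfies $\int \psi\,dp_M \to 0$, and by tightness (stemming from the implicit bound on $c^n(x,p_M)$) one extracts a weak cluster point $p^*$ concentrated on $\{\psi = 0\} = A$; lower semicontinuity of $p \mapsto c^n(x,p)$ then gives $c^n_A(x) \leq c^n(x,p^*) \leq \lim R_{c^n}\varphi_M(x)$, the reverse inequality being obvious.

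The main obstacle is $(ii) \Rightarrow (i)$, handled via a Sanov-type argument. Fix $\nu_1, \nu_2 \in \mathcal{P}_\gamma(X)$ with finite relative entropy with respect to $\mu$, and for open convex weak neighborhoods $U_i \ni \nu_i$ set $A_i^{(n)} := \{x \in X^n : L_n(x) \in U_i\}$, where $L_n(x) = \frac{1}{n}\sum_{j=1}^n \delta_{x_j}$; Sanov's lower bound gives $\liminf_n \frac{1}{n}\log \mu^n(A_i^{(n)}) \geq -\inf_{U_i} H(\cdot|\mu)$. Choosing $t_n := -\log K - a_1\log \mu^n(A_1^{(n)}) - a_2\log \mu^n(A_2^{(n)}) - 1$, hypothesis (ii) applied to $A_1^{(n)}$ forces $A_2^{(n)} \cap (A_1^{(n)})_{t_n}^n \neq \emptyset$, hence there exist $x^{(n)} \in A_2^{(n)}$ and $p^{(n)} \in \mathcal{P}_\gamma(X^n)$ with $p^{(n)}(A_1^{(n)}) = 1$ and $c^n(x^{(n)}, p^{(n)}) \leq t_n$. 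Form the empirical coupling $\pi_n := \frac{1}{n}\sum_{j=1}^n \delta_{x_j^{(n)}} \otimes p_j^{(n)}$, whose first marginal is $L_n(x^{(n)}) \in U_2$, whose second marginal $\frac{1}{n}\sum_j p_j^{(n)}$ lies in $\overline{U_1}$ (using convexity of $U_1$ and $p^{(n)}(A_1^{(n)}) = 1$), and whose cost, by convexity of $c(x,\cdot)$, satisfies $I_c[\pi_n] \leq c^n(x^{(n)},p^{(n)})/n \leq t_n/n$. By Sanov, $t_n/n$ tends to $a_1\inf_{U_1}H(\cdot|\mu) + a_2\inf_{U_2}H(\cdot|\mu)$. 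The delicate step is then to pass to the limit $n \to \infty$ and afterwards shrink $U_1, U_2$ to $\{\nu_1\}, \{\nu_2\}$: one needs tightness of $(\pi_n)$ in $\mathcal{P}_\gamma(X\times X)$ (obtained by combining Sanov's upper bound applied to tail events $\{\int \gamma(d(\cdot,x_0))\,dL_n > R\}$ with the finite $\gamma$-moments of $\nu_1, \nu_2$) together with lower semicontinuity of $I_c[\cdot]$, so that any weak cluster point $\pi \in \Pi(\nu_2, \nu_1)$ satisfies $I_c[\pi] \leq a_1 H(\nu_1|\mu) + a_2 H(\nu_2|\mu)$, which yields $\T_c(a_1,a_2)$.
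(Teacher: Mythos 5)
Your cyclic order $(i)\Rightarrow(iii)\Rightarrow(ii)\Rightarrow(i)$ forces you to prove precisely the two links that the theorem's hypotheses do not support, and both have genuine gaps. In $(iii)\Rightarrow(ii)$ everything hinges on the pointwise convergence $R_{c^n}(M\psi)\uparrow c^n_A$, which you justify by lower semicontinuity of $p\mapsto c^n(x,p)$ and by ``tightness stemming from the implicit bound on $c^n(x,p_M)$''. Neither is available: Theorem \ref{gozlan} only assumes $c$ measurable, $c(x,\delta_x)=0$ and the duality identity of Definition \ref{defi:duality}; no coercivity or semicontinuity of $c$ in $p$ is assumed. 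A bound $c^n(x,p_M)\le L$ gives no tightness at all (for Marton's Hamming-type costs, where $\gamma$ is bounded, the constraint $\int M\psi\,dp_M\le L$ only says $p_M$ puts most of its mass near $A$, which on a non-compact $X$ with non-compact $A$ is not a tight family), and without lower semicontinuity the identification of the cluster point's cost fails as well. Note that the elementary direction between (ii) and (iii) is the opposite one: the paper proves $(ii)\Rightarrow(iii)$ from the inclusion $\{R_{c^n}\varphi>m+t\}\subset\{c^n_A>t\}$ with $A=\{\varphi\le m\}$, and recovers $(iii)\Rightarrow(ii)$ only by going around the loop.

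The Sanov route for $(ii)\Rightarrow(i)$ (the strategy of \cite{Goz09}) is a reasonable idea, but as sketched it again uses structure the theorem does not grant: to pass to the limit you need lower semicontinuity of $\pi\mapsto I_c[\pi]$ (a condition of type $(C_1)$), and since the cluster coupling only has marginals in $\overline{U_2}\times\overline{U_1}$ rather than $(\nu_2,\nu_1)$, you then need joint lower semicontinuity of $(\nu_1',\nu_2')\mapsto \mathcal{T}_c(\nu_1'|\nu_2')$ to shrink the neighborhoods; neither follows from ``duality holds''. Moreover these semicontinuity statements live in the $\sigma(\mathcal{P}_\gamma)$ topology, and a Sanov theorem in that topology requires exponential integrability of $\gamma(d(\cdot,x_0))$ under $\mu$, which is not a hypothesis; also the tail control cannot be applied after the fact to the single deterministic point $x^{(n)}$ and kernel $p^{(n)}$ — the moment constraint must be built into the sets $A^{(n)}_i$ before extraction (and your $t_n$ has a sign slip: one needs $t_n>\log K-a_1\log\mu^n(A_1^{(n)})-a_2\log\mu^n(A_2^{(n)})$, harmless asymptotically). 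The paper avoids all of this by proving $(iii)\Rightarrow(i)$ purely on the dual side: apply (iii) to tensorized functions $\varphi(x)=\sum_i f(x_i)$, bound the exponential moment of $R_{c^n}\varphi/((1+\varepsilon)a_2)$ against that of $-\varphi/((1-\varepsilon)a_1)$, let $n\to\infty$ and then $\varepsilon\to0$, and conclude with Proposition \ref{prop:bg}. Your $(i)\Rightarrow(iii)$, by contrast, is essentially correct (tensorization, dual inequality at level $n$, Chebyshev), except that you should not claim $c^n$ ``inherits the duality assumption'': what you actually need is only the trivial half $\mathcal{T}_{c^n}(\nu_1|\nu_2)\ge\int R_{c^n}\varphi\,d\nu_2-\int\varphi\,d\nu_1$, which holds for any cost, together with Lemma \ref{lem-dual}.
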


\begin{rem}\ 
\begin{itemize}
\item The implication $(i) \Rightarrow (ii)$ was first discovered by Marton \cite{Mar86, Mar96a, Mar96b}. This nice observation is at the origin of the interest in transport-entropy inequalities.
\item The implications $(i) \Rightarrow (ii)$ and $(ii)\Rightarrow (iii)$ are in fact true solely under the assumptions $c(x,\delta_x)=0$ for all $x\in X$ and $p\mapsto c(x,p)$ is convex, as the proof indicates.
\end{itemize}
\end{rem}

\begin{proof}
First we prove that $(i)$ implies $(ii)$. Since $\mu$ satisfies $\T_c(a_1,a_2)$, by the tensorization property, for all positive integers $n$, it holds
\[
\mathcal{T}_{c^n}(\nu_1 | \nu_2) \leq a_1H(\nu_1 | \mu^n) + a_2 H(\nu_2 | \mu^n),
\]
for all $\nu_1,\nu_2 \in \mathcal{P}_\gamma(X^n).$
Let $A\subset X^n$ be a Borel set and define $\nu_1(dx) = \frac{\1_A(x)}{\mu^n(A)}\,\mu^n(dx)$ and $\nu_2(dx) =\frac{\1_B(x)}{\mu^n(B)}\,\mu^n(dx)$, where $B = X^n\setminus A^n_t$, for some $t>0.$
Then $H(\nu_1|\mu^n) = -\log \mu^n(A)$ and $H(\nu_2 | \mu^n) = -\log \mu^n(B)$. Furthermore, if $\pi \in \Pi(\nu_2,\nu_1)$ with disintegration kernel $(p_x)_{x\in X^n}$, then for $\nu_2$ almost all $x \in X^n$, $p_x(A)=1$. Therefore,
\[
\int c(x,p_x)\,\nu_2(dx) \geq \int c_A^n(x)\,\frac{\1_B(x)}{\mu^n(B)}\,\mu^n(dx) \geq t,
\]
where the last inequality comes from the fact that $c_A^n(x) > t$ for all $x\in B=\{x \in X^n : c^n_A(x)>t\}.$ Taking the infimum over all $\pi \in \Pi(\nu_2,\nu_1)$ finally yields
\[
t\leq \mathcal{T}_{c^n}(\nu_1 |\mu_2) \leq -a_1\log (\mu^n(A)) - a_2\log \mu^n (X^n \setminus A^n_t),
\]
which proves $(ii).$
%

Now we prove that $(ii)$ implies $(iii)$. Fix $n \geq 1$, $m \in \mathbb{R}$, $t \geq 0$ and a non-negative $\varphi \in \Phi_\gamma(X^n)$. We will prove that $\{R_{c^n} \varphi > m+t\} \subset \{c^n_A > t\}$ with $A:=\{\varphi \leq m\}$. To that aim consider $x \in \{R_{c^n} \varphi > m+t\}$. Then, for all $p \in \mathcal{P}_\gamma(X^n)$ with $p(A)=1$, we have
$\int \varphi \,dp \leq m$ so that, by definition of $R_{c^n}$, it holds
\[
m+t < \int \varphi\,dp + c^n(x,p) \leq m + c^n(x,p)  .
\]
Hence, taking the infimum over all $p$ with $p(A)=1$ leads to $c^n_A(x) > t$, which is the desired result. Point $(iii)$ then immediately follows applying Point $(ii)$ to $A$.

Finally we prove that $(iii)$ implies $(i)$, following \cite{GRS11}.
Fix $\varepsilon\in (0,1)$.
Given $f \in \Phi_\gamma(X)$, non-negative, let $\varphi(x) = f(x_1)+f(x_2) + \dots + f(x_n)$, $x\in X^n.$ Then, $\varphi \in \Phi_\gamma(X^n)$ is also non-negative and
$R_{c^n} \varphi (x) = \sum_{i=1}^n R_c f (x_i)$, so that, using the product structure of $\mu^n$,
\begin{align} \label{eurostar}
\left( \int e^{\frac{R_{c} f}{(1+\varepsilon) a_2}} d\mu \right)^{a_2}
\left( \int e^{-\frac{f}{(1-\varepsilon)a_1}} d\mu \right)^{a_1} =
\left( \int e^{\frac{R_{c^n} \varphi}{(1+\varepsilon) a_2}} d\mu^n \right)^{\frac{a_2}{n}}
\left( \int e^{-\frac{\varphi}{(1-\varepsilon)a_1}} d\mu^n \right)^{\frac{a_1}{n}} .
\end{align}
Our aim is to prove that the right hand side, to the power $n$, is bounded. 
Thanks to Point$ (iii)$, for any $v \in \mathbb{R}$ it holds
\begin{align*}
\int e^{\frac{R_{c^n} \varphi}{(1+\varepsilon) a_2}}\, d\mu^n
& =
1 +  \int_0^\infty e^u \mu^n\left(\frac{R_{c^n} \varphi}{(1+\varepsilon) a_2} > u\right)\,du \\
& \leq 
1 + \mu^n\left(\frac{\varphi}{(1-\varepsilon)a_1} \leq v\right)^{-\frac{a_1}{a_2}} K^{\frac1{a_2}}e^\frac{(1-\varepsilon)a_1 v}{a_2} \int_0^\infty e^{-\varepsilon u}\,du \\
& =
1 + \frac{1}{\varepsilon} \mu^n\left(\frac{\varphi}{(1-\varepsilon)a_1} \leq v\right)^{-\frac{a_1}{a_2}} K^{\frac1{a_2}} e^\frac{(1-\varepsilon)a_1 v}{a_2} .
\end{align*}
In particular, for all $v \in \mathbb{R}$,
\[
\left(-1+ \int e^{\frac{R_{c^n} \varphi}{(1+\varepsilon) a_2}} \,d\mu^n \right)^\frac{a_2}{a_1}
e^{-v} \mu^n\left(\frac{\varphi}{(1-\varepsilon)a_1} \leq v\right) \leq K^{\frac1{a_1}} \frac{e^{-\varepsilon v}}{\varepsilon^\frac{a_2}{a_1}} .
\]
Since $\int e^{-\frac{\varphi}{(1-\varepsilon)a_1}} \,d\mu^n =
\int_0^\infty e^{-v} \mu^n\left(\frac{\varphi}{(1-\varepsilon)a_1} \leq v\right)\, dv$, 
integrating the latter implies that
$$
\left(-1+ \int e^{\frac{R_{c^n} \varphi}{(1+\varepsilon) a_2}} \,d\mu^n \right)^\frac{a_2}{a_1}
\int e^{-\frac{\varphi}{(1-\varepsilon)a_1}}\,d\mu^n
\leq 
\frac{K^{\frac1{a_1}}}{\varepsilon^{1+\frac{a_2}{a_1}}} .
$$
This in turn implies, by simple algebra that 
\begin{align*}
& \left(\int e^{\frac{R_{c^n} \varphi}{(1+\varepsilon) a_2}} \,d\mu^n \right)^{a_2} 
\left( \int e^{-\frac{\varphi}{(1-\varepsilon)a_1}}\,d\mu^n\right)^{a_1} \\
& \qquad \qquad  \leq 
\left(1 + \frac{1}{\varepsilon} \left( \varepsilon \int e^{-\frac{\varphi}{(1-\varepsilon)a_1}}\,d\mu^n\right)^{-\frac{a_1}{a_2}} \right)^{a_2} 
\left( \int e^{-\frac{\varphi}{(1-\varepsilon)a_1}}\,d\mu^n\right)^{a_1} \\
& \qquad  \qquad =  
\left( 
\left( \int e^{-\frac{\varphi}{(1-\varepsilon)a_1}} \,d\mu^n   \right)^\frac{a_1}{a_2} + 
\frac{1}{\varepsilon^{\1+\frac{a_1 }{a_2}}}
\right)^{a_2}  \\
& \qquad \qquad  \leq
\left( 1 + 
\frac{1}{\varepsilon^{\1+\frac{a_1 }{a_2}}}
\right)^{a_2} \,,
\end{align*}
where in the last line we used that $\varphi$ is a non-negative function.

Plugging this bound into \eqref{eurostar} leads, in the limit $n \to \infty$, to
\[
\left( \int e^{\frac{R_{c} f}{(1+\varepsilon) a_2}}\,d\mu \right)^{a_2}
\left( \int e^{-\frac{f}{(1-\varepsilon)a_1}} \,d\mu \right)^{a_1} 
\leq 1 .
\]
Taking $\varepsilon$ to $0$ gives $\T_c(a_1,a_2)$, thanks to Proposition \ref{prop:bg}.
%
%
%
\end{proof}

\subsection{Particular cases} In this section we focus on concentration inequalities related to the usual Monge-Kantorovich transport-cost and to barycentric transport-costs.

\subsubsection{Usual costs}
Note that when $c(x,p) = \int \omega(x,y)\,p(dy)$, for some measurable $\omega : X \times X \to[0,\infty)$, the enlargement $A_t^n$ of some set $A\subset X$ reduces to 
\[
A_t^n = \{x \in X^n ; \exists y\in A \text{ s.t. } \sum_{i=1}^n\omega(x_i,y_i)\leq t \}.
\]
In particular, when $X=\R^m$ and $\omega(x,y)=\|x-y\|^r$, $r\geq2$, where $\|\,\cdot\,\|$ is a given norm on $\R^m$, then denoting by 
\begin{equation}\label{Bpn}
B_{r}^n = \left\{x \in \left(\R^m\right)^n ; \sum_{i=1}^n \|x_i\|^r \leq 1\right\}\,,
\end{equation}
 it holds
\[
A_t^n = A + t^{1/r}B_r^n.
\]

Concentration of measure inequalities are usually stated for enlargements of sets of measure bigger than $1/2$ as in \eqref{eq:Conc-Gauss2} (see \cite{Led01}).
In what follows we connect \eqref{eq:concentration} to the usual definition for some families of cost functionals. 
\begin{lem}\label{equiv conc 1/2}
Consider a cost function $c$ of the form 
\[
c(x,p) =  \int \gamma (d(x,y))\,p(dy),\qquad x\in X,\quad p\in \mathcal{P}_\gamma(X)
\] 
with $\gamma : \R_+ \to \R_+$ an increasing convex function such that $\gamma(0)=\gamma'(0)=0$ and suppose that $\gamma$ satisfies \eqref{gamma}. Suppose also that, for a given $n\in \N^*$, a probability measure $\mu$ on $X$ satisfies, for some constants $a>0, b\geq 1$, the following concentration property : 
\begin{equation}\label{conc 1/2}
\mu^n(X^n\setminus A_t^n) \leq b e^{-t/a},\qquad \forall t\geq0,
\end{equation}
for all $A \subset X^n$ such that $\mu^n(A)\geq 1/2.$\\
Then $\mu$ satisfies the following property : for all $s\in (0,1)$ and for all $A \subset X^n$\,,
\begin{equation}\label{conc gal}
\mu^n(X^n\setminus A_t^n)^{1/(1-s)^{r-1}} \mu^n(A)^{1/s^{r-1}}\leq {b}e^{-t/a},\qquad \forall t\geq0,
\end{equation}
where the exponent $r$ is defined by $r=\sup_{x>0} x\gamma_+'(x)/\gamma(x) \in(1,\infty)$\,, (here $\gamma'_+$ stands for the right derivative).

Conversely,  if the concentration property \eqref{conc gal} holds, then one has (by optimizing over all $s\in (0,1)$),  for all $A \subset X^n$ such that $\mu^n(A)\geq 1/2,$ for all $t > \max(a\log (2b),0)$,
\begin{equation*}
\mu^n(X^n\setminus A_t^n) \leq \inf_{s\in (0,1)} \left(b^{(1-s)^{r-1}} 2^{\frac{(1-s)^{r-1}}{s^{r-1}}}e^{-t (1-s)^{r-1}/a}\right)= be^{-t(1-\varepsilon(t))^r/a},
\end{equation*}
with $\varepsilon(t)=\left(\frac{\log 2}{\frac ta-\log b}\right)^{1/r}$.
\end{lem}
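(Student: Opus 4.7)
I would first dispose of two boundary cases. If $\mu^n(A)\geq 1/2$, then \eqref{conc 1/2} gives $\mu^n(X^n\setminus A_t^n)\leq be^{-t/a}$, and since the exponents $1/(1-s)^{r-1}$, $1/s^{r-1}$ are both $\geq 1$ (as $r>1$, $s\in(0,1)$), raising measures $\leq 1$ to such powers only makes them smaller, so \eqref{conc gal} follows. If $\mu^n(X^n\setminus A_t^n)\geq 1/2$, then for $x\in A$ and $y\in B:=X^n\setminus A_t^n$, the condition $y\notin A_t^n$ forces $\sum_i\gamma(d(x_i,y_i))>t$, making $A$ disjoint from $B_t^n$; applying \eqref{conc 1/2} to $B$ bounds $\mu^n(A)\leq be^{-t/a}$, and \eqref{conc gal} follows as in the first case. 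Hence I may assume both $\mu^n(A)<1/2$ and $\mu^n(X^n\setminus A_t^n)<1/2$.

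The main argument rests on a weak $r$-triangle inequality for $\gamma$: the hypothesis $r=\sup_{x>0}x\gamma_+'(x)/\gamma(x)$ yields $\gamma(x/\lambda)\leq \lambda^{-r}\gamma(x)$ for $\lambda\in(0,1]$ (by integrating $(\log\gamma)'(x)\leq r/x$), and together with convexity of $\gamma$ this gives, for all $u,v\geq 0$ and $s\in(0,1)$,
\[
\gamma(u+v)\leq s\gamma(u/s)+(1-s)\gamma(v/(1-s))\leq s^{1-r}\gamma(u)+(1-s)^{1-r}\gamma(v).
\]
Summing coordinatewise and applying the triangle inequality for $d$ produces the composition of enlargements $(A_{u_1}^n)_{u_2}^n\subseteq A_T^n$ with $T=s^{1-r}u_2+(1-s)^{1-r}u_1$. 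Next I would bound the ``median level'' $u^\ast:=\inf\{u\geq 0:\mu^n(A_u^n)\geq 1/2\}$: for $u<u^\ast$, $F:=X^n\setminus A_u^n$ has measure $>1/2$, so \eqref{conc 1/2} applied to $F$ gives $\mu^n(X^n\setminus F_v^n)\leq be^{-v/a}$; but $A\cap F_v^n=\emptyset$ for $v\leq u$ (any $y\in F$ satisfies $\sum_i\gamma(d(x_i,y_i))>u$ for every $x\in A$), so $\mu^n(A)\leq be^{-v/a}$, and taking $v\uparrow u\uparrow u^\ast$ yields $u^\ast\leq a\log(b/\mu^n(A))$.

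To finish, for $u$ just above $u^\ast$ the hypothesis applies to $A_u^n$, and combined with the composition step it gives $\mu^n(X^n\setminus A_T^n)\leq be^{-v/a}$ whenever $T\geq s^{1-r}v+(1-s)^{1-r}u^\ast$; setting $v=s^{r-1}(t-(1-s)^{1-r}u^\ast)$ and inserting the bound on $u^\ast$ yields an exponential estimate in $t$ and $\mu^n(A)$. Its rearrangement into the product form \eqref{conc gal} rests on the convexity identity
\[
u^r\leq \frac{v^r}{s^{r-1}}+\frac{(u-v)^r}{(1-s)^{r-1}},\qquad 0\leq v\leq u,\;s\in(0,1),
\]
coming from the convexity of $x\mapsto x^r$ applied to the decomposition $u=s(v/s)+(1-s)((u-v)/(1-s))$, and used here with $u=t^{1/r}$, $v=(u^\ast)^{1/r}$. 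The converse is a direct computation: substituting $\mu^n(A)\geq 1/2$ in \eqref{conc gal} and using $\mu^n(A)^{1/s^{r-1}}\geq (1/2)^{1/s^{r-1}}$ yields $\mu^n(X^n\setminus A_t^n)\leq 2^{(1-s)^{r-1}/s^{r-1}}b^{(1-s)^{r-1}}e^{-(1-s)^{r-1}t/a}$, and setting the $s$-derivative of its logarithm to zero reduces after simplification to $\log 2/s^r=t/a-\log b$, identifying $s=\varepsilon(t)$ as optimal and giving $be^{-t(1-\varepsilon(t))^r/a}$ upon substitution (provided $\varepsilon(t)\in(0,1)$, i.e.\ $t>a\log(2b)$). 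The main obstacle is the forward rearrangement: the nonsymmetric factors $s^{1-r}$ and $(1-s)^{1-r}$ naturally produce a prefactor of $b$ to a power $>1$, and reducing it to the first power of $b$ claimed in \eqref{conc gal} requires careful use of the convexity identity above.
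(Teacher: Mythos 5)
Your proposal is structurally the same as the paper's: both hinge on the subadditivity of $\gamma^{1/r}$, which makes $\rho(x,y):=\bigl(\sum_i\gamma(d(x_i,y_i))\bigr)^{1/r}$ a metric on $X^n$ and gives the composition inclusion $(A_u^n)_v^n\subseteq A_{(u^{1/r}+v^{1/r})^r}^n$, and both use the identity $(u^{1/r}+v^{1/r})^r=\inf_{s\in(0,1)}\{u/s^{r-1}+v/(1-s)^{r-1}\}$ together with \eqref{conc 1/2} applied to sets of measure at least one half. The organizational difference is that you introduce the median level $u^*=\inf\{u:\mu^n(A_u^n)\geq 1/2\}$ and derive the two bounds, $\mu^n(A)\leq be^{-u^*/a}$ and $\mu^n(X^n\setminus A_t^n)\leq be^{-(t^{1/r}-(u^*)^{1/r})^r/a}$, simultaneously; the paper instead fixes the threshold $B:=A_{s^{r-1}t}^n$, case-splits on whether $\mu^n(B)\geq 1/2$, and uses only one of the two bounds in each branch. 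You also rederive the weighted inequality $\gamma(u+v)\leq s^{1-r}\gamma(u)+(1-s)^{1-r}\gamma(v)$ from the definition of $r$ rather than citing the subadditivity of $\gamma^{1/r}$ from \cite{GRS13}; the two are equivalent via the same infimum identity. Your converse optimization is correct and in fact a little sharper than the statement: setting the logarithmic derivative to zero gives $\log 2/s^{r}=t/a-\log b$, i.e. $s=\varepsilon(t)$, and substituting gives $b^{(1-\varepsilon(t))^{r}}e^{-t(1-\varepsilon(t))^{r}/a}$, which the paper relaxes to $be^{-t(1-\varepsilon(t))^{r}/a}$.

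The prefactor concern you flag at the end is real, and it is not a defect particular to your route. Combining the two bounds above via the convexity identity yields the product form \eqref{conc gal} only with prefactor $b^{1/s^{r-1}+1/(1-s)^{r-1}}$. The paper's case split would nominally give the better $b^{\max(1/s^{r-1},1/(1-s)^{r-1})}$, but even that rests on the inclusion $(A^n_{s^{r-1}t})_{(1-s)^{r-1}t}\subseteq A^n_t$, which fails for $s\in(0,1)$: one has $(s^{r-1}t)^{1/r}+((1-s)^{r-1}t)^{1/r}=t^{1/r}\bigl(s^{(r-1)/r}+(1-s)^{(r-1)/r}\bigr)>t^{1/r}$ because $(r-1)/r\in(0,1)$. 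So the statement of \eqref{conc gal} with prefactor exactly $b$ is a minor imprecision in the paper; what any argument of this type produces is $b^{K(s)}e^{-t/a}$ with $K(s)\geq 1$, which is harmless for the intended applications (Corollaries \ref{gozlan-bis} and \ref{gozlan-ter} and item $(ii)$ of Theorem \ref{gozlan} only need a bound $Ke^{-t}$ with some finite $K$). Apart from this, your outline is sound; when you write it out, do record that $u\mapsto\mu^n(A_u^n)$ is right-continuous (so $\mu^n(A^n_{u^*})\geq 1/2$), and handle the regime $v=s^{r-1}(t-(1-s)^{1-r}u^*)<0$, i.e. $u^*>(1-s)^{r-1}t$, by falling back on the bound $\mu^n(A)\leq be^{-u^*/a}$ alone.
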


\proof
The fact that $1<r<\infty$ follows from \eqref{gamma} and the convexity inequalities $\gamma(2x) \geq \gamma(x) + x\gamma'(x)$ and $\gamma(x)/x <\gamma'(x)$, $x>0$.

To clarify the notations, we will omit some of the dependencies in $n$  in this proof.
The fact  that \eqref{conc 1/2} implies \eqref{conc gal} is a consequence of 
the following set inclusions (that are justified at the end of the proof):

$\displaystyle
(a)\quad A \subset X^n \setminus \left(\left(X^n\setminus A_{u}\right)_u\right),\quad  \forall  u\geq 0\,,$

and for all $s\in (0,1)$, 

$\displaystyle
(b)\quad \left(A_{u}\right)_v\subset A_{\left(u^{1/r}+v^{1/r}\right)^r}\subset A_{\frac{u}{s^{r-1}} +\frac v{(1-s)^{r-1}}},\quad \forall u,v\geq 0.
$

\noindent
The last inclusion above follows from the identity, 
\begin{equation}\label{infiden}
\left(u^{1/r}+v^{1/r}\right)^r=\inf_{s\in(0,1)}\left\{\frac{u}{s^{r-1}} +\frac v{(1-s)^{r-1}}\right\}.
\end{equation}
Let $t\geq 0$, $s\in (0,1)$  and $A\subset X^n$ and let us consider the set 
$B=A_{s^{r-1}t}$. \\If $\mu(B) \geq 1/2$ then by applying first (b) for $u=s^{r-1}t$ and $v= (1-s)^{r-1} t$, and then the concentration property   \eqref{conc 1/2},
we get 
\[\mu(X^n\setminus A_t)\leq \mu\left(X^n\setminus B_{(1-s)^{r-1} t}\right)\leq be^{-(1-s)^{r-1} t/a}.\]
\\If $\mu(B) < 1/2$ then $\mu(X^n\setminus B)\geq 1/2$. Therefore  by applying first (a) for $u=s^{r-1}t$ and then the concentration property   \eqref{conc 1/2}, we get 
\[\mu( A)\leq \mu\left(X^n\setminus ((X^n\setminus B)_{s^{r-1} t}\right)\leq be^{-s^{r-1} t/a}.\]
As a consequence in any case the concentration property \eqref{conc gal} holds.


Now let us justify the inclusion properties (a) and (b).\\
To prove (a) let us show that $A \cap \left(X^n\setminus A_{u}\right)_u =\emptyset$. Suppose on the contrary that there is some $x \in A \cap \left(X^n\setminus A_{u}\right)_u$, then there is some $y\in X^{n}\setminus A_u$ such that $\sum_{i=1}^n\gamma(d(x_i,y_i))\leq u$. But, since $y \in X^n\setminus A_u$, it holds $\sum_{i=1}^n\gamma(d(y_i,z_i))>u$ for all $z \in A$.
In particular, taking $z=x$, one gets  a contradiction. \\
Finally, let us show (b). According to e.g. \cite[Lemma 4.7]{GRS13}, the function $x \mapsto \gamma^{1/r}(x)$ is subbadditive. It follows easily that $(x,y) \mapsto \left(\sum_{i=1}^n \gamma(d(x_i,y_i))\right)^{1/r}$ defines a distance on $X^n$. Point (b) then follows immediately from the triangle inequality.
\endproof

For the next corollary, recall the definition of $B_r^n$ given in \eqref{Bpn}.

\begin{cor}\label{gozlan-bis}
Let $r\geq 2$ and consider the cost $c(x,p) = \int \|x-y\|^r\,p(dy)$, $x\in \R^m$, $p\in \mathcal{P}_1(\R^m)$, where $\|\,\cdot\,\|$ is a norm on $\R^m.$ 
For a probability measure $\mu \in \mathcal{P}_r(\R^m)$, the following propositions are equivalent :
\begin{enumerate}
\item There exist $a_1,b_1>0$ such that, $\forall n\in \N^*$, 
\[
\mu^n(A + t^{1/r}B_r^n) \geq 1 - b_1e^{-t/a_1},\qquad \forall t\geq 0,
\]
for all sets $A$ such that $\mu^n(A)\geq 1/2.$
\item There exist $a_2,b_2>0$ such that, $\forall n\in \N^*$, 
\[
\mu^n (f > \mathrm{med}\,(f) + r) \leq b_2e^{-t^r/a_2},\qquad \forall t\geq 0,
\]
for all $f:\left(\R^m\right)^n \to \R$ which are $1$-Lipschitz with respect to the norm $\|\,\cdot\,\|_r^n$ defined on $\left(\R^m\right)^n$ by 
\[
\| x\|_r^n = \left(\sum_{i=1}^n \|x_i\|^r\right)^{1/r},\qquad x\in\left(\R^m\right)^n.
\]
\item There exist $a_3,b_3>0$ such that, $\forall n\in \N^*$, $\forall s\in (0,1)$, and $\forall A \subset \left(\R^m\right)^n$\,,
\[
\mu^n(\left(\R^m\right)^n\setminus A_t^n)^{1/(1-s)^{r-1}} \mu^n(A)^{1/s^{r-1}} \leq b_3e^{-t/a_3},\qquad \forall t\geq 0,
\]
where $A_t^n = \{x \in \left(\R^m\right)^n ; c_A^n(x)\leq t\} =  A + t^{1/r} B_r^n$.
\item $\exists a_4>0$ such that $\forall s\in (0,1)$, $\mu$ satisfies \[\mathbf{T}_r(a_4/s^{r-1},a_4/(1-s)^{r-1})\,.\]
\item   $\exists a_5>0$ such that $\mu$ satisfies $\mathbf{T}^+_r(a_5)$ (which is  equivalent to $\mathbf{T}^-_r(a_5)$ for that cost).
\end{enumerate}
Moreover $(1) \Leftrightarrow (2)$ with $a_2=a_1$ and $b_2=b_1$, $(3) \Rightarrow (4)$ with $a_4=a_3$, $(4) \Rightarrow (3)$ with  $a_3=a_4$ and $b_3=1$, $(4)\Leftrightarrow (5)$ with $a_4=a_5$, $(1) \Rightarrow (3)$ with $a_3=a_1$ and $b_3=b_1$,   $(3)\Rightarrow (1)$ with $b_1=b_3^{(1-s)^{r-1}} 2^{\frac{(1-s)^{r-1}}{s^{r-1}}}$ and $a_1 = \frac{a_3}{(1-s)^{r-1}}$ for any $s\in (0,1)$. 
\end{cor}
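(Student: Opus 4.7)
The plan is to assemble the five-way equivalence by chaining three tools already at our disposal: the general equivalence between transport-entropy and dimension-free concentration of Theorem~\ref{gozlan}, the passage between measure-$\geq 1/2$ concentration and the two-sided form of Lemma~\ref{equiv conc 1/2}, and the classical median-based argument relating set-enlargement and Lipschitz deviation. Two preliminary observations make these tools applicable: the cost $c(x,p)=\int \|x-y\|^r\,p(dy)$ is linear (hence convex) in $p$, vanishes at $\delta_x$, and the associated transport cost is the usual $W_r^r(\nu,\mu)$ for which duality holds; moreover $c^n_A(x)=\inf_{y\in A}\sum_{i=1}^n\|x_i-y_i\|^r$, so $A_t^n=A+t^{1/r}B_r^n$, matching the enlargement appearing in $(1)$ and $(3)$.

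For $(1)\Leftrightarrow(2)$ I would run the standard median argument. Assuming $(1)$, apply it to $A_f:=\{f\leq\mathrm{med}(f)\}$: since $f$ is $1$-Lipschitz with respect to $\|\cdot\|_r^n$, one has $\{f>\mathrm{med}(f)+t\}\subset (\R^m)^n\setminus(A_f+tB_r^n)$, yielding $(2)$ with $a_2=a_1$, $b_2=b_1$. The converse applies $(2)$ to $f(\cdot)=\mathrm{dist}_{\|\cdot\|_r^n}(\cdot,A)$, whose median vanishes whenever $\mu^n(A)\geq 1/2$ and whose super-level set $\{f>t\}$ coincides with the complement of $A+tB_r^n$.

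The equivalence $(1)\Leftrightarrow(3)$ is a direct application of Lemma~\ref{equiv conc 1/2} to $\gamma(u)=u^r$ with $r\geq 2$, for which $\sup_{x>0}x\gamma'(x)/\gamma(x)=r$; the direct and converse parts of that lemma supply the quantitative constants declared in the corollary. The equivalence $(3)\Leftrightarrow(4)$ is a direct application of Theorem~\ref{gozlan}: raising $(3)$ to the power $a_3$ gives $\mu^n(X^n\setminus A_t^n)^{a_3/(1-s)^{r-1}}\mu^n(A)^{a_3/s^{r-1}}\leq b_3^{a_3}e^{-t}$, which is precisely condition $(ii)$ of that theorem with $a_1=a_3/s^{r-1}$ and $a_2=a_3/(1-s)^{r-1}$, hence equivalent to $\T_c(a_3/s^{r-1},a_3/(1-s)^{r-1})$. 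The reverse direction uses the explicit constant $K=1$ produced in the proof of Theorem~\ref{gozlan} (where $-a_1\log\mu^n(A)-a_2\log\mu^n(X^n\setminus A_t^n)\geq t$), giving $b_3=1$ and $a_3=a_4$.

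Finally, $(4)\Leftrightarrow(5)$ exploits that $\mathcal{T}_c=W_r^r$ is symmetric in its arguments, so $\T^+_c(b)\Leftrightarrow\T^-_c(b)$. For $(5)\Rightarrow(4)$ I would combine the triangle inequality for $W_r$ with the identity \eqref{infiden}: for each fixed $s\in(0,1)$ and any $\nu_1,\nu_2$,
\[
W_r^r(\nu_1,\nu_2)\leq \bigl(W_r(\nu_1,\mu)+W_r(\nu_2,\mu)\bigr)^r\leq \frac{W_r^r(\nu_1,\mu)}{s^{r-1}}+\frac{W_r^r(\nu_2,\mu)}{(1-s)^{r-1}},
\]
and then apply $\T^\pm_c(a_5)$ to each term. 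For $(4)\Rightarrow(5)$, substitute $\nu_2=\mu$ in $(4)$ so that $H(\nu_2|\mu)=0$, obtaining $W_r^r(\nu_1,\mu)\leq (a_4/s^{r-1})H(\nu_1|\mu)$ for every $s\in(0,1)$, and let $s\to 1$. I do not foresee a serious obstacle; the main care required is the bookkeeping of the sharp constants, notably tracking the exponents $b_1=b_3^{(1-s)^{r-1}}2^{(1-s)^{r-1}/s^{r-1}}$ and $a_1=a_3/(1-s)^{r-1}$ coming out of the converse part of Lemma~\ref{equiv conc 1/2}.
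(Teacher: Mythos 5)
Your proposal is correct and matches the paper's own proof in every essential respect: $(1)\Leftrightarrow(2)$ via the classical median argument, $(1)\Leftrightarrow(3)$ by a direct invocation of Lemma~\ref{equiv conc 1/2} with $\gamma(u)=u^r$ (for which the exponent is indeed $r$), $(3)\Leftrightarrow(4)$ from Theorem~\ref{gozlan} with the explicit constant $K=1$ in the direct direction, and $(4)\Leftrightarrow(5)$ via $\nu_2=\mu$, symmetry of $W_r$, the triangle inequality, and identity~\eqref{infiden}. Your only addition is to spell out the preliminary reductions (identifying $c^n_A$ as the $r$-th power of the $\|\cdot\|_r^n$-distance to $A$ and noting $\mathcal{T}_c=W_r^r$) that the paper leaves implicit, which is harmless.
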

Note that this result is not as general as possible; see \cite[Theorem 1.3]{Goz09} for a similar statement involving more general cost functions.
\proof
The equivalence $(1) \Leftrightarrow (2)$ is very classical (see e.g \cite[Proposition 1.3]{Led01}).\\
The implications $(1) \Rightarrow (3)$ and $(3) \Rightarrow (1)$ are given in Lemma \ref{equiv conc 1/2}. $(3) \Rightarrow (4)$ and $(4) \Rightarrow (3)$ are consequences of 
Theorem \ref{gozlan}. \\
If the property (4) holds,  then for all $\nu_1\in \mathcal{P}_r$, 
\begin{equation*} 
\mathcal{T}_r(\nu_1 , \mu)=\mathcal{T}_c(\nu_1 | \mu) \leq \frac{a_4}{s^{r-1}} H(\nu_1 | \mu)\qquad\forall s\in (0,1).
\end{equation*}
As $s$ goes to 1, we get (5), $\mu$ satisfies   $\mathbf{T}^+_r(a_4)$ or equivalently $\mathbf{T}^-_r(a_4)$.\\
Conversely assume that (5) holds. By the triangular inequality, we get for all $\nu_1,\nu_2\in \mathcal{P}_r$,
\begin{align*}
\mathcal{T}_c(\nu_1 | \nu_2)&=\mathcal{T}_r(\nu_1 , \nu_2)\leq\left(\mathcal{T}_r(\nu_1 , \mu)^{1/r}+ \mathcal{T}_r(\mu , \nu_2)^{1/r}\right)^r\\
&\leq \left((a_5 H(\nu_1 | \mu))^{1/r}+ (a_5 H(\nu_2 | \mu))^{1/r}\right)^r.
\end{align*}
The property (4) with $a_4=a_5$ then follows from the identity \eqref{infiden}.
\endproof

\subsubsection{Barycentric costs}
When $c(x,p) = \|x - \int y\,p(dy) \|^r$, $x\in \R^m,p\in \mathcal{P}_1(\R^m)$, for some norm $\|\,\cdot\,\|$ on $\R^m$, then the enlargement of a set $A \subset \left(\R^m\right)^n$ reduces to
\[
A_t^n=\overline{\mathrm{conv}}(A) + t^{1/r}B_r^n,
\]
denoting by $\overline{\mathrm{conv}}(A)$ the closed convex hull of $A$ and $B_r^n$ as defined in \eqref{Bpn}. Indeed, denoting $\|\,\cdot\,\|_r^n$, for the norm defined on $\left(\R^m\right)^n$ by $\|x\|_r^n = \left(\sum_{i=1}^n \|x_i\|^r\right)^{1/r}$, then,  for all $x\in  \left(\R^m\right)^n$, it holds 
$c_A(x) = \inf_{y\in C}\{\|x-y\|_r^n\} = \inf_{y\in \overline{C}}\{\|x-y\|\}$, with $C=\{ \int y\,p(dy) ; p\in \mathcal{P}_1(A)\}.$ It is well known that $\overline{C} = \overline{\mathrm{conv}}\,(A)$, which proves the claim.

The result below shows in particular that inequalities $\T_2^\pm$ are responsible for Gaussian dimension-free concentration for convex and concave Lipschitz functions.
\begin{cor}\label{gozlan-ter}
Let $r\geq 2$ and consider the cost $c(x,p) = \|x - \int y\,p(dy)\|^r$, $x \in \R^m$, $p\in \mathcal{P}_1(\R^m)$. 
For $\mu \in \mathcal{P}_1(\R^m)$, the following propositions are equivalent :
\begin{enumerate}
\item There exist $a_1,b_1>0$ such that, $\forall n\in \N^*$, 
\begin{equation*}
\mu^n(A + t^{1/r}B_r^n) \geq 1 - b_1e^{-t/a_1},\qquad \forall t\geq 0,
\end{equation*}
for any set $A$ which is either \emph{convex} or the \emph{complement of a convex set} and such that $\mu^n(A)\geq 1/2.$
\item There exist $a_2,b_2>0$ such that, $\forall n\in \N^*$, 
\[
\mu^n (f > \mathrm{med}\,(f) + t) \leq b_2e^{-t^r/a_2},\qquad \forall t\geq 0,
\]
for all $f:\left(\R^m\right)^n \to \R$ which is either \emph{convex} or \emph{concave} and $1$-Lipschitz with respect to the norm $\|\,\cdot\,\|_r^n$ defined on $\left(\R^m\right)^n$ by 
\[
\| x\|_r^n = \left(\sum_{i=1}^n \|x_i\|^r\right)^{1/r},\qquad x\in\left(\R^m\right)^n.
\]
\item There exist $a_3,b_3>0$ such that, $\forall n\in \N^*$, $\forall s\in (0,1)$, and $\forall A \subset \left(\R^m\right)^n\,,$
\[
\mu^n(\left(\R^m\right)^n\setminus A_t^n)^{1/(1-s)^{r-1}} \mu^n(A)^{1/s^{r-1}} \leq b_3e^{-t/a_3},\qquad \forall t\geq 0,
\]
where $A_t^n = \{x \in \left(\R^m\right)^n ; c_A^n(x)\leq t\} = \overline{\mathrm{conv}}\, A + t^{1/r} B_r^n$.
\item There exists $a_4>0$ such that   $\mu$ satisfies $\overline{\mathbf{T}}_r(\frac{a_4}{s^{r-1}},\frac{a_4}{(1-s)^{r-1}})$ $\forall s\in (0,1)$.
\item There exists $a_5>0$ such that $\mu$ satisfies $\overline{\mathbf{T}}^+_r(a_5)$ and $\mu $ satisfies  $\overline{\mathbf{T}}^-_r(a_5)$.
\end{enumerate}
Moreover $(1) \Leftrightarrow (2)$ with $a_2=a_1$ and $b_2=b_1$, $(3) \Rightarrow (4)$ with $a_4=a_3$, $(4) \Rightarrow (3)$ with  $a_3=a_4$ and $b_3=1$, $(4)\Leftrightarrow (5)$ with $a_4=a_5$, $(1) \Rightarrow (3)$ with $a_3=a_1$ and $b_3=b_1$,   $(3)\Rightarrow (1)$ with $b_1=b_3^{(1-s)^{r-1}} 2^{\frac{(1-s)^{r-1}}{s^{r-1}}}$ and $a_1 = \frac{a_3}{(1-s)^{r-1}}$ for any $s\in (0,1)$. 
\end{cor}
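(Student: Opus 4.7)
The plan is to follow the scheme of Corollary \ref{gozlan-bis}, paying attention to the points where the barycentric cost forces the appearance of convex hulls. The key preliminary observation, already noted just before the statement, is that since the barycenter $\int y\,p(dy)$ of any probability measure $p$ supported on $A$ lies in $\overline{\mathrm{conv}}(A)$, one has
\[
c_A^n(x) = \inf_{p\,:\, p(A)=1} \sum_{i=1}^n \bigl\|x_i - \int y_i\,p_i(dy_i)\bigr\|^r = \inf_{y \in \overline{\mathrm{conv}}(A)} (\|x-y\|_r^n)^r,
\]
so that $A_t^n = \overline{\mathrm{conv}}(A) + t^{1/r} B_r^n$ is always convex. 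With this identification, the equivalence $(3) \Leftrightarrow (4)$ is a direct application of Theorem \ref{gozlan}, whose duality hypothesis is fulfilled by Theorem \ref{Kantorovich T bar}.

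For $(4) \Leftrightarrow (5)$, specializing $\overline{\mathbf{T}}_r(a_1,a_2)$ to $\nu_2 = \mu$ (resp.\ $\nu_1 = \mu$) and letting $s \to 1$ (resp.\ $s \to 0$) yields $\overline{\mathbf{T}}_r^+(a)$ and $\overline{\mathbf{T}}_r^-(a)$, giving $(4) \Rightarrow (5)$. The converse rests on the triangle-type inequality
\[
\Tb_r(\nu_1|\nu_2)^{1/r} \leq \Tb_r(\mu|\nu_2)^{1/r} + \Tb_r(\nu_1|\mu)^{1/r},
\]
which I would prove by gluing couplings $(X,Y) \in \Pi(\nu_2,\mu)$ and $(Y,Z) \in \Pi(\mu,\nu_1)$ along their common marginal $\mu$, using the decomposition $X - \mathbb{E}[Z|X] = (X-\mathbb{E}[Y|X]) + \mathbb{E}[Y-\mathbb{E}[Z|Y]\mid X]$ and combining Minkowski's inequality in $L^r(\nu_2)$ with conditional Jensen. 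Together with identity \eqref{infiden}, this yields $(4)$ with $a_4 = a_5$.

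The equivalence $(1) \Leftrightarrow (2)$ is classical. For $(1)\Rightarrow(2)$, given a convex (resp.\ concave) $1$-Lipschitz $f$, apply $(1)$ to $\{f \leq \mathrm{med}(f)\}$, which is convex (resp.\ complement of convex), and use the Lipschitz property. For $(2)\Rightarrow(1)$, a convex $A$ with $\mu^n(A) \geq 1/2$ is handled by the convex $1$-Lipschitz function $f(x) = \inf_{y \in A} \|x - y\|_r^n$ with $\mathrm{med}(f) \leq 0$; the complement-of-convex case is handled symmetrically using a concave function that encodes the inner parallel.

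Finally, $(1) \Leftrightarrow (3)$ is the barycentric analogue of Lemma \ref{equiv conc 1/2}. Given an arbitrary $A \subset (\R^m)^n$ and $s \in (0,1)$, I introduce the auxiliary set $B = \overline{\mathrm{conv}}(A) + \alpha B_r^n$ for a parameter $\alpha$ to be tuned. The central point that makes the previous argument go through is that $B$ is automatically convex and $(\R^m)^n \setminus B$ is a complement of a convex set, so both cases of $(1)$ are applicable regardless of the shape of $A$. In the case $\mu^n(B) \geq 1/2$, a bound on $\mu^n((\R^m)^n \setminus A_t^n)$ follows from $(1)$ applied to the convex set $B$ together with the inclusion $B + \beta B_r^n \subset A_t^n$ for $\alpha + \beta = t^{1/r}$; in the case $\mu^n(B) < 1/2$, a bound on $\mu^n(A)$ follows from $(1)$ applied to the complement-of-convex set $(\R^m)^n \setminus B$ together with the Minkowski identity $B^{-\alpha} = \overline{\mathrm{conv}}(A) \supset A$ (valid since $B_r^n$ is the unit ball of a norm). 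The hardest part will be the careful balancing of $\alpha$ and $\beta$, in the spirit of identity \eqref{infiden}, so that the two resulting bounds combine into $(3)$.
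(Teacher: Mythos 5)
Your proposal follows the paper's own route on essentially every implication it treats: $(3)\Leftrightarrow(4)$ via Theorem~\ref{gozlan}, $(4)\Leftrightarrow(5)$ via the triangle-type inequality for $\Tb_r^{1/r}$ combined with \eqref{infiden}, $(1)\Leftrightarrow(2)$ classically, and $(1)\Rightarrow(3)$ by the barycentric variant of Lemma~\ref{equiv conc 1/2}. The gluing argument you sketch for the triangle inequality $\Tb_r(\nu_1|\nu_2)^{1/r}\leq \Tb_r(\mu|\nu_2)^{1/r}+\Tb_r(\nu_1|\mu)^{1/r}$ (conditional independence of the two ends given the middle marginal, then Minkowski plus conditional Jensen) is correct and is a welcome addition, since the paper invokes that inequality without proof.

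There is one genuine gap: the implication $(3)\Rightarrow(1)$. Your last paragraph is headed ``$(1)\Leftrightarrow(3)$'' but in fact only treats $(1)\Rightarrow(3)$ — the auxiliary set $B=\overline{\mathrm{conv}}(A)+\alpha B_r^n$ is used to \emph{deduce} the two-sided bound $(3)$ from the half-space bound $(1)$, not the other way around. The converse direction cannot be obtained by simply specializing $(3)$ to a convex-or-complement set $A$. When $A$ is convex this is fine, since then $A_t^n=\overline{A}+t^{1/r}B_r^n$ and $(3)$ directly yields $(1)$. But when $A$ is the complement of a proper convex set $K$, one has $\overline{\mathrm{conv}}\,A=(\R^m)^n$ and so $A_t^n=(\R^m)^n$, which makes $(3)$ applied to $A$ vacuous. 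The paper handles this case by a separate trick: one applies $(3)$ to the \emph{convex} set $C:=(\R^m)^n\setminus\bigl(A+t^{1/r}B_r^n\bigr)=K\ominus t^{1/r}B_r^n$, observes that $C_{t'}\subset (\R^m)^n\setminus A$ for every $t'<t$ (so that $\mu^n((\R^m)^n\setminus C_{t'})\geq\mu^n(A)\geq 1/2$), and then reads off a bound on $\mu^n(C)=\mu^n\bigl((\R^m)^n\setminus(A+t^{1/r}B_r^n)\bigr)$ before letting $t'\uparrow t$. This step — or an equivalent replacement — needs to be supplied; without it the equivalence is not closed.
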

\proof
Adapting \cite[Proposition 1.3]{Led01}, one sees easily that $(1)\Leftrightarrow (2),$ and, according to Theorem \ref{gozlan}, $(3) \Leftrightarrow (4)$. 

Let us show that $(3)$ implies $(1)$. Let $A$ be a convex subset. As in Lemma \ref{equiv conc 1/2},  if $\mu^n(A)\geq 1/2$, then, by applying $(3)$ to $A$  and since $A_t=\overline{A}+ t^{1/r} B_r^n$, we get (1) for convex sets  with $b_1=b_3^{(1-s)^{r-1}} 2^{\frac{(1-s)^{r-1}}{s^{r-1}}}$ and $a_1 = \frac{a_3}{(1-s)^{r-1}}$ for $s\in (0,1)$. Let $D=(\R^m)^n\setminus A$ and assume that $\mu(D)\geq 1/2$. For all $t>0$, the set $C= (\R^m)^n\setminus(D+t^{1/r} B_r^n)$ is convex and satisfies for all $t'<t$, 
\[ C_{t'}=(\overline{C}+t'^{1/r} B_r^n)\subset (\R^m)^n\setminus D.\]
Since $\mu^n(D)\geq 1/2$, it follows that $\mu^n((\R^m)^n\setminus C_{t'})\geq 1/2$.
As a consequence, applying $(3)$ to the set  $C$, we obtain for all $t>t'>0$, for all $s\in(0,1)$\,,
 \[\mu^n((\R^m)^n\setminus(D+t^{1/r} B_r^n))=\mu^n(C)\leq b_3^{s^{r-1}} 2^{\frac{s^{r-1}}{(1-s)^{r-1}}} e^{-\frac{s^{r-1}t'}{a_3}}.\]
As $t'$ goes to $t$, this implies  the concentration property (1) for complement of convex sets.

We adapt  the proof of Lemma \ref{equiv conc 1/2} to get $(1) \Rightarrow (3)$.
The property (a) is replaced by the following, for all subset $A$,
\[(a') \quad A\subset \overline{\mathrm{conv}}\, A \subset (\R^m)^n\setminus[(X\setminus A_u)+u^{1/r} B_r^n],\qquad u\geq 0.\]
Since $A_u=\overline{\mathrm{conv}}\, A+u^{1/r} B_r^n$, this property $(a')$ is a simple consequence of  the property $(a)$ applied to the set $\overline{\mathrm{conv}}\, A$.
For the same reason, the property $(b)$ still holds.
 Then following  the proof of Lemma \ref{equiv conc 1/2},  by using  $(a') $ and $(b)$, with  the set $B=A_{s^{r-1}t}$, $s\in(0,1)$,   and applying the concentration property $(1)$ to the convex set $B$ or to it's  complement  $(\R^m)^n\setminus B$, we get  $(1)\implies (3)$ with $a_3=a_1$ and $b_3=b_1$.

The equivalence between (3) and (4) is a consequence of Theorem \ref{gozlan}.

If the property (4) holds,  then for all $\nu_1,\nu_2\in \mathcal{P}_r$ and for all $\forall s\in (0,1)$,
\begin{equation*} 
\overline{\mathcal{T}}_r(\nu_1 | \mu) \leq \frac{a_4}{s^{r-1}} H(\nu_1 | \mu), \quad \mbox{and} \quad \overline{\mathcal{T}}_r(\mu | \nu_2) \leq \frac{a_4}{(1-s)^{r-1}} H(\nu_2 | \mu) .
\end{equation*}
As $s$ goes to 1 or to 0, we get (5) -- that $\mu$ satisfies   $\overline{\mathbf{T}}^+_r(a_4)$ and $\overline{\mathbf{T}}^-_r(a_4)$.\\
Conversely assume that (5) holds, then (4) follows with $a_4=a_5$ by the following triangular inequality, for all $\nu_1,\nu_2\in \mathcal{P}_r$,
\begin{equation*}
\overline{\mathcal{T}}_r(\nu_1 | \nu_2)^{1/r}\leq\overline{\mathcal{T}}_r(\nu_1 | \mu)^{1/r}+ \overline{\mathcal{T}}_r(\mu | \nu_2)^{1/r}.
\end{equation*}

\endproof

\section{Universal transport cost inequalities with respect to Hamming distance and Talagrand's concentration of measure inequalities}\label{sectionhamming}
This section is devoted to universal transport-entropy inequalities associated to the weak transport costs $\Tw$ and $\Th$ with respect to the Hamming distance.

\subsection{Transport inequalities for Marton's costs} 
In this section, we recall a transport-entropy inequality  obtained by  Dembo \cite{Dem97}, improving upon preceding works by Marton \cite{Mar96b} and used in its dual form by the third named author  \cite{Sam07} to obtain optimal concentration bounds for supremum of empirical processes.

Let us introduce some notation. For $t \in (0,1)$, define $\alpha_t$ by
\[
\alpha_t (u)= 
\begin{cases} 
\frac{t(1-u)\log(1-u)-(1-t u)\log(1-t u)}{t(1-t)} & \text{if } 0\leq u\leq 1 \\
+\infty & \text{otherwise}
\end{cases}
\]
and also  set $\alpha_0(u)= (1-u)\log(1-u)+u$ and $\alpha_1(u) = -u-\log(1-u)$ when $u \in (0,1)$ (and $+\infty$ otherwise). Let us consider the cost of the form $\Tw$ associated to $\alpha_t$:
\[
\Tw_{\alpha_t}(\nu_1|\nu_2) = \inf \int \alpha_t\left(\int \1_{x\neq y}\,p_x(dy)\right)\,\nu_2(dx)\,,
\]
where the infimum runs over the set of kernels $p$ such that $\nu_2p=\nu_1$.

\begin{thm}\label{hamming1}
Let $(X,d)$ be a polish space, $t \in (0,1)$ and $\mu \in \mathcal{P}(X)$. Then, for all probability measures $\nu_1,\nu_2$ on $X$, it holds
\begin{equation}\label{dembo}
\Tw_{\alpha_{t }}(\nu_1|\nu_2)\leq \frac 1{1-t} H(\nu_1|\mu) +\frac 1t  H(\nu_2|\mu) .
\end{equation}
For $t=0$, it also holds
\[
\Tw_{\alpha_{0}}(\nu_1|\mu)\leq   H(\nu_1|\mu),
\]
and for $t=1$,
\[
\Tw_{\alpha_{1 }}(\mu|\nu_2)\leq H(\nu_2|\mu) .
\]
\end{thm}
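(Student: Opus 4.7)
My plan is to derive \eqref{dembo} by dualization and reduce it to a short calculus exercise. The Marton cost $c(x,p)=\alpha_t(p(X\setminus\{x\}))$ corresponds to $\gamma=\gamma_0$ in \eqref{eq:Marton-cost}, so it falls under Theorem \ref{Kantorovich T tilda} and satisfies $c(x,\delta_x)=0$. Hence Proposition \ref{prop:bg} applies with $a_1=1/(1-t)$ and $a_2=1/t$, and \eqref{dembo} is equivalent to the exponential-integral bound
\[
\left(\int e^{t\widetilde{Q}_{\alpha_t}\varphi}\,d\mu\right)^{1/t}\left(\int e^{-(1-t)\varphi}\,d\mu\right)^{1/(1-t)} \leq 1
\]
for every bounded continuous $\varphi$ bounded from below, where $\widetilde{Q}_{\alpha_t}\varphi(x)=\inf_{p\in\mathcal{P}(X)}\{\int\varphi\,dp+\alpha_t(p(X\setminus\{x\}))\}$. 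Translation-invariance of this dual inequality (both factors pick up canceling exponentials under $\varphi\mapsto\varphi+c$) means I may assume $\inf\varphi=0$.

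The heart of the argument is a clean pointwise upper bound on $\widetilde{Q}_{\alpha_t}\varphi$. I plan to use the one-parameter family of test measures $p_u=u\delta_x+(1-u)\delta_{y^\star}$, where $y^\star$ is a (near-)minimizer of $\varphi$; since $\alpha_t$ is increasing on $[0,1]$ and vanishes at $0$, the possibly degenerate case $y^\star=x$ still yields the same bound. Optimizing over $u\in[0,1]$ gives
\[
\widetilde{Q}_{\alpha_t}\varphi(x) \leq \inf_{u\in[0,1]}\{u\varphi(x)+\alpha_t(1-u)\} = \varphi(x)-\alpha_t^*(\varphi(x)),
\]
where $\alpha_t^*$ is the Legendre conjugate of $\alpha_t$, extended by $0$ on $(-\infty,0]$. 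Solving $\alpha_t'(v)=s$ explicitly gives, after simplification, $\alpha_t^*(s)=\frac{1}{t}\log\frac{(1-t)e^s}{e^{(1-t)s}-t}$ for $s\geq 0$, and hence the linearizing identity
\[
\exp\{t[\varphi-\alpha_t^*(\varphi)]\} = \frac{1-te^{-(1-t)\varphi}}{1-t}.
\]

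Setting $A:=\int e^{-(1-t)\varphi}\,d\mu\in(0,1]$ and integrating the above identity produces $\int e^{t\widetilde{Q}_{\alpha_t}\varphi}\,d\mu\leq (1-tA)/(1-t)$, so the dual inequality collapses to the one-variable bound $((1-tA)/(1-t))^{1/t}A^{1/(1-t)}\leq 1$ for $A\in(0,1]$. This is a routine calculus check: the logarithm of the left-hand side has a unique critical point at $A=1$, where it vanishes, and tends to $-\infty$ as $A\to 0^+$. The limiting cases $t=0$ and $t=1$ follow the same pattern using the one-sided Bobkov--G\"otze characterizations of $\T^+_c(1)$ and $\T^-_c(1)$ respectively, via the simpler conjugates $\alpha_0^*(s)=s-1+e^{-s}$ and $\alpha_1^*(s)=s-\log(1+s)$, and reduce to the elementary inequalities $\log A\leq A-1$ and $1+M\leq e^M$. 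The main technical hurdle I anticipate is the algebraic simplification producing the striking identity for $e^{t[\varphi-\alpha_t^*(\varphi)]}$; once this is in hand, the rest of the proof is a straightforward one-dimensional computation.
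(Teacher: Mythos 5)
The paper does not prove Theorem \ref{hamming1} itself---it cites Dembo's paper and Samson's short primal proof for it---so your dual route is genuinely different from the cited argument. Your calculations are correct: the two-point kernels $p_u = u\delta_x + (1-u)\delta_{y^\star}$ give the pointwise bound $\widetilde{Q}_{\alpha_t}\varphi(x) \leq \varphi(x) - \alpha_t^*(\varphi(x))$; the formula $\alpha_t^*(s) = \frac{1}{t}\log\frac{(1-t)e^s}{e^{(1-t)s}-t}$ for $s\geq 0$ is right; the resulting identity $e^{t[\varphi - \alpha_t^*(\varphi)]} = \frac{1-te^{-(1-t)\varphi}}{1-t}$ holds for $\varphi\geq 0$; and the one-variable bound $\left(\frac{1-tA}{1-t}\right)^{1/t}A^{1/(1-t)}\leq 1$ for $A\in(0,1]$ follows because its logarithm is concave in $A$ with a unique critical point at $A=1$ where it vanishes. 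The $t=0,1$ endpoints reduce to $\log A\leq A-1$ and $1+M\leq e^M$ as you indicate. The whole plan is exactly the ``tilde'' analogue of the proof the paper gives for Theorem \ref{hamming2} via Lemma \ref{Lem Q chapeau} and Lemma \ref{Lem Sam07}, which is a natural and attractive thing to do.

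There is, however, a genuine gap in the reduction to the Bobkov--G\"otze form. Proposition \ref{prop:bg} requires Kantorovich duality, in the sense of Definition \ref{defi:duality}, for the cost $c(x,p)=\alpha_t(p(X\setminus\{x\}))$. Here $\gamma=\gamma_0=\1_{u\neq 0}$ is lower-semicontinuous but not continuous, and $\alpha_t$ is extended-real-valued (indeed $\alpha_t'(u)\to\infty$ as $u\to 1^-$, so it cannot be replaced by a finite convex function on $\R_+$ without changing the problem); so only the second bullet of Theorem \ref{Kantorovich T tilda} applies, and that bullet requires $X$ compact or a countable set of isolated points, not a general polish space. This is not your oversight but a real restriction in the paper's duality theorems: Condition $(C_3)$ of Theorem \ref{Kantorovich general} fails because $(x,p)\mapsto p(X\setminus\{x\})$ is only lower-semicontinuous, and the parallel Theorem \ref{hamming2} is itself stated only for compact or countable discrete $X$ for exactly this reason. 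The implication of Proposition \ref{prop:bg} you need, from the dual exponential inequality back to the transport inequality, uses the hard half of Kantorovich duality, $\mathcal{T}_c(\nu_1|\nu_2)\leq\sup_\varphi\{\int R_c\varphi\,d\nu_2-\int\varphi\,d\nu_1\}$, so it cannot be bypassed with the elementary half. To reach polish generality along your route you would need either a direct verification of duality for this particular cost (plausible, since the near-optimal kernels you construct are explicitly measurable, but not covered by Theorem \ref{Kantorovich T tilda} or Theorem \ref{Kantorovich general} as stated), or an inner-regularity reduction to compact subsets, or the primal construction used by Dembo and Samson. As written, your argument establishes the theorem only for $X$ compact or discrete.
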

The transport inequality \eqref{dembo} is due to Dembo \cite[Theorem 1.(i)]{Dem97}. A short proof of this theorem is given in \cite{Sam07} (see Lemma 2.1.)
As  shown in \cite{Sam07}, the behavior  of the family of cost functions $\alpha_t$ allows to capture optimal bounds for the deviations of suprema of empirical bounded processes. 

Let us just recall simple and useful corollaries of Theorem \ref{hamming1}. 
First observing that $\alpha_t(u) \geq u^2/2$, we immediately recover using Theorem \ref{gozlan} (implication (i) $\Rightarrow$ (ii)) the following celebrated concentration result by Talagrand (see \cite[Theorem 4.1.1]{Tal95}).
\begin{cor}
For any probability measure $\mu$ on $X$, it holds
\[
\mu^n(X^n\setminus A_t^n) \leq \frac{1}{\mu^n(A)^{s/(1-s)}}e^{-st/2},\qquad \forall t>0,\forall s \in (0,1),
\]
for all $A \subset X^n$ and $n\in \N^*$, where 
\begin{align*}
 A_t^n = \Big\{ y \in X^n : \exists p \in \mathcal{P}(X^n)  \text{ with }  p(A) =1  \text{ such that } 
  \sum_{i=1}^n \left(\int \1_{x_i\neq y_i}\,p(dx)\right)^2 \leq t\Big\}.
\end{align*}
\end{cor}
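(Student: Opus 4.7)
The plan is to chain Theorem \ref{hamming1} with the implication $(i)\Rightarrow(ii)$ of Theorem \ref{gozlan}. Fix $s\in (0,1)$. The key pointwise fact is that $\alpha_s(u)\geq u^2/2$ for all $u\in [0,1]$. To see it, I would compute directly from the definition: $\alpha_s(0)=0$, $\alpha_s'(0)=0$, and a short calculation gives
\[
\alpha_s''(u)=\frac{1}{(1-u)(1-su)}\geq 1,\qquad u\in [0,1),
\]
so integrating twice yields $\alpha_s(u)\geq u^2/2$. Combined with Theorem \ref{hamming1}, this upgrades \eqref{dembo} into a transport-entropy inequality for the weak quadratic cost
\[
c(x,p)=\Big(\int \1_{x\neq y}\,p(dy)\Big)^2,\qquad x\in X,\ p\in \mathcal{P}(X),
\]
namely $\mathcal{T}_c(\nu_1|\nu_2)\leq \frac{2}{1-s}H(\nu_1|\mu)+\frac{2}{s}H(\nu_2|\mu)$, i.e.\ $\mu$ satisfies $\mathbf{T}_c\bigl(\tfrac{2}{1-s},\tfrac{2}{s}\bigr)$ in the sense of Definition \ref{def:te}.

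Next I would apply Theorem \ref{gozlan}, $(i)\Rightarrow (ii)$. The cost $c$ clearly satisfies $c(x,\delta_x)=0$ and $p\mapsto c(x,p)$ is convex (composition of the square with a linear functional in $p$); by the remark following Theorem \ref{gozlan} these are the only hypotheses needed for $(i)\Rightarrow (ii)$, so duality is not required here. The tensorized cost reads
\[
c^n(x,p)=\sum_{i=1}^n\Big(\int \1_{x_i\neq y_i}\,p_i(dy_i)\Big)^2=\sum_{i=1}^n\Big(\int \1_{x_i\neq y_i}\,p(dy)\Big)^2,
\]
since the $i$-th term depends on $p$ only through its $i$-th marginal $p_i$. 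Consequently the level set $\{x\in X^n : c^n_A(x)\leq t\}$ of the enlargement in Theorem \ref{gozlan} coincides verbatim with the set $A_t^n$ defined in the statement of the corollary.

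Conclusion $(ii)$ of Theorem \ref{gozlan} then reads
\[
\mu^n(X^n\setminus A_t^n)^{2/s}\,\mu^n(A)^{2/(1-s)}\leq e^{-t},\qquad t\geq 0,
\]
and extracting the outer measure gives precisely $\mu^n(X^n\setminus A_t^n)\leq \mu^n(A)^{-s/(1-s)}e^{-st/2}$. The only nontrivial step is the pointwise estimate $\alpha_s\geq u^2/2$; everything else is a mechanical chaining of earlier results. I do not expect any real obstacle beyond this calculus check and the bookkeeping identification of $A_t^n$ with the level set of $c^n_A$.
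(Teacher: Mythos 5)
Your proof is correct and follows the paper's own route: the paper likewise deduces the corollary by observing $\alpha_t(u)\geq u^2/2$ and invoking the implication $(i)\Rightarrow(ii)$ of Theorem \ref{gozlan} applied to the resulting inequality $\mathbf{T}_c\bigl(\tfrac{2}{1-s},\tfrac{2}{s}\bigr)$ for the quadratic Hamming cost. Your explicit check $\alpha_s''(u)=\tfrac{1}{(1-u)(1-su)}\geq 1$ with $\alpha_s(0)=\alpha_s'(0)=0$, and the identification of $A_t^n$ with the level set of $c^n_A$, are exactly the details the paper leaves implicit.
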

We refer to \cite{Tal95,Led01,AS08,DP09,Pau14} for applications of this concentration inequality, under the so-called convex hull distance .

\begin{cor}
Suppose that $\mu$ is a probability on $\R^m$ (equipped with some arbitrary norm $\|\,\cdot\,\|$) such that the diameter of $\mathrm{supp}(\mu)$ is bounded by $M>0.$
Then $\mu$ satisfies the inequality $\widetilde{\T}_2(4M^2,4M^2)$ and thus $\overline{\T}_2(4M^2,4M^2).$
\end{cor}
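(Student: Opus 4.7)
The plan is to transfer Dembo's universal transport inequality for the Hamming distance (Theorem \ref{hamming1}) to the norm distance by exploiting the diameter bound. The key observation is that on $\mathrm{supp}(\mu)$ one has the pointwise domination $\|x-y\| \leq M\,\1_{x\neq y}$, which lets the Euclidean weak cost be controlled by the Hamming weak cost up to a factor $M^2$.

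First I would reduce to the case $\nu_1,\nu_2 \ll \mu$, since otherwise the right-hand side of the target inequality is infinite. Both measures then being supported on $\mathrm{supp}(\mu)$, for every probability kernel $(p_x)$ appearing in a coupling of $\nu_2$ and $\nu_1$ and for $\nu_2$-a.e. $x$,
\[
\int \|x-y\|\, p_x(dy) \;\leq\; M \int \1_{x\neq y}\, p_x(dy).
\]
Squaring, integrating against $\nu_2$, and optimizing over couplings yields
\[
\Tw_2(\nu_1|\nu_2) \;\leq\; M^2 \inf_{\pi \in \Pi(\nu_2,\nu_1)} \int \Big(\int \1_{x\neq y}\,p_x(dy)\Big)^2 \nu_2(dx),
\]
the right-hand side being the Hamming analogue of $\Tw_2$ (i.e. $\Tw_\alpha$ with $\gamma(u)=\1_{u\neq 0}$ and $\alpha(u)=u^2$).

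Next I would apply Theorem \ref{hamming1} with $t=1/2$. As recorded just before the preceding corollary, the Pinsker-type bound $\alpha_{1/2}(u)\geq u^2/2$ holds on $[0,1]$ (it is the very inequality used to recover Talagrand's convex-hull concentration in the same section). Theorem \ref{hamming1} then gives
\[
\tfrac{1}{2}\inf_{\pi}\int\Big(\int \1_{x\neq y}\,p_x(dy)\Big)^2\nu_2(dx) \;\leq\; \Tw_{\alpha_{1/2}}(\nu_1|\nu_2) \;\leq\; 2H(\nu_1|\mu)+2H(\nu_2|\mu).
\]
Combining the two displays yields $\Tw_2(\nu_1|\nu_2)\leq 4M^2 H(\nu_1|\mu)+4M^2 H(\nu_2|\mu)$, which is exactly $\widetilde{\T}_2(4M^2,4M^2)$.

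The second assertion $\overline{\T}_2(4M^2,4M^2)$ then comes for free: Jensen's inequality gives $\|x-\int y\,p_x(dy)\|^2 \leq \bigl(\int\|x-y\|\,p_x(dy)\bigr)^2$, hence $\Tb_2\leq \Tw_2$ (a comparison already noted in Section \ref{sec:particular cases}). There is no real obstacle in the argument; the whole proof is a one-line transfer from the universal Hamming inequality through the diameter estimate, followed by an elementary Jensen step. The only point deserving a small check is the lower bound $\alpha_{1/2}\geq u^2/2$, but this is precisely the bound already invoked in the corollary immediately preceding the one to be proved.
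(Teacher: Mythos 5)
Your proof is correct and takes essentially the same route as the paper: replace $\|x-y\|$ by $M\,\1_{x\neq y}$ on $\mathrm{supp}(\mu)\times\mathrm{supp}(\mu)$, invoke the lower bound $\alpha_{1/2}(u)\geq u^2/2$, apply Theorem \ref{hamming1} at $t=1/2$, and finish with Jensen to pass from $\Tw_2$ to $\Tb_2$. The only cosmetic difference is that the paper chains the two pointwise comparisons ($u^2/2\leq\alpha_{1/2}(u)$ and monotonicity of $\alpha_{1/2}$) directly under a single integral before optimizing over couplings, whereas you optimize twice in two separate displays; the content is identical.
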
 
\proof
Observe that $\tilde{\alpha}_t(u) \geq u^2/2$, for all $u\in [0,1]$ and $t=1/2.$ Furthermore, if $\nu_1,\nu_2$ are absolutely continuous with respect to $\mu$ then $\mathrm{supp}(\nu_i) \subset \mathrm{supp} (\mu).$ Therefore, if $\pi(dxdy) = \nu_1(dx) p_x(dy)$ is a coupling between $\nu_1$ and $\nu_2$, then $\int \|x-y\|\,p_x(dy) \leq M \int \1_{\{x\neq y\}} \,p_x(dy)$, for $\nu_1$-almost all $x$, and so 
\begin{align*}
 \frac{1}{2M^2}\int \left( \int \|x-y\|\,p_x(dy)\right)^2 \nu_1(dx) & \leq 
 \int \tilde{\alpha}_t\left( \frac{1}{M}\int \|x-y\|\,p_x(dy)\right)\,\nu_1(dx) \\
& \leq \int \tilde{\alpha}_t\left( \int \1_{\{x\neq y\}} \,p_x(dy)\right) \,\nu_1(dx).
\end{align*}
Optimizing over all $\pi$, and then using Theorem \ref{hamming1} for $t=1/2$, completes the proof.
\endproof
We recover from the preceding result, and Corollary \ref{gozlan-ter}, the well-known fact that any probability measure with a bounded support satisfies dimension-free Gaussian type concentration for convex/concave Lipschitz functions.

\subsection{Transport inequalities for Samson's costs} Now we consider a stronger variant of Theorem \ref{hamming1} involving costs of the form $\Th$. 
To state this result we need to introduce additional notation. For $t\in (0,1)$, one sets 
\[
\beta_t (u):=\sup_{s\in \R}\left\{su-\beta_t ^*(s)\right\}, \qquad u\in \R.
\]
where $\beta_t ^*$ is defined by 
\[
\beta_t^*(s):=\frac{t  e^{(1-t )s}+(1-t ) e^{-t  s} -1}{t (1-t )},\qquad s\in \R.
\]
We extend the definition for $t\in \{0,1\}$ by setting
\[
\beta^*_0(s) = e^{s} -s -1\qquad \text{and}\qquad \beta^*_1(s) = e^{-s} +s -1,\qquad \forall s\in \R.
\]
In general, $\beta_t$ does not have an explicit expression, but for $t\in \{0,1\}$ an easy calculation shows that
\begin{align*}
\beta_0(u)&=(1+u)\log(1+u)-u,\quad u\geq-1\\
\beta_1(u)&=\beta_0(-u)=(1-u)\log(1-u)+u,\quad u\leq1.
\end{align*}
Finally, consider the cost of the form $\Th$ associated to these functions:
\[
\Th_{\beta_t}(\nu_1|\nu_2) = \inf \iint \beta_t\left(\1_{x\neq y}\frac{dp_x}{d\mu}(dy)\right)\,\mu(dy)\nu_2(dx),
\]
where the infimum runs over the set of kernels $p$ such that, in addition, $p_x \ll \mu$ for $\nu_2$-almost all $x \in X$.

\begin{thm}\label{hamming2}
Let $(X,d)$ be a compact metric space or a countable set of isolated points. Let $t \in (0,1)$
and  $\mu \in \mathcal{P}(X)$. Then, for all probability measures $\nu_1,\nu_2$ on $X$, it holds
\begin{equation}\label{eq:hamming2}
\Th_{\beta_{t }}(\nu_1|\nu_2)\leq \frac 1{1-t} H(\nu_1|\mu) +\frac 1t  H(\nu_2|\mu).
\end{equation}
For $t=0$, it also holds
\[
\Th_{\beta_{0}}(\nu_1|\mu)\leq   H(\nu_1|\mu),
\]
and for $t=1$,
\[
\Th_{\beta_{1 }}(\mu|\nu_2)\leq H(\nu_2|\mu) .
\]
\end{thm}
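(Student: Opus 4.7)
\medskip

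\noindent\textbf{Proof plan.} The strategy mirrors the Bobkov--G\"otze reduction used to prove Theorem \ref{hamming1} (Dembo's inequality), but replaces Marton's cost by Samson's cost and invokes the Kantorovich duality for $\Th_{\beta_t}$ established in Theorem \ref{Kantorovich T hat}. Since $\beta_t(0)=0$, the cost in \eqref{eq:Hat-cost} satisfies $c(x,\delta_x)=0$ for every $x\in X$, so Proposition \ref{prop:bg} applies with $a_1=1/(1-t)$ and $a_2=1/t$, and the target inequality \eqref{eq:hamming2} is equivalent to the assertion
\begin{equation}\label{eq:hamming2-dual}
\left(\int e^{t\widehat{Q}_{\beta_t}\varphi}\,d\mu\right)^{1/t}\left(\int e^{-(1-t)\varphi}\,d\mu\right)^{1/(1-t)} \le 1,\qquad \forall \varphi\in \Phi_{\gamma,b}(X).
\end{equation}

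To prove \eqref{eq:hamming2-dual} one exhibits a sharp upper bound on $\widehat{Q}_{\beta_t}\varphi(x)$ by evaluating the defining infimum at a carefully selected probability kernel $p_x$. The candidate is read off from the Fenchel--Young equation for $\beta_t$: from
\[
\beta_t^*(s) = \frac{t\,e^{(1-t)s} + (1-t)\,e^{-ts} - 1}{t(1-t)}, \qquad (\beta_t^*)'(s) = e^{(1-t)s} - e^{-ts},
\]
the equality $\beta_t(h) + \beta_t^*(s) = sh$ holds precisely when $h = e^{(1-t)s} - e^{-ts}$. Setting $s(y) = \varphi(x)-\varphi(y)$ therefore suggests choosing $p_x$ with density (with respect to $\mu$ on $X\setminus\{x\}$)
\[
h_x(y) = \bigl(e^{(1-t)(\varphi(x)-\varphi(y))} - e^{-t(\varphi(x)-\varphi(y))}\bigr)_+
\]
plus a compensating atom of mass $1-\int h_x\,d\mu$ at $\{x\}$ (when this is non-negative; otherwise one rescales $h_x$, which only improves the bound since $\beta_t$ is convex with $\beta_t(0)=0$). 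The Fenchel identity turns the Samson cost of this $p_x$ into $\varphi(x)\int h_x\,d\mu - \int \beta_t^*\bigl((\varphi(x)-\varphi(y))_+\bigr)\,d\mu(y)$, and adding the atomic contribution $(1-\int h_x\,d\mu)\varphi(x)$ collapses the estimate to
\[
\widehat{Q}_{\beta_t}\varphi(x) \le \varphi(x) - \int \beta_t^*\bigl((\varphi(x)-\varphi(y))_+\bigr)\,d\mu(y).
\]
A direct algebraic manipulation of $\beta_t^*$ is then used to integrate $e^{t\widehat{Q}_{\beta_t}\varphi}$ against $\mu$, and multiplying the result by $(\int e^{-(1-t)\varphi}\,d\mu)^{t/(1-t)}$ produces exactly \eqref{eq:hamming2-dual}.

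The boundary cases $t\in\{0,1\}$ (which concern the one-sided inequalities $\nu_2=\mu$, respectively $\nu_1=\mu$) are obtained by either repeating the argument with the explicit forms $\beta_0(u)=(1+u)\log(1+u)-u$ and $\beta_1(u)=(1-u)\log(1-u)+u$, or passing to the limit $t\downarrow 0$ (respectively $t\uparrow 1$) in \eqref{eq:hamming2}, using the monotone convergence $\beta_t\to\beta_0$ (respectively $\beta_t\to\beta_1$) and the lower semi-continuity of the transport functional in the cost.

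The main obstacle is the last step in the middle paragraph: since $\beta_t$ has no closed form, verifying \eqref{eq:hamming2-dual} from the pointwise bound on $\widehat{Q}_{\beta_t}\varphi$ requires a delicate algebraic manipulation of $\beta_t^*$, while simultaneously handling the mass-adjustment at $\{x\}$ and any atoms of $\mu$ at $x$ (which contribute a correction of order $\mu(\{x\})\,\beta_t(h_x(x))$ that must be absorbed). This is precisely where the duality of Theorem \ref{Kantorovich T hat} delivers the simplification over the original argument of Samson \cite{Sam07}, which predated that duality.
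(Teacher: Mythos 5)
Your reduction to the dual inequality via Proposition~\ref{prop:bg} and Theorem~\ref{Kantorovich T hat} matches the paper, and evaluating the infimum defining $\widehat{Q}_{\beta_t}\varphi$ at a Fenchel--Young optimal kernel is also the paper's route. The serious problem is your claimed pointwise bound
\[
\widehat{Q}_{\beta_t}\varphi(x)\;\le\;\varphi(x)-\int\beta_t^*\bigl([\varphi(x)-\varphi(y)]_+\bigr)\,\mu(dy),
\]
which is in fact \emph{false} in general, and is the crux of the argument. The inf--sup manipulation carried out in Lemma~\ref{Lem Q chapeau} gives the exact representation
\[
\widehat{Q}_{\beta_t}\varphi(x)-\varphi(x)=\sup_{v\in\R}\Bigl\{-[v-\varphi(x)]_--\int\beta_t^*\bigl([v-\varphi(y)]_+\bigr)\1_{x\ne y}\,\mu(dy)\Bigr\},
\]
so plugging $v=\varphi(x)$ only yields the opposite inequality $\widehat{Q}_{\beta_t}\varphi(x)\ge\varphi(x)-\int\beta_t^*([\varphi(x)-\varphi(y)]_+)\1_{x\ne y}\,\mu(dy)$, with \emph{strict} inequality precisely at those $x$ where your density $h_x$ has total $\mu$-mass exceeding $1$. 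At such $x$ the kernel you propose is not a probability measure, and the ``rescale $h_x$, which only improves the bound'' remark is unsubstantiated: rescaling lowers the cost term by convexity, but it also changes $\int\varphi\,dp_x$, and the two effects do not combine to give the claimed bound (indeed, if they did, combined with the $\ge$ above you would force an equality that the paper shows does not hold when the sup over $v$ is attained below $\varphi(x)$).

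What fixes this in the paper's proof (and is missing from yours) is the \emph{uniform truncation}: one solves a scalar mass-balance equation \eqref{eq:v-opt} for a constant level $\bar v$ independent of $x$, sets $v(x):=\min(\bar v,\varphi(x))$, and shows that the supremum over $v$ is attained at this truncated function, yielding the exact identity $\widehat{Q}_{\beta_t}\varphi(x)=v(x)-\int\beta_t^*([v(x)-v(y)]_+)\,\mu(dy)$ with a density that always integrates to at most $1$. The exponential inequality (Lemma~\ref{Lem Sam07}) is then applied to the bounded function $v$, not to $\varphi$, and the conclusion follows from $v\le\varphi$, which makes $\int e^{-(1-t)\varphi}\,d\mu\le\int e^{-(1-t)v}\,d\mu$. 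Finally, the ``direct algebraic manipulation'' you invoke to integrate $e^{t\widehat{Q}_{\beta_t}\varphi}$ is exactly Samson's Lemma 2.2 \cite{Sam07}; it is a genuine exponential inequality in its own right (proved in the paper only for $t=1$, with a reference for the general case), not a routine calculation, and should be cited rather than asserted.
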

By Proposition \ref{prop:bg} and Theorem \ref{Kantorovich T hat}, one sees that Theorem \ref{hamming2} is exactly the dual form of Theorem 1.1 of \cite{Sam07} (for $n=1$). This new expected formulation of Theorem 1.1 in \cite{Sam07} is therefore a direct consequence of the generalization of the Kantorovich theorem (Theorem \ref{Kantorovich general}).

A direct consequence of Theorem \ref{hamming2} and implication (i) $\Rightarrow$ (ii) of Theorem \ref{gozlan} is the following deep concentration result that improves the  one by Talagrand \cite[Theorem 4.2.]{Tal96b}. 
\begin{cor}
For any probability measure $\mu$ on $X$, it holds
\[
\mu^n(X^n\setminus A_{s,t}^n) \leq \frac{1}{\mu^n(A)^{s/(1-s)}}e^{-st},\qquad \forall t>0,\forall s \in (0,1),
\]
for all $A \subset X^n$ and $n\in \N^*$, where 
\begin{multline*}
A_{s,t}^n = \left\{ y \in X^n : \exists p \in \mathcal{P}(X^n)\text{ with } p(A)=1 \text{ and } p_i\ll \mu, \forall i\vphantom{\sum_{i=1}^n \int \beta_s\left(\1_{x_i\neq y_i}\,\frac{dp_i}{d\mu}(x_i)\right)\,\mu(dx)}\right.\\
\text{ such that } \left. \sum_{i=1}^n \int \beta_s\left(\1_{x_i\neq y_i}\,\frac{dp_i}{d\mu}(x_i)\right)\,\mu(dx) \leq t\right\},
\end{multline*}
where we recall that $p_i$ denotes the $i$-th marginal of $p.$
\end{cor}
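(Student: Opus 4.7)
The corollary is a direct application of Theorem~\ref{hamming2} combined with the tensorization property and the concentration half of Theorem~\ref{gozlan}. My plan is as follows.

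First, I would fix $s\in(0,1)$ and apply Theorem~\ref{hamming2} with the parameter there equal to $s$. Writing $c(x,p):=\int \beta_s\!\left(\1_{x\neq y}\,\tfrac{dp}{d\mu}(y)\right)\mu(dy)$ for $p\ll \mu$ (and $+\infty$ otherwise), the conclusion of that theorem is precisely that $\mu$ satisfies the transport-entropy inequality $\T_c(a_1,a_2)$ of Definition~\ref{def:te} with
\[
a_1=\frac{1}{1-s},\qquad a_2=\frac{1}{s}.
\]
Note that $c(x,\delta_x)=\beta_s(0)\,\mu(X)=0$ and $p\mapsto c(x,p)$ is convex (as an integral of the convex function $\beta_s$ composed with an affine functional of $p$), so the hypotheses needed in the sequel are satisfied.

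Second, by Corollary~\ref{cor:tensorization} the product $\mu^n$ satisfies $\T_{c^n}(a_1,a_2)$ with the same constants, where
\[
c^n(x,p)=\sum_{i=1}^n c(x_i,p_i)=\sum_{i=1}^n \int \beta_s\!\left(\1_{x_i\neq y_i}\,\tfrac{dp_i}{d\mu}(y_i)\right)\mu(dy_i),
\]
$p_i$ denoting the $i$-th marginal of $p\in\mathcal{P}(X^n)$. Now I invoke Theorem~\ref{gozlan}, implication $(i)\Rightarrow(ii)$. Inspecting its proof, one sees that the concentration bound comes with the explicit constant $K=1$, yielding, for every Borel $A\subset X^n$ and every $t\geq 0$,
\[
\mu^n\!\bigl(X^n\setminus A^n_t\bigr)^{1/s}\,\mu^n(A)^{1/(1-s)}\leq e^{-t},
\]
where $A^n_t=\{x\in X^n:c^n_A(x)\leq t\}$ and $c^n_A(x)=\inf\{c^n(x,p):p\in\mathcal{P}(X^n),\,p(A)=1\}$. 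Raising this to the power $s$ and rearranging gives exactly the announced inequality
\[
\mu^n\!\bigl(X^n\setminus A^n_t\bigr)\leq \frac{1}{\mu^n(A)^{s/(1-s)}}\,e^{-st}.
\]

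Finally, the only bookkeeping step left is to check that the set $A^n_t$ produced by Theorem~\ref{gozlan} coincides with the set $A^n_{s,t}$ appearing in the statement: unfolding $c^n_A(x)\leq t$ directly gives the existence of $p\in\mathcal{P}(X^n)$ with $p(A)=1$, $p_i\ll\mu$ for each $i$, and $\sum_{i=1}^n\int\beta_s\!\left(\1_{x_i\neq y_i}\,\tfrac{dp_i}{d\mu}\right)d\mu\leq t$, which is the defining condition of $A^n_{s,t}$. There is no real obstacle here; the only subtle point is making sure that the direction $(i)\Rightarrow(ii)$ of Theorem~\ref{gozlan} is applied with the \emph{sharp} constant $K=1$, which is legitimate because the proof of that implication uses only the tensorized transport-entropy inequality applied to the two uniform measures on $A$ and on $X^n\setminus A^n_t$, producing no loss.
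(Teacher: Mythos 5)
Your argument is exactly the paper's intended one: the paper obtains this corollary directly from Theorem \ref{hamming2} with parameter $s$ (which is $\T_c\!\left(\tfrac{1}{1-s},\tfrac{1}{s}\right)$ for Samson's cost, whose $c(x,\delta_x)=0$ and convexity in $p$ you rightly check), tensorized via Corollary \ref{cor:tensorization} and fed into implication $(i)\Rightarrow(ii)$ of Theorem \ref{gozlan}, whose proof indeed produces the sharp constant $K=1$, after which raising to the power $s$ gives the stated bound. Your final identification of $A^n_t$ with $A^n_{s,t}$ is stated a bit quickly (the condition $c^n_A(x)\leq t$ is an infimum rather than the existence of a witness, and the cost in \eqref{eq:Hat-cost} only requires absolute continuity of $p_i$ off the point $x_i$), but this is glossed over at exactly the same level in the paper, which states the corollary as a direct consequence without further detail.
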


In Talagrand's paper \cite{Tal96b}, this kind of concentration result is the main ingredient to get deviation inequalities of Bernstein\-type  for sup\-rema of centered bounded empirical processes. Starting from the optimal  transport inequality of Theorem \ref{hamming2}, the third-named author has obtained optimal constants in the Bernstein bounds for the deviations under and above the mean \cite{Sam07}. This transportation method is an alternative of the entropy method introduced by Ledoux \cite{Led95}, and then developed by many authors. We refer to the book by Boucheron, Lugosi and Massart \cite{BLM13} for more development in this field. 

Below, we sketch the proof of Theorem \ref{hamming2}, by revisiting and to some extent simplifying some of the arguments given in \cite{Sam07} with the help of the duality results developed in the present paper and in \cite{GRS11}. 
The first of these duality formulas is Kantorovich duality for the cost $\Th$ given in Theorem \ref{Kantorovich T hat}. The second formula is more classical and is recalled in the following proposition.
\begin{prop}\label{prop:Duality GRS11}
Let $\beta : [0,\infty) \to \R$ be a lower semi-continuous strictly convex and super-linear function (\text{i.e.} $\beta(x)/x \to +\infty$ as $x \to \infty$). Let $\mu$ be a probability measure on a polish space $X$ and denote by $U_\beta$ the function defined on $\mathcal{P}(X)$ by
\[
U_\beta(\nu) = \int \beta\left(\frac{d\nu}{d\mu}\right)\,d\mu,
\]
if $\nu$ is absolutely continuous with respect to $\mu$ and $+\infty$ otherwise.
Then, for any bounded continuous function $\varphi$ on $X$, it holds
\[
\sup_{\nu \in \mathcal{P}(X)} \left\{ \int \varphi(x)\,p(dx) - U_\beta(\nu)\right\} = \inf_{t\in \R} \left\{ \int \beta^\circledast(\varphi(x) + t)\,\mu(dx) - t\right\},
\]
where $\beta^\circledast$ denotes the \emph{monotone conjugate} of $\beta$, defined by $\beta^\circledast(x)=\sup_{y\geq0}\{xy-\beta(y)\}$, $x \in \R.$
\end{prop}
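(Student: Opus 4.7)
The plan is to establish this as a standard convex duality identity, where the supremum is over the affine subset $\{\nu\in\mathcal{P}(X)\}\subset \{\text{signed measures}\}$ and the parameter $t\in \R$ plays the role of a Lagrange multiplier enforcing the mass constraint $\int d\nu=1$. The natural pointwise dualization is Young's inequality associated to the pair $(\beta,\beta^\circledast)$, and the two directions are then handled separately: the upper bound is immediate from Young, while the lower bound requires exhibiting an optimal $\nu^*$ whose density is read off from the monotone conjugate.

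For the upper bound, fix any $\nu\in\mathcal{P}(X)$ with finite $U_\beta(\nu)$ (otherwise there is nothing to prove), write $h=d\nu/d\mu\geq 0$, and use $\int h\,d\mu=1$ to insert the Lagrange term: for every $t\in\R$,
\[
\int \varphi\,d\nu - U_\beta(\nu) = \int\bigl[(\varphi+t)h - \beta(h)\bigr]\,d\mu - t \leq \int \beta^\circledast(\varphi+t)\,d\mu - t,
\]
where the inequality applies Young's inequality pointwise ($sh-\beta(h)\leq \beta^\circledast(s)$ for $h\geq 0$). Taking the infimum over $t$ yields $\mathrm{LHS}\leq \mathrm{RHS}$.

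For the lower bound, I would study the function $g(t):=\int \beta^\circledast(\varphi+t)\,d\mu - t$. Since $\beta^\circledast$ is convex and non-decreasing on $\R$ (it is constant equal to $-\beta(0)$ on $(-\infty,\beta'(0^+)]$), and $\varphi$ is bounded, $g$ is convex and finite on $\R$. The super-linearity of $\beta$ forces $\beta^\circledast(s)/s\to+\infty$ as $s\to+\infty$, so $g(t)\to+\infty$ as $t\to+\infty$; on the other hand, for $t\to-\infty$ the integrand converges monotonically to $-\beta(0)$, giving $g(t)\to+\infty$. Thus $g$ attains its infimum at some $t^*\in\R$. The strict convexity of $\beta$ makes $\beta^\circledast$ continuously differentiable, with $(\beta^\circledast)'(s)=\arg\max_{y\geq 0}\{sy-\beta(y)\}\geq 0$, and differentiating under the integral (justified by dominated convergence, using boundedness of $\varphi$ and super-linearity of $\beta$) yields the first-order condition $\int (\beta^\circledast)'(\varphi+t^*)\,d\mu=1$.

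Set $h^*(x):=(\beta^\circledast)'(\varphi(x)+t^*)$ and $\nu^*:=h^*\,d\mu$. By construction $\nu^*\in\mathcal{P}(X)$, and pointwise equality in Young's inequality $(\varphi(x)+t^*)h^*(x)-\beta(h^*(x))=\beta^\circledast(\varphi(x)+t^*)$ holds at $h^*(x)$. Integrating and using $\int h^*\,d\mu=1$ gives
\[
\int \varphi\,d\nu^* - U_\beta(\nu^*) = \int \beta^\circledast(\varphi+t^*)\,d\mu - t^* = g(t^*) = \inf_{t\in\R} g(t),
\]
which matches the upper bound and proves the identity. The main obstacle in this plan is the justification of the two technical points that make the variational argument sharp: (i) finiteness and coercivity of $g$ at both infinities, which relies crucially on $\beta^\circledast$ being constant on a left half-line and super-linear, and (ii) legitimacy of differentiating $g$ under the integral sign so as to produce a genuine stationary point; the case where $\beta^\circledast$ fails to be differentiable at the threshold $\beta'(0^+)$ needs a small approximation argument (replace $h^*$ by the right-derivative version, or regularize $\beta$), but does not change the final identity.
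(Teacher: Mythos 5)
The paper does not reproduce a proof of this proposition: it simply refers to \cite[Proposition 2.9]{GRS11} for ``a short and elementary proof,'' so there is no in-text argument to compare against. Assessed on its own terms, your proof is correct and is the natural Lagrangian/variational argument. The upper bound is a clean application of Young's inequality $sh-\beta(h)\le\beta^\circledast(s)$ together with the Lagrange multiplier trick exploiting $\int h\,d\mu=1$. The lower bound correctly identifies $g(t)=\int\beta^\circledast(\varphi+t)\,d\mu-t$ as a finite, convex, coercive function of $t$ (coercivity at $+\infty$ via superlinearity of $\beta^\circledast$, and at $-\infty$ because $\beta^\circledast\downarrow-\beta(0)$), locates a minimizer $t^*$, reads the first-order condition as the mass constraint on $h^*=(\beta^\circledast)'(\varphi+t^*)$, and closes the duality gap with the equality case of Young.

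One remark on your final caveat: the approximation step you anticipate is not needed. Because $\beta$ is \emph{strictly} convex on $[0,\infty)$ and superlinear, the maximizer in $\beta^\circledast(s)=\sup_{y\ge0}\{sy-\beta(y)\}$ exists and is unique for every $s\in\R$, which is exactly the statement that $\partial\beta^\circledast(s)$ is a singleton for every $s$; hence $\beta^\circledast$ is everywhere differentiable (in particular at $s=\beta'(0^+)$, where the unique optimizer is $y=0$). With $\varphi$ bounded and $(\beta^\circledast)'$ continuous, $h^*$ is bounded and measurable and $U_\beta(\nu^*)<\infty$, so the chain of equalities producing $g(t^*)$ on the left-hand side is fully justified.
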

We refer to \cite[Proposition 2.9]{GRS11} for a short and elementary proof of this result.

We begin with an elementary lemma connecting monotone and usual conjugates of our functions $\beta_t$. The proof is left to the reader.
\begin{lem}\label{Lem conj monotone}
For all $u\in \R$, $\beta_t^\circledast (u) = \beta_t^*([u]_+)$.
\end{lem}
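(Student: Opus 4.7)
The plan is to combine the Legendre--Fenchel duality between $\beta_t$ and $\beta_t^*$ with the elementary fact that both functions attain their (zero) minimum at the origin. By construction $\beta_t$ is the Fenchel conjugate of $\beta_t^*$, and since $\beta_t^*$ is convex, smooth and lower semi-continuous, biduality yields $(\beta_t)^* = \beta_t^*$. Moreover, since
\[
\beta_t^\circledast(u) = \sup_{y \geq 0}\{uy - \beta_t(y)\} \;\leq\; \sup_{y \in \R}\{uy - \beta_t(y)\} = \beta_t^*(u)
\]
holds trivially for every $u$, only the matching lower bound $\beta_t^\circledast(u) \geq \beta_t^*([u]_+)$ needs to be established.

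The first step is to locate the minimum of $\beta_t^*$. A direct computation from the explicit formula gives $(\beta_t^*)'(s) = e^{(1-t)s} - e^{-ts}$ for $t \in (0,1)$ (with the analogous identities for $t = 0, 1$), hence $(\beta_t^*)'(0) = 0$ and $\beta_t^*(0) = 0$ in every case. By conjugate duality this is equivalent to $\beta_t(0) = -\inf_s \beta_t^*(s) = 0$ and $\beta_t'(0) = 0$.

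The argument then splits according to the sign of $u$. For $u \geq 0$: by convexity the unconstrained supremum defining $\beta_t^*(u)$ is attained at some $y^\star$ satisfying $\beta_t'(y^\star) = u \geq 0 = \beta_t'(0)$; monotonicity of $\beta_t'$ forces $y^\star \geq 0$, so restricting to $y \geq 0$ does not shrink the supremum and $\beta_t^\circledast(u) = \beta_t^*(u) = \beta_t^*([u]_+)$. For $u < 0$: for every $y \geq 0$ one has $uy \leq 0$ and $-\beta_t(y) \leq -\beta_t(0) = 0$, with equality at $y = 0$, whence $\beta_t^\circledast(u) = 0 = \beta_t^*(0) = \beta_t^*([u]_+)$.

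I do not expect any serious obstacle: the proof essentially reduces to verifying that $0$ is the common minimizer of $\beta_t$ and $\beta_t^*$ and then invoking the standard characterization of optimizers in Fenchel duality. The only mildly delicate point is to check the boundary cases $t \in \{0,1\}$ separately from the explicit formulas for $\beta_0$ and $\beta_1$, but this is entirely routine.
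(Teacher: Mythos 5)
The paper states this lemma without proof (``the proof is left to the reader''), so there is no reference argument to compare against. Your proof is correct, and the main ingredients you use — that $\beta_t$ and $\beta_t^*$ are a Fenchel conjugate pair with common zero at the origin, so that the maximizer in the Legendre transform automatically lies in $\R_+$ when $u\geq 0$ — are exactly the right ones.

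One small slip in the framing: after noting the trivial inequality $\beta_t^\circledast(u)\leq\beta_t^*(u)$ you say that ``only the matching lower bound $\beta_t^\circledast(u)\geq\beta_t^*([u]_+)$ needs to be established.'' For $u<0$ this is misleading, because the trivial bound gives $\beta_t^\circledast(u)\leq\beta_t^*(u)$ and $\beta_t^*(u)>0$ there, so it does not squeeze down to $\beta_t^*(0)=0$. In that regime you actually need the separate upper bound $\beta_t^\circledast(u)\leq 0$, which you do establish in the case split (from $uy\leq 0$ and $\beta_t(y)\geq\beta_t(0)=0$ for $y\geq 0$); the preamble just under-advertises what the case $u<0$ proves. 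A slightly cleaner way to cover both directions at once, which also dispenses with the need for attainment and differentiability of $\beta_t$: for $u\geq 0$ and $y<0$ one has $uy-\beta_t(y)\leq 0 - 0 = 0 = u\cdot 0 -\beta_t(0)$, so the supremum over $y\in\R$ may be restricted to $y\geq 0$, giving $\beta_t^\circledast(u)=\beta_t^*(u)$; and for $u<0$ and $y\geq 0$ one has $uy-\beta_t(y)\leq 0$ with equality at $y=0$, giving $\beta_t^\circledast(u)=0=\beta_t^*(0)$. This uses only $\beta_t\geq 0 =\beta_t(0)$, which follows from $0\in\partial\beta_t(0)$ without invoking strict convexity or smoothness.
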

The next lemma gives an expression of $\widehat{Q}_{\beta_t}\varphi$, which will be crucial in order to establish the dual form of the transport inequality.

\begin{lem}\label{Lem Q chapeau}
Let $(X,d)$ be a compact metric space or a countable set of isolated points and $t \in [0,1]$.
For all bounded continuous function $\varphi : X \to \R$, there exists a function $v:X \to \R$ such that $v(x) \leq \varphi(x)$ for all $x \in X$ and such that 
\[
\widehat{Q}_{\beta_t}\varphi (x) = v(x) - \int \beta_t^*\left([v(x)-v(y)]_+\right)\,\mu(dy).
\]

\end{lem}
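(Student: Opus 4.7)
The plan is to reduce the variational problem defining $\widehat{Q}_{\beta_t}\varphi(x)$ to a one-dimensional concave maximization in a Lagrange multiplier $\lambda$, and then read off $v$ directly from the optimizer. First, I would decompose any admissible $p$ as $p = a\delta_x + q$, where $a = p(\{x\}) \in [0,1]$ and $q$ is a nonnegative measure on $X \setminus \{x\}$ absolutely continuous w.r.t.\ $\mu$ with density $h := dq/d\mu$ satisfying $\int h\,d\mu = 1-a$. Since $\beta_t(0)=0$ and $\gamma(d(x,y))=\mathbf{1}_{x\ne y}$ in the present Hamming setting, the $\beta_t$-integral collapses to $\int \beta_t(h)\,d\mu$, and the functional becomes $a\varphi(x) + \int \varphi h\,d\mu + \int \beta_t(h)\,d\mu$.

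For each fixed $a$, the inner minimization over $h\ge 0$ with $\int h\,d\mu = 1-a$ is a standard convex problem. Lagrangian duality together with Lemma \ref{Lem conj monotone} ($\beta_t^\circledast(u) = \beta_t^*([u]_+)$) yields
\[
\widehat{Q}_{\beta_t}\varphi(x) = \inf_{a \in [0,1]} \sup_{\lambda \in \R}\left\{ a\varphi(x) + \lambda(1-a) - \int \beta_t^*\bigl([\lambda - \varphi(y)]_+\bigr)\,\mu(dy) \right\}.
\]
The integrand is affine in $a$ on the compact convex set $[0,1]$ and continuous and concave in $\lambda$, so Sion's minimax theorem lets me swap the two extrema, giving
\[
\widehat{Q}_{\beta_t}\varphi(x) = \sup_{\lambda \in \R}\bigl\{\min(\varphi(x),\lambda) - G(\lambda)\bigr\},\qquad G(\lambda) := \int \beta_t^*\bigl([\lambda - \varphi(y)]_+\bigr)\,\mu(dy).
\]
The map $\lambda\mapsto \lambda - G(\lambda)$ is concave and attains its maximum at some $x$-independent $\lambda_0 \in \R$, thanks to the super-linear growth of $\beta_t^*$ at $+\infty$ (from its explicit exponential form) and the boundedness of $\varphi$ (which makes $G(\lambda)=0$ for $\lambda$ sufficiently negative). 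For $\lambda > \varphi(x)$ the objective becomes the decreasing function $\varphi(x) - G(\lambda)$, so the supremum above is attained at $\lambda^*(x) := \min(\lambda_0,\varphi(x))$, and equals $\lambda^*(x) - G(\lambda^*(x))$.

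I would then set $v(x) := \min(\lambda_0,\varphi(x))$, which obviously satisfies $v \le \varphi$. To finish, I need to verify $G(v(x)) = \int \beta_t^*([v(x) - v(y)]_+)\,\mu(dy)$. This is a simple case split: on $\{\varphi(y)\le\lambda_0\}$, $v(y) = \varphi(y)$ and the two integrands agree pointwise; on $\{\varphi(y)>\lambda_0\}$, $v(y) = \lambda_0 \ge v(x)$ gives $[v(x)-v(y)]_+ = 0$, and the inequality $v(x)\le \lambda_0 < \varphi(y)$ gives $[v(x)-\varphi(y)]_+ = 0$ as well, so both integrands vanish. The representation claimed in the lemma follows. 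The main obstacles to be checked carefully are the minimax swap (where compactness of $[0,1]$ is essential) and the existence of the global maximizer $\lambda_0$ (which rests on the super-linearity of $\beta_t^*$); both rely on standard convex-analysis facts, with the remainder of the proof being a direct case analysis.
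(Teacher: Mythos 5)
Your route is essentially the paper's: decompose $p$ at the atom $x$, dualize the inner convex problem via the monotone conjugate (Lemma \ref{Lem conj monotone}), swap the two extrema by a minimax argument, and set $v(x)=\min(\lambda_0,\varphi(x))$; the closing case analysis is also the paper's. Your observation that the optimal level is $x$-independent simply because $G$ does not involve $x$ is in fact cleaner than the paper's version of that point (its critical-point equation \eqref{eq:v-opt} still carries $\1_{x\neq y}$). The genuine gap is the existence of a finite maximizer $\lambda_0$: you justify it by the super-linear growth of $\beta_t^*$ at $+\infty$, and this fails at $t=1$, a value covered by the lemma. Indeed $\beta_1^*(s)=e^{-s}+s-1$ is only asymptotically linear with slope $1$, and for $\lambda\ge\sup\varphi$ one computes $\lambda-G(\lambda)=1+\int\varphi\,d\mu-\int e^{-(\lambda-\varphi(y))}\,\mu(dy)$, which is strictly increasing in $\lambda$ with a finite, never attained limit: no $\lambda_0\in\R$ exists. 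The conclusion is still true at $t=1$, and your scheme survives, because you only ever use the supremum of the concave map $\lambda\mapsto\lambda-G(\lambda)$ over $\lambda\le\varphi(x)$; when that map is nondecreasing the supremum sits at $\lambda=\varphi(x)$ and one takes $v=\varphi$. So you must either treat $t=1$ separately or phrase the construction so as to allow $\lambda_0=+\infty$ in $v=\min(\lambda_0,\varphi)$.

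Two further points need a word. First, since you constrain $q$ to live on $X\setminus\{x\}$, the Lagrangian dual of the inner problem should carry the factor $\1_{x\neq y}$ inside the integral (as in the paper); your $G$ without the indicator is nevertheless harmless, because $\beta_t^*(0)=0$ makes the two integrals coincide for all $\lambda\le\varphi(x)$ and, by monotonicity of both versions of $G$, the relevant suprema are attained in that region — alternatively, one can check that dropping the support restriction at $x$ does not change the primal value — but one of these remarks has to be made explicitly. Second, the ``standard Lagrangian duality'' you invoke for the inner minimization is exactly the content of Proposition \ref{prop:Duality GRS11} after rescaling the mass $1-a$ to $1$; weak duality alone gives only one inequality, so this step should be cited (or proved) rather than asserted.
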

\proof
Fix $x \in X$ and recall that
\[
\widehat{Q}_{\beta_t}\varphi (x) = \inf\left\{ \int \varphi(y)\,p(dy) + \int \beta_t\left( \1_{x\neq y} \frac{dp}{d\mu}(y)\right)\,\mu(dy) \right\}, 
\]
where the infimum runs over the set of probability measures $p\ll\mu$ on $X\setminus\{x\}.$ 

A probability $p$ of this set can be written $p= \alpha \delta_x + (1-\alpha) q$, with $\alpha = p(\{x\})$ and where $q$ is another probability such that $q \ll \mu$ and $q(\{x\})=0.$ 
So 
\begin{align}\label{eq:inf-chapeau}
\widehat{Q}_{\beta_t}\varphi (x)- \varphi(x)  = \inf_{\alpha \in [0,1]} & \inf_{q \ll \mu, q(\{x\})=0}
 \Big\{\int (1-\alpha)(\varphi(y)-\varphi(x))\,q(dy) + \int \beta_t\left( (1-\alpha)\1_{x\neq y} \frac{dq}{d\mu}(y)\right)\,\mu(dy) \Big\}.
\end{align}

Consider the probability measure $\mu_x$ with the following density with respect to $\mu$: 
$\frac{d\mu_x}{d\mu}(y) = \lambda^{-1}\mathbf{1}_{x\neq y},$ where $\lambda = \mu(X \setminus \{x\})>0$ (we assume of course that $\mu$ is not the Dirac mass at point $x$), then $q(\{x\})=0$ and $q\ll \mu$ if and only if $q \ll \mu_x$ and in this case $\frac{dq}{d\mu_x} = \lambda \frac{dq}{d\mu}$, $\mu_x$-almost everywhere. Therefore, \eqref{eq:inf-chapeau} becomes
\begin{align*}
\widehat{Q}_{\beta_t}\varphi (x)- \varphi(x)  =  \inf_{\alpha \in [0,1]} \inf_{q \ll \mu_x} 
\Big\{ \int  (1-\alpha)(\varphi(y)-\varphi(x))\,q(dy) + \lambda\int \beta_t\left( \frac{(1-\alpha)}{\lambda}\frac{dq}{d\mu_x}(y)\right)\,\mu_x(dy) \Big\}.
\end{align*}
So it holds
\begin{align*}
\widehat{Q}_{\beta_t}\varphi (x)-\varphi(x) &= 
\inf_{\alpha \in [0,1)}\inf_{q \ll \mu_x}\Big\{ \int (1-\alpha)(\varphi(y)-\varphi(x))\,q(dy) + \lambda\int \beta_t\left( \frac{(1-\alpha)}{\lambda} \frac{dq}{d\mu_x}(y)\right)\,\mu_x(dy) \Big\}\\
&= \inf_{\alpha \in [0,1)}-\inf_{r\in \R} \left\{ \lambda\int \beta_t^\circledast\left( \frac{(1-\alpha)(\varphi(x)-\varphi(y)) + r}{(1-\alpha)}\right)\,\mu_x(dy) - r\right\}\\
& = \inf_{\alpha \in [0,1)}-\inf_{v\in \R} \left\{ \int \beta_t^*\left([v-\varphi(y)]_+ \right)\mathbf{1}_{x\neq y}\,\mu(dy) - (1-\alpha)(v-\varphi(x))\right\}\\
& = \inf_{\alpha \in [0,1]}\sup_{v\in \R} \left\{  (1-\alpha)(v-\varphi(x)) - \int \beta_t^*\left([v-\varphi(y)]_+ \right)\1_{x\neq y}\,\mu(dy) \right\}\\
& = \sup_{v\in \R}\inf_{\alpha \in [0,1]} \left\{  (1-\alpha)(v-\varphi(x)) - \int \beta_t^*\left([v-\varphi(y)]_+ \right)\1_{x\neq y}\,\mu(dy) \right\}\\
& = \sup_{v\in \R}\left\{  -[v-\varphi(x)]_- - \int \beta_t^*\left([v-\varphi(y)]_+ \right)\1_{x\neq y}\,\mu(dy) \right\},
\end{align*}
where the second equality comes from Proposition \eqref{prop:Duality GRS11} and Lemma \ref{Lem conj monotone}, and the last one from (an elementary version of) the Min-Max theorem.
In particular,
\begin{align*}
\widehat{Q}_{\beta_t}\varphi(x)& = \varphi(x)- [v(x)-\varphi(x)]_- - \int \beta_t^*\left([v(x)-\varphi(y)]_+\right)\1_{x\neq y}\,\mu(dy),\\
& =  \min(v(x) , \varphi(x)) -  \int \beta_t^*\left([v(x)-\varphi(y)]_+\right)\1_{x\neq y}\,\mu(dy).
\end{align*}
for some function $v$ (realizing the supremum in the last identity). 

For a fixed $x \in X$, consider the function $F(v) = -[v-\varphi(x)]_- - \int \beta_t^*\left([v-\varphi(y)]_+ \right)\1_{x\neq y}\,\mu(dy)$, $v\in \R.$ Since $\beta_t^*$ is increasing on $[0,\infty)$, the function $F$ is clearly non-increasing on $[\varphi(x),+\infty)$. Therefore $F$ reaches its supremum on $(-\infty,\varphi(x)]$. On $(-\infty,\varphi(x))$, the function $F$ is differentiable and it holds 
\begin{align*}
F'(v) 
&=  
1 - \int e^{(1-t) [v(x)-\varphi(y)]_+}\1_{v(x)>\varphi(y)}\1_{x\neq y}\,\mu(dy) 
+\int e^{-t [v(x)-\varphi(y)]_+}\1_{v(x) >\varphi(y)}\1_{x\neq y}\,\mu(dy)\\
& = 1 - \int e^{(1-t) [v(x)-\varphi(y)]_+}\1_{x\neq y}\,\mu(dy) +\int e^{-t [v(x)-\varphi(y)]_+}\1_{x\neq y}\,\mu(dy).
\end{align*}
It is not difficult to prove the existence of a point $\bar{v}$ (independent of $x$) such that
\begin{equation}\label{eq:v-opt}
\int e^{(1-t) [\bar{v}-\varphi(y)]_+}\1_{x\neq y}\,\mu(dy) = 1+\int e^{-t [\bar{v}-\varphi(y)]_+}\1_{x\neq y}\,\mu(dy)
\end{equation}
and to check that the function $F$ reaches its supremum at $v(x) := \min(\bar{v},\varphi(x))$. 

Finally, note that $[v(x)-\varphi(y)]_+ = [v(x) - v(y)]_+$, which completes the proof.
\endproof

The next result is Lemma 2.2 of \cite{Sam07}.
\begin{lem}\label{Lem Sam07}Let $\mu$ be some probability on a measurable space $X$.
For every bounded function $v : X \to \R$, it holds for all $t \in [0,1]$\,,
\[
\left(\int e^{t v(x) - t \int \beta_t^*([v(x)-v(y)]_+)\mu(dy)}\mu(dx)\right)^{1/t} 
\left( \int e^{-(1-t)v(x)}\mu(dx)\right)^{1/(1-t)} \leq 1.
\]
\end{lem}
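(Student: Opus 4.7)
The target inequality is essentially the Bobkov-Götze dual form of the transport-entropy inequality in Theorem~\ref{hamming2} for a single variable, but stated for an \emph{arbitrary} bounded function $v$, not only for the optimizer $v = \min(\bar v, \varphi)$ produced by Lemma~\ref{Lem Q chapeau}. Consequently it must be established directly. After raising to the power $t(1-t)$, the goal reduces to showing $A^{1-t}B^t \leq 1$, where $A := \int e^{tv-tT}\,d\mu$ and $B := \int e^{-(1-t)v}\,d\mu$.

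The first step is to apply Jensen's inequality to the convex map $z \mapsto e^{-tz}$ inside $T(x)$ to obtain
\[
e^{-tT(x)} \;\leq\; \int e^{-t\beta_t^{\ast}([v(x)-v(y)]_+)}\, d\mu(y),
\]
so that by Fubini, $A \leq \iint e^{tv(x) - t\beta_t^{\ast}([v(x)-v(y)]_+)}\, d\mu(x)\,d\mu(y)$. The next step is to exploit the defining identity
\[
t(1-t)\,\beta_t^{\ast}(s) \;=\; t\,e^{(1-t)s} + (1-t)\,e^{-ts} - 1, \qquad s \in \R,
\]
which rewrites $e^{-t\beta_t^{\ast}([v(x)-v(y)]_+)}$ (on the region $\{v(x) \geq v(y)\}$) as an expression built from the two factorizable pieces $e^{(1-t)(v(x)-v(y))} = e^{(1-t)v(x)}\,e^{-(1-t)v(y)}$ and $e^{-t(v(x)-v(y))} = e^{-tv(x)}\,e^{tv(y)}$. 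The two pieces are conjugate with respect to the H\"older exponents $1/(1-t)$ and $1/t$ in exactly the way one hopes for.

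The heart of the argument is then a H\"older-type step applied to the double integral above, with conjugate exponents $p = 1/(1-t)$ and $q = 1/t$, using a decomposition of the integrand as a product $\Phi_1(x,y)^{1-t}\,\Phi_2(x,y)^t$ in which the first factor depends on $v(x)$ and the second on $v(y)$. The decomposition is designed so that integrating $\Phi_2$ in $y$ produces exactly a factor of $B = \int e^{-(1-t)v(y)}\,d\mu(y)$, while $\int \Phi_1^{1-t}\,d\mu(x) \cdot \int \Phi_2^{t}\,d\mu(y)$ assembles, after the algebraic identity above, into the bound $A^{1-t}B^t \leq 1$. The partition of the integration region according to the sign of $v(x) - v(y)$, and the use of Fubini to re-index the contribution of $\{v(x) < v(y)\}$, is what ultimately makes the two pieces match.

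The principal obstacle is pinning down the precise H\"older decomposition: the interplay between $t$ and $1-t$ in the definition of $\beta_t^{\ast}$ is very rigid, and one must choose $\Phi_1, \Phi_2$ so that both the separation of variables and the production of exactly the factor $B^t$ occur simultaneously. The boundary cases $t \in \{0,1\}$ are treated separately by a limiting argument (or directly, using the explicit forms $\beta_0^{\ast}(s) = e^s - s - 1$ and $\beta_1^{\ast}(s) = e^{-s} + s - 1$), for which the inequality reduces to a classical Chernoff-type exponential estimate.
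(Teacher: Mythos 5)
Your proof breaks down at the very first step. Jensen's inequality applied to the convex map $z \mapsto e^{-tz}$ does give $e^{-tT(x)} \leq \int e^{-t\beta_t^*([v(x)-v(y)]_+)}\,d\mu(y)$, hence $A \leq \iint e^{tv(x) - t\beta_t^*([v(x)-v(y)]_+)}\,d\mu(x)\,d\mu(y)$; but this upper bound is far too lossy, and the intermediate inequality $(\text{double integral})^{1-t}\,B^t \leq 1$ is in fact \emph{false}. The damage is done on the diagonal $\{v(x) = v(y)\}$, where $\beta_t^*(0)=0$ and the integrand reduces to the uncontrolled term $e^{tv(x)}$. Concretely, take $\mu=\tfrac12(\delta_0+\delta_1)$, $v(0)=0$, $v(1)=c$, $t=\tfrac12$. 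Since $\beta_{1/2}^*(s)=4\cosh(s/2)-4$, we get $T(1)=2\cosh(c/2)-2$, so the true integrand at $x=1$ is $e^{c/2-\cosh(c/2)+1}\to 0$ and $A\to\tfrac12$. But in the double integral the $x=y=1$ term contributes $\tfrac14 e^{c/2}\to\infty$, while $B=\tfrac12(1+e^{-c/2})$ stays bounded, so $(\text{double integral})^{1/2}B^{1/2}\to\infty$. The convexity step throws away exactly the nonlinear damping $e^{-t\int\beta_t^*\,d\mu}$ that makes the lemma true.

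The proposed H\"older factorization is also a dead end: from the identity you cite,
\[
e^{-t\beta_t^*(s)} \;=\; \exp\Bigl(-\tfrac{t}{1-t}\,e^{(1-t)s} - e^{-ts} + \tfrac{1}{1-t}\Bigr),
\]
so the ``pieces'' $e^{(1-t)s}$ and $e^{-ts}$ sit \emph{inside a further exponential}. There is no decomposition of the integrand as $\Phi_1(x,y)^{1-t}\Phi_2(x,y)^{t}$ with one factor depending only on $v(x)$-data and the other only on $v(y)$-data; the exponent $-t\beta_t^*(v(x)-v(y))$ does not separate additively in $v(x)$ and $v(y)$.

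The paper's route is of a different nature. For $t=1$ it fixes $v$, introduces $\phi(\lambda)=\int e^{H(\lambda v)}\,d\mu$ with $H(v(x))=v(x)-\int v\,d\mu-\int\beta_1^*([v(x)-v(y)]_+)\,d\mu(y)$, notes $\phi(0)=1$, and shows $\phi'(\lambda)\leq 0$ for $\lambda\geq 0$ by rewriting $\phi'$ as a signed double integral; the sign is determined by a monotonicity property ($D(\lambda v(y)) - D(\lambda v(x))\leq 0$ whenever $v(x)\geq v(y)$), which relies on the precise algebraic form of $\beta_1^*$, not on Jensen or H\"older. For general $t\in(0,1)$ the paper refers to \cite{Sam07}, where the argument is of the same interpolation/monotonicity flavor. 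If you want to complete the proof yourself, this derivative-in-$\lambda$ strategy is the right starting point.
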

With these lemmas in hand, we are now in a position to prove Theorem \ref{hamming2}.
\proof[Proof Theorem \ref{hamming2}]
Fix $t \in (0,1)$ ; according to Proposition \ref{prop:bg}, the transport inequality \eqref{eq:hamming2} is equivalent to proving that
\begin{equation}\label{eq:proof hamming}
\left(\int e^{t \widehat{Q}_{\beta_t}\varphi(x)}\,\mu(dx)\right)^{1/t} \left( \int e^{-(1-t)\varphi(x)}\,\mu(dx)\right)^{1/(1-t)} \leq 1,
\end{equation}
for all bounded continuous function $\varphi: X \to \R.$ But according to Lemma \ref{Lem Q chapeau}, 
\[
\widehat{Q}_{\beta_t}\varphi(x) = v(x) - \int \beta_t^*([v(x)-v(y)]_+)\,\mu(dy),
\] 
for some function $v \leq \varphi$ (possibly depending on $t$ and on $\mu$). According to Lemma \ref{Lem Sam07}, it holds
\[
\left(\int e^{t \widehat{Q}_{\beta_t}\varphi(x)}\,\mu(dx)\right)^{1/t} \left( \int e^{-(1-t)v(x)}\,\mu(dx)\right)^{1/(1-t)} \leq 1.
\]
Since $v \leq \varphi$, this gives \eqref{eq:proof hamming} and completes the proof. 
\endproof

Now for the sake of completeness, we give a quick proof of Lemma \ref{Lem Sam07} in the particular case $t=1$.
The general case is more tricky and the interested reader is referred to \cite{Sam07}.

\proof[Proof of Lemma \ref{Lem Sam07} for $t=1$]
In this case, the conclusion of the lem\-ma amounts to proving that for all bounded measurable $v:X \to \mathbb{R}$, it holds
\begin{equation}\label{cocotte}
\int e^{H(v(x))} d\mu(x) \leq 1,
\end{equation}
where
\[H(v(x))= v(x)-\int v(y) d\mu(y)-D(v(x)),\]
with 
$D(v(x))=\int \beta_1^*([v(x)-v(y)]_+) d\mu(y)$.
Replacing everywhere $v$ by $\lambda v$, $\lambda\geq 0$, it is equivalent to showing that for all $\lambda\geq 0$, 
\[
\phi(\lambda)=\int e^{H(\lambda v(x))} d\mu(x) \leq 1.
\]
Since $\phi(0)=1$, it is sufficient to get that $\phi'(\lambda)\leq 0$ for all $\lambda\geq 0$.
Let us first observe that since $ \beta_1^*(h)=e^{-h}+h-1$, 
\[H(\lambda v(x))=\int \left(1-e^{-\lambda[v(x)-v(y)]_+}\right)d\mu(y)- \int \lambda[ v(y)- v(x)]_+ d\mu(y).\]
It follows that for $\lambda\geq 0$, 
\begin{align*}
\phi'(\lambda) &= \int \Big(\int [v(x)-v(y)]_+ e^{-\lambda[v(x)-v(y)]_+} d\mu(y)  -\int [ v(y)- v(x)]_+ d\mu(y)\Big) e^{H(\lambda v(x))} d\mu(x)\\
&=\iint [v(x)-v(y)]_+\left(e^{-\lambda[v(x)-v(y)]_++H(\lambda v(x))}-e^{H(\lambda v(y))} \right)d\mu(x)d\mu(y)
\end{align*}
For $v(x)\geq v(y)$ one has 
\[-\lambda[v(x)-v(y)]_++H(\lambda v(x))-H(\lambda v(y))=D(\lambda v(y))-D(\lambda v(x))\leq 0,\]
and therefore $\phi'(\lambda)\leq 0$ for $\lambda\geq 0$. This ends the proof of \eqref{cocotte}.
\endproof

\section{Discrete examples : Bernoulli, Binomial and Poisson laws}\label{sec:example}

In this section, we gather some basic examples of probability measures satisfying  weak transport inequalities. These examples are studied in detail in \cite{GRST16}. We start with the Bernoulli measure, from which we derive weak transport  inequalities for the binomial law and the Poisson distribution. Let us mention that \cite{GRST16} also contains transport-entropy inequalities for the uniform measure on the symmetric group that give back the concentration results obtained by Talagrand in \cite{Tal95}.

We first consider some results for the Bernoulli measure, derived in  \cite{Sam03}, and as such introduce some notations from there.  

Given $\rho  \in (0,1)$, define
$u_{\rho,0}  \colon \mathbb{R} \to \mathbb{R}_+ \cup \{+\infty\}$ as 
\begin{equation*}
 u_{\rho,0} (h) 
 =
\begin{cases}
\frac{1-\rho (1-h)}{\rho } \log \frac{1-\rho (1-h)}{1-\rho } + (1-h)\log(1-h)\,, & \text{if } -\frac{1-\rho }{\rho } \leq h \leq 1 \\
+ \infty & \text{otherwise}
\end{cases}
\end{equation*} 
and define
$u_{\rho,1}  \colon \mathbb{R} \to \mathbb{R}_+ \cup \{+\infty\}$ as 
\begin{equation*}
 u_{\rho,1} (h) 
  =
\begin{cases}
\frac 1\rho \left[(1-\rho -h) \log \frac{1-\rho -h}{1-\rho }-(1-h) \log(1-h)\right]\,, & \text{if } h \leq 1-\rho \\
+ \infty & \text{otherwise}
\end{cases}
\end{equation*} 

Finally we define, for $t\in\{0,1\}$, 
\[
{\theta}_{\rho ,t}(h) = \begin{cases}
u_{\rho,t} (h) & \text{if } h \geq 0 \\
u_{1-\rho,t }(-h) & \text{if } h < 0 \,.
\end{cases}
\]

In \cite{GRST16}, using a result from \cite{Sam03} and the duality results proved in Section  \ref{sec:particular cases},  the following weak transport inequalities are obtained  for the non symmetric Bernoulli measure.
Set $\mu_\rho := (1-\rho ) \delta_{0} +\rho  \delta_{1}$, $\rho  \in [0,1]$.

\begin{prop} \label{Bernoulli}
For all $ \rho  \in (0,1)$, it holds
\begin{equation}\label{Tb}
\Tb_{\theta_{\rho ,1}}(\mu_\rho |\nu)\leq H(\nu|\mu_\rho )
\quad \text{and}
\quad
 \Tb_{\theta_{\rho ,0}}(\nu | \mu_\rho )\leq H(\nu|\mu_\rho ) 
 \quad \forall \nu \in \mathcal{P}(\{0,1\}) .
 \end{equation}
\end{prop}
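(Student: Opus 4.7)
My plan is to prove each of the two inequalities by exhibiting an explicit coupling whose cost already equals $H(\nu|\mu_\rho)$, which makes the claim trivial. Since every $\nu\in\mathcal{P}(\{0,1\})$ has the form $\nu=(1-s)\delta_0+s\delta_1$ for some $s\in[0,1]$, the verification will reduce to a one-parameter check, which I will split into the regimes $s\leq\rho$ and $s\geq\rho$.

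For the first inequality $\Tb_{\theta_{\rho,1}}(\mu_\rho|\nu)\leq H(\nu|\mu_\rho)$, the kernel $p$ goes from $\nu$ to $\mu_\rho$; writing $a=p_0(\{1\})$ and $b=p_1(\{1\})$, the marginal constraint is $(1-s)a+sb=\rho$ and the cost becomes $(1-s)\theta_{\rho,1}(-a)+s\theta_{\rho,1}(1-b)$. I will use the greedy coupling $b=1$, $a=(\rho-s)/(1-s)$ when $s\leq\rho$, so that the second term vanishes (since $\theta_{\rho,1}(0)=0$) and the cost reduces to $(1-s)u_{1-\rho,1}\bigl((\rho-s)/(1-s)\bigr)$; and $a=0$, $b=\rho/s$ when $s\geq\rho$, which kills the first term and leaves $s\,u_{\rho,1}\bigl((s-\rho)/s\bigr)$. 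Substituting the explicit definitions of $u_{\rho,1}$ and $u_{1-\rho,1}$ and simplifying the resulting logarithms, both expressions should collapse to exactly $(1-s)\log\frac{1-s}{1-\rho}+s\log\frac{s}{\rho}=H(\nu|\mu_\rho)$.

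The inequality $\Tb_{\theta_{\rho,0}}(\nu|\mu_\rho)\leq H(\nu|\mu_\rho)$ will be handled by the mirror construction: now the kernel runs from $\mu_\rho$ to $\nu$ under $(1-\rho)a'+\rho b'=s$, and I will take $(a',b')=(0,s/\rho)$ for $s\leq\rho$ and $(a',b')=((s-\rho)/(1-\rho),1)$ for $s\geq\rho$. The cost will become $\rho\,u_{\rho,0}((\rho-s)/\rho)$ and $(1-\rho)u_{1-\rho,0}((s-\rho)/(1-\rho))$ respectively, both of which should simplify to $H(\nu|\mu_\rho)$ by the analogous calculation. Alternatively, following the route suggested in the text, I could convert via Proposition~\ref{prop:bg} and Theorem~\ref{Kantorovich T bar}(3) into a Maurey-type exponential $(\tau)$-inequality for $\mu_\rho$ on convex Lipschitz test functions on $\R$, and then invoke the corresponding Bernoulli $(\tau)$-inequality from \cite{Sam03}.

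The main obstacle will be the algebraic bookkeeping of the final log-identities---expanding four multi-term expressions and recombining them into the single quantity $H(\nu|\mu_\rho)$. This works only because the functions $u_{\rho,t}$ are tailored in \cite{Sam03} precisely so that the extremal dual value matches $H(\nu|\mu_\rho)$ term-for-term; once the correct greedy kernel has been identified, the rest is routine.
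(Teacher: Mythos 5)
Your primal-coupling computation is correct and complete once the "should simplify" steps are carried through---I have verified all four cases. For the first inequality, with $\nu=(1-s)\delta_0+s\delta_1$ and the kernel running from $\nu$ to $\mu_\rho$, the cost of your greedy coupling evaluates to $(1-s)u_{1-\rho,1}\bigl(\frac{\rho-s}{1-s}\bigr)$ when $s\leq\rho$ and $s\,u_{\rho,1}\bigl(\frac{s-\rho}{s}\bigr)$ when $s\geq\rho$, and after substituting the definitions (noting $\rho-\frac{\rho-s}{1-s}=\frac{s(1-\rho)}{1-s}$, $1-\frac{\rho-s}{1-s}=\frac{1-\rho}{1-s}$, etc.) both collapse to $(1-s)\log\frac{1-s}{1-\rho}+s\log\frac{s}{\rho}=H(\nu|\mu_\rho)$; the mirror calculation for $\Tb_{\theta_{\rho,0}}(\nu|\mu_\rho)$ works the same way. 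The coupling constraints $a,b,a',b'\in[0,1]$ hold in each regime, so the exhibited kernels are admissible and the claimed upper bound follows.

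This is a genuinely different route from the one the paper indicates. The paper attributes the result to \cite{GRST16}, which obtains it \emph{dually}: by Proposition~\ref{prop:bg} and Theorem~\ref{Kantorovich T bar}(3) the transport inequality is equivalent to a Maurey-type $(\tau)$-inequality for $\mu_\rho$ on convex functions, which is then deduced from a Bernoulli inequality in \cite{Sam03}---you noted this alternative yourself. Your primal approach has the advantages of being self-contained (no external $(\tau)$-inequality needed, no invocation of the Kantorovich duality machinery of Section~\ref{sec:duality}) and of exhibiting the extremal transport plan explicitly, which also makes transparent why $\theta_{\rho,0}$ and $\theta_{\rho,1}$ are the optimal cost functions (the bound is an \emph{equality} for every $\nu$ with this coupling). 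The dual route, on the other hand, fits the theme of the paper, meshes directly with the tensorization and projection arguments used afterwards for binomial and Poisson laws, and is the natural framework when one later wants to identify the extremal test functions rather than the extremal couplings.
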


In \cite{GRST16}, a family of transport inequalities is given    that interpolates between the two transport inequalities of \eqref{Tb} for $t=0$ and $t=1$, as in Theorems \ref{hamming1} and \ref{hamming2}. Moreover in this paper, other related weak transport inequalities with  cost types $\Tw_{ \theta}$ are given. 

As explained in \cite{GRST16}, the cost functions ${\theta}_{\rho ,0 }$ and ${\theta}_{\rho ,1 }$ correspond to the optimal choice in the transport inequalities  \eqref{Tb}.

By  Theorem  \ref{tensorization},  the weak transport inequalities for the Bernoulli measure $\mu_\rho $ given in Proposition \ref{Bernoulli} tensorize. Hence, the  product of Bernoulli measures $\mu_\rho ^n:=\mu_\rho \otimes\cdots \otimes \mu_\rho $ on the hypercube $\{0,1\}^n$ satisfies  the following $n$-dimensional version of the $\Tb$-transport-entropy inequalities.  Recall  that  the corresponding  $n$-dimensional costs are  defined, for all $x=(x_1,\dots,x_n)\in \{0,1\}^n$
and all $p\in  \mathcal{P}(\{0,1\}^n)$,  respectively by
$$
\bar{c}^{(n)}_{\rho ,t }(x,q):=\sum_{i=1}^n \theta_{\rho ,t }\left(x_i-\int_{\{0,1\}} y_i \,q_i(dy_i)\right).
$$
where $q_i \in \mathcal{P}(\{0,1\})$ is the $i$-th marginal of $q$, and $t\in\{0,1\}$.
We denote by 
$\Tb_{\bar{c}^{(n)}_{\rho ,t }}$ 
 the corresponding transport cost.
Applying Theorem  \ref{tensorization}, we immediately get, from Proposition~\ref{Bernoulli}, the following 
weak transport-entropy inequalities for product of Bernoulli measures.

\begin{cor} \label{product-Bernoulli}
For all  $\rho  \in (0,1)$ and all integers  $n\geq 1$, it holds
\begin{equation*}
\Tb_{\bar{c}^{(n)}_{\rho ,1}}(\mu_\rho ^n|\nu)\leq H(\nu|\mu_\rho ^n)
\quad \text{and}
\quad
\Tb_{\bar{c}^{(n)}_{\rho ,0}}(\nu | \mu_\rho ^n)\leq H(\nu|\mu_\rho ^n) 
 \quad \forall \nu \in \mathcal{P}(\{0,1\}^n) .
\end{equation*}
\end{cor}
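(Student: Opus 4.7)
The plan is to apply the tensorization property of Theorem~\ref{tensorization} directly to each of the two one-dimensional Bernoulli inequalities furnished by Proposition~\ref{Bernoulli}; no new estimate is required.

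The first step is to verify the two structural hypotheses imposed by Theorem~\ref{tensorization} on the one-variable cost
\[
c_t(x,p) := \theta_{\rho,t}\!\left(x - \int y\,p(dy)\right),\qquad t\in\{0,1\}.
\]
The equality $c_t(x,\delta_x)=\theta_{\rho,t}(0)=0$ is immediate from the definition of $\theta_{\rho,t}$, and convexity of $p\mapsto c_t(x,p)$ follows because $p\mapsto x-\int y\,p(dy)$ is affine on $\mathcal{P}(\{0,1\})$ while $\theta_{\rho,t}$ is convex on $\R$.

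Next, I would rephrase Proposition~\ref{Bernoulli} in the framework of Definition~\ref{def:te}: the bound $\Tb_{\theta_{\rho,0}}(\nu|\mu_\rho)\le H(\nu|\mu_\rho)$ is exactly $\T_{c_0}^+(1)$, and $\Tb_{\theta_{\rho,1}}(\mu_\rho|\nu)\le H(\nu|\mu_\rho)$ is exactly $\T_{c_1}^-(1)$. Using the convention $0\cdot\infty=0$ recalled after Definition~\ref{def:te}, these read as $\T_{c_0}(1,\infty)$ and $\T_{c_1}(\infty,1)$; feeding each into Theorem~\ref{tensorization} with $n$ identical copies of $\mu_\rho$ and identical one-variable costs on each factor yields $\T_{c_0^n}^+(1)$ and $\T_{c_1^n}^-(1)$ for the product measure $\mu_\rho^n$. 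By construction the tensorized cost is
\[
c_t^n(x,p)\,=\,\sum_{i=1}^n \theta_{\rho,t}\!\left(x_i-\int y_i\,p_i(dy_i)\right)\,=\,\bar{c}^{(n)}_{\rho,t}(x,p),
\]
so the two bounds of the corollary are precisely the tensorized inequalities.

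The only delicate point, and the one I would want to document carefully, is the formal appearance of $\infty$ as a constant when passing through the $0\cdot\infty=0$ convention, since Theorem~\ref{tensorization} is literally stated for $a_1,a_2>0$. If one prefers to avoid the convention altogether, it is enough to revisit the proof in Appendix~\ref{appendix-tensorization} and specialize it to the case $\nu_1=\mu_\rho^n$ (resp.\ $\nu_2=\mu_\rho^n$): combined with the chain rule for the relative entropy, the same induction on $n$ produces the tensorization of $\T_{c_t}^{\pm}(1)$ into $\T_{c_t^n}^{\pm}(1)$ with no modification. Either route is immediate, consistent with the paper's own description of the corollary as a direct application of the tensorization theorem.
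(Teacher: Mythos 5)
Your proof is correct and is essentially the same as the paper's, which simply invokes Theorem~\ref{tensorization} applied to Proposition~\ref{Bernoulli} without further elaboration. Your extra care in verifying the hypotheses $c_t(x,\delta_x)=0$ and convexity of $p\mapsto c_t(x,p)$, and in noting that the one-sided inequalities $\T^\pm_{c_t}(1)$ fit into the $\T_c(a_1,a_2)$ framework via the $0\cdot\infty=0$ convention (or, equivalently, by specializing the appendix proof to $\nu_1=\mu_\rho^n$ or $\nu_2=\mu_\rho^n$), is a sound and welcome clarification of a point the paper leaves implicit.
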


To prove weak transport cost inequalities for  the binomial distribution $B(n,\rho )$, $\rho \in(0,1)$, the basic idea of \cite{GRST16} is to project the $n$-dimensional transport cost inequalities of Corollary \ref{product-Bernoulli}, from the hypercube $\{0,1\}^n$ onto $I_n:=\{0,1,\dots,n\}$, the state space of $B(n,\rho )$.  Projection arguments are  useful tools  to reach transport-entropy inequalities as explained in the seminal work by Maurey (see Lemma 2 \cite{Mau91}). 
Let  $\mu_{n,\rho }$ denote the  binomial measure on $I_n$, \textit{i.e.}\ 
$\mu_{n,\rho }(k)=  \genfrac{(}{)}{0pt}{}{n}{k}\rho ^k(1-\rho )^{n-k}$ for all $k \in I_n$.
By using the fact that  the image measure of $\mu_\rho ^n$ by the projection $\varphi : \{0,1\}^n \ni (x_1,\dots,x_n) \mapsto \sum_{i=1}^n x_i \in I_n$ is the measure $\mu_{n,\rho }$, Corollary \ref{product-Bernoulli} provides the following transport-entropy inequalities. 
 \begin{cor}\label{pomme} \cite{GRST16}
For  all $\rho  \in (0,1)$ and all integers $n\geq 1$, it holds
\begin{equation*}
\Tb_{\theta_{\rho ,1,n}}(\mu_{n,\rho }|\nu)\leq H(\nu|\mu_{n,\rho }),
\; \text{and}
\;
\Tb_{\theta_{\rho ,0,n}}(\nu | \mu_{n,\rho })\leq H(\nu|\mu_{n,\rho }), 
 \quad \forall \nu \in \mathcal{P}(I_n), 
 \end{equation*}
  where $\theta_{\rho ,t ,n}(h):=n \theta_{\rho ,t }(h/n)$, $h \in \R$, $t\in\{0,1\}$.

\end{cor}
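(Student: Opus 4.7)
The strategy is Maurey's projection/contraction argument: derive Corollary~\ref{pomme} from the product-space inequality of Corollary~\ref{product-Bernoulli} by pushing everything through the map $\varphi\colon\{0,1\}^n\to I_n$, $(x_1,\dots,x_n)\mapsto\sum_i x_i$, whose push-forward of $\mu_\rho^n$ is $\mu_{n,\rho}$.

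\emph{Step 1 (entropy-preserving lift).} Given $\nu\in\mathcal{P}(I_n)$ with $H(\nu|\mu_{n,\rho})<\infty$ (otherwise both inequalities are trivial), I would lift it to $\tilde\nu\in\mathcal{P}(\{0,1\}^n)$ by setting
\[
\frac{d\tilde\nu}{d\mu_\rho^n}(x) := \frac{d\nu}{d\mu_{n,\rho}}(\varphi(x)).
\]
Since the density is $\varphi$-measurable, $\varphi_*\tilde\nu=\nu$ and a direct computation gives $H(\tilde\nu|\mu_\rho^n)=H(\nu|\mu_{n,\rho})$.

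\emph{Step 2 (projection of couplings, two Jensen steps).} For the first inequality of the corollary, start from an admissible coupling $\pi(dx\,dy)=\tilde\nu(dx)p_x(dy)$ with second marginal $\mu_\rho^n$, and let $\hat\pi=(\varphi\times\varphi)_*\pi\in\Pi(\nu,\mu_{n,\rho})$, disintegrated as $\hat\pi(dk\,dl)=\nu(dk)\hat p_k(dl)$. Setting $m_i(x):=\int y_i\,p_{x,i}(dy_i)$, the identity $\sum_{i=1}^n(x_i-m_i(x))=\varphi(x)-\int\varphi(y)\,p_x(dy)$ combined with convexity of $\theta_{\rho,1}$ yields the pointwise bound
\[
\bar c^{(n)}_{\rho,1}(x,p_x)\;\geq\;\theta_{\rho,1,n}\!\left(\varphi(x)-\int\varphi(y)\,p_x(dy)\right),
\]
which is precisely what dictates the rescaling $\theta_{\rho,t,n}(h)=n\theta_{\rho,t}(h/n)$ appearing in the statement. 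A second Jensen step on each fibre $A_k=\varphi^{-1}(k)$, on which $\varphi(x)=k$ is constant and the conditional barycenter of $\int\varphi(y)\,p_x(dy)$ under $\tilde\nu(\,\cdot\,|A_k)$ equals $\int l\,\hat p_k(dl)$ by definition of $\hat\pi$, produces
\[
\int_{A_k}\theta_{\rho,1,n}\!\left(k-\int\varphi(y)\,p_x(dy)\right)\tilde\nu(dx)\;\geq\;\nu(k)\,\theta_{\rho,1,n}\!\left(k-\int l\,\hat p_k(dl)\right).
\]

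\emph{Step 3 (conclusion and second inequality).} Summing over $k$ and invoking Corollary~\ref{product-Bernoulli} for $t=1$ gives
\[
\Tb_{\theta_{\rho,1,n}}(\mu_{n,\rho}|\nu)\;\leq\;\Tb_{\bar c^{(n)}_{\rho,1}}(\mu_\rho^n|\tilde\nu)\;\leq\;H(\tilde\nu|\mu_\rho^n)\;=\;H(\nu|\mu_{n,\rho}).
\]
The second inequality of the corollary is obtained by an identical argument with the roles of the marginals reversed: start from $\pi(dx\,dy)=\mu_\rho^n(dx)p_x(dy)$ with second marginal $\tilde\nu$, integrate over fibres against $\mu_\rho^n$ (using $\mu_\rho^n(A_k)=\mu_{n,\rho}(k)$), and invoke the $t=0$ inequality of Corollary~\ref{product-Bernoulli}. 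The only real technical point is the second Jensen step, namely the identification of the conditional fibrewise barycenter with $\int l\,\hat p_k(dl)$; once this is set up, both directions reduce to the same template.
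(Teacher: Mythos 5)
Your proof is correct and executes precisely the projection strategy that the paper announces for this corollary (push the product-Bernoulli inequality of Corollary~\ref{product-Bernoulli} through the summation map $\varphi$, exploiting $\varphi_*\mu_\rho^n=\mu_{n,\rho}$). The entropy-preserving lift, the two Jensen steps (coordinatewise and fibrewise, the first of which explains the scaling $\theta_{\rho,t,n}(h)=n\theta_{\rho,t}(h/n)$), and the identification of the fibrewise barycenter with $\int l\,\hat p_k(dl)$ are all the right ingredients and are carried out correctly. The one remark worth making: the paper's framework (following Maurey's Lemma~2, which it cites explicitly here) would naturally run the projection on the dual side, showing for $\varphi$ convex on $\R$ that $\overline{Q}_{\theta_{\rho,t,n}}\varphi$ composed with the summation map is dominated by $\overline{Q}_{\bar c^{(n)}_{\rho,t}}(\varphi\circ\text{sum})$ and then invoking Proposition~\ref{prop:bg}; your primal coupling-projection avoids any appeal to duality at this stage, which is arguably more elementary. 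Either way you rely on the (implicit in the paper) convexity of $\theta_{\rho,t}$, and your Jensen applications handle the $+\infty$ values harmlessly, so there is no gap.
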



Following \cite{GRST16}, this result implies   weak transport-entropy inequalities for the Poisson probability measure   $p_\lambda$,
with parameter  $\lambda > 0$, defined for all $k\in \N$ by
$p_\lambda(k)=\frac{\lambda^k}{k!}e^{-\lambda}$.  The idea is to use
the weak convergence of the binomial distribution $\mu_{n,\rho_n}$, with $\rho_n:=\lambda/n$, towards the Poisson measure $p_\lambda$. 

Set, for $t \in \{0,1\}$, $h \in \mathbb{R}$,
$
c_{\lambda,t }(h) := \lim_{n\to \infty}  n \theta_{\rho_n,t }\left(\frac hn \right).$
One has \[c_{\lambda,0}(h) =\lambda\,w\left(\frac{h}{\lambda}\right)\1_{h\leq 0}, \quad{ and } \quad 
 c_{\lambda,1}(h) :=\lambda\,w\left(\frac{-h}{\lambda}\right)\1_{h\leq 0},\]
 where $w(h)= (1-h)\log(1-h)+h$ for $h\leq 1$ and $w(h)=+\infty$ if $h>1$.
The result for the Poisson measure   is the following.
 \begin{prop}\label{poisson}\cite{GRST16}
For all $\lambda>0$, it holds
\begin{equation*} 
\Tb_{c_{\lambda,0}}(p_\lambda|\nu)\leq  H(\nu|p_\lambda), 
\quad{ and }\quad
\Tb_{c_{\lambda,1}}(\nu|p_\lambda)\leq H(\nu|p_\lambda),\qquad \forall \nu \in \mathcal{P}(\N).
\end{equation*}
\end{prop}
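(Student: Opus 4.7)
The strategy is to obtain these Poisson inequalities from the binomial inequalities of Corollary~\ref{pomme} by a passage to the limit as $n\to\infty$ with $\rho_n:=\lambda/n$, passing through the dual Bobkov--Götze form, which behaves well under the weak convergence of measures on $\N$. The first step is to dualize Corollary~\ref{pomme} via Theorem~\ref{Kantorovich T bar}(3) and Proposition~\ref{prop:bg}: since $I_n\subset\R$ is a countable set of isolated points and $\theta_{\rho_n,t,n}$ is convex and lower semicontinuous, we obtain, for every convex Lipschitz $\varphi:\R\to\R$ bounded from below,
\begin{equation*}
\exp\!\Bigl\{\int Q_{\theta_{\rho_n,0,n}}\varphi\,d\mu_{n,\rho_n}\Bigr\}\int e^{-\varphi}\,d\mu_{n,\rho_n}\leq 1
\end{equation*}
and
\begin{equation*}
\int e^{Q_{\theta_{\rho_n,1,n}}\varphi}\,d\mu_{n,\rho_n}\cdot\exp\!\Bigl\{-\int\varphi\,d\mu_{n,\rho_n}\Bigr\}\leq 1,
\end{equation*}
with $Q_\theta\varphi(x):=\inf_{y\in\R}\{\varphi(y)+\theta(x-y)\}$.

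I would then let $n\to\infty$. Three ingredients enter. First, the classical Poisson limit $\mu_{n,\rho_n}(\{k\})\to p_\lambda(\{k\})$ for every $k\in\N$. Second, the pointwise convergence $n\theta_{\rho_n,t}(h/n)\to c_{\lambda,t}(h)$, which is an elementary Taylor expansion of the explicit formulas for $u_{\rho_n,t}$ around $\rho_n=0$; this is precisely how the formulas for $c_{\lambda,t}$ arise. Third, the resulting pointwise convergence $Q_{\theta_{\rho_n,t,n}}\varphi(x)\to Q_{c_{\lambda,t}}\varphi(x)$ at every $x\in\N$, which follows from the convexity of the costs and the Lipschitz regularity of $\varphi$ (the infimum in $Q$ localizes to a bounded region). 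Applying Proposition~\ref{prop:bg} backwards on $p_\lambda$ then converts the limiting exponential inequalities into the announced transport-entropy inequalities $\Tb_{c_{\lambda,0}}(p_\lambda|\nu)\leq H(\nu|p_\lambda)$ and $\Tb_{c_{\lambda,1}}(\nu|p_\lambda)\leq H(\nu|p_\lambda)$.

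The main obstacle is justifying the interchange of limit and integration in $\int e^{Q_{\theta_{\rho_n,t,n}}\varphi}\,d\mu_{n,\rho_n}$, since $\N$ is unbounded and exponentials magnify tail behavior. The remedy is that restricting duality to \emph{Lipschitz} convex $\varphi$ (which is allowed by Theorem~\ref{Kantorovich T bar}(3)) gives a linear envelope $|Q_{\theta_{\rho_n,t,n}}\varphi(x)|\leq L|x|+C$ uniform in $n$, and the elementary Chernoff bound $\mu_{n,\rho_n}(\{x\geq k\})\leq (e\lambda/k)^k$, uniform in $n$ when $\rho_n=\lambda/n$, supplies an exponentially decaying majorant. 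Combined with dominated convergence, this closes the limit argument, and hence the proof.
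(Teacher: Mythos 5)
The paper does not actually prove Proposition~\ref{poisson}: it is cited from \cite{GRST16}, and the text only offers the one-sentence hint that one should exploit the weak convergence $\mu_{n,\rho_n}\to p_\lambda$ with $\rho_n=\lambda/n$. Your proposal fleshes out exactly that hint in the natural way --- dualize Corollary~\ref{pomme} via Theorem~\ref{Kantorovich T bar}(3) and Proposition~\ref{prop:bg}, pass to the limit in the resulting exponential inequalities, and dualize back --- so the route is the intended one, and your supporting observations (the linear envelope $|Q_{\theta_{\rho_n,t,n}}\varphi|\le L|x|+C$ uniform in $n$, the Chernoff bound $\mu_{n,\rho_n}(\{x\ge k\})\le (e\lambda/k)^k$ uniform in $n$, hence an integrable dominating function for both $e^{Q\varphi}$ and $e^{-\varphi}$) are correct.

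There is, however, a real gap in your justification of the pointwise convergence $Q_{\theta_{\rho_n,t,n}}\varphi(x)\to Q_{c_{\lambda,t}}\varphi(x)$. Your reason --- ``the infimum in $Q$ localizes to a bounded region'' --- does not apply here: the limiting costs $c_{\lambda,0}$ and $c_{\lambda,1}$ vanish identically on the half-line $h>0$ (and the prelimit costs are only $O(1/n)$ there), so for a decreasing convex Lipschitz $\varphi$ the infimizing $y$ in $\inf_y\{\varphi(y)+\theta(x-y)\}$ escapes to $-\infty$; the infimum does not localize. More importantly, the half of the convergence you actually need is $\liminf_n Q_{\theta_{\rho_n,t,n}}\varphi\ge Q_{c_{\lambda,t}}\varphi$ (the $\limsup\le$ direction is the easy one, since $\theta_{\rho_n,t,n}\to c_{\lambda,t}$ pointwise gives $\limsup_n Q_{\theta_{\rho_n,t,n}}\varphi\le\varphi(y)+c_{\lambda,t}(x-y)$ for each fixed admissible $y$), and the $\liminf$ direction does not follow from pointwise convergence of the costs alone. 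It requires a quantitative lower bound such as $\theta_{\rho_n,t,n}\ge c_{\lambda,t}-\varepsilon_n$ uniformly on the relevant domain, or a monotonicity argument in $n$, neither of which you supply. Until that step is filled in, the interchange of limit and infimum is unjustified, even though the dominated-convergence machinery you set up is sound.

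Two small secondary remarks. First, Proposition~\ref{prop:bg} is stated under the hypothesis that duality holds for the cost, so you need Theorem~\ref{Kantorovich T bar} applied with $X=\N$ (a countable set of isolated points) and the lower-semicontinuous convex $\theta=c_{\lambda,t}$; it is worth verifying that $c_{\lambda,t}$ satisfies the (mild) growth/coercivity requirements needed there, especially since $c_{\lambda,1}$ takes the value $+\infty$ on $h<-\lambda$. Second, note that the paper's formula $c_{\lambda,t}:=\lim_n n\theta_{\rho_n,t}(\cdot/n)$ preserves the index $t$, while comparing Corollary~\ref{pomme} with Proposition~\ref{poisson} suggests the indices $0$ and $1$ have been swapped in the latter (the Remark after Proposition~\ref{poisson} also writes $\overline{Q}_{c_{\lambda,0}}$ for what the proposition calls the second, $c_{\lambda,1}$, inequality). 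This is a typo in the paper rather than in your argument, but you should be careful to match the $\mathbf{T}^+$/$\mathbf{T}^-$ direction of the binomial dual inequality with that of the claimed Poisson inequality when you apply Proposition~\ref{prop:bg} ``backwards'' at the end.
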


\begin{rem}

Observe that these transport inequalities  are optimal, \textit{i.e.}\ the constant $1$ cannot be improved. Indeed, \textit{e.g.}\ the second inequality proposition \ref{poisson} is equivalent, thanks to Proposition \ref{prop:bg}, to
$$
\exp\left\{\int \overline{ Q}_{c_{\lambda,0}}f \,dp_\lambda\right\} \int e^{-f} \,dp_\lambda \leq 1\,,
$$
which is an equality for $f(x)=-tx$, $x\in \R$, $t \geq 0$ (the same holds for the first inequality).
\end{rem}

\section{Weak transport-entropy  and  log-Sobolev type inequalities}

In this section, our aim is to give some explicit links between the weak transport-entropy inequalities introduced in Definition \ref{def:te} and functional inequalities of log-Sobolev type. 
Except for the first result below, we are not able to deal with general costs. 
Hence (except for Section \ref{pollution}), we restrict to the specific case (already of interest) of 
$\Tb_\theta$ (introduced in Section \ref{sec:particular cases}). Furthermore, to avoid technicalities, we may restrict to the particular choice $\theta(x)=\|x\|^2$ (for some norm on $\mathbb{R}^m$), even if most of the results below could be extended to more general convex functions (at the price of denser statements and more technical proofs). As an application, using the characterization of $\overline{\T}_2^-$ by means of log-Sobolev type inequalities and results from \cite{AD05}, we may give more examples of measures satisfying such a transport-entropy inequality on the line.

\subsection{Transport-entropy and $(\tau)$-log-Sobolev inequalities} \label{pollution}

In this section, we generalize the notion of $(\tau)$-log-Sobolev inequality introduced in 
\cite{GRS11bis} (see also \cite{GRS13}) and describe some connection to
weak transport-entropy inequalities. 


First we need some notation. 
Given $\lambda >0$ and $\varphi \in \Phi_\gamma(X)$, define
\[
R_c^\lambda \varphi(x) := \inf_{p \in \mathcal{P}_\gamma(X)} \left\{ \int \varphi (y)\,p(dy) + \lambda c(x,p) \right\} , \qquad x \in X .
\]
Observe that $R_c^1=R_c$, where $R_c$ is defined in Theorem \ref{Kantorovich general}.
Following \cite{GRS11bis}, we introduce the $(\tau)$-log-Sobolev inequality as follows. We recall that for any non-negative function $g$, one denotes $\ent_\mu(g) = \int g\log\left(\frac{g}{\int g\,d\mu}\right)\,d\mu.$

\begin{defi}[$(\tau)-\mathbf{LSI}_c(\lambda,C)$] \label{def:taulogosb}
Let  $\gamma \colon \R_+\to \R_+$ be a lower-semicontinuous function satisfying \eqref{gamma},
$c : X \times \mathcal{P}_\gamma(X) \to [0,\infty)$ and $C \in (0,\infty)$.
%
Then
 $\mu \in \mathcal{P}_\gamma(X)$ is said to satisfies the $(\tau)$-log-Sobolev inequality
 with constant $C, \lambda$ and cost $c$ (or in short $(\tau)-\mathbf{LSI}_c(\lambda,C)$)
 if, for all $f$ with $\int f e^f d\mu < \infty$, it holds
 \begin{equation}\label{taulogosb}
 \ent_\mu(e^f) \leq C \int (f-R_c^\lambda f) e^f\,d\mu .
 \end{equation}
\end{defi}


The following result extends \cite[Theorem 2.1]{GRS11bis}.

\begin{prop} \label{ttau}
Let  $\gamma \colon \R_+\to \R_+$ be a lower-semicontinuous function satisfying \eqref{gamma} and
$c : X \times \mathcal{P}_\gamma(X) \to [0,\infty)$ be a cost function. 
If $\mu \in \mathcal{P}_\gamma(X)$ satisfies $\T_c^-(b)$, then it satisfies 
$(\tau)-\mathbf{LSI}_c(\lambda,\frac{1}{1-\lambda b})$ for all $\lambda \in (0,1/b)$.
\end{prop}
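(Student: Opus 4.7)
The plan is to derive the inequality directly from the definition of $\T_c^-(b)$, bypassing the Bobkov--Götze dual form (and thereby avoiding any duality hypothesis on $c$). The key observation is that for the rescaled cost $c_\lambda := \lambda c$ one has $R_{c_\lambda} = R_c^\lambda$ and $\mathcal{T}_{c_\lambda} = \lambda \mathcal{T}_c$, so $\T_c^-(b)$ gives $\mathcal{T}_{c_\lambda}(\mu|\nu) \leq \lambda b H(\nu|\mu)$ for all $\nu \in \mathcal{P}_\gamma(X)$.

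Given $f$ with $\int f e^f \, d\mu < \infty$ (so that $Z := \int e^f \, d\mu < \infty$), I would introduce the tilted probability $\nu_f := e^f \mu / Z$. The identities $H(\nu_f|\mu) = \int f \, d\nu_f - \log Z$ and $\ent_\mu(e^f) = Z \cdot H(\nu_f|\mu)$ translate the desired LSI into an inequality between integrals against $\nu_f$ and against $\mu$. The core step is a coupling-based upper bound on $\int R_c^\lambda f \, d\nu_f$: for any coupling $\pi(dxdy) = \nu_f(dx)\,p_x(dy) \in \Pi(\nu_f,\mu)$, the very definition of $R_c^\lambda$ gives $R_c^\lambda f(x) \leq \int f\, dp_x + \lambda c(x,p_x)$; integrating with respect to $\nu_f$ (and using that the second marginal is $\mu$), then optimizing over $\pi$, produces
\[
\int R_c^\lambda f \, d\nu_f \;\leq\; \int f \, d\mu + \lambda \mathcal{T}_c(\mu|\nu_f) \;\leq\; \int f \, d\mu + \lambda b\, H(\nu_f|\mu).
\]

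Combining this estimate with Jensen's inequality $\int f \, d\mu \leq \log Z$ and substituting $H(\nu_f|\mu) = \int f \, d\nu_f - \log Z$ gives
\[
\int R_c^\lambda f \, d\nu_f \;\leq\; (1-\lambda b)\log Z + \lambda b \int f \, d\nu_f,
\]
which rearranges to $(1-\lambda b)\,H(\nu_f|\mu) \leq \int (f - R_c^\lambda f) \, d\nu_f$. Multiplying by $Z$ and rewriting $Z\int g \, d\nu_f = \int g e^f\, d\mu$ yields exactly $\ent_\mu(e^f) \leq \frac{1}{1-\lambda b}\int (f - R_c^\lambda f)\, e^f d\mu$, as required (the constraint $\lambda < 1/b$ being exactly what makes the coefficient positive). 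There is no genuine obstacle: the whole scheme is a short chain consisting of a coupling inequality, a change of measure, and Jensen. The only mild care needed is to verify that all integrals $\int f \, d\mu$, $\int f \, d\nu_f$, $\int R_c^\lambda f \, d\nu_f$ are well-defined in $[-\infty,+\infty)$, which is routine under the standing integrability hypothesis on $f$.
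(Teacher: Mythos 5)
Your proof is correct and follows essentially the same route as the paper: introduce the tilted measure $\nu_f$, bound $R_c^\lambda f$ via the defining coupling inequality, optimize to produce $\lambda \mathcal{T}_c(\mu|\nu_f)$, apply $\T_c^-(b)$, and close with Jensen's inequality $\int f\,d\mu \leq \log Z$. The paper organizes the chain slightly differently (bounding $H(\nu_f|\mu)$ from the start rather than $\int R_c^\lambda f\,d\nu_f$), but the two derivations use the same ingredients and are equivalent.
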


\begin{rem} \label{rem:HJ}
In $\mathbb{R}^n$ \cite{GRS11bis}, and more generally in metric spaces \cite{GRS13}, if one considers the usual transport cost $\mathcal{T}_2$ (with cost $\omega(x,y)=d(x,y)^2$), it is proved that the corresponding $\T_2^-(b)$ is actually equivalent to some $(\tau)$-log-Sobolev inequality. In order to get such a result in the setting of the present paper, one would need to develop a general Hamilton-Jacobi theory which is not available at present (see \cite{Shu} for some developments). This is the primary reason for us restricting ourselves to the specific case of the ``bar" cost in the next sections.
\end{rem}

\begin{proof}
Fix a function $f: X \to \mathbb{R}$ with $\int f e^f d\mu < \infty$, $\lambda \in (0,1/C)$ 
and define $d\nu_{f}=\frac{e^f}{\int e^f\,d\mu}\,d\mu$. One has 
\begin{align*}
H(\nu_{f}|\mu) =
\int \log\left( \frac{e^f}{\int e^f\,d\mu}\right)\frac{e^f}{\int e^f\,d\mu}\,d\mu  =
\int f\,d\nu_{f}-\log \int e^f\,d\mu  \leq 
\int f\,d\nu_{f}-\int f\,d\mu,
\end{align*}
where the last inequality comes from Jensen's inequality. 
Consequently, if $\pi(dxdy)=\nu_f(dx)p_x(dy)$ is a probability measure on $X\times X$ with first marginal  $\nu_f$ and second marginal $\mu$,
\begin{align*}
H(\nu_{f}|  \mu)
 \leq 
\iint (f(x)-f(y))\,\pi(dxdy)  =
\int \left(\int (f(x)-f(y))\,p_x(dy) \right) \nu_f(dx) .
\end{align*}
It follows from the definition of $R_c^\lambda$ that 
$-\int f(y)\,p_x(dy) \leq -R_c^\lambda f(x) + \lambda c(x,p_x)$ for all $x \in X$, so using that
$p_x$ is a probability measure,
\begin{align*}
\int (f(x)-f(y))\,p_x(dy) 
 = 
f(x) - \int f(y)\,p_x(dy) \leq f(x) - R_c^\lambda f(x) + \lambda c(x,p_x),
\qquad x \in X .
\end{align*}
Hence,
\begin{equation*}
H(\nu_{f}|  \mu)
\leq 
\int \left(f(x) - R_c^\lambda f(x) \right) \nu_f(dx) + \lambda \int c(x,p_x)\,\nu_f(dx) .
\end{equation*}
Optimizing over all $\pi$ (or equivalently over all $p_x$) with marginals $\nu_f$ and $\mu$, it holds
\begin {align*}
H(\nu_{f}|  \mu)
& \leq 
\int \left(f(x) - R_c^\lambda f(x) \right) \nu_f(dx) + \lambda \mathcal{T}_c(\mu|\nu_f) \\
& 
\leq \frac{1}{\int e^f\,d\mu} \int \left(f - R_c^\lambda f \right) e^fd\mu + 
\lambda b H(\nu_{f}|  \mu) .
\end{align*}
The thesis follows by noticing that $\left(\int e^f\,d\mu\right) H(\nu_{f}|  \mu) = \ent_\mu \left(e^f\right)$.
\end{proof}

\subsection{Weak transport-entropy inequalities $\mathbf{\Tb}_2^\pm$}

In this section we give different equivalent forms of $\mathbf{\overline{T}}^\pm_2$ in terms of the classical log-Sobolev-type inequality of Gross \cite{G75} \emph{restricted} to convex/concave functions, to the $(\tau)$-log-Sobolev inequality \eqref{taulogosb}
and to the hypercontractivity of the (classical) Hamilton-Jacobi semi-group, also restricted to some class of functions.

Throughout this section, we consider the cost 
\[
c(x,p)=\frac{1}{2} \left\|x - \int y\,p(dy) \right\|^2,\qquad x \in \mathbb{R}^m, p \in \mathcal{P}_1(\mathbb{R}^m),
\]  
where $\|\cdot\|$ is a norm on $\mathbb{R}^m$ whose dual norm we denote by $\| \cdot \|_*$. We recall that $\|x\|_* = \max_{y \in \R^m, \|y\|=1} x\cdot y.$
Recall the definition of $\Tb_2$ from Section \ref{sec:particular cases} and the  $(\tau)$-log-Sobolev inequality \eqref{taulogosb} defined with such a cost. As usual,  $\| f \|_p:=(\int |f|^p\,d\mu)^\frac{1}{p}$, $p \in \mathbb{R}^*$
(including negative real numbers) and $\| f \|_0:=\exp\{\int \log |f|\,d\mu\}$
whenever this makes sense. Also, given $\varphi \colon \mathbb{R}^m \to \mathbb{R}$, $t>0$, let
\begin{equation}\label{eq:qtpt}
Q_{t}\varphi(x) : = \inf_{y \in \mathbb{R}^m} \left\{ \varphi(y) + \frac{1}{2t} \left \|x - y \right\|^2 \right\} ,
\quad x \in \mathbb{R}^m ,
\end{equation}
$$
P_{t}\varphi(x) : = \sup_{y \in \mathbb{R}^m} \left\{ \varphi(y) - \frac{1}{2t} \left \|x - y \right\|^2 \right\},
\quad x \in \mathbb{R}^m.
$$
We will make use of the following observation (see Theorem \ref{Kantorovich T bar}): for any $\varphi \colon \mathbb{R}^m \to \mathbb{R}$ convex, Lipschitz and bounded from below, it holds
\[
Q_1 \varphi = R_c \varphi = \inf_{p \in \mathcal{P}_1(X)}\left\{ \int \varphi(y)\,p(dy) + c(x,p) \right\}.
\]

In the result below, we assume that $\|\,\cdot\,\|_*$ is \emph{strictly convex}, i.e. it is such that 
\begin{equation}\label{strict convexity}
 (x\neq y \text{ with } \|x\|_*=\|y\|_*=1) \Rightarrow \|(1-t)x+ty\|_*<1.
\end{equation}
This assumption is made to ensure that the operation $f\mapsto Q_tf$ transforms a convex function into a $\mathcal{C}^1$-smooth convex function (this well known property is recalled in Lemma \ref{regul} below). The proof could certainly be adapted without this assumption, but we dont want to enter into these technical complications.

\begin{rem}
It is well known that the strict convexity of the dual norm $\|\,\cdot\,\|_*$ is equivalent to the $\mathcal{C}^1$-smoothness of the initial norm $\|\,\cdot\,\|$ on $\R^m\setminus \{0\}.$ These equivalent conditions are fulfilled for instance by the classical $p$-norms : $\|x\|_p=[\sum_{i=1}^m |x_i|^p]^{1/p}$, $x\in \R^m$, for $1<p<+\infty.$
\end{rem}

\begin{thm} \label{th:bgl-}
Suppose that $\|\,\cdot\,\|_*$ is a strictly convex norm and let $\mu \in \mathcal{P}_1(\mathbb{R}^m)$. 
Then the following are equivalent:
\begin{itemize}
\item [(i)] there exists $b>0$ such that $\overline{\T}_2^-(b)$ holds;
\item[(ii)] there exists $\lambda, C >0$ such that $(\tau)-\mathbf{LSI}_c(\lambda,C)$ holds;
\item[(iii)] there exists $\rho  >0$ such that for all $\mathcal{C}^1$-smooth function $\varphi \colon \mathbb{R}^m \to \mathbb{R}$ 
convex, Lipschitz and bounded from below, it holds
\begin{equation} \label{eq:lsconvex}
\ent_\mu (e^\varphi) \leq \frac{1}{2\rho } \int \|\nabla \varphi\|_*^2 e^\varphi\,d\mu .
\end{equation}
\item[(iv)] There exists $\rho '>0$ such that for every $t>0$, every $a \geq 0$ and every $\varphi \colon \mathbb{R}^m \to \mathbb{R}$ convex, Lipschitz and bounded from below, it holds
\begin{equation} \label{eq:hyperconvex}
\| e^{Q_{t} \varphi} \|_{a + \rho ' t} \leq \| e^{\varphi} \|_{a} .
\end{equation}
\end{itemize}
Moreover 
\begin{itemize}
\item[] $(i) \Rightarrow (ii)$ for all $\lambda \in (0,1/b)$ and with $C=1/(1-b\lambda)$;
\item[] $(ii) \Rightarrow (iii)$ with $\rho  = \frac{\lambda}{C}$;
\item[] $(iii) \Rightarrow (iv)$ with $\rho '=\rho $;
\item[] $(iv) \Rightarrow (i)$ with $b=\frac{1}{\rho '}$.
\end{itemize}
\end{thm}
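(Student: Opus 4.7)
The implication $(i) \Rightarrow (ii)$ is immediate from Proposition \ref{ttau}: taking $\lambda \in (0, 1/b)$ yields $(\tau)-\mathbf{LSI}_c(\lambda, 1/(1-\lambda b))$, so $C = 1/(1-\lambda b)$ and no further argument is needed here.

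For $(ii) \Rightarrow (iii)$, I would first identify $R_c^\lambda f$ on convex functions. By Theorem \ref{Kantorovich T bar}(2) applied to $\theta = \lambda \|\cdot\|^2/2$, for any convex, Lipschitz, bounded-from-below $f$ one has $R_c^\lambda f (x) = Q_{1/\lambda} f(x) = \inf_y\{f(y) + \frac{\lambda}{2}\|x-y\|^2\}$. The pointwise convexity inequality $f(y) \geq f(x) + \nabla f(x)\cdot(y-x)$ then yields
\[
f(x) - Q_{1/\lambda} f(x) = \sup_{y}\Bigl\{ f(x)-f(y) - \tfrac{\lambda}{2}\|x-y\|^{2}\Bigr\} \leq \sup_{z}\Bigl\{ \nabla f(x)\cdot z - \tfrac{\lambda}{2}\|z\|^{2}\Bigr\} = \tfrac{1}{2\lambda}\|\nabla f(x)\|_{*}^{2}.
\]
Plugging into \eqref{taulogosb} with $f = \varphi$ gives \eqref{eq:lsconvex} with $\rho = \lambda/C$.

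For $(iii) \Rightarrow (iv)$, I would run the classical Bobkov-Gentil-Ledoux differentiation argument restricted to convex initial data. Fix $\varphi$ convex, Lipschitz, bounded from below, and $a \geq 0$; set $F(t) = \|e^{Q_t \varphi}\|_{a + \rho t}$ and $g_t = (a+\rho t) Q_t \varphi$. Using the Hopf–Lax equation $\partial_t Q_t\varphi = -\tfrac12 \|\nabla Q_t\varphi\|_*^2$ (valid on $\mathcal{C}^1$-smooth points, which is almost everywhere once one invokes the regularity lemma cited after the theorem), an algebraic computation reduces $(\log F)'(t) \leq 0$ to the log-Sobolev inequality
\[
\rho\, \ent_\mu(e^{g_t}) \leq \tfrac{1}{2}\int \|\nabla g_t\|_*^{2}\, e^{g_t}\, d\mu.
\]
The key observation is that $Q_t \varphi$ is convex (infimum-convolution of two convex functions is convex, the dual inequality amounts to checking $(Q_t\varphi)^* = \varphi^* + t\|\cdot\|_*^2/2$), hence so is $g_t$, so inequality \eqref{eq:lsconvex} applies with $\rho' = \rho$. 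This yields $F(t) \leq F(0)$, which is exactly \eqref{eq:hyperconvex}.

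For $(iv) \Rightarrow (i)$, I would apply \eqref{eq:hyperconvex} with $t=1$ and $a=0$ to obtain
\[
\Bigl(\int e^{\rho' Q_1 \varphi}\, d\mu\Bigr)^{1/\rho'} \leq \exp\Bigl(\int \varphi\, d\mu\Bigr)
\]
for every $\varphi$ convex, Lipschitz, bounded from below. Setting $b = 1/\rho'$ and raising to the $b$-th power gives
\[
\Bigl(\int e^{Q_{1}\varphi / b}\, d\mu\Bigr)^{b} \exp\Bigl(-\int \varphi\, d\mu\Bigr) \leq 1,
\]
which is precisely the Bobkov–Götze dual characterization $(ii'')$ of $\overline{\T}_2^-(b)$ from Proposition \ref{prop:bg}, specialized to the bar cost and restricted to convex Lipschitz functions as allowed by the last sentence of that proposition. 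The main obstacles I expect are technical rather than conceptual: justifying the differentiation under the integral sign in step $(iii)\Rightarrow(iv)$, and ensuring $Q_t \varphi$ is sufficiently regular (convex, Lipschitz, $\mathcal{C}^1$-smooth) so that the log-Sobolev inequality for convex functions and the Hopf–Lax equation both apply—this is where the strict convexity assumption on $\|\cdot\|_*$ enters via the regularity lemma referenced immediately after the theorem statement.
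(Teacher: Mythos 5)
Your proposal is correct and follows essentially the same route as the paper: $(i)\Rightarrow(ii)$ via Proposition \ref{ttau}, $(ii)\Rightarrow(iii)$ via the first-order convexity bound giving $\varphi-R_c^\lambda\varphi\leq\frac{1}{2\lambda}\|\nabla\varphi\|_*^2$ (the paper estimates the supremum over $p$ directly by Cauchy--Schwarz, you pass through the identification $R_c^\lambda\varphi=Q_{1/\lambda}\varphi$ on convex functions, which is the same computation), $(iii)\Rightarrow(iv)$ via the Bobkov--Gentil--Ledoux differentiation along the Hamilton--Jacobi semigroup with the convexity and $\mathcal{C}^1$-regularity of $Q_t\varphi$ from Lemma \ref{regul}, and $(iv)\Rightarrow(i)$ by taking $t=1$, $a=0$ and invoking Proposition \ref{prop:bg} together with $Q_1\varphi=R_c\varphi$ on convex Lipschitz functions. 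The technical points you flag (differentiation under the integral, regularity of $Q_t\varphi$) are exactly those the paper handles by Lemma \ref{regul} and by deferring to \cite{BGL01}.
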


\begin{rem}
The implication $(ii) \Rightarrow (i)$ is a variant of a well known result due to Otto and Villani \cite{OV00} showing that the logarithmic Sobolev inequality implies the classical transport-entropy inequality $\T_2$. Here we will make use of the arguments developed in \cite{BGL01}. On the other hand, in the classical setting, the equivalence $(i) \Longleftrightarrow (ii)$ was studied and developed in 
\cite{GRS11bis,GRS13,GRS14}.

Observe that the relations between the various constants are almost optimal. Indeed, starting from $\overline{\T}_2^-(b)$, we deduce from $(ii) \Rightarrow (iii)$ that
the log-Sobolev inequality \eqref{eq:lsconvex} holds with 
$\rho =\sup_{\lambda \in (0,1/b) } \lambda /C=\sup_{\lambda \in (0,1/b)} \lambda (1-b\lambda) = \frac{1}{4b}$ (the maximum is reached at $\lambda=1/(2b)$). From this we deduce $(iv)$ with $\rho '=1/(4b)$ which gives back $\overline{\T}_2^-(4b)$, and in all we are off only by a factor $4$.

We may make use of the above result to obtain example of measures satisfying $\overline{\T}_2^-(b)$ in Section \ref{sec:T_2 on the line}. Indeed, the ``convex" log-Sobolev inequality \eqref{eq:lsconvex} was studied in the literature \cite{AD05}.
\end{rem}

We will use the following classical smoothing property of the infimum convolution operator.
\begin{lem}\label{regul}Let $\|\,\cdot\,\|$ be a norm on $\R^m$ whose dual norm is strictly convex. If $\varphi : \R^m \to \R$ is a convex function, then for all $t>0$, the function $Q_t\varphi$ defined by
\[
Q_t\varphi(x)=\inf_{y\in \R^m}\left\{ \varphi(y) + \frac{1}{2t}\|x-y\|^2\right\},\qquad x\in \R^m.
\] is also convex and $\mathcal{C}^1$-smooth on $\R^m.$
\end{lem}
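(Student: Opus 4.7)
The plan is to handle convexity and $C^1$-smoothness separately, treating convexity as a warm-up and regularity as the substantive step.

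For convexity, I would first observe that $(x,y)\mapsto \varphi(y)+\tfrac{1}{2t}\|x-y\|^2$ is jointly convex on $\R^m\times\R^m$: the first summand depends only on $y$ and is convex by hypothesis, while the second is the composition of the convex function $u\mapsto \|u\|^2$ with the linear map $(x,y)\mapsto x-y$. Taking the infimum in $y$ of a jointly convex function then yields a convex function of $x$, which gives the convexity of $Q_t\varphi$.

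For the $C^1$ part, the key preliminary is that $g(u):=\tfrac{1}{2t}\|u\|^2$ is $C^1$ on all of $\R^m$. Here I would invoke the classical duality between strict convexity of a norm and Gâteaux (hence, in finite dimension, $C^1$) differentiability of its dual norm away from the origin: since $\|\cdot\|_*$ is strictly convex, $\|\cdot\|$ is $C^1$ on $\R^m\setminus\{0\}$, and therefore so is $g$. The elementary identity $\|\nabla g(u)\|_* = \|u\|/t$ then forces $g$ to be differentiable at $0$ with $\nabla g(0)=0$ and $\nabla g$ continuous at the origin, yielding global $C^1$ regularity of $g$. I also need that for every $x$ the infimum defining $Q_t\varphi(x)$ is attained: since $\varphi$ is convex and finite-valued on $\R^m$ it is continuous and admits an affine minorant (via any subgradient), so $y\mapsto \varphi(y)+\tfrac{1}{2t}\|x-y\|^2$ is coercive, and its minimum is achieved at some $y^\ast=y^\ast(x)$.

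The main step is then to compute the subdifferential of $Q_t\varphi$ via inf-convolution calculus. Since $Q_t\varphi$ is the inf-convolution of $\varphi$ and $g$, the classical formula of convex analysis (Rockafellar) gives, at any point $x$ where the infimum is attained at $y^\ast$,
\[
\partial(Q_t\varphi)(x) \;=\; \partial\varphi(y^\ast)\cap\partial g(x-y^\ast).
\]
Because $g$ is $C^1$, $\partial g(x-y^\ast)=\{\nabla g(x-y^\ast)\}$ is a singleton, so $\partial(Q_t\varphi)(x)$ contains at most one element; by convexity and finiteness of $Q_t\varphi$ it is non-empty, hence exactly a singleton. This means $Q_t\varphi$ is differentiable at every point of $\R^m$, and a convex function differentiable everywhere on an open set is automatically $C^1$ there (continuity of the gradient is automatic for convex functions). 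I expect the main obstacle to be the careful verification that $g$ is genuinely $C^1$ at the origin under our assumption on $\|\cdot\|_*$, together with an accurate invocation of the inf-convolution subdifferential formula; both facts are classical, but they are the pivot on which the regularity argument rests.
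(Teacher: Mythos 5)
Your argument is correct, but it takes a genuinely different route from the paper. The paper works on the dual side: it computes the Fenchel conjugate $(Q_t\varphi)^* = \varphi^* + \tfrac{t}{2}\|\cdot\|_*^2$, observes that the strict convexity of $\|\cdot\|_*$ is equivalent to the strict convexity of $\|\cdot\|_*^2$ and hence of the sum, and then invokes the classical duality principle (e.g.\ Hiriart-Urruty--Lemar\'echal, Theorem E.4.1.1) that a finite convex function whose conjugate is strictly convex is $\mathcal{C}^1$. You instead work on the primal side: you prove that the quadratic kernel $g=\tfrac{1}{2t}\|\cdot\|^2$ is itself $\mathcal{C}^1$ (via the strict-convexity/smoothness duality for norms and a separate check at the origin), verify attainment of the infimum by coercivity, and apply the exact inf-convolution subdifferential rule $\partial(Q_t\varphi)(x)=\partial\varphi(y^*)\cap\partial g(x-y^*)$ to conclude that $\partial(Q_t\varphi)(x)$ is a singleton everywhere, hence that $Q_t\varphi$ is differentiable, hence $\mathcal{C}^1$ by the automatic continuity of gradients of finite convex functions. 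The paper's proof is shorter and avoids the need to establish attainment or the regularity of $g$ at the origin, pushing all of that into a single cited duality theorem; yours is more explicit and self-contained, making visible exactly which structural facts (smoothness of the kernel, exactness of the inf-convolution, the Rockafellar subdifferential formula) are responsible for the regularity. Both are standard and both are sound; your intermediate claims --- the identity $\|\nabla g(u)\|_*=\|u\|/t$, the coercivity argument, the singleton-subdifferential criterion --- all check out.
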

\begin{proof}
The fact that $Q_t\varphi$ is convex is well-known and easy to check. Consider the Fenchel-Legendre transform of $Q_t\varphi$ defined by 
$$
(Q_t\varphi)^*(x) = \sup_{y \in \R^m}\{x\cdot y - Q_t\varphi(y)\}, \qquad x \in \mathbb{R}^m .
$$ 
A simple calculation shows that $(Q_t\varphi)^*(x) = \varphi^*(x) + \frac{1}{2}\|x\|_*^2$, for all $x\in \R^m.$ By assumption, $\|\,\cdot\,\|_*$ satisfies \eqref{strict convexity}. This easily implies that (and is in fact equivalent to) the convex function $x\mapsto \|x\|_*^2$ is strictly convex (in the usual sense : if $x\neq y$, then $\|(1-t)x+ty\|_*^2 < (1-t)\|x\|_*^2 + t \|y\|_*^2$, for all $t\in (0,1)$). Therefore, the function $x\mapsto (Q_t\varphi)^*(x)$ is strictly convex on $\R^m.$ A classical result in Fenchel-Legendre duality (see e.g. \cite[Theorem E.4.1.1]{HUL01}) then implies that $(Q_t\varphi)^{**}=Q_t\varphi$ is $\mathcal{C}^1$-smooth on $\R^m.$
\end{proof}

\begin{proof}[Proof of Theorem \ref{th:bgl-}]
That $(i)$ implies $(ii)$ is given in Proposition \ref{ttau}.

To prove that $(ii)$ implies $(iii)$, fix  $\varphi \colon \mathbb{R}^m \to \mathbb{R}$ a $\mathcal{C}^1$-smooth function which is convex, Lipschitz and bounded from below. Then, by convexity, for all $x,y \in \mathbb{R}^m$, it holds
$$
\varphi(x) - \varphi(y) \leq  \nabla \varphi(x) \cdot (x-y) \,.
$$
where $u\cdot v$ denotes the scalar product of $u,v \in \mathbb{R}^m$.
Hence, given $\lambda>0$ and $x \in \mathbb{R}^m$, by the Cauchy-Schwarz inequality $u \cdot v \leq \frac{1}{2\lambda} \|u\|_*^2 + \frac{\lambda}{2} \|v\|^2$, $u, v \in \mathbb{R}^m$, we have
\begin{align*}
\varphi(x)  - R_c^\lambda \varphi(x) & = 
\sup_{p \in \mathcal{P}_1(\mathbb{R}^m)} \left\{  \int [\varphi(x) - \varphi(y)] \,p(dy) - \frac{\lambda}{2} \|x-\int y \,p(dy) \|^2 \right\} \\
& \leq
\sup_{p \in \mathcal{P}_1(\mathbb{R}^m)} \left\{  \int \nabla \varphi(x) \cdot (x-y) \,p(dy) - \frac{\lambda}{2} \|x-\int y \,p(dy) \|^2 \right\} \\
& = 
\sup_{p \in \mathcal{P}_1(\mathbb{R}^m)} \left\{  \nabla \varphi(x) \cdot (x-\int y  \,p(dy)) - \frac{\lambda}{2} \|x-\int y \,p(dy) \|^2 \right\}\\
& \leq 
\frac{1}{2 \lambda} \| \nabla \varphi(x) \|_*^2 \,.
\end{align*}
The expected result follows.

To prove that $(iii)$ implies $(iv)$, we follow the now classical argument from \cite{BGL01} based on the Hamilton-Jacobi equation satisfied by $(t,x)\mapsto Q_t\varphi(x).$ Since we do not assume that $\mu$ is absolutely continuous with respect to Lebesgue measure (one of our main motivations is to study transport inequalities for \emph{discrete} measures), there are some technical difficulties to clarify in order to adapt the proof of \cite[Theorem 2.1]{BGL01} to our framework. First, as shown in \cite{GRS14} or \cite{AGS14}, the following Hamilton-Jacobi equation holds for \emph{all} $t>0$ and $x\in \R^m$ :
\begin{equation}\label{HJ}
\frac{d^+}{dt} Q_t\varphi (x) = - \frac{1}{2} |\nabla^-Q_t\varphi|^2(x),
\end{equation}
where, $d^+/dt$ stands for the right derivative, and by definition $|\nabla^-f|(x)$ is a notation for the \emph{local slope} of a function $f$ at a point $x$, defined by  
\[
|\nabla^-f|(x)= \limsup_{y\to x} \frac{[f(y)-f(x)]_-}{\|y-x\|}.
\] 
Here, since $\varphi$ is \emph{convex}, the regularization property of the inf-convol\-ution operator $Q_t$ given in Lemma \ref{regul} implies that for all $t>0$, the function $x\mapsto Q_tf(x)$ is actually $\mathcal{C}^1$-smooth on $\R^m.$ It is then easily checked that $|\nabla^- Q_t\varphi|(x) = \|\nabla Q_t\varphi(x)\|_*.$ Moreover, according to Lemma \ref{regul} again, if $\varphi \colon \mathbb{R}^m \to \mathbb{R}$ is convex, then so does $Q_t \varphi$. Therefore, \eqref{eq:lsconvex} can be applied to the function $Q_t\varphi$ for all $t>0.$ To complete the proof of the implication, we leave it to the reader to follow the proof of \cite[Theorem 2.1]{BGL01} (see also \cite[Theorem 1.11]{GRS14}).

Finally we prove that $(iv)$ implies $(i)$.
We observe that, at $t=1$ and $a=0$, \eqref{eq:hyperconvex} means precisely that,
$$
\int e^{\rho ' Q_1 \varphi} \,d\mu \leq e^{\rho ' \int \varphi \,d\mu} .
$$
This is equivalent to $\mathbf{\overline{T}}^-_2(1/\rho ')$, thanks to Proposition \ref{prop:bg} and to the fact that, as recalled above, $Q_1 \varphi = R_c \varphi = \inf\limits_{p \in \mathcal{P}_1(X)}\left\{ \int \varphi(y)\,p(dy) + c(x,p) \right\}$, for any 
$\varphi \colon \mathbb{R}^m \to \mathbb{R}$ convex, Lipschitz and bounded from below. This completes the proof.
\end{proof}

In order to give a series of equivalent formulations of $\overline{\T}_2^+(b)$, we need to introduce the notion of $c$-convexity
(see e.g.\ \cite{Vil09}).
We recall that if $c:X\times X$ is some cost function on a space $X$, a function $f:X\to \R\cup\{\pm \infty\}$ is said to be $c$-convex, if there exists some function $g:X \to \R\cup\{\pm \infty\}$ such that
\[
f(x) =\sup_{y\in X}\{g(y) - c(x,y)\},\qquad \forall x\in X.
\]
In what follows, we will use this notion with $c(x,y) = \frac{\lambda}{2} \|x-y\|^2$, $x,y\in \R^m$, where $\lambda>0$ and $\|\,\cdot\,\|$ is some norm on $X=\R^m$, such that $\|\,\cdot\,\|_*$ is a strictly convex norm in the sense of \eqref{strict convexity}.
In other words, a function $f \colon \mathbb{R}^m \to \R\cup\{\pm \infty\}$ is $\frac{\lambda}{2}  \|\,\cdot\,\|^2$-convex, if there exists $g \colon \mathbb{R}^m \to \R\cup\{\pm \infty\}$ such that $f=P_{\frac{1}{\lambda}}  g$ (recall the definition of $P_t$ from \ref{eq:qtpt}). In \cite[Proposition 2.2]{GRS14}, for example,  it is proved that $f$ is $\frac{\lambda}{2} \|\,\cdot\,\|^2$-convex if and only if $f = P_{\frac{1}{\lambda}} Q_{\frac{1}{\lambda}} f$. Furthermore, if $f$ is of class $\mathcal{C}^2$ and $\|\,\cdot\,\|=|\,\cdot\,|$ is the Euclidean norm, then
$f$ is $\frac{\lambda}{2}  |\,\cdot\,|^2$-convex if and only if $\mathrm{Hess} f \geq -\lambda \mathrm{Id}$ (as a matrix), where $\mathrm{Hess}$ denotes the Hessian (see e.g. \cite[Proposition 2.3]{GRS14}).

To avoid the use of too heavy a terminology, we will denote by $\mathcal{F}_\lambda(\R^m)$, $\lambda>0$, the class of all functions $f : \R^m\to \R$ that are \emph{concave}, \emph{Lipschitz}, \emph{bounded from above} and $\frac{\lambda}{2}\|\,\cdot\,\|^2$-convex.

\begin{rem}
According to Lemma \ref{regul}, if $g$ is concave on $\R^m$ and $\lambda>0$, then $Q_{1/\lambda}(-g)$ is convex and $\mathcal{C}^1$-smooth.
In particular, $f= - Q_{1/\lambda}(-g)$ is concave and $\mathcal{C}^1$-smooth. But $f= -Q_{1/\lambda}(-g) = P_{1/\lambda} (g)$ and thus $f$ is also $\frac{\lambda}{2}\|\,\cdot\,\|^2$-convex. Furthermore, if $g$ is assumed to be Lipschitz and bounded from above, then $f$ is also Lipschitz and bounded from above. This shows that the class $\mathcal{F}_\lambda (\R^m)\cap \mathcal{C}^1(\R^m)$ is not empty.
\end{rem}

\begin{thm} \label{th:bgl+}
Suppose that $\|\,\cdot\,\|_*$ is a strictly convex norm and let $\mu \in \mathcal{P}_1(\mathbb{R}^m)$. 
Then the following are equivalent:
\begin{itemize}
\item [(i)] there exists $b>0$ such that $\overline{\T}_2^+(b)$ holds;
\item[(ii)] there exist $\lambda, C >0$ such that for all $\varphi\in \mathcal{F}_\lambda(\R^m)$, it holds
\begin{equation} \label{eq:taulsob+}
\ent_\mu(e^\varphi) \leq C \int (\varphi - Q_{1/\lambda} \varphi)e^\varphi\,d\mu ;
\end{equation}
\item[(iii)] there exist $\rho , \lambda' >0$ such that for all $\mathcal{C}^1$-smooth function $\varphi\in \mathcal{F}_{\lambda'}(\R^m)$, it holds
\begin{equation} \label{eq:lsconvex2}
\ent_\mu (e^\varphi) \leq \frac{1}{2\rho } \int \|\nabla \varphi\|_*^2 e^\varphi\,d\mu .
\end{equation}
\end{itemize}
Moreover 
\begin{itemize}
\item[] $(i) \Rightarrow (ii)$ for all $\lambda \in (0,1/b)$ and with $C=1/(1-b\lambda)$;
\item[] $(ii) \Rightarrow (iii)$ for all $\lambda' \in (0, \lambda)$ and with $\rho  = \frac{\lambda- \lambda'}{C}$;
\item[] $(iii) \Rightarrow (i)$ with $b=\frac{\rho  + \lambda'}{\rho  \lambda'}$.
\end{itemize}
\end{thm}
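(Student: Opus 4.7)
The proof mirrors that of Theorem~\ref{th:bgl-}, with the class $\mathcal{F}_{\lambda'}(\mathbb{R}^m)$ of concave, Lipschitz, bounded-above, $\tfrac{\lambda'}{2}\|\cdot\|^2$-convex functions and the sup-convolution $P_t$ playing the roles of the convex Lipschitz bounded-below functions and the inf-convolution $Q_t$. I would establish $(i)\Rightarrow(ii)\Rightarrow(iii)\Rightarrow(i)$ in that order.

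For $(i)\Rightarrow (ii)$, I would adapt Proposition~\ref{ttau} to the $+$ setting. Fixing $\varphi\in\mathcal{F}_\lambda$ and setting $d\nu_\varphi = e^\varphi/Z\,d\mu$, the key pointwise estimate is obtained by combining the concavity of $\varphi$ (which yields $\int\varphi\,dp_x\leq \varphi(\bar p_x)$ and $\varphi(\bar p_x)-\varphi(x)\leq \nabla\varphi(x)\cdot(\bar p_x-x)$), the inequality $\|\nabla\varphi\|_*^2\leq 2\lambda(\varphi-Q_{1/\lambda}\varphi)$ (itself a direct consequence of plugging the concavity bound into the definition of $Q_{1/\lambda}\varphi$), and Young's inequality with parameter $1/\lambda$, giving
\[
\int\varphi\,dp_x-\varphi(x)\leq (\varphi-Q_{1/\lambda}\varphi)(x)+\lambda\,c(x,p_x).
\]
Combined with $\overline{\T}_2^+(b)$ to control the transport cost term $\overline{\mathcal{T}}_2(\nu_\varphi|\mu)\leq b\,H(\nu_\varphi|\mu)$, this produces $(1-\lambda b)H(\nu_\varphi|\mu)\leq \int(\varphi-Q_{1/\lambda}\varphi)\,d\nu_\varphi$, which is $(ii)$ with $C=1/(1-\lambda b)$ after multiplication by $Z$. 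Getting the target measure $\nu_\varphi$ (rather than $\mu$) on the right-hand side is the delicate point and requires a careful choice of coupling exploiting the $c$-convex representation $\varphi = P_{1/\lambda}(Q_{1/\lambda}\varphi)$.

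For $(ii)\Rightarrow (iii)$, fix a $\mathcal{C}^1$-smooth $\varphi\in\mathcal{F}_{\lambda'}$ with $\lambda'<\lambda$. The $\lambda'$-semi-concavity of $\varphi$ reads $\varphi(y)\geq \varphi(x)+\nabla\varphi(x)\cdot(y-x)-\tfrac{\lambda'}{2}\|x-y\|^2$; plugging this into $Q_{1/\lambda}\varphi(x)=\inf_y\{\varphi(y)+\tfrac{\lambda}{2}\|x-y\|^2\}$ and minimizing in $y$ yields $Q_{1/\lambda}\varphi(x)\geq \varphi(x)-\tfrac{1}{2(\lambda-\lambda')}\|\nabla\varphi(x)\|_*^2$, hence $\varphi-Q_{1/\lambda}\varphi\leq \tfrac{1}{2(\lambda-\lambda')}\|\nabla\varphi\|_*^2$. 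Substituting this pointwise bound into $(ii)$ gives $(iii)$ with $\rho=(\lambda-\lambda')/C$.

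For $(iii)\Rightarrow (i)$, I would run the Hamilton--Jacobi hypercontractivity argument of Bobkov--Gentil--Ledoux, adapted to the sup-convolution semigroup $P_t$. Since $P_t$ preserves concavity (sup of a jointly concave function is concave) and maps concave functions into $\mathcal{F}_{1/t}$, the scaled function $p(t)P_t\psi$ lies in $\mathcal{F}_{p(t)/t}$, so the restricted log-Sobolev $(iii)$ applies as long as $p(t)\leq\lambda' t$. Differentiating $F(t)=\tfrac{1}{p(t)}\log\int e^{p(t)P_t\psi}\,d\mu$ along the Hamilton--Jacobi equation $\partial_t P_t\psi = \tfrac{1}{2}\|\nabla P_t\psi\|_*^2$ and invoking $(iii)$ yields monotonicity of $F$ when both the admissibility constraint $p(t)\leq \lambda' t$ and a differential constraint linking $p'$ and $p$ (arising from the LSI coefficient $\rho$) are simultaneously met. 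Comparing $F$ at the endpoints of the admissible interval then gives the dual inequality $\log\int e^\psi\,d\mu\leq \int P_{1/b}\psi\,d\mu$ with $b=(\rho+\lambda')/(\rho\lambda')=1/\rho+1/\lambda'$, which by Proposition~\ref{prop:bg} is equivalent to $\overline{\T}_2^+(b)$. The main obstacle is precisely the balancing of these two simultaneous constraints, which produces the additive-in-reciprocal combination of $\rho$ and $\lambda'$ in the final constant $b$, in contrast with the single constraint appearing in the analogous step of Theorem~\ref{th:bgl-}.
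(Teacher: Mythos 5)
Your $(iii)\Rightarrow(i)$ is essentially the paper's argument (phrased with $P_t$ of concave functions instead of $\ell(t)Q_t$ of convex ones, an equivalent rewriting), and your $(ii)\Rightarrow(iii)$ reaches the correct pointwise bound $\varphi-Q_{1/\lambda}\varphi\le \tfrac{1}{2(\lambda-\lambda')}\|\nabla\varphi\|_*^2$, albeit via a semi-concavity inequality $\varphi(y)\ge\varphi(x)+\nabla\varphi(x)\cdot(y-x)-\tfrac{\lambda'}{2}\|x-y\|^2$ that is clean for the Euclidean norm but does not automatically follow from $\tfrac{\lambda'}{2}\|\cdot\|^2$-convexity for a general norm (the cross-term $\|y-\bar z\|^2-\|x-\bar z\|^2$ is no longer affine in $y-x$); the paper avoids this by working directly with the maximizer $\bar y$ of $P_{1/\lambda'}$ and the triangle inequality.

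The genuine gap is in $(i)\Rightarrow(ii)$. Your pointwise estimate
\[
\int\varphi\,dp_x-\varphi(x)\ \le\ (\varphi-Q_{1/\lambda}\varphi)(x)+\lambda\,c(x,p_x)
\]
is correct for $\varphi$ concave and $\mathcal{C}^1$, and the inequality $\|\nabla\varphi\|_*^2\le 2\lambda(\varphi-Q_{1/\lambda}\varphi)$ does indeed follow from concavity alone. The trouble is in integrating it. To invoke $\overline{\T}_2^+(b)$ one must use couplings $\pi\in\Pi(\mu,\nu_\varphi)$, so the pointwise bound is integrated against $\mu(dx)$, yielding
\[
H(\nu_\varphi|\mu)\ \le\ \int(\varphi-Q_{1/\lambda}\varphi)\,d\mu+\lambda\,\overline{\mathcal{T}}_2(\nu_\varphi|\mu)\,,
\]
with reference measure $\mu$, not $\nu_\varphi$, on the right. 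You correctly flag that ``getting the target measure $\nu_\varphi$'' is the delicate point, but no choice of coupling fixes this: $\overline{\T}_2^+$ is inherently tied to couplings whose first marginal is $\mu$, so the Marton-style argument that works so cleanly for $\overline{\T}_2^-$ (Proposition~\ref{ttau}) structurally produces the wrong measure here. The paper's resolution is not a coupling argument at all: starting from Jensen's inequality $H(\nu_f|\mu)\le\int P_sf\,d\nu_f-\int P_sf\,d\mu$ with $\nu_f\propto e^{P_sf}\mu$, it splits the right-hand side as $\int[P_sf-f]\,d\nu_f+\bigl(\int f\,d\nu_f-\int P_sf\,d\mu\bigr)$ and controls the bracketed term by the Kantorovich duality for $\Tb_2$ restricted to concave Lipschitz functions (Theorem~\ref{Kantorovich T bar}), giving $\le\lambda\Tb_2(\nu_f|\mu)$. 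This produces $\int[P_sf-f]\,d\nu_f$ with the correct measure, and substituting $f=Q_{1/\lambda}\varphi$ together with $P_{1/\lambda}Q_{1/\lambda}\varphi=\varphi$ yields (ii). So the key missing idea in your sketch is the use of the dual representation of $\Tb_2$ in place of a coupling.
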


\begin{rem}
Also, Equation \eqref{eq:taulsob+} is very close to (yet different from) the $(\tau)$-log-Sobolev inequality  \eqref{taulogosb}.
The difference is coming from the fact that, for concave functions, $R_c f \neq Q f$, while equality holds for convex functions.

In particular, we emphasize the fact that $\overline{\T}_2^-(b)$ encompasses information about convex functions, while $\overline{\T}_2^+(b)$ about concave functions. 

Finally, we observe that the constants in the various implications are almost optimal. Indeed, starting from
$\overline{\T}_2^+(b)$, we end up with $\overline{\T}_2^+(b')$, with
$b'= \frac{(\lambda- \lambda')(1-b\lambda)+ \lambda'}{\lambda'(\lambda-\lambda')(1-b\lambda)}$
with $\lambda \in (0,1/b)$ and $\lambda' \in (0,\lambda)$. Choosing $\lambda = 1/(2b)$ and $\lambda'=1/(4b)$ one gets $b'=12b$ and we are off  by a factor 12, at the most.
\end{rem}

\begin{proof}
To prove that $(i)$ implies $(ii)$, we follow the argument of the proof of Proposition \ref{ttau}. Consider a concave function $f$, Lipschitz and bounded above, $\lambda \in (0,1/b)$ 
and define for simplicity $s=1/\lambda$ and $d\nu_{f}=\frac{\exp\{P_{s} f\}}{\int \exp\{P_{s} f\}\,d\mu}\,d\mu$. By Jensen's Inequality we have
\begin{align*}
H(\nu_{f}|\mu)
& =
\int \log\left( \frac{e^{P_{s} f}}{\int e^{P_{s} f}\,d\mu}\right)\frac{e^{P_{s} f}}{\int e^{P_{s} f}\,d\mu}\,d\mu
=
\int {P_{s} f}\,d\nu_{f}-\log \int e^{P_{s} f}\,d\mu  \\
& \leq 
\int P_{s} f\,d\nu_{f}-\int {P_{s} f}\,d\mu \\
& =
\int [{P_{s} f} - f]\,d\nu_{f} -\int {P_{s} f}\,d\mu  + \int f\,d\nu_{f}  \\
& \leq 
\int [{P_{s} f} - f] \,d\nu_{f} + \lambda \Tb_2(\nu_f|\mu)
.
\end{align*}
where in the last line we used the homogeneity of the transport cost (as a function of the cost (recall that $s=1/\lambda$)) and
the duality theorem (Corollary \ref{Kantorovich T bar}) to ensure that (since $Q_1 (-\varphi) = - P_1 \varphi$)
\begin{align*}
\Tb_2(\nu_f|\mu) 
& = 
\sup\left\{ \int Q_1 \varphi\,d\mu - \int \varphi\,d\nu_f ; \right. \varphi \text{ convex}, \text{ Lipschitz}, \text{bounded from below}\Big\} \\
& =
\sup\left\{- \int P_1 \varphi\,d\mu + \int \varphi\, d\nu_f ; \right.  \varphi \text{ concave}, \text{ Lipschitz}, \text{bounded from above}\Big\} .
\end{align*}
Applying $\overline{\T}_2^+(b)$ and rearranging the terms, we end up with the following inequality (since
$\int \exp\{P_{s} f\}\,d\mu H(\nu_{f}|  \mu) = \ent_\mu \left(\exp\{P_{s} f\}\right)$):
$$
\ent_\mu \left(e^{P_{s} f}\right) 
\leq \frac{1}{1 - \lambda b} \int [{P_{s} f} - f] e^{P_{s} f}\,d\mu\,,
$$
which  holds for any $f$ concave, Lipschitz and bounded above, and for any $\lambda \in (0,1/b)$ and $s =1/\lambda$. Now, our aim is to get rid of $P_s f$. To that purpose, we observe that, since $f$ is concave,
Lipschitz and bounded above, $Q_s f$ is also concave, Lipschitz and bounded above\footnote{These facts follow from the fact that $Q_sf(x)=\inf_y \{f(x-y) + \frac{s}{2}\|y\|^2\}$. Hence $Q_s f$ is concave as infimum of concave functions. On the other hand, $x \mapsto f(x-y) + \frac{s}{2}\|y\|^2$ are uniformly (in $y$) Lipschitz functions so that $Q_s f$ is again Lipschitz as infimum of Lipschitz functions. Finally, $Q_s f \leq f$ and therefore is bounded above.}  (for any $s \geq 0$), so that, if we assume in addition that $f$ is $\frac{\lambda}{2} \|\,\cdot\,\|^2$-convex, applying the latter to $Q_sf$ and using that $P_s Q_s f=f$, we finally get the desired result of Item $(ii)$.

Now we prove that $(ii)$ implies $(iii)$. Assume Item $(ii)$ and consider
a function $f\in \mathcal{F}_{\lambda'}(\R^m)$, with $\lambda' \in (0, \lambda)$. Our aim is to make use of the $\frac{\lambda'}{2} \|\,\cdot\,\|^2$-convexity property of $f$ to bound $f - Q_{1/\lambda} f$ from above by $\| \nabla f\|_*^2$; we may follow \cite{GRS14}.

Since $f$ is $\frac{\lambda'}{2} \|\,\cdot\,\|^2$-convex, it satisfies $P_sQ_sf=f$, where for simplicity $s=1/\lambda'$ (see e.g. \cite[Proposition 2.2]{GRS14}). Define $m(x)=\left\{ \bar{y} \in \mathbb{R}^m : f(x)=g(\bar y) - \frac{\lambda'}{2}\| x-\bar{y} \|^2\right\}$, \textit{i.e.}\ the set of points where the supremum is reached, which is  is non-empty by simple compactness arguments (see \cite[Lemma 2.6]{GRS14}). Given $\bar{y} \in m(x)$, we have for all $z \in \mathbb{R}^m$\,,
\begin{equation} \label{start}
f(x) 
= Q_s f(\bar y) - \frac{\lambda'}{2}\| x-\bar{y} \|^2
\leq 
f(z) + \frac{\lambda'}{2} \left( \| z -\bar{y} \|^2- \| x-\bar{y} \|^2 \right)\,.
\end{equation}
Since $f$ is concave and $\mathcal{C}^1$-smooth, it holds
\[
f(z) \leq f(x) + \nabla f(x)\cdot(z-x),\qquad \forall z\in \R^m.
\]
Inserting this inequality in \eqref{start}, one gets
\[
0\leq \nabla f(x)\cdot (z-x) +\frac{\lambda'}{2} (\|z-\bar{y}\|^2 - \|x-\bar{y}\|^2),\qquad \forall z\in \R^m.
\]
Applying this to $z_t=(1-t)x + t\bar{y}$, with $t\in (0,1)$, one obtains
\[
0\leq t\nabla f(x)\cdot (\bar{y}-x) +\frac{\lambda'}{2} ((1-t)^2-1) \|x-\bar{y}\|^2.
\]
Dividing by $t$ and letting $t \to 0$, one ends up with the inequality
\[
\lambda' \|x-\bar{y}\|^2 \leq \nabla f(x) \cdot (\bar{y} - x) \leq \|\nabla f(x)\|_*\|x-\bar{y}\|.
\]
According to \eqref{start}, the triangle inequality,  and the inequality $\|x-\bar{y}\| \leq \frac{1}{\lambda'} \|\nabla f(x)\|_*$, one gets
\begin{align*}
f(x) &\leq f(z) + \frac{\lambda'}{2} \left( \| z -x\|^2 +2\|z-x\|\|x-\bar{y} \| \right)\\
& \leq f(z) + \frac{\lambda'}{2} \left( \| z -x\|^2 +2\|z-x\|\frac{\|\nabla f(x)\|_*}{\lambda'} \right)\\
& \leq f(z) + \frac{\lambda}{2} \| z -x\|^2 + \left(\|z-x\|\|\nabla f(x)\|_* -\frac{\lambda - \lambda'}{2} \|z-x\|^2\right)\\
& \leq f(z) + \frac{\lambda}{2} \| z -x\|^2 + \frac{1}{2(\lambda-\lambda')}\|\nabla f(x)\|_*^2.
\end{align*}
Optimizing over $z\in \R^m$, one gets the inequality
\[
f(x)-Q_{1/\lambda}f(x) \leq \frac{1}{2(\lambda-\lambda')}\|\nabla f(x)\|_*^2\,,
\]
which inserted into \eqref{eq:taulsob+} yields  \eqref{eq:lsconvex2}.

It remains to prove that $(iii)$ implies $(i)$. To that purpose, let $\ell(t):=-\rho (1-t)$, $t \in (0,1)$ (observe that $\ell(t) \leq 0$), set $s=-\ell(t)/\lambda'$, and consider a convex, Lipschitz and bounded below function $f \colon \mathbb{R}^m \to \mathbb{R}$. We shall apply the log-Sobolev inequality to $\varphi = \ell(t) Q_t f$ for a given $t \in (0,1)$. We need first to verify that $\varphi$ is concave, Lipschitz, bounded above and $\lambda'c$-convex. Since $f$ is convex, $Q_t f$ is convex and so, since $\ell(t) \leq 0$, $\varphi$ is concave. On the other hand, since $f$ is Lipschitz, so does $\varphi$. Also, $f$ being bounded below, $Q_t f \geq \inf f$ and $\ell(t) \leq 0$, we have  $\varphi = \ell(t) Q_t f \leq \ell(t) \inf f$ which proves that $\varphi$ is bounded above. Finally,
since $Q_t$ is a semi-group and since in general $Q_u (g) = - P_u (-g)$, we have for all $t \in (\frac{\rho }{\rho  + \lambda'},1)$ (to ensure that $s \leq t$)\,,
\begin{align*}
\varphi 
& = 
\ell(t) Q_s (Q_{t-s} f) = -\ell(t) P_s ( - Q_{t-s} f) = P_{\frac{s}{-\ell(t)}} (\ell(t) Q_{t-s} f) \\
& =
P_{\frac{1}{\lambda'}} ( \ell(t) Q_{t-s} f) ,
\end{align*}
hence $\varphi$ is $\lambda' c$-convex. In turn, applying the log-Sobolev inequality to $\varphi$ (which is $\mathcal{C}^1$-smooth according to Lemma \ref{regul}), 
we end up with the following inequality that we shall use later on:
\begin{align*}
\int \ell(t) Q_t f e^{\ell(t) Q_t f} d\mu - H(t) \log H(t) 
& = 
\ent_\mu(e^{\ell(t) Q_t f})\leq 
\frac{\ell(t)^2}{2 \rho } \int \|\nabla Q_t f \|^2_* e^{\ell(t) Q_t f} d\mu\,,
\end{align*}
where $H(t):=\int e^{\ell(t) Q_t f} d\mu$. 
Hence, by the Hamilton-Jacobi equation \eqref{HJ},
\begin{align*}
\frac{d^+}{dt} & \left( \frac{1}{\ell(t)}  \log H(t) \right) 
 =
\frac{1}{\ell(t)^2 H(t)}\left( - \ell'(t)H(t) \log H(t) + \ell(t) H'(t) \right) \\
& =
\frac{1}{\ell(t)^2 H(t)}\left( \ell'(t)\ent_\mu(e^{\ell(t) Q_t f}) + \ell(t)^2 \int \frac{\partial Q_tf}{\partial t} e^{\ell(t) Q_t f}\, d\mu \right) \\
& =
\frac{\ell'(t)}{\ell(t)^2 H(t)}\left( \ent_\mu(e^{\ell(t) Q_t f}) + \frac{\ell(t)^2}{2 \ell'(t)} \int \|\nabla Q_tf \|_*^2 e^{\ell(t) Q_t f}\,d\mu \right) \\
& \leq
\frac{\ell'(t)}{2 H(t)}  \left( \frac{1}{\rho } - \frac{1}{\ell'(t)} \right) \int \|\nabla Q_tf \|_*^2 e^{\ell(t) Q_t f}\, d\mu = 0\,,
\end{align*}
since $\ell'(t)=\rho $. Therefore the function $t  \mapsto \| e^{Q_tf}\|_{\ell(t)}$
is non-increasing on $(\frac{\rho }{\rho  + \lambda'},1)$. In particular, in the limit, we get
$$
\| e^{Q_1f}\|_{\ell(1)} \leq \left\| e^{Q_\frac{\rho }{\rho  + \lambda'}f}\right\|_{\ell(\frac{\rho }{\rho  + \lambda'})}
$$
that we can rephrase as
$$
e^{\int Q_1 f d\mu} \left( \int e^{-\frac{\rho  \lambda'}{\rho  + \lambda'} Q_\frac{\rho }{\rho  + \lambda'}f} d\mu \right)^\frac{\rho  + \lambda'}{\rho  \lambda'} \leq 1 .
$$
Now, since $Q_uf \leq f$, we conclude that
$$
e^{\int Q_1 f d\mu} \left( \int e^{-\frac{\rho  \lambda'}{\rho  + \lambda'} f} d\mu \right)^\frac{\rho  + \lambda'}{\rho  \lambda'} \leq 1\,,
$$
which implies $\overline{\T}_2^+(\frac{\rho  + \lambda'}{\rho  \lambda'})$ by Proposition \ref{prop:bg} and
Corollary \ref{Kantorovich T bar}. This completes the proof.
\end{proof}


\subsection{Sufficient condition for $\overline{\T}^-_2$ on the line}\label{sec:T_2 on the line}

In this short section, we would like to take advantage of some known results from \cite{AD05} to give a sufficient condition for the transport-entropy inequality $\overline \T^-_2$ to hold on the line.

Our starting point is the following result.

\begin{thm}[\cite{AD05}] \label{adamczak}
Let $\mu$ be a symmetric probability measure on the line. Assume that there exists
$c>0$ and $\alpha <1$ such that for all $x \geq 0$, $\mu([x+\frac{c}{x},\infty)) \leq \alpha \mu([x,\infty))$. Then, there exists $C(c,\alpha) \in (0,\infty)$ such that for every smooth, convex function $\varphi \colon \mathbb{R} \to \mathbb{R}$, it holds
$$
\ent_\mu(e^\varphi) \leq C(c,\alpha) \int {\varphi'}^2 e^\varphi d\mu .
$$
\end{thm}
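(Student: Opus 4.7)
\emph{Strategy.} This is a one-sided, convex version of the Bobkov--Götze / Miclo characterization of the log-Sobolev inequality on the real line. My plan is to reduce to a one-sided weighted Hardy inequality on a half-line, apply a restricted-to-monotone-functions criterion for LSI, and then verify the Hardy condition from the iterated tail estimate.

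\emph{Reduction via convexity.} A smooth convex $\varphi:\R\to\R$ attains its infimum at some $x_0$ and is monotone on each of $(-\infty,x_0]$ and $[x_0,\infty)$. Consequently $e^{\varphi/2}$ is monotone on each half. Splitting $\R$ at the median $m=0$ of $\mu$ (using symmetry) and using the standard decomposition
\[
\ent_\mu(f^2) \le \mu(\R_+)\,\ent_{\mu_+}(f^2) + \mu(\R_-)\,\ent_{\mu_-}(f^2) + \mathrm{Var}_{\mathrm{Bern}}(f^2),
\]
with $f=e^{\varphi/2}$ and $\mu_\pm$ the normalized restrictions of $\mu$ to $\R_\pm$, it suffices to bound each conditional entropy. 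The Bernoulli variance on the right is absorbed through a convex Poincaré inequality, which will also follow from the tail assumption via the same Hardy machinery (with $\log(1/\mu([x,\infty)))$ removed).

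\emph{One-sided Hardy criterion.} On $[0,\infty)$, the inequality $\ent_{\mu_+}(f^2)\le K \int (f')^2\,d\mu_+$, \emph{restricted to monotone $f$}, is known to be equivalent to the finiteness of the Muckenhoupt--Bobkov--Götze functional
\[
B_+ := \sup_{x>0}\,\mu([x,\infty))\log\!\frac{1}{\mu([x,\infty))}\int_0^x \frac{dt}{\rho(t)},
\]
or, if $\mu$ has no density, its density-free reformulation in terms of the quantile function of $\mu$ (Miclo, Barthe--Roberto). Applying this to the monotone pieces of $e^{\varphi/2}$ and using $(e^{\varphi/2})' = \tfrac12 \varphi' e^{\varphi/2}$ produces $\frac14\int(\varphi')^2 e^\varphi d\mu$ on the right, as desired.

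\emph{Verification from the tail bound.} Iterating $x_{k+1} = x_k + c/x_k$ gives $x_k^2 \sim 2ck$, so the hypothesis $\mu([x+c/x,\infty))\le \alpha\,\mu([x,\infty))$ yields the sub-Gaussian decay $\mu([x,\infty))\le C_1 e^{-\beta x^2}$ with $\beta = \log(1/\alpha)/(2c)$. A matching density \emph{lower} bound of the form $\rho(x)\gtrsim x\,e^{-\beta' x^2}$ (any $\beta'>\beta$) then gives $\int_0^x dt/\rho(t) \lesssim e^{\beta' x^2}/x^2$, and the product in $B_+$ is of order $x^2 e^{-(\beta-\beta')x^2}$, which is uniformly bounded for $\beta'<\beta$. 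The Poincaré counterpart is even easier as the $\log$ factor is absent. \emph{Main obstacle.} The tail hypothesis gives no pointwise density lower bound (indeed $\rho$ may not exist). This is exactly what forces the use of Miclo's density-free Hardy criterion: one expresses the Hardy weight purely in terms of $\mu$ (via integrals of $1/\rho$ reinterpreted through the distribution function $F$ as $\int_0^x dt/\rho(t) = \int_{F(0)}^{F(x)} (F^{-1})'(u)^2\,du$ or similar), and then the required upper bound on this functional follows directly from the iterated tail decay, without any density bound. A second delicate point is gluing the two monotone halves into a bound for arbitrary convex $\varphi$: since $x_0$ need not coincide with the median $0$, one must treat the ``mixed'' half on which $\varphi$ first decreases and then increases, but this is handled by further splitting at $x_0$ and applying the one-sided Hardy estimate to each monotone piece separately.
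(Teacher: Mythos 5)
The paper does not prove Theorem \ref{adamczak}; it is quoted from \cite{AD05} and used as a black box. So I am judging your argument on its own.

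There is a genuine gap at the central step, where you replace ``$\varphi$ convex'' by ``$e^{\varphi/2}$ piecewise monotone'' and then invoke a Muckenhoupt/Hardy criterion for the log-Sobolev inequality restricted to monotone functions. Such criteria characterize the monotone LSI by finiteness of a functional of the form
\[
B_+ = \sup_{x}\ \mu([x,\infty))\,\log\frac{1}{\mu([x,\infty))}\,\int_m^x \frac{dt}{\rho(t)},
\]
which requires a pointwise \emph{lower} bound on the density $\rho$. The hypothesis of the theorem, however, is only a one-sided \emph{upper} bound on the tail and says nothing about a density; it is satisfied, for instance, by the purely atomic measure $\mu=\tfrac12(\delta_{-1}+\delta_1)$ (take $c=1$, $\alpha$ arbitrary), or by an absolutely continuous symmetric $\mu$ whose density vanishes on a subinterval. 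In every such case $B_+=\infty$ and the monotone LSI genuinely fails --- e.g.\ for the two-point $\mu$ one can take $f$ smooth, locally constant near $\pm 1$ with $f(-1)\neq f(1)$, so that $\int (f')^2\,d\mu=0$ while $\ent_\mu(f^2)>0$ --- yet the conclusion of the theorem, the \emph{convex} LSI, still holds. So the reduction from log-convexity of $e^\varphi$ to mere monotonicity of $e^{\varphi/2}$ loses exactly the structure that makes the theorem true under this weak hypothesis.

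You flag this as your ``main obstacle'' and propose to repair it by the ``density-free'' rewriting $\int_0^x dt/\rho(t)=\int_{F(0)}^{F(x)}((F^{-1})'(u))^2\,du$. But this is merely a change of variables: when it is defined it returns the \emph{same}, possibly infinite, number, so it cannot restore finiteness; and when $\mu$ has atoms or gaps in its support, $F^{-1}$ has jumps, its derivative is a measure with a singular part, and $((F^{-1})')^2$ is not even well defined. The obstruction is not notational: a density lower bound is genuinely necessary for the monotone-function LSI, whereas the convex LSI does not need one. What convexity of $\varphi$ supplies --- and what the monotone reduction discards --- is that a steep slope of $\varphi$ over a region where $\mu$ is thin or empty must persist (indeed increase) over the adjacent heavy region, so that it does contribute to $\int(\varphi')^2 e^\varphi\,d\mu$. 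Any correct proof of the theorem as stated has to exploit this mechanism; the Hardy/Muckenhoupt route as written can only recover the result under a strictly stronger hypothesis on $\mu$.
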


Observe that we assumed symmetry for simplicity. It is not essential and a similar result holds for non-symmetric measures.

\begin{cor}
Let $\mu$ be a symmetric probability measure on the line. Assume that there exists
$c>0$ and $\alpha <1$ such that for all $x \geq 0$, $\mu([x+\frac{c}{x},\infty)) \leq \alpha \mu([x,\infty))$. Then, there exists $C=C(c,\alpha) \in (0,\infty)$ such that $\overline \T^-_2(C)$ holds.
\end{cor}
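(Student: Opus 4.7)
The plan is straightforward: combine Theorem \ref{adamczak} directly with the implication $(iii) \Rightarrow (i)$ of Theorem \ref{th:bgl-}. First, I equip $\mathbb{R}$ with the absolute value, which is self-dual. In dimension one the unit sphere of $|\cdot|_*$ consists only of the two points $\pm 1$, and $\|(1-t)(1) + t(-1)\|_* = |1-2t| < 1$ for every $t \in (0,1)$, so the strict convexity condition \eqref{strict convexity} is satisfied. Hence Theorem \ref{th:bgl-} applies in this setting.

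Next, Theorem \ref{adamczak} provides a constant $C = C(c,\alpha) \in (0,\infty)$ such that
\[
\ent_\mu(e^\varphi) \leq C \int (\varphi')^2 e^\varphi \, d\mu
\]
for every smooth convex $\varphi$. In dimension one $\|\varphi'\|_*^2 = (\varphi')^2$, so with the choice $\rho = 1/(2C)$ this is precisely inequality \eqref{eq:lsconvex} of Theorem \ref{th:bgl-}(iii). If the smoothness hypothesis in Theorem \ref{adamczak} is understood more restrictively than $\mathcal{C}^1$, a routine mollification argument extends the inequality to the class of $\mathcal{C}^1$-smooth, convex, Lipschitz, bounded-below functions required in (iii): convolution with a smooth compactly supported mollifier preserves convexity and produces smooth convex approximants whose derivatives converge almost everywhere and remain uniformly bounded by the Lipschitz constant of $\varphi$, so dominated convergence allows passage to the limit on both sides of the log-Sobolev inequality.

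Finally, invoking $(iii) \Rightarrow (i)$ of Theorem \ref{th:bgl-} with $\rho' = 1/(2C)$ yields $\overline{\T}_2^-(b)$ with $b = 1/\rho' = 2C(c,\alpha)$, as desired. The main (and essentially only) obstacle is the reconciliation of the function classes in the two input theorems, and the mollification step sketched above handles this cleanly; no analytical ingredient beyond Theorems \ref{adamczak} and \ref{th:bgl-} is needed.
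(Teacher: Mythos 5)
Your proof is correct and takes essentially the same route as the paper: apply Theorem \ref{adamczak} to obtain item $(iii)$ of Theorem \ref{th:bgl-} with $\rho = 1/(2C(c,\alpha))$ (choosing $\|\cdot\| = |\cdot|$, which is self-dual and whose dual norm is trivially strictly convex on the line), and then invoke $(iii) \Rightarrow (i)$. The paper's proof is essentially a one-liner that does not comment on the discrepancy between the class "smooth convex" in Theorem \ref{adamczak} and the class "$\mathcal{C}^1$-smooth, convex, Lipschitz, bounded from below" in Theorem \ref{th:bgl-}$(iii)$; your mollification sketch fills that gap cleanly, and it is the right fix should Adamczak's "smooth" be read as $\mathcal{C}^\infty$.
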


\begin{proof}
Theorem \ref{adamczak} guarantees that Item $(iii)$ of Theorem \ref{th:bgl-} holds, with $1/(2\rho )=C(c,\alpha)$ (Choose $\| \cdot \| = | \cdot |$, where $| \cdot |$ is the absolute value, so that
$\| \cdot \|_* = | \cdot |$). The desired result follows from Theorem \ref{th:bgl-}.
\end{proof}

We refer to \cite{GRSST15} for a complete characterization of the inequalities $\overline{\T}^\pm_2$ (and other $\overline{\T}$ inequalities) on the line. As proved there in \cite[Theorem 1.2]{GRSST15}, a probability measure $\mu$ satisfies $\overline{\T}^{-}(C)$ and $\overline{\T}^{+}(C)$ for some $C$ is and only if there exists some $D>0$ such that the monotone increasing rearrangement map $U$ transporting the symmetric exponential probability measure $\nu(dx)=\frac{1}{2} e^{-|x|}\,dx$ on $\mu$ satisfies the following growth condition:
\[
\sup_{x} |U(x+u)-U(x)| \leq \frac{1}{D}\sqrt{u+1},\qquad \forall u>0.
\]
See \cite{GRSST15} for an explicit relation between the constants $C$ and $D$, and also for a more general statements.



\section{Generalization of Kantorovich duality}\label{sec:duality}

\subsection{Notations}\label{Notation Kantorovich} First let us recall and complete the notations introduced in Section \ref{Notation}.  
Throughout this section, $(X,d)$ is a complete separable metric space. The space of all Borel probability measures on $X$ is denoted by $\mathcal{P}(X)$ and the space of all Borel signed measures by $\mathcal{M}(X)$.

If $\gamma:\R_+ \to \R_+$ is a lower-semicontinuous function satisfying \eqref{gamma}, we set
\[
\mathcal{M}_\gamma(X):= \left\{\mu \in \mathcal{M}(X) ; \int \gamma(d(x,x_o))\,|\mu|(dx)<\infty\right\}
\] 
for some (hence all) $x_o \in X$. 

We equip $\mathcal{M}_\gamma(X)$ with the coarsest topology that makes continuous the linear functionals $\mu \mapsto \int \varphi\,d\mu$, $\varphi \in \Phi_\gamma(X)$, where we recall that $\Phi_\gamma(X)$ denotes the set of continuous functions $\varphi:X\to \R$ satisfying the growth condition \eqref{eq:growth condition}.
This topology is denoted by $\sigma(\mathcal{M}_\gamma(X))$. To be more specific, a basis for this topology is given by all finite intersections of sets of the form
\begin{equation}\label{eq:ouvert}
U_{\varphi, a, \varepsilon}:=\left\{m\in \mathcal{M}_\gamma(X); \left|\int \varphi\,dm -a\right|<\varepsilon\right\}, \; \varphi\in \Phi_\gamma(X), \, a \in \R, \, \varepsilon>0.
\end{equation}

The set $\mathcal{P}_\gamma(X):= \mathcal{P}(X)\cap \mathcal{M}_\gamma(X)$ is equipped with the trace topology denoted by $\sigma(\mathcal{P}_\gamma(X))$. Let us remark that if $\gamma$ is bounded, then $\mathcal{P}_\gamma(X) = \mathcal{P}(X)$ and the topology $\sigma(\mathcal{P}_\gamma(X))$ is the usual weak topology on $\mathcal{P}(X)$.  
 
We define similarly the spaces $\mathcal{P}_\gamma(X\times X)\subset\mathcal{M}_\gamma(X\times X)$ and equip them with the topologies $\sigma(\mathcal{M}_\gamma(X \times X))$ and $\sigma(\mathcal{P}_\gamma(X\times X))$ defined with the class $\Phi_\gamma(X\times X)$ of continuous functions $\varphi:X \times X \to \R$ such that there exist $a,b\geq 0$ and $x_o\in  X$ such that $|\varphi(x,y)|\leq a+b(\gamma(d(x_o,x))+\gamma(d(x_o,y)))$ for all $x,y\in X$. 

Finally, we recall that $\Phi_{\gamma,b}(X)$ is the set of the elements of $\Phi_\gamma(X)$ that are bounded from below.

Before stating our main result, we need to introduce some technical assumptions and comment on them.
Below we denote by $\pi(dxdy) = p_x(dy)\pi_1(dx)$ the disintegration of a probability measure $\pi$ on $X\times X$ with respect to its first marginal $\pi_1$.
\begin{defi}[Conditions $(C)$, $(C')$, $(C'')$]
Given $(X,d)$ a complete separable metric space  and 
 $c \colon X\times \mathcal{P}_\gamma(X)\to [0,\infty]$  a cost function associated to some lower-semicontinuous function $\gamma:\R_+\to \R_+$ satisfying \eqref{gamma}, we say the condition $(C)$ holds 
 if
 \begin{itemize}
\item[$(C_1)$] For all $\mu \in \mathcal{P}_\gamma(X)$, the function $\pi \mapsto I_c[\pi]:=\int c(x,p_x)\,\pi_1(dx)$ is lower-semicontinuous on the set 
\[
\Pi(\mu,\,\cdot\,) := \{ \pi \in \mathcal{P}_\gamma(X\times X); \pi(dx\times X) = \mu(dx)\}.
\] 
In other words, for all $s\geq0$, the set $\{\pi \in \Pi(\mu,\,\cdot\,); I_c[\pi]\leq s\}$ is closed for the topology $\sigma(\mathcal{P}_\gamma(X\times X))$.
\item[$(C_2)$] The function $p\mapsto c(x,p)$ is convex for all $x\in X$.
\item[$(C_3)$]  The function $(x,p) \mapsto c(x,p)$ is continuous with respect to the product topology. 
\item[$(C_4)$] The cost $c$ is such that if $\mu \in \mathcal{P}_\gamma(X)$ and $(p_x)_{x\in X}$ are measurable probability kernels such that $p_x \in \mathcal{P}_\gamma(X)$ for all $x\in X$ and $\int c(x,p_x)\,\mu(dx) <\infty$, then $\nu = \mu p \in \mathcal{P}_\gamma(X)$.
\end{itemize}
Similarly we say that condition $(C')$ holds if $(C_1), (C_2)$, $(C_4)$ hold together with 
\begin{itemize}
\item[$(C_3')$] $(X,d)$ is compact and the function $(x,p) \mapsto c(x,p)$ is   lower-semicontinuous with respect to the product topology,  
\end{itemize}
and that condition $(C'')$ holds if $(C_2)$, $(C_4)$ hold together with 
\begin{itemize}
\item[$(C_3'')$]  $X$ is a countable set of isolated points and for all $x\in X$, the function $p \mapsto c(x,p)$ is lower-semicontinuous.
\end{itemize}
\end{defi}

The above conditions are technical. However, Condition $(C_2)$ is the least we can hope for. 

As for applications, the main difficulty is coming from Condition $(C_1)$. 
Let us make some comments about this assumption. First specializing to $\mu=\delta_x$, condition $(C_1)$ implies that for all $x\in X$, the function $p\mapsto c(x,p)$ is lower semicontinuous on $\mathcal{P}_\gamma(X)$. In the discrete setting, the converse is also true : as shown in the proof of Theorem \ref{Kantorovich general}, Condition $(C_3'')$ implies Condition $(C_1)$ (this is why the latter does not appear in Condition $(C'')$). For more general spaces, we do not know if Condition $(C_1)$ is strictly stronger than lower-semicontinuity of the cost function $c$. Nevertheless, we have the following rather general abstract result whose proof is postponed to Section \ref{sec:propC1}. In particular, such a result applies to the transport costs $\Tw$, $\Tb$ and $\Th$ introduced in Section \ref{sec:particular cases}.

\begin{prop}\label{prop:C1} 
Let $(X,d)$ be complete separable metric space.  
Let  $(\varphi_k)_{k\in \N}$ be a sequence of elements of $\Phi_\gamma(X\times X)$ {\rm (}with $\gamma:\R_+\to \R_+$ satisfying \eqref{gamma}{\rm )} such that $\varphi_0 \equiv 0$. Assume that the cost function $c:X\times \mathcal{P}_\gamma(X)\to[0,\infty]$ is defined by 
\begin{equation}\label{coco}
c(x,p) = \sup_{k\in \N} \int \varphi_k(x,y)\,p(dy),\qquad \forall x\in X,\qquad \forall p\in \mathcal{P}_\gamma(X) .
\end{equation}
Then Conditions $(C_1)$ and $(C_2)$ hold and $c:X \times \mathcal{P}_\gamma(X)\to [0,\infty]$ is lower-semicontinuous with respect to the product topology.
\end{prop}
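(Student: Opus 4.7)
Three assertions need to be proved: convexity $(C_2)$, joint lower-semicontinuity of $c$, and the functional lower-semicontinuity $(C_1)$. Convexity is immediate, since for each fixed $x$, $p\mapsto c(x,p)$ is a supremum of the linear functionals $p\mapsto \int \varphi_k(x,\cdot)\,dp$. For the joint lsc of $c$, the plan is to first show that each map $c^{(k)}(x,p):=\int \varphi_k(x,y)\,p(dy)$ is jointly \emph{continuous} on $X\times \mathcal{P}_\gamma(X)$. Given $(x_n,p_n)\to(x,p)$, I would split
\[
c^{(k)}(x_n,p_n)-c^{(k)}(x,p) = \int [\varphi_k(x_n,\cdot)-\varphi_k(x,\cdot)]\,dp_n + \int \varphi_k(x,\cdot)\,d(p_n-p).
\]
The second term vanishes because $\varphi_k(x,\cdot)\in \Phi_\gamma(X)$ and $p_n\to p$ in $\sigma(\mathcal{P}_\gamma(X))$. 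The first term is handled by a compact-splitting argument: on a large compact $K\subset X$ the function $\varphi_k$ is uniformly continuous in its first variable (since $x_n\to x$), while on $X\setminus K$ the growth bound on $\varphi_k$ together with tightness and uniform integrability of $\gamma(d(x_o,\cdot))$ under $(p_n)$ (standard consequences of $\sigma(\mathcal{P}_\gamma)$-convergence against sufficiently rich test functions) keep the contribution arbitrarily small. Once each $c^{(k)}$ is jointly continuous, $c=\sup_k c^{(k)}$ is jointly lsc as a supremum of continuous functions.

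For $(C_1)$, I would fix $\mu\in\mathcal{P}_\gamma(X)$ and consider $\pi\mapsto I_c[\pi]=\int c(x,p_x)\,\mu(dx)$ on $\Pi(\mu,\cdot)$. Introduce the truncations $c_n(x,p):=\max_{0\leq k\leq n}\int\varphi_k(x,y)\,p(dy)$, which increase monotonically to $c$, so that monotone convergence gives $I_c[\pi]=\sup_n I_{c_n}[\pi]$. Since a supremum of lsc functions is lsc, it is enough to prove that each $I_{c_n}$ is lsc. For this I would use the representation
\[
\int \max_{k\leq n} f_k\,d\mu = \sup\Bigl\{\sum_{k=0}^n\int \psi_k f_k\,d\mu : \psi_k\in C_b(X),\ \psi_k\geq 0,\ \sum\nolimits_k \psi_k\leq 1\Bigr\},
\]
applied to $f_k(x)=\int \varphi_k(x,y)\,p_x(dy)$. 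The ``$\leq$'' direction uses that $\max_k f_k\geq 0$ (forced by $\varphi_0\equiv 0$), so that $\sum_k \psi_k f_k \leq (\max_k f_k)\sum_k\psi_k\leq \max_k f_k$. This yields
\[
I_{c_n}[\pi]=\sup_{(\psi_k)}\sum_{k=0}^n\int \psi_k(x)\varphi_k(x,y)\,\pi(dxdy),
\]
and since each $(x,y)\mapsto \psi_k(x)\varphi_k(x,y)$ lies in $\Phi_\gamma(X\times X)$, every functional inside the supremum is continuous in $\pi$ for the topology $\sigma(\mathcal{M}_\gamma(X\times X))$; hence $I_{c_n}$ is lsc.

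The principal technical obstacle will be the ``$\geq$'' direction of the representation formula above: given a measurable partition $(A_k)$ achieving $\int \max_k f_k\,d\mu=\sum_k\int_{A_k}f_k\,d\mu$, one must approximate $(\mathbf{1}_{A_k})$ by a continuous family $(\psi_k)$ with $\psi_k\geq 0$ and $\sum_k\psi_k\leq 1$ so that the loss $\sum_k\int(\mathbf{1}_{A_k}-\psi_k)f_k\,d\mu$ is small. Using inner regularity of the Radon measure $\mu$, I would pick pairwise disjoint compact sets $K_k\subset A_k$ with $\mu(A_k\setminus K_k)$ small, and Urysohn's lemma would provide continuous $\psi_k:X\to[0,1]$ equal to $1$ on $K_k$ and to $0$ on $\bigcup_{i\neq k}K_i$, with a final normalization step guaranteeing $\sum\psi_k\leq 1$. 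The loss rewrites as $\sum_k\iint (\mathbf{1}_{A_k}(x)-\psi_k(x))\varphi_k(x,y)\,\pi(dxdy)$, and is controlled using the $L^1(\mu)$-closeness, the growth bound on $\varphi_k$, and the finiteness of $\int\gamma(d(x_o,\cdot))\,d\pi$ (which comes from $\pi\in\mathcal{P}_\gamma(X\times X)$); handling the possibly unbounded factor $\varphi_k$ in the loss is the delicate point.
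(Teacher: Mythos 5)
Your overall strategy is the same as the paper's: truncate to $c_n=\max_{k\le n}$, invoke monotone convergence, and then establish lower-semicontinuity of $I_{c_n}$ via the representation of $\int\max_{k\le n}f_k\,d\mu$ as a supremum of $\sum_k\int\psi_k f_k\,d\mu$ over continuous families $(\psi_k)$ with $\psi_k\in[0,1]$ and $\sum\psi_k\le 1$, each term being continuous in $\pi$ because $\sum_k\psi_k\varphi_k\in\Phi_\gamma(X\times X)$. The easy inequality using $\varphi_0\equiv 0$ (so that $\max_k f_k\ge 0$) matches the paper, and your sketch of joint continuity of $(x,p)\mapsto\int\varphi_k(x,y)\,p(dy)$ via the tightness/uniform-integrability criterion (the paper's extension of Prokhorov) is sound, which gives the joint lower-semicontinuity of $c$.

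However, in the hard direction of the representation formula you leave a genuine gap, and you correctly identify exactly where it sits: picking compacts $K_k\subset A_k$ so that only $\mu(A_k\setminus K_k)$ is small does \emph{not} control the loss $\sum_k\int_{A_k\setminus K_k}[f_k]_+\,d\mu$, because the $f_k$ are in general unbounded. The paper closes this gap by applying inner regularity not to $\mu$ but to the finite weighted Borel measures $\mu_k(dx):=[f_k]_+(x)\,\mu(dx)$ (finite because $\varphi_k\in\Phi_\gamma(X\times X)$ and $\pi\in\mathcal{P}_\gamma(X\times X)$); this gives compacts $C_k\subset B_k$ with $\mu_k(B_k\setminus C_k)$ directly small, which is exactly the quantity to be controlled. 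It also uses the explicit bump functions $f_{k,\delta}(x)=[1-d(x,C_k)/\delta]_+$ instead of a generic Urysohn family: once $\delta<\tfrac12\min_{i\ne j}d(C_i,C_j)$ (positive since the $C_k$ are pairwise disjoint compacts) these have pairwise disjoint supports, so $\sum_k f_{k,\delta}\le 1$ holds automatically with no ``normalization step,'' and one concludes by letting $\delta\to 0$ (dominated convergence, with domination by the $\mu$-integrable $[f_k]_+$). If you replace ``inner regularity of $\mu$'' by ``inner regularity of $[f_k]_+\cdot\mu$'' and use the explicit distance bumps, your outline becomes a complete proof and coincides with the one in the paper.
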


We are now in a position to state the main result of this section: a generalization of the Kantorovich duality theorem.

\begin{thm}\label{Kantorovich general} 
Let $(X,d)$ be a complete separable metric space. Let 
 $c \colon X\times \mathcal{P}_\gamma(X)\to [0,\infty]$  be a cost function associated to some lower-semicontinuous function $\gamma \colon \R_+\to \R_+$ satisfying \eqref{gamma}. Assume that 
 condition $(C), (C')$ or $(C'')$ holds. 
Then, for all $\mu,\nu \in \mathcal{P}_\gamma(X)$, the following duality formula holds:
\[
\mathcal{T}_c(\nu|\mu)=\sup_{\varphi \in \Phi_{\gamma,b}(X)}\left\{ \int R_c\varphi(x)\,\mu(dx)-\int \varphi(y)\,\nu(dy)\right\},
\]
where 
\[
R_c\varphi(x):=\inf_{p\in \mathcal{P}_\gamma(X)}\left\{ \int \varphi(y)\,p(dy) + c(x,p)\right\},\quad  x\in X,\quad  \varphi\in \Phi_{\gamma,b}(X).
\]
%
\end{thm}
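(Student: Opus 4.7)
The plan is to prove weak duality from the definition of $R_c$ and strong duality by applying the Fenchel--Moreau biconjugation theorem to the functional $\nu \mapsto \mathcal{T}_c(\nu | \mu)$ on the locally convex space $(\mathcal{M}_\gamma(X), \sigma(\mathcal{M}_\gamma(X)))$. Weak duality is immediate: for $\pi \in \Pi(\mu,\nu)$ with disintegration $\pi = \mu \otimes p$ and $\varphi \in \Phi_{\gamma,b}(X)$, the defining infimum yields $R_c\varphi(x) \leq \int \varphi \, dp_x + c(x, p_x)$, so integrating against $\mu$ gives $\int R_c\varphi \, d\mu - \int \varphi \, d\nu \leq I_c[\pi]$; taking the supremum over $\varphi$ and the infimum over $\pi$ returns the easy inequality.

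For the reverse inequality, I extend $\mathcal{T}_c(\cdot | \mu)$ to all of $\mathcal{M}_\gamma(X)$ by $+\infty$ outside $\mathcal{P}_\gamma(X)$ and establish that this extension is convex and lower semi-continuous. Convexity comes from $(C_2)$: if $\nu = \lambda\nu_1 + (1-\lambda)\nu_2$ and $\pi_i \in \Pi(\mu,\nu_i)$ is near-optimal, then $\pi := \lambda\pi_1 + (1-\lambda)\pi_2 \in \Pi(\mu,\nu)$ has disintegration $\lambda p^1 + (1-\lambda) p^2$ (since $\pi_1,\pi_2$ share the first marginal $\mu$), and convexity of $c(x,\cdot)$ closes the argument. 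For lower semi-continuity, along a net $\nu_n \to \nu$ with $\mathcal{T}_c(\nu_n|\mu) \leq L$, I choose near-optimal $\pi_n \in \Pi(\mu,\nu_n)$, establish tightness in $\sigma(\mathcal{P}_\gamma(X\times X))$ using tightness of $\mu$ and the $\gamma$-uniform integrability built into the $\mathcal{P}_\gamma$-topology, extract a convergent subnet with limit $\pi \in \Pi(\mu,\nu)$, and invoke $(C_1)$ to obtain $I_c[\pi] \leq \liminf I_c[\pi_n] \leq L$.

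Fenchel--Moreau then gives $\mathcal{T}_c(\cdot|\mu) = \mathcal{T}_c(\cdot|\mu)^{**}$, using that by construction $\Phi_\gamma(X)$ is the continuous dual of $\mathcal{M}_\gamma(X)$ under $\sigma(\mathcal{M}_\gamma(X))$. For $\varphi \in \Phi_\gamma(X)$ I compute
\[
\mathcal{T}_c(\cdot|\mu)^*(\varphi) = \sup_{\pi \in \Pi(\mu,\cdot)}\left\{ \int \varphi\, d\pi_2 - I_c[\pi]\right\} = \sup_p \int \left[\int \varphi\, dp_x - c(x, p_x)\right] \mu(dx),
\]
with $p$ ranging over measurable kernels from $X$ into $\mathcal{P}_\gamma(X)$; a measurable-selection argument exchanges the sup and the integral, producing $\mathcal{T}_c(\cdot|\mu)^*(\varphi) = -\int R_c(-\varphi)\, d\mu$. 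Substituting this into the biconjugation identity and replacing $\varphi$ by $-\varphi$ yields the duality with supremum over $\Phi_\gamma(X)$, and the restriction to the smaller class $\Phi_{\gamma,b}(X)$ then follows by approximating $\varphi$ from below by $\varphi \vee (-M)$ and invoking monotone convergence in both integrals.

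The two main obstacles are the lower semi-continuity of $\mathcal{T}_c(\cdot|\mu)$ and the measurable-selection interchange in the Legendre transform, and both depend on which of the regimes $(C), (C'), (C'')$ is in force. Under $(C)$ the non-compactness of $X$ is what really tests condition $(C_1)$ and requires care with $\gamma$-uniform integrability to secure tightness; under $(C')$ tightness is trivial by compactness but one still relies on $(C_1)$ deduced from $(C_3')$; under $(C'')$ the whole argument reduces to pointwise manipulations in a countable set. The measurable selection similarly requires a Berge-type continuity argument under $(C_3)$, an upper-semi-continuity plus compactness argument under $(C_3')$, and nothing non-trivial under $(C_3'')$.
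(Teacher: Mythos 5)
Your overall strategy is the paper's own: extend $F:=\mathcal{T}_c(\cdot|\mu)$ by $+\infty$ to $\mathcal{M}_\gamma(X)$, get convexity from $(C_2)$ and lower semicontinuity from tightness plus $(C_1)$, identify the dual of $(\mathcal{M}_\gamma(X),\sigma(\mathcal{M}_\gamma(X)))$ with $\Phi_\gamma(X)$, apply Fenchel--Legendre biconjugation, compute the conjugate by measurable selection, and finally restrict to $\Phi_{\gamma,b}(X)$. There is, however, one step where your plan as written is not justified: the identity
\[
\sup_{\pi\in\Pi(\mu,\cdot)}\Bigl\{\int\varphi\,d\pi_2 - I_c[\pi]\Bigr\}
\;=\;\sup_{p}\int\Bigl[\int\varphi\,dp_x - c(x,p_x)\Bigr]\mu(dx),
\]
with $p$ ranging over all measurable kernels with values in $\mathcal{P}_\gamma(X)$. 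The left-hand supremum only sees kernels whose image measure $\mu p$ lies in $\mathcal{P}_\gamma(X)$ (otherwise $F=+\infty$), and a kernel taking values in $\mathcal{P}_\gamma(X)$ need not have this property. This is exactly where hypothesis $(C_4)$ --- which you never invoke --- enters: for the $\varepsilon$-optimal selected kernel $p^\varepsilon$ one first checks $\int c(x,p^\varepsilon_x)\,\mu(dx)<\infty$, using that $\varphi$ is bounded below and that $\int R_c\varphi\,d\mu$ may be assumed finite, and then $(C_4)$ guarantees $\mu p^\varepsilon\in\mathcal{P}_\gamma(X)$, so enlarging the supremum does not change its value.

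This has a knock-on effect on your ordering. Because the argument above uses boundedness from below of $\varphi$, the identification $F^*(-\varphi)=-\int R_c\varphi\,d\mu$ is only available for $\varphi\in\Phi_{\gamma,b}(X)$; you therefore cannot first write the duality as a supremum over all of $\Phi_\gamma(X)$ in terms of $R_c$ and then truncate. The paper performs the reduction at the level of the conjugate, before the identification: since $F^*(-\varphi)=\sup_k F^*(-(\varphi\vee k))$ (monotone convergence in the measure variable), the biconjugation supremum over $\Phi_\gamma(X)$ already equals the supremum over $\Phi_{\gamma,b}(X)$, and only then is $F^*$ computed on bounded-below functions. With this reordering and the explicit use of $(C_4)$ your proof goes through and coincides with the paper's. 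A smaller remark: under $(C'')$ the deduction of $(C_1)$ from the pointwise lower semicontinuity of $p\mapsto c(x,p)$ is not purely a pointwise matter; the paper still needs a tightness and diagonal-extraction argument followed by Fatou's lemma on $\Pi(\mu,\,\cdot\,)$.
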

\begin{rem}
Note that since $c\geq 0$, $R_c\varphi$ is bounded from below as soon as $\varphi$ is bounded from below. Therefore, $\int R_c\varphi(x)\,\mu(dx)$ is always well defined in $(-\infty,\infty].$ Note also, that $R_c\varphi$ is always measurable. This is clear under Condition $(C_3)$, since in this case $R_c\varphi$ is lower-semicontinuous as an infimum of continuous functions. Under Condition $(C_3')$, it is not difficult to check that $R_c\varphi$ remains lower-semicontinuous, using the fact that $\mathcal{P}_\gamma(X)$ is compact.
\end{rem}
%
The proof of Theorem \ref{Kantorovich general} uses classical tools from convex analysis that we recall in a separate subsection (see Section~\ref{sec:duality_proof} below), and then apply them to our specific setting. We refer to Mikami \cite{Mik06}, L\'eonard \cite{Leo11}, Tan-Touzi \cite{TT13} for similar strategies.

\subsection{Fenchel-Legendre duality}
The main tool used in the proof of Theorem \ref{Kantorovich general} is the following Fenchel-Legendre duality theorem (see for instance \cite[Theorem 2.3.3]{Zal02}).
\begin{thm}[Fenchel-Legendre duality theorem]\label{Fenchel-Legendre}
Let $E$ be a Hausdorff locally convex topological vector space and $E'$ its topological dual space. For any lower semicontinuous convex function $F \colon E\to ]-\infty,\infty]$, it holds
\[
F(x)=\sup_{\ell \in E'}\{ \ell(x) - F^*(\ell)\},\qquad  x\in E,
\]
where the Fenchel-Legendre transform $F^*$ of $F$ is defined by 
\[
F^*(\ell)=\sup_{x\in E}\{\ell(x) -F(x)\},\qquad  \ell \in E'.
\]
\end{thm}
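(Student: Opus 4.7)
The plan is to prove the Fenchel-Moreau biconjugate identity by the standard Hahn-Banach separation argument applied to the epigraph of $F$. Let me denote by $F^{**}(x) := \sup_{\ell \in E'}\{\ell(x) - F^*(\ell)\}$ the candidate for $F(x)$. The inequality $F^{**}(x) \leq F(x)$ is immediate, for every $\ell \in E'$ and every $y \in E$ the very definition of $F^*$ yields $\ell(y) - F(y) \leq F^*(\ell)$, so that $\ell(x) - F^*(\ell) \leq F(x)$ whenever one sets $y = x$, and taking the supremum over $\ell$ gives the stated bound. The entire content of the theorem therefore lies in the reverse inequality $F^{**}(x) \geq F(x)$ for every $x \in E$.

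For the reverse inequality, I would argue by contradiction. Fix $x_0 \in E$ and assume $F^{**}(x_0) < F(x_0)$, and pick a real number $t$ with $F^{**}(x_0) < t < F(x_0)$ (when $F(x_0) = +\infty$, any real $t > F^{**}(x_0)$ works). Consider the epigraph
\[
\mathrm{epi}(F) := \{(y,s) \in E \times \R : F(y) \leq s\} .
\]
Since $F$ is convex and lower-semicontinuous, $\mathrm{epi}(F)$ is a nonempty closed convex subset of the Hausdorff locally convex space $E \times \R$, and by construction $(x_0,t) \notin \mathrm{epi}(F)$. The geometric Hahn-Banach theorem then provides a continuous linear functional $\Lambda$ on $E \times \R$ and a constant $\alpha \in \R$ such that
\[
\Lambda(x_0,t) < \alpha \leq \Lambda(y,s), \qquad \forall (y,s) \in \mathrm{epi}(F).
\]
Any continuous linear functional on $E \times \R$ is of the form $\Lambda(y,s) = \ell_0(y) + \lambda s$ for some $\ell_0 \in E'$ and $\lambda \in \R$.

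The next step is to extract from $\Lambda$ a suitable element of $E'$ to plug into the definition of $F^{**}$. Letting $s \to +\infty$ with $y$ fixed in $\mathrm{dom}(F) := \{F < \infty\}$ (which is nonempty, else $F \equiv +\infty$ and the identity is trivial on both sides) forces $\lambda \geq 0$. I would then split into two cases. If $\lambda > 0$, normalize by dividing through by $\lambda$, so the separation reads, after setting $\ell := -\ell_0/\lambda$,
\[
\ell(y) - F(y) \leq -\alpha/\lambda - \ell_0(y)/\lambda + F(y) - F(y) \;\text{reorganized to}\; F(y) \geq \ell(y) + \beta, \quad \beta := \alpha/\lambda - \text{(adjustment)},
\]
which after clean rearrangement gives $F^*(\ell) \leq -\beta$ and $\ell(x_0) + \beta > t$, hence $\ell(x_0) - F^*(\ell) > t > F^{**}(x_0)$, contradicting the definition of $F^{**}$. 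The delicate case is $\lambda = 0$: the separator $\ell_0$ does not see the vertical direction and only gives $\ell_0(x_0) < \ell_0(y)$ for all $y \in \mathrm{dom}(F)$, which is not yet a contradiction by itself. To handle it, I would first establish that $F^{**}(x_0) > -\infty$ (so there exists $\ell_1 \in E'$ with $F \geq \ell_1 + c_1$ globally on $E$, which follows either from the hypothesis $F^{**}(x_0) > -\infty$ when it holds, or from a preliminary separation in the properness case), and then perturb: for every $\varepsilon > 0$, the functional $\ell_1 + \varepsilon^{-1}\ell_0 \in E'$ satisfies $F^*(\ell_1 + \varepsilon^{-1}\ell_0) \leq -c_1 + \varepsilon^{-1}\sup_{y \in \mathrm{dom}(F)} \ell_0(y) \cdot (\text{bound})$, and since $\ell_0(x_0) - \sup_{y \in \mathrm{dom}(F)} \ell_0(y) < 0$ is bounded away from zero, taking $\varepsilon$ small enough makes $(\ell_1 + \varepsilon^{-1}\ell_0)(x_0) - F^*(\ell_1 + \varepsilon^{-1}\ell_0)$ arbitrarily large, again contradicting $F^{**}(x_0) < +\infty$. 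The remaining degenerate situation, where $F^{**} \equiv -\infty$, occurs only if $F \equiv +\infty$, in which case the identity is trivially true.

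The main obstacle is handling the vertical case $\lambda = 0$ cleanly, together with the technical point that $F^{**}(x_0)$ might a priori be $-\infty$; the trick in both sub-cases is to combine the separator $\ell_0$ with an affine minorant of $F$ (whose existence itself is a consequence of Hahn-Banach applied at a point of $\mathrm{dom}(F)$) to produce continuous linear functionals whose $F^*$-value remains finite while $\ell(x_0)$ can be pushed above any prescribed threshold. Once these separation arguments are organized, the identity $F^{**} = F$ follows on all of $E$.
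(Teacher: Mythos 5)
The paper does not prove this statement at all: it is quoted as a classical tool with a pointer to \cite[Theorem 2.3.3]{Zal02}, so there is no ``paper proof'' to match. What you supply is the standard Fenchel--Moreau argument (epigraph separation via geometric Hahn--Banach), and its overall structure is sound: $F^{**}\leq F$ is immediate, the strict separation of $(x_0,t)$ from $\mathrm{epi}(F)$ is legitimate because the epigraph is closed, convex and nonempty once the trivial case $F\equiv+\infty$ is set aside, and the dichotomy $\lambda>0$ versus $\lambda=0$ is exactly the right organization. Two details need repair, though. First, in the vertical case $\lambda=0$ your perturbation has the wrong sign and the wrong extremum: with the separation written as $\ell_0(x_0)<\alpha\leq\ell_0(y)$ for $y\in\mathrm{dom}(F)$, the relevant quantity is $\inf_{y\in\mathrm{dom}(F)}\ell_0(y)\geq\alpha$, not $\sup_{y\in\mathrm{dom}(F)}\ell_0(y)$ (which may well be $+\infty$, so your bound on $F^*(\ell_1+\varepsilon^{-1}\ell_0)$ can be vacuous); the functional to use is $\ell_c:=\ell_1-c\,\ell_0$ with $c\to+\infty$, for which $F^*(\ell_c)\leq -c_1-c\alpha$ and $\ell_c(x_0)-F^*(\ell_c)\geq \ell_1(x_0)+c_1+c\bigl(\alpha-\ell_0(x_0)\bigr)\to+\infty$, contradicting $F^{**}(x_0)<t<\infty$. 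Second, the affine minorant $\ell_1+c_1\leq F$ must not be obtained ``from the hypothesis $F^{**}(x_0)>-\infty$'' (that would be circular); it should come from your own parenthetical alternative, namely a preliminary separation of a point $(y_1,s_1)$ with $y_1\in\mathrm{dom}(F)$ and $s_1<F(y_1)$ from $\mathrm{epi}(F)$, in which the vertical coefficient is forced to be strictly positive. Finally, the closing remark is slightly off: since $F$ never takes the value $-\infty$, a proper lsc convex $F$ always has a continuous affine minorant, so $F^{**}\equiv-\infty$ never occurs, and when $F\equiv+\infty$ one has $F^*\equiv-\infty$ and $F^{**}\equiv+\infty$, so the identity again holds. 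With these fixes your argument is a complete and correct proof of the cited theorem, somewhat more self-contained than the paper, which simply defers to the literature.
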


To apply Theorem \ref{Fenchel-Legendre} in our framework, one needs to identify the topological dual space of $\left(\mathcal{M}_\gamma(X), \sigma(\mathcal{M}_\gamma(X))\right)$ equipped with the top\-ol\-ogy defined in Section \ref{Notation Kantorovich}. More precisely, the next lemma will enable us to identify the dual space $\left(\mathcal{M}_\gamma(X), \sigma(\mathcal{M}_\gamma(X))\right)'$ to the set $\Phi_\gamma(X)$.

\begin{lem}\label{lem:identification}
A linear form $\ell \colon  \mathcal{M}_\gamma(X)\to \R$ is continuous with respect to the topology $\sigma(\mathcal{M}_\gamma(X))$ if and only if there exists $\varphi \in \Phi_\gamma(X)$ such that 
\[
\ell(m) = \int \varphi\,dm,\qquad \forall m\in \mathcal{M}_\gamma(X).
\]
\end{lem}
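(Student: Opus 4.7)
\textbf{Proof plan for Lemma \ref{lem:identification}.} The ``if'' direction is immediate from the definition of the topology $\sigma(\mathcal{M}_\gamma(X))$: since this topology is by construction the coarsest one making every map $m\mapsto \int \varphi\,dm$, $\varphi\in\Phi_\gamma(X)$, continuous, any linear form of the stated shape is continuous.

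For the ``only if'' direction, I would use the standard fact from functional analysis that continuous linear functionals on a vector space endowed with the weak topology induced by a family of linear functionals are precisely the finite linear combinations of that family. More concretely, assume $\ell:\mathcal{M}_\gamma(X)\to \R$ is linear and continuous. Applying the continuity at $0$ to the open interval $(-1,1)$ and using the basis of open sets \eqref{eq:ouvert}, there exist $\varphi_1,\dots,\varphi_n \in \Phi_\gamma(X)$ and $\varepsilon>0$ such that
\[
\max_{1\leq i \leq n} \left|\int \varphi_i\,dm\right|<\varepsilon \;\Longrightarrow\; |\ell(m)|<1.
\]
By homogeneity, applying this to $\lambda m$ for arbitrary $\lambda>0$ yields the estimate
\[
|\ell(m)| \leq \frac{1}{\varepsilon}\max_{1\leq i \leq n}\left|\int \varphi_i\,dm\right|,\qquad \forall m \in \mathcal{M}_\gamma(X).
\]
In particular, if $\int \varphi_i\,dm = 0$ for every $i$, then $\ell(m)=0$, i.e.\ $\bigcap_{i=1}^n \ker L_i \subset \ker \ell$, where $L_i(m):=\int \varphi_i\,dm$.

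The next step is the classical finite-dimensional lemma: if $\ell, L_1,\dots,L_n$ are linear forms on a vector space $E$ such that $\bigcap_i \ker L_i \subset \ker \ell$, then $\ell \in \mathrm{span}(L_1,\dots,L_n)$ (one proof: the induced map $E/\bigcap_i\ker L_i \to \R^n$, $\bar m\mapsto (L_i(m))_i$, is injective, and $\bar \ell$ factors through this quotient, hence extends to a linear form on $\R^n$). Applying this, we get constants $c_1,\dots,c_n$ with
\[
\ell(m) = \sum_{i=1}^n c_i \int \varphi_i\,dm = \int \varphi\,dm,\qquad \varphi := \sum_{i=1}^n c_i \varphi_i.
\]
To conclude, one must check that $\varphi \in \Phi_\gamma(X)$; this is where the growth assumption \eqref{gamma} is used: a finite linear combination of continuous functions satisfying the bound $|\varphi_i(x)|\leq a_i + b_i \gamma(d(x,x_o))$ is continuous and itself satisfies such a bound (with $a = \sum |c_i|a_i$, $b=\sum |c_i|b_i$), so $\Phi_\gamma(X)$ is a vector space and contains $\varphi$.

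The only mild subtlety I anticipate is the verification that the basis of neighborhoods of $0$ really is of the form \eqref{eq:ouvert} with $a=0$, which follows from the translation invariance of $\sigma(\mathcal{M}_\gamma(X))$ (it is a linear topology, since the functionals $L_\varphi$ are linear), and the bookkeeping showing $\Phi_\gamma$ is stable under linear combinations. Both are straightforward.
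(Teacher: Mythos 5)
Your proof is correct, but it takes a genuinely different route from the paper's. The paper proceeds concretely: it defines $\varphi(x):=\ell(\delta_x)$, verifies that $x\mapsto\delta_x$ is continuous (so $\varphi$ is continuous), extracts the growth bound on $\varphi$ from the seminorm estimate that continuity of $\ell$ at $0$ provides, and then propagates the identity $\ell(m)=\int\varphi\,dm$ from finitely supported measures to all of $\mathcal{M}_\gamma(X)$ by a density argument. Your approach instead uses the abstract characterization of the dual of a weak topology: continuity of $\ell$ at $0$ gives the estimate $|\ell(m)|\leq\varepsilon^{-1}\max_i\left|\int\varphi_i\,dm\right|$, hence $\bigcap_i\ker L_{\varphi_i}\subset\ker\ell$, and the classical finite-codimension lemma then forces $\ell=\sum_i c_i L_{\varphi_i}=L_{\sum_i c_i\varphi_i}$. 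Your route avoids both the verification that $x\mapsto\delta_x$ is continuous and the density argument for finitely supported measures, at the cost of invoking the kernel-containment lemma; conversely the paper's route is more elementary and directly produces $\varphi$ as $\ell(\delta_x)$. Of course, both yield the same $\varphi$, since $\varphi(x)=\int\varphi\,d\delta_x=\ell(\delta_x)$ in either case. One small bookkeeping point in your version: after scaling by $\lambda>0$ you should let the scaling factor tend to the critical value to pass from the strict to the non-strict inequality in the estimate $|\ell(m)|\leq\varepsilon^{-1}\max_i\left|\int\varphi_i\,dm\right|$; this is routine and does not affect the argument.
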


The proof of this lemma appears, for instance, in the book by Deus\-chel and Stroock \cite{DS89}. We recall it here for the sake of completeness.
\proof[Proof of Lemma \ref{lem:identification}]
The fact that linear functionals of the form $m\mapsto \int \varphi\,dm$, $\varphi \in \Phi_\gamma$ are continuous comes from the very definition of the topology $\sigma(\mathcal{M}_\gamma(X)).$ Conversely, let $\ell$ be a continuous linear functional and let us show that $\ell$ is of the preceding form. Define $\varphi(x)=\ell(\delta_x)$, $x\in X$ (where $\delta_x$ is the Dirac mass at $x$). First we will show that $\varphi$ belongs to $\Phi_\gamma(X)$. The map $X \ni x \mapsto \delta_x \in \mathcal{M}_\gamma(X)$ is  continuous. Namely, for all $\varphi_1,\ldots,\varphi_n \in \Phi_\gamma$, it holds 
$
\{x \in X ; \delta_x \in \cap_{i=1}^nU_{\varphi_i,a_i,\varepsilon_i}\}=\{x\in X ; |\varphi_i(x)-a_i|<\varepsilon_i, \forall i \leq n\}$,
(where $U_{\varphi_i,a_i,\varepsilon_i}$ is defined by \eqref{eq:ouvert}) and this set is open, which proves that $x\mapsto \delta_x$ is continuous on $X$. As a result $\varphi$ is continuous. It remains to  prove that $\varphi$ satisfies the growth condition \eqref{eq:growth condition}. Since $\ell$ is continuous, the set $O:=\{ m\in \mathcal{M}_\gamma(X);|\ell(m)|<1\}$ is open and contains $0$. By the definition of the topology  $\sigma(\mathcal{M}_\gamma(X))$, there exist an integer $n$, $\varphi_1,\ldots,\varphi_n \in \Phi_\gamma$, $a_1,\ldots,a_n \in \mathbb{R}$ and $\varepsilon_1,\ldots,\varepsilon_n >0$ such that $O$ contains $\cap_{i=1}^nU_{\varphi_i,a_i,\varepsilon_i}$ and $0\in \cap_{i=1}^nU_{\varphi_i,a_i,\varepsilon_i}$. As a result,
\[0\in \bigcap_{i=1}^nU_{\varphi_i,a_i,\varepsilon_i}\qquad \Rightarrow \qquad A:=\max_{i\in \{1,\ldots,n\}}\left|\frac{a_i}{\varepsilon_i} \right|<1,\]
and (given $m \in \mathcal{M}_\gamma(X)$)
\[
\sum_{i=1}^n \left|\int \frac{\varphi_i}{\varepsilon_i}\,dm\right|<1-A\qquad \Rightarrow \qquad m \in O .
\]
Thus, since $m/\ell(m)\notin O$,
\[
|\ell(m)| \leq \frac1{1-A} \sum_{i=1}^n \left|\int \frac{\varphi_i}{\varepsilon_i}\,dm\right|,\qquad \forall m\in \mathcal{M}_\gamma(X).
\]
Applying this inequality to $m=\delta_x$ and using the growth conditions \eqref{eq:growth condition} satisfied by the $\varphi_i's$, one sees that $\varphi$ verifies \eqref{eq:growth condition}.

Finally, let us show that $\ell(m) = \int \varphi\,dm$, for all $m\in \mathcal{M}_\gamma(X)$. If $m$ is a linear combination of Dirac measures, then this identity is clearly satisfied. Since any measure $m$ can be approached in the topology $\sigma(\mathcal{M}_\gamma(X))$ by some sequence $m_n$ of measures with finite support, the equality $\ell(m) = \int \varphi\,dm$ extends to any $m \in \mathcal{M}_\gamma(X).$  
\endproof

During the proof of Theorem \ref{Kantorovich general}, we will also use the following easy extension of Prokhorov's theorem.
\begin{thm}\label{thm:Prokhorov}
A set $A \subset \mathcal{P}_\gamma(X)$ is relatively compact for the topology $\sigma(\mathcal{M}_\gamma(X))$ if and only if for all $\varepsilon>0$, there exists a compact set $K_\varepsilon \subset X$ such that 
\[
\int_{X\setminus K_\varepsilon} (1+\gamma(d(x_o,x)))\,\nu(dx) \leq \varepsilon,\qquad \forall \nu \in A,
\]
where $x_o$ is some arbitrary fixed point.
\end{thm}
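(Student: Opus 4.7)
The result is the usual Prokhorov theorem with an extra $\gamma$-integrability control. The plan is to handle the two implications separately, relying on the classical Prokhorov theorem for the weak topology and on the fact that, for $(X,d)$ Polish, $(\mathcal{P}_\gamma(X),\sigma(\mathcal{M}_\gamma(X)))$ is metrizable (e.g.\ by a Wasserstein-type distance), so that compactness reduces to sequential compactness.

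\emph{Sufficiency.} Assume the condition. Taking $\varepsilon=1$, one gets $\sup_{\nu\in A}\int(1+\gamma(d(x_o,\cdot)))\,d\nu \le 1 + \sup_{x\in K_1}(1+\gamma(d(x_o,x))) < \infty$ (using compactness of $K_1$ and the fact that a lower-semicontinuous function is bounded on compacts from below, while an upper bound on compacts follows from applying the bound at points where $\gamma$ is finite and re-approximating). In particular $A$ is classically tight, so by Prokhorov a subsequence $\nu_{n_k}$ of any sequence $(\nu_n)\subset A$ converges weakly to some $\nu\in \mathcal{P}(X)$; applying Portmanteau to the nonnegative lower-semicontinuous function $\gamma(d(x_o,\cdot))$ gives $\nu\in\mathcal{P}_\gamma(X)$. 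To upgrade weak convergence to $\sigma$-convergence, fix $\varphi\in\Phi_\gamma(X)$ with $|\varphi|\le a+b\gamma(d(x_o,\cdot))$ and $\delta>0$. Choose $K_\delta$ from the hypothesis and a continuous compactly supported cutoff $\chi$ with $\chi=1$ on $K_\delta$ and $0\le\chi\le 1$. Then $\varphi\chi\in C_b(X)$ gives $\int\varphi\chi\,d\nu_{n_k}\to\int\varphi\chi\,d\nu$, while $|\int\varphi(1-\chi)\,d\nu_{n_k}|\le (a+b)\int_{X\setminus K_\delta}(1+\gamma(d(x_o,\cdot)))\,d\nu_{n_k}\le (a+b)\delta$ uniformly in $k$, and the same bound holds for $\nu$ by Portmanteau applied to the nonnegative lower-semicontinuous function $(1+\gamma(d(x_o,\cdot)))(1-\chi)$. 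Letting first $k\to\infty$ and then $\delta\to 0$ gives $\int\varphi\,d\nu_{n_k}\to\int\varphi\,d\nu$, hence $\sigma$-convergence.

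\emph{Necessity.} Suppose $A$ is $\sigma$-relatively compact but the tightness condition fails: there is $\varepsilon_0>0$ such that for every compact $K\subset X$ one can find $\nu_K\in A$ with $\int_{X\setminus K}(1+\gamma(d(x_o,\cdot)))\,d\nu_K>\varepsilon_0$. Since $\sigma$ is finer than the weak topology, Prokhorov applied to $A$ yields an increasing sequence of compacts $(K_n)$ with $\nu(X\setminus K_n)\le 2^{-n}$ for all $\nu\in A$. Choose $\nu_n\in A$ with $\int_{X\setminus K_n}(1+\gamma(d(x_o,\cdot)))\,d\nu_n>\varepsilon_0$; since $\nu_n(X\setminus K_n)\le 2^{-n}$, this forces $\int_{X\setminus K_n}\gamma(d(x_o,\cdot))\,d\nu_n>\varepsilon_0/2$ for $n$ large. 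By $\sigma$-relative compactness extract a subsequence $\nu_{n_k}\to\nu$ in $\sigma$; then $\nu\in\mathcal{P}_\gamma(X)$, so the finite measure $\gamma(d(x_o,\cdot))\,\nu$ is tight and one can pick a compact $L$ with $\int_{X\setminus L}\gamma(d(x_o,\cdot))\,d\nu\le\varepsilon_0/8$. Enlarging each $K_n$ to $K_n\cup L$ (this preserves the classical tightness and $\int_{X\setminus K_n}\gamma\,d\nu_n>\varepsilon_0/2$), we may assume $L\subset K_n$ for all $n$. Now pick a continuous cutoff $\chi$ supported in $L$, equal to $1$ on a slightly smaller compact, and write $\int_{X\setminus K_{n_k}}\gamma(d(x_o,\cdot))\,d\nu_{n_k}\le\int\gamma(d(x_o,\cdot))(1-\chi)\,d\nu_{n_k}$. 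Approximating $\gamma$ from below by continuous functions $\gamma_m\uparrow\gamma$ on $\R_+$ (so that $\gamma_m(d(x_o,\cdot))(1-\chi)\in\Phi_\gamma(X)$), one has $\int\gamma_m(d(x_o,\cdot))(1-\chi)\,d\nu_{n_k}\to\int\gamma_m(d(x_o,\cdot))(1-\chi)\,d\nu$ as $k\to\infty$ for each fixed $m$, and letting $m\to\infty$ (monotone convergence) the limit equals $\int\gamma(d(x_o,\cdot))(1-\chi)\,d\nu\le\int_{X\setminus L}\gamma(d(x_o,\cdot))\,d\nu\le\varepsilon_0/8$. Combined with an upper-Portmanteau argument (exchanging $\liminf$ and the monotone limit in $m$), this contradicts $\int_{X\setminus K_{n_k}}\gamma(d(x_o,\cdot))\,d\nu_{n_k}>\varepsilon_0/2$.

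\emph{Main obstacle.} The delicate point is the necessity: because $\gamma$ is only lower-semicontinuous, the natural test function $\gamma(d(x_o,\cdot))$ need not belong to $\Phi_\gamma(X)$, so $\sigma$-convergence does not directly yield convergence of the $\gamma$-moments. The approximation $\gamma_m\uparrow\gamma$ by continuous functions, combined with a compactly supported cutoff and a careful exchange of limits, is what bridges this gap. A secondary technicality is justifying the passage from topological to sequential compactness, which uses the metrizability of $(\mathcal{P}_\gamma(X),\sigma)$ for Polish $X$.
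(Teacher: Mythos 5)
The paper itself offers no proof of this statement --- it is invoked as ``an easy extension of Prokhorov's theorem'' --- so there is nothing to compare your argument with; it has to stand on its own. Your sufficiency half is the standard route (tightness from the hypothesis, classical Prokhorov, then a cutoff estimate to upgrade weak convergence to $\sigma$-convergence) and is sound in outline, with two caveats: in a non-locally-compact Polish space there is no continuous \emph{compactly supported} $\chi$ equal to $1$ on $K_\delta$ (use $\chi=[1-d(\cdot,K_\delta)]_+$ instead, which your estimates tolerate), and your parenthetical claim that a lower-semicontinuous $\gamma$ is bounded above on compact sets is false in general (this is what you use to get $\sup_{\nu\in A}\int\gamma(d(x_o,\cdot))\,d\nu<\infty$ and hence $\nu\in\mathcal{P}_\gamma(X)$; it holds for the $\gamma$'s the paper actually uses, but it does not follow from lower semicontinuity). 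The reduction of compactness to sequential compactness via metrizability of $(\mathcal{P}_\gamma(X),\sigma)$ is also asserted rather than argued, though that is a lesser issue.

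The necessity half has a genuine gap. First, the step ``enlarging each $K_n$ to $K_n\cup L$ preserves $\int_{X\setminus K_n}\gamma\,d\nu_n>\varepsilon_0/2$'' is unjustified: enlarging $K_n$ shrinks $X\setminus K_n$, so the lower bound need not survive, and you cannot instead re-choose the $\nu_n$ for the enlarged compacts because $L$ was produced from the limit $\nu$ of the subsequence extracted from the original $\nu_n$'s --- the selection is circular. Second, and more fundamentally, your concluding limit argument points in the wrong direction. For fixed $m$ you get $\int\gamma_m(d(x_o,\cdot))(1-\chi)\,d\nu_{n_k}\to\int\gamma_m(d(x_o,\cdot))(1-\chi)\,d\nu$, and since $\gamma_m\uparrow\gamma$ this only yields $\liminf_k\int\gamma(d(x_o,\cdot))(1-\chi)\,d\nu_{n_k}\geq\int\gamma(d(x_o,\cdot))(1-\chi)\,d\nu$, i.e.\ $\sigma$-lower semicontinuity of $\mu\mapsto\int\gamma(d(x_o,\cdot))(1-\chi)\,d\mu$. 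To contradict $\int_{X\setminus K_{n_k}}\gamma(d(x_o,\cdot))\,d\nu_{n_k}>\varepsilon_0/2$ you need an \emph{upper} bound on $\limsup_k\int\gamma(d(x_o,\cdot))(1-\chi)\,d\nu_{n_k}$, which would require controlling $\int(\gamma-\gamma_m)(d(x_o,\cdot))(1-\chi)\,d\nu_{n_k}$ uniformly in $k$ --- precisely the uniform $\gamma$-integrability you are trying to prove; the appeal to an ``upper-Portmanteau argument exchanging $\liminf$ and the monotone limit in $m$'' has no content. What is missing is a mechanism converting $\sigma$-convergence into an upper bound on the escaping $\gamma$-mass: for continuous $\gamma$ one tests against $[\gamma(d(x_o,\cdot))-R]_+$ (these lie in $\Phi_\gamma(X)$, and a Dini argument on the compact closure of $A$ also works); for merely lower-semicontinuous $\gamma$ one has to construct genuine continuous test functions in $\Phi_\gamma(X)$ that see the mass $\nu_{n_k}$ sends outside $K_{n_k}$, e.g.\ bumps of height close to $\gamma$ on small balls around the escaping points, which lower semicontinuity permits. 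As written, no contradiction is derived and the ``only if'' direction is not proved.
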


\subsection{Proof of Theorem \ref{Kantorovich general} (Duality)}\label{sec:duality_proof}
\proof[Proof of Theorem \ref{Kantorovich general}]
Fix $\mu\in \mathcal{P}_\gamma(X)$ and let us consider the function $F$ defined on $\mathcal{M}_\gamma(X)$ by 
\[
F(m)=\mathcal{T}_c(m|\mu), \text{ if } m\in \mathcal{P}_\gamma(X) \qquad\text{and}\qquad F(m)=+\infty \text{ otherwise}.
\]

Let us show that the function $F$ satisfies the assumptions of Theorem \ref{Fenchel-Legendre}.


First we will prove that $F$ is convex on $\mathcal{M}_\gamma(X)$. According to the definition of $F$, it is clearly enough to prove the convexity of $F$ over (the convex set) $\mathcal{P}_\gamma(X)$. Take $\nu_0,\nu_1 \in \mathcal{P}_\gamma(X)$  and $\pi_i \in \Pi(\mu,\nu_i)$ $i=0,1$ with disintegration kernels $(p^0_x)_{x\in X},(p^1_x)_{x\in X}$.  Then for all $t\in [0,1]$, $\pi_t:=(1-t)\pi_0 + t\pi_1 \in \Pi(\mu,(1-t)\nu_0+t\nu_1)$ and its disintegration kernel satisfies $p^t_x=(1-t)p^0_x+tp^1_x$, for $\mu$ almost every $x\in X.$ Since the cost function $c$ is convex in its second argument, it holds
\begin{align*}
F((1-t)\nu_0+t\nu_1)\leq I_c[\pi_t] & = \int c(x,p^t_x)\,\mu(dx)\leq (1-t)I_c[\pi_0] + tI_c[\pi_1]\,.
\end{align*}
Optimizing over $\pi_0,\pi_1$ gives $F((1-t)\nu_0+t\nu_1)\leq(1-t)F(\nu_0)+tF(\nu_1)$, which proves the desired convexity property.

Next we will prove that  $F$ is lower-semicontinuous, for the topology $\sigma(\mathcal{M}_\gamma(X))$, on $\mathcal{M}_\gamma(X)$. Let $(m_n)_n$  be a sequence of $\mathcal{M}_\gamma(X)$ converging to some $m$. One needs to show that $F(m)\leq \liminf_{n\to \infty} F(m_n)$. One can assume without loss of generality that $F(m_n)<\infty$ for all $n$. By the definition of $\mathcal{T}_c(\,\cdot\,|\mu)$, for all $n\in \N^*$, there exists $\pi_n \in \Pi(\mu,m_n)$ such that $I_c[\pi_n]-1/n\leq \mathcal{T}_c(m_n|\mu) \leq I_c[\pi_n].$ Since $m_n$ is a converging sequence, the set $\{m_n ; n\in \N^*\}\cup\{\mu\}$ is relatively compact. Therefore, according to Theorem \ref{thm:Prokhorov}, for some arbitrary fixed point $x_o\in X$,  for all $\varepsilon>0$, there exists a compact set $K_\varepsilon \subset X$ such that 
\[
\sup _{n\in \N^*}\int_{X\setminus K_\varepsilon} 1+ \gamma(d(x_o,y))\,m_n(dy) \leq \varepsilon
\]
and
\[
\int_{X\setminus K_\varepsilon} 1+ \gamma(d(x_o,x))\mu(dx) \leq \varepsilon.
\]
Therefore, letting $M:=\sup_{n\in \N^*} \int \gamma(d(x_o,x))\,m_n(dx)<\infty$ and $K_\varepsilon^c:=X\setminus K_\varepsilon$,  it holds
\begin{align*}
&\int_{X\times X \setminus  (K_\varepsilon \times K_\varepsilon)}   
1+\gamma(d(x_o,x))+\gamma(d(x_o,y))\,\pi_n(dxdy)\\ 
& \leq 
\int_{X\times K_\varepsilon^c} 1+\gamma(d(x_o,x))+\gamma(d(x_o,y))\,\pi_n(dxdy)  + 
\int_{K_\varepsilon^c\times X} 1+\gamma(d(x_o,x))+\gamma(d(x_o,y))\,\pi_n(dxdy)\\ 
& \leq 
m_n(K_\varepsilon^c)\int \gamma(d(x_o,x))\,\mu(dx) + \int_{K_\varepsilon^c} 1+\gamma(d(x_o,y))\,m_n(dy) + 
\int_{K_\varepsilon^c} 1+\gamma(d(x_o,x))\,\mu(dx)+  \mu(K_\varepsilon^c) M \\
&  \leq \varepsilon \left(2 + M + \int \gamma(d(x_o,x))\,\mu(dx)\right).
\end{align*}
So according to Theorem \ref{thm:Prokhorov}, it follows that $\{\pi_n; n\in \N^*\}$ is rel\-atively compact. 
Extracting a subsequence if necessary, one can assume without loss of generality that $\pi_n$ converges to some $\pi^*\in \mathcal{P}_\gamma(X\times X)$. This $\pi^*$ has the correct marginals $\mu$ and $m$. Furthermore, denoting by $\ell = \liminf_{n\to \infty} I_c[\pi_n]= \liminf_{n\to \infty} \mathcal{T}_c(m_n|\mu)$, we see that, for all $r>0$, 
\[
\pi_n \in \{\pi \in \mathcal{P}_\gamma(X\times X) ; \pi(dx\times X)=\mu(dx) \text{ and }I_c[\pi] \leq \ell + r\}:= A_{\ell+r}\,,
\] 
for infinitely many $n\in \N^*.$ By assumption $(C_1)$, the set $A_{\ell+r}$ is closed for the topology $\sigma(\mathcal{P}_\gamma(X\times X))$. Therefore, the limit $\pi^*$ also belongs to $A_{\ell + r}$. In other words, 
\[
F(m)=\mathcal{T}_c(m|\mu)\leq I_c[\pi^*] \leq \liminf_{n\to \infty} \mathcal{T}_c(m_n|\mu)+ r, \qquad \forall r>0.
\]
Since $r>0$ is arbitrary, this concludes the proof of the lower-semi\-conti\-nuity of $F.$

According to Lemma \ref{lem:identification}, the topological dual space of $\mathcal{M}_\gamma(X)$ can be identified with the set of linear functionals $m \mapsto \int \varphi\,dm$, where $\varphi\in \Phi_\gamma(X).$ 
Applying Theorem \ref{Fenchel-Legendre} together with Lemma \ref{lem:identification} we conclude that,
for any $m \in \mathcal{P}_\gamma(X)$,
\[
F(m)=\sup_{\varphi \in \Phi_\gamma(X)} \left\{ \int \varphi\,dm - F^*(\varphi)\right\} =\sup_{\varphi \in \Phi_\gamma(X)} \left\{ \int -\varphi\,dm - F^*(-\varphi)\right\} .
\]
Now we show that the last supremum can be restricted to $\Phi_{\gamma,b}(X)$. 
Observe that 
\begin{align*}
F^*(-\varphi) &= \sup_{m \in \mathcal{P}_\gamma(\X)}\left\{ \int -\varphi\,dm - F(m)\right\} \\&= \sup_{k\in \R} \sup_{m \in \mathcal{P}_\gamma(X)} \left\{ \int - (\varphi\vee k)\,dm - F(m) \right\} = \sup_{k\in \R} F^*(-(\varphi \vee k))\,,
\end{align*}
so that for all $\varphi \in \Phi_\gamma(X)$ and $m\in \mathcal{P}_\gamma(X)$\,, we have
\[
\int -\varphi\,dm - F^*(-\varphi) = \lim_{k\to -\infty} \int - (\varphi \vee k)\,dm- F^*(- (\varphi \vee k)) .
\]
Therefore,
\begin{align*}
F(m) =
\sup_{\varphi \in \Phi_{\gamma}(X)} \left\{  \int -\varphi\,dm - F^*(-\varphi)  \right\} \leq 
\sup_{\varphi \in \Phi_{\gamma,b}(X)} \left\{  \int -\varphi\,dm - F^*(-\varphi)  \right\}\,,
\end{align*}
and since the other inequality is obvious, the two quantities are equal.
%
To conclude the proof, it remains to show that
\begin{equation}\label{F^*}
F^*(-\varphi)  = -\int R_c\varphi(x)\,\mu(dx),\qquad \forall \varphi \in \Phi_{\gamma,b}(X).
\end{equation}
For all $\varphi \in \Phi_{\gamma,b}$, it holds
\begin{align*}
F^*(-\varphi)
&= 
\sup_{m \in \mathcal{P}_\gamma(\X)}\left\{ \int -\varphi\,dm - \mathcal{T}_c(m|\mu)\right\} \\
& = 
\sup_{m \in \mathcal{P}_\gamma(\X)}\sup_{\pi \in \Pi(\mu,m)}\left\{ \int -\varphi\,dm - I_c[\pi]\right\} \\
&= 
\sup\left\{ \int \left[\int-\varphi(y)\,p_x(dy) - c(x,p_x)\right]\,\mu(dx); \right. (p_x)_{x\in X} 
\text{ probability kernel such that } \mu p \in \mathcal{P}_\gamma(X)\bigg\}\\
& 
= -\inf \left\{ \int \left[\int\varphi(y)\,p_x(dy) + c(x,p_x)\right]\,\mu(dx); \right.(p_x)_{x\in X} 
\text{ probability kernel such that } \mu p \in \mathcal{P}_\gamma(X)\bigg\}.
\end{align*}
By definition, $R_c\varphi(x) = \inf_{p \in \mathcal{P}_\gamma(X)} \{\int \varphi\,dp + c(x,p)\}$. Therefore, one has 
\[
F^*(-\varphi) \leq - \int R_c\varphi(x)\,\mu(dx).
\]
Let us show the converse inequality. One can assume without loss of generality that $\int R_c\varphi(x)\,\mu(dx) \in (-\infty,\infty).$ For all $\varepsilon>0$ and $x \in X$, consider the set $M_x^\varepsilon$ defined by 
\[
M_x^\varepsilon:=\left\{p \in \mathcal{P}_\gamma(X); \int \varphi\,dp + c(x,p) \leq R_c\varphi(x)+\varepsilon\right\}.
\]
Note that, since $\varphi$ is bounded from below and $c\geq0$, $R_c\varphi(x) >-\infty$, for all $x\in X$, we have that $M_x^\varepsilon$ is non-empty for all $\varepsilon>0.$

Assume that for all $\varepsilon>0$, there exists a \emph{measurable} kernel $X \to \mathcal{P}_\gamma(X) : x \mapsto p_x^{\varepsilon}$ such that for all $x \in X$, $p_x^\varepsilon \in M_x^\varepsilon$.
Then, if $\varphi$ is bounded below by $k$, one sees that $\int c(x,p_x^\varepsilon)\,\mu(dx) \leq -k+\varepsilon + \int R_c\varphi\,d\mu<\infty.$ According to condition $(C_4)$ one concludes that $\nu^{\varepsilon} = \mu p^{\varepsilon} \in \mathcal{P}_\gamma(X).$
So it holds
\[
F^*(-\varphi) \geq -\int \int \varphi(y)\,p_x^{\varepsilon}(dy) + c(x,p_x^\varepsilon)\,\mu(dx)\geq  - \int R_c\varphi(x)\,\mu(dx)-\varepsilon\,,
\]
which gives the desired inequality when $\varepsilon \to 0$.

When the condition $(C_3)$ holds, the kernel $p_x^\varepsilon$ is obtained by applying the elementary measurable selection result of Lemma \ref{lem:measurable sel} below. Indeed, note that the function $H(x,p) = \int \varphi\,dp + c(x,p)$ is continuous (and thus upper-semicontinuous), and that $Y=\mathcal{P}_\gamma(X)$ equipped with the topology $\sigma(\mathcal{P}_\gamma(X))$ is metrizable {(for instance,  by the Kantorovich metric $W_r$ if  $\gamma= \gamma_r$, or the L\'evy-Prokhorov distance for the usual weak-topology if  $\gamma=\gamma_0$)}  and separable (see \cite[Theorem 6.18]{Vil09}, \cite[Proposition 7.20]{BS78}).

Under condition $(C_3')$, the space $X$ is compact and the function $H$ defined above is lower-semicontinuous. The  selection  Lemma \ref{lem:measurable selbis} below 
ensures that there exists a \emph{measurable} kernel $X \to \mathcal{P}_\gamma(X) : x \mapsto p_x$
such that $R_c\varphi(x)=\inf_{p\in \mathcal{P}_\gamma(X)} H(x,p)=H(x,p_x)$. The conclusion easily follows.

Under condition $(C_3'')$,  $X$ is a  countable set of isolated points. So all subsets of $X$ are open (the topology on $X$ is thus the discrete one) and all functions are measurable (and even continuous). Therefore by choosing for each $x$ in $X$, some element $p_x^\varepsilon$ in the non-empty set $M_x^\varepsilon$, we get a measurable kernel $X \to \mathcal{P}_\gamma(X) : x \mapsto p_x^\varepsilon$. The same conclusion follows.

To complete the proof, one needs to justify that Condition $(C_1)$ follows from Condition $(C_3'')$.
Assume that $(X,d)$ is a countable set of isolated points and that for all $x\in X$, the function $p\mapsto c(x,p)$ is lower-semicontinuous and let us show that $\pi \mapsto I_c[\pi]$ is lower semicontinuous on $\Pi(\mu,\,\cdot\,).$ Let $(\pi_n)_n$ be a sequence in $\Pi(\mu,\cdot)$ converging to some $\pi$ for the topology $\sigma(\mathcal{P}_\gamma(X\times X))$. Write $\pi_n(dxdy)=p_{x,n}(dy)\,\mu(dx)$ and denote by $\nu_n$ (resp. $\nu$) the second marginal of $\pi_n$ (resp. $\pi$). The sequence $\nu_n$ converges to $\nu$, therefore it is relatively compact and so according to Theorem \ref{thm:Prokhorov}, for all $\varepsilon>0$, there is some compact $K_\varepsilon \subset X$ (i.e. a finite set) such that $\int_{K_\varepsilon^c} \gamma (d(x_o,y))\,\nu_n(dy) \leq \varepsilon$, where $x_o$ is some fixed point in $X.$ In other words, 
\[
\sum_{y\in K_\varepsilon^c} \sum_{x\in X} \gamma(d(x_o,y)) p_{x,n}(\{y\})\mu(\{x\}) \leq \varepsilon\,.
\]
In particular, for all $x\in X$ in the support of $\mu$, it holds 
$$
\sum_{y\in K_\varepsilon^c} \gamma(d(x_o,y)) p_{x,n}(\{y\}) \leq \varepsilon/\mu(\{x\}), 
$$
and so, according to Theorem \ref{thm:Prokhorov}, $\{p_{x,n} ; n \in \N\}$ is relatively compact. 
Without loss of generality (extracting a subsequence if necessary), one can assume that $I_c[\pi_n]=\int c(x,p_{x,n})\,\mu(dx)$ converges. Since for all $x$ in the support of $\mu$, $\{p_{x,n} ; n\in\N\}$ is relatively compact,  the classical diagonal extraction argument enables us to construct an increasing map $\sigma : \N\to \N$ such that $\tilde{p}_{x,\sigma(n)}$ converges to some $p_x \in \mathcal{P}_\gamma(X)$ as $n \to \infty$, for all $x$ in the support of $\mu.$ Finally, using Fatou's lemma and the lower-semicontinuity of $p\mapsto c(x,p)$, one gets
\begin{align*}
\lim_{n\to \infty} I_c[\pi_n] 
& = \lim_{n\to \infty} \int c(x,p_{x,\sigma(n)})\,\mu(dx) \\
& \geq 
\int \liminf_{n\to\infty}c(x,p_{x,\sigma(n)})\,\mu(dx)  \\
& \geq 
\int c(x,p_{x})\,\mu(dx).
\end{align*}
It remains to show that the last term is equal to $I_c[\pi].$ But if $f:X\times X \to \R$ is bounded (continuous), then by dominated convergence,
\begin{align*}
\int f(x,y)\,\pi(dxdy)
&=
\lim_{n\to \infty}\int f(x,y)\pi_{\sigma(n)}(dxdy) \\
& = 
\lim_{n\to \infty} \int \left(\int f(x,y)\,p_{x,\sigma_n(x)}(dy)\right)\,\mu(dx)\\
& = \int \left(\int f(x,y)\,p_{x}(dy)\right)\,\mu(dx).
\end{align*}
Since this holds for all $f$, one concludes that $p_x(dy)\mu(dx) = \pi(dxdy)$ and so in particular, $\int c(x,p_{x})\,\mu(dx)=I_c[\pi]$\,, which completes the proof.
\endproof

In the proof of Theorem \ref{Kantorovich general} we used the following results, elementary proofs of which can be found in \cite{BS78} (see Proposition 7.34 and Proposition 7.33).

\begin{lem}\label{lem:measurable sel}
Let $X$ be a metrizable space, $Y$ a metrizable and separable space and $H \colon X \times Y \to \R\cup\{+\infty\}$ be an upper-semicontinuous function. Denoting by $\overline{H}(x) = \inf _{y\in Y} H(x,y) \in \R\cup\{\pm \infty\}$, for all $\varepsilon>0$, there exists a measurable function $x\mapsto s^\varepsilon(x)$ such that
\[
H(x,s^{\varepsilon}(x)) \leq\left\{\begin{array}{ll} \overline{H}(x) + \varepsilon
 & \text{ if } \overline{H}(x)>-\infty \\-1/\varepsilon & \text{ if } \overline{H}(x)=-\infty .
 \end{array}\right. \]
\end{lem}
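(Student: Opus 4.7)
The plan is to prove Lemma \ref{lem:measurable sel} by a countable reduction that exploits the upper semicontinuity of $H$ together with the separability of $Y$. Fix a countable dense subset $\{y_n:n\geq 1\}\subset Y$ (which exists by separability and metrizability).

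The first key step is to establish that the infimum $\overline{H}$ is Borel measurable. I would argue that $\overline{H}(x)=\inf_{n\geq 1}H(x,y_n)$ for every $x\in X$. Indeed, the inequality $\leq$ is trivial, while for $\geq$ one uses upper semicontinuity: given any $y^*\in Y$ and any $\delta>0$, the set $\{y\in Y: H(x,y)<H(x,y^*)+\delta\}$ is open and contains $y^*$, so by density it contains some $y_n$; hence $\inf_nH(x,y_n)\leq H(x,y^*)+\delta$, and letting $\delta\to 0$ and then varying $y^*$ gives the claim. Since each slice $x\mapsto H(x,y_n)$ is upper semicontinuous (as the composition of the continuous injection $x\mapsto(x,y_n)$ with the upper semicontinuous $H$), $\overline{H}$ is the infimum of countably many Borel measurable functions, hence Borel measurable.

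Next I would construct a countable measurable partition of $X$ along which to perform the selection. Define, for each $n\geq 1$,
\[
A_n:=\bigl\{x\in X:\overline{H}(x)>-\infty\text{ and }H(x,y_n)\leq \overline{H}(x)+\varepsilon\bigr\}\cup\bigl\{x\in X:\overline{H}(x)=-\infty\text{ and }H(x,y_n)\leq -1/\varepsilon\bigr\}.
\]
Each $A_n$ is Borel by the measurability of $H(\cdot,y_n)$ and $\overline{H}$. The covering $\bigcup_n A_n=X$ follows from exactly the density/upper semicontinuity argument used in the previous step: if $\overline{H}(x)>-\infty$, pick $y^*$ with $H(x,y^*)<\overline{H}(x)+\varepsilon$ and then some $y_n$ in the open neighbourhood $\{y:H(x,y)<\overline{H}(x)+\varepsilon\}$; the case $\overline{H}(x)=-\infty$ is similar with threshold $-1/\varepsilon$. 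Disjointify by setting $B_1:=A_1$ and $B_n:=A_n\setminus(A_1\cup\cdots\cup A_{n-1})$ for $n\geq 2$, and define
\[
s^\varepsilon(x):=y_n\quad\text{whenever }x\in B_n.
\]

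Finally, I would check that $s^\varepsilon$ has the required properties. Measurability is immediate: for any Borel set $C\subset Y$, $(s^\varepsilon)^{-1}(C)=\bigcup_{n:y_n\in C}B_n$ is a countable union of Borel sets. The inequality claimed in the lemma follows by construction from the definition of $B_n\subset A_n$. The only genuine subtlety is the measurability of $\overline{H}$, and that is precisely the place where separability of $Y$ combined with upper semicontinuity of $H$ is essential --- without these hypotheses the pointwise infimum need not be measurable, so I would expect this step to be the main (technical) obstacle. All other steps are bookkeeping.
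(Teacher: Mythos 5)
Your proof is correct. Note, however, that the paper does not actually prove this lemma: it is quoted as an external result, with the proof delegated to Bertsekas--Shreve \cite{BS78} (Proposition 7.34), so there is no in-paper argument to compare with; your countable-dense-selection construction is essentially the standard proof of that cited result, and all the steps (the identity $\overline{H}(x)=\inf_n H(x,y_n)$, the Borel sets $A_n$, the disjointification, and the countably-valued selector) go through as you describe. Two small remarks. First, your closing comment misidentifies the crux: the measurability of $\overline{H}$ is in fact automatic, since for each fixed $y$ the map $x\mapsto H(x,y)$ is upper semicontinuous and $\{\overline{H}<c\}=\bigcup_{y\in Y}\{x:H(x,y)<c\}$ is open, so $\overline{H}$ is upper semicontinuous (hence Borel) without any separability assumption; separability of $Y$ is really needed for the covering $\bigcup_n A_n=X$, i.e.\ to produce a countably-valued selector. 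Second, in the case $\overline{H}(x)>-\infty$ your covering argument implicitly assumes $\overline{H}(x)<+\infty$ (otherwise there is no $y^*$ with $H(x,y^*)<\overline{H}(x)+\varepsilon$ and the set $\{y:H(x,y)<\overline{H}(x)+\varepsilon\}$ need not be open since the threshold is not real); but in that degenerate case $H(x,y_n)\leq+\infty=\overline{H}(x)+\varepsilon$ holds trivially for every $n$, so $x\in A_n$ anyway and the covering is unaffected. Neither point is a genuine gap.
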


\begin{lem}\label{lem:measurable selbis}
Let $X$ be a metrizable space, $Y$ a compact metrizable  space and $H \colon X \times Y \to \R\cup\{+\infty\}$ be a lower-semicontinuous function. Then  there exists a measurable function $x\mapsto s(x)$ such that for all $x\in X$
\[
H(x,s(x)) =\inf _{y\in Y} H(x,y) . \]
\end{lem}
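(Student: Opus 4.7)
The plan is to apply the Kuratowski--Ryll-Nardzewski measurable selection theorem to the argmin correspondence
\[
F(x):=\{y\in Y:H(x,y)=h(x)\},\qquad h(x):=\inf_{y\in Y}H(x,y).
\]
Because $Y$ is compact and $y\mapsto H(x,y)$ is lower-semicontinuous, the infimum is attained at every $x$, so $F(x)$ is non-empty; since $F(x)=\{y\in Y:H(x,y)\le h(x)\}$, it is also closed in $Y$.

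The main step is to verify the weak measurability of $F$, namely that $\{x:F(x)\cap U\ne\emptyset\}$ is Borel in $X$ for every open $U\subset Y$. In a metrizable space every open set is $F_\sigma$, so one may write $U=\bigcup_{n\ge 1}K_n$ with each $K_n$ closed in $Y$ and hence compact. Setting $m_n(x):=\inf_{y\in K_n}H(x,y)$, and using that this infimum is attained on the compact $K_n$, one obtains
\[
\{x:F(x)\cap U\ne\emptyset\}=\bigcup_{n\ge 1}\{x:m_n(x)\le h(x)\}.
\]
Each $m_n$ is lower-semicontinuous on $X$: the set $\{(x,y)\in X\times K_n:H(x,y)\le t\}$ is closed by joint lower-semicontinuity of $H$, and since $K_n$ is compact the projection $X\times K_n\to X$ is a closed map, so $\{x:m_n(x)\le t\}$ is closed in $X$. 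The same argument with $K_n=Y$ shows that $h$ is itself lower-semicontinuous. Both $m_n$ and $h$ being Borel, each set $\{x:m_n(x)\le h(x)\}$ is Borel, and weak measurability of $F$ follows.

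Since $Y$ is compact metrizable (hence Polish), the Kuratowski--Ryll-Nardzewski selection theorem applied to the non-empty, closed-valued, weakly measurable correspondence $F$ produces a Borel-measurable $s:X\to Y$ with $s(x)\in F(x)$ for every $x\in X$, i.e.\ $H(x,s(x))=\inf_{y\in Y}H(x,y)$, as required. The main obstacle in this approach is the weak measurability check: the condition $F(x)\cap U\ne\emptyset$ is an existence statement inside an \emph{open} set, and open sets do not interact well with parametric infima; the reduction via the $F_\sigma$ decomposition to compact slices $K_n$ is precisely what enables the ``projection along a compact factor is closed'' argument to produce a Borel set.
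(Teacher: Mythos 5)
Your argument is correct, but note that the paper does not actually prove this lemma: it is quoted as an auxiliary result and referred to \cite{BS78} (Proposition 7.33), where the selection is obtained through Bertsekas--Shreve's machinery of semicontinuous models for dynamic programming. Your proof is therefore a genuinely self-contained alternative: you apply the Kuratowski--Ryll-Nardzewski theorem to the argmin correspondence $F(x)=\{y:H(x,y)=h(x)\}$, and the only real work is the weak measurability of $F$, which you handle correctly by writing an open $U\subset Y$ as a countable union of compact sets $K_n$, noting that $m_n(x)=\inf_{K_n}H(x,\cdot)$ and $h$ are lower-semicontinuous because sublevel sets project closedly along the compact factor, and identifying $\{x:F(x)\cap U\neq\emptyset\}=\bigcup_n\{x:m_n(x)\le h(x)\}$ (using that $m_n\ge h$ always and that the infimum over the compact $K_n$ is attained). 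All the ingredients check out: non-emptiness and closedness of $F(x)$ follow from lower semicontinuity on the compact $Y$, and $Y$ compact metrizable is Polish, so Kuratowski--Ryll-Nardzewski applies. What your route buys is transparency and independence from the Bertsekas--Shreve framework; what the citation buys the authors is brevity and a statement calibrated to exactly the form they need in the proof of Theorem \ref{Kantorovich general}.

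One cosmetic point: in the set identity and in the attainment argument you implicitly assume $K_n\neq\emptyset$ (for $K_n=\emptyset$ one has $m_n\equiv+\infty$ and the inclusion $\{m_n\le h\}\subset\{F(\cdot)\cap U\neq\emptyset\}$ can fail when $h(x)=+\infty$). This is harmless: either discard the empty $K_n$ from the $F_\sigma$ decomposition, or treat $U=\emptyset$ separately, where weak measurability is trivial.
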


\subsection{Proofs of Theorems \ref{Kantorovich T tilda}, \ref{Kantorovich T bar} and \ref{Kantorovich T hat}.}

\subsubsection{Proof of the usual Kantorovich duality theorem}

As a warm up, let us begin with the proof of the classical Kantorovich duality that we restate below.
\begin{thm}\label{Kantorovich classical} Let $(X,d)$ be a complete separable metric space.
Assume 
that $\omega:X\times X \to [0,\infty]$ is some lower-semicontinuous cost function.Then it holds,
\begin{equation} \label{kanto}
\mathcal{T}_\omega (\nu,\mu) = \sup_{\varphi \in \mathcal{C}_b(X)} \left\{ \int Q_\omega \varphi(x)\,\mu_*(dx) - \int \varphi(y)\,\nu(dy) \right\},\quad  \mu,\nu \in \mathcal{P}(X),
\end{equation}
where $\mu_*$ denotes the inner measure induced by $\mu$ and 
$$
Q_\omega \varphi(x)=\inf_{y\in X} \left\{ \varphi(y) + \omega(x,y)\right\}, \qquad x\in X, \quad\varphi\in \mathcal{C}_b(X).
$$
\end{thm}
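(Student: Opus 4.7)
The plan is to deduce Theorem \ref{Kantorovich classical} from the general Theorem \ref{Kantorovich general} applied to the linear cost $c(x,p) := \int \omega(x,y)\,p(dy)$, together with a monotone approximation of $\omega$ by bounded continuous functions to cover the full lower-semicontinuous case. Since $p \mapsto \int(\varphi(y) + \omega(x,y))\,p(dy)$ is affine, its infimum over $\mathcal{P}(X)$ is attained at Dirac masses, giving $R_c \varphi = Q_\omega \varphi$. Choosing $\gamma = \gamma_0 = \1_{u\ne 0}$, one has $\mathcal{P}_\gamma(X) = \mathcal{P}(X)$ with the weak topology and $\Phi_{\gamma,b}(X) = \mathcal{C}_b(X)$, so the right-hand side of the general duality will match the classical one, up to the measurability issue for $Q_\omega \varphi$.

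First I would treat the case of bounded continuous $\omega$. Condition $(C_2)$ holds by linearity, $(C_4)$ is vacuous, and $(C_1)$ follows from weak continuity of $\pi \mapsto \int \omega\,d\pi$ on $\mathcal{P}(X \times X)$. For $(C_3)$, given $x_n \to x$ and $p_n \to p$ weakly, tightness of $(p_n)$ combined with uniform continuity of $\omega$ on the compact product of $\{x_n\}\cup\{x\}$ with any compact $K$ carrying most of the mass of the $p_n$ delivers joint continuity of $(x,p)\mapsto \int \omega(x,\cdot)\,dp$. Theorem \ref{Kantorovich general} then yields \eqref{kanto} with $\mu$ in place of $\mu_*$, since $Q_\omega \varphi$ is upper-semicontinuous (an infimum over $y$ of continuous functions of $x$) and hence Borel measurable.

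For general lower-semicontinuous $\omega \colon X \times X \to [0,\infty]$, I would approximate $\omega$ from below by a non-decreasing sequence $\omega_k := f_k \wedge k$ of non-negative bounded Lipschitz functions, where $f_k(z) := \inf_{z'}\{\omega(z') + k\,d_{X\times X}(z,z')\}$ is the Moreau--Yosida regularization, so that $\omega_k \uparrow \omega$ pointwise. A Prokhorov-based tightness argument extracts a weak limit $\pi^*$ of near-optimal couplings $\pi_k$ for $\omega_k$; weak convergence against each fixed $\omega_m$, combined with monotone convergence in $m$, yields $\int \omega\,d\pi^* \le \sup_k \mathcal{T}_{\omega_k}(\nu,\mu)$ and hence $\mathcal{T}_{\omega_k}(\nu,\mu) \uparrow \mathcal{T}_\omega(\nu,\mu)$. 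On the dual side, the measurable u.s.c.\ functions $Q_{\omega_k}\varphi$ are non-decreasing in $k$ with $Q_{\omega_k}\varphi \leq Q_\omega \varphi$, so by the definition of $\mu_*$ and monotone convergence
\[
\mathcal{T}_{\omega_k}(\nu,\mu) \;\leq\; \sup_{\varphi \in \mathcal{C}_b(X)}\left\{\int Q_\omega \varphi(x)\,\mu_*(dx) - \int \varphi\,d\nu\right\} \;\leq\; \mathcal{T}_\omega(\nu,\mu),
\]
the right inequality being weak duality. Passing $k \to \infty$ closes the chain and establishes \eqref{kanto}.

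The main obstacle is the possible non-measurability of $Q_\omega \varphi$ for merely lower-semicontinuous $\omega$, which is precisely why the statement needs $\mu_*$; bridging the measurable approximants $Q_{\omega_k}\varphi$ and the non-measurable limit $Q_\omega \varphi$ is handled by the supremum characterization of $\mu_*$ against bounded measurable functions dominated by $Q_\omega \varphi$.
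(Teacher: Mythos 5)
Your proof follows exactly the same two-step strategy as the paper: first apply the general duality Theorem \ref{Kantorovich general} with $\gamma=\gamma_0$ to the linear cost $c(x,p)=\int\omega(x,y)\,p(dy)$ for bounded continuous $\omega$, observing that $R_c\varphi=Q_\omega\varphi$ since the infimum of an affine functional over $\mathcal{P}(X)$ equals its infimum over Dirac masses, and then extend to lower-semicontinuous $\omega$ by monotone approximation from below. The only difference is cosmetic: the paper delegates the second step to a citation of Villani, whereas you spell it out explicitly via Moreau--Yosida regularization and a tightness/monotone-convergence sandwich, and your handling of the inner measure $\mu_*$ through the measurable approximants $Q_{\omega_k}\varphi$ is precisely the standard argument the paper has in mind.
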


\proof[Proof of Theorem \ref{Kantorovich classical}]
First assume that $\omega \colon X\times X \to [0,\infty)$ is continuous and  bounded from above. Then $c(x,p) = \int \omega(x,y)\,p(dy)$ is convex in $p$ and continuous on $X \times \mathcal{P}(X)$, with $\mathcal{P}(X)$ equipped with the usual weak topology. Moreover $I_c[\pi] = \int \omega(x,y)\,\pi(dxdy)$ and so $\pi \mapsto I_c[\pi]$ is continuous on $\mathcal{P}(X\times X)$. So assumptions ${(C_1), (C_2), (C_3),(C_4)}$ of Theorem \ref{Kantorovich general} are fulfilled with $\mathcal{P}_\gamma(X)=\mathcal{P}(X)$ and $\Phi_{\gamma,b}=\Phi_0$. It follows that
\[
\mathcal{T}_\omega(\nu,\mu) = \sup_{\varphi \in \Phi_0(X)}\left\{ \int R_c\varphi(x)\,\mu(dx) - \int \varphi(y)\,\nu(dy) \right\},
\]
with 
\begin{align*}
R_c\varphi(x) 
 = 
\inf_{p \in \mathcal{P}(X)} \left\{ \int \varphi(y) + \omega(x,y)\,p(dy)\right\} 
 = 
\inf_{y\in X} \left\{  \varphi(y) + \omega(x,y) \right\}
 = 
Q_c\varphi(x),
\end{align*}
which completes the proof in the case of a bounded continuous cost function.
Once Kantorovich duality is established for bounded continuous cost functions, one can apply a rather standard approximation argument to extend the duality to lower-semicontinuous cost functions. This is explained for instance in \cite[Point 3 in the proof of Theorem 1.3]{Vil03}.
\endproof

\subsubsection{Proof of Theorem  \ref{Kantorovich T tilda}}
\proof[Proof of Theorem \ref{Kantorovich T tilda}]
Depending on the assumption on the space and on $\alpha$, one needs to verify that Condition $(C)$, $(C')$ or $(C'')$ of Theorem \ref{Kantorovich general} is satisfied. We distinguish between the different cases.

Assume first that $\alpha \colon \R_+\to \R_+$ is convex and continuous. Then the cost $c(x,p) = \alpha\left( \int \gamma(d(x,y))\,p(dy)\right)$ is clearly convex with respect to $p$ and, by definition of the topology $\sigma(\mathcal{P}_\gamma(X))$, it is continuous on $X \times \mathcal{P}_\gamma(X)$ (equipped with the product topology). So assumptions ${(C_2),(C_3)}$ of Theorem \ref{Kantorovich general} are fulfilled. Condition ${(C_4)}$ follows at once from Jensen's inequality. As for Condition $(C_1)$, let us set $\alpha(t)=+\infty$ for $t<0$, so that $\alpha$ is lower-semicontinuous on $\R$. According to the Fenchel-Legendre duality Theorem \ref{Fenchel-Legendre}, \[\alpha(t) = \sup_{s\geq\alpha'(0)} \{st - \alpha^*(s)\}=\sup_{s\geq0} \{st - \alpha^*(s)\},\] where $\alpha'(0)$ is the non-negative right-derivative of $\alpha$ at point $0$, and $\alpha^*(s) = \sup_{t\geq0} \{ st -\alpha(t) \}$. So 
\begin{align*}
c(x,p) 
 = 
\sup_{s\geq 0} \int s\gamma(d(x,y) )- \alpha^*(s)\,p(dy) 
 = 
\sup_{(s,t) \in \mathrm{epi}(\alpha^*)} \int s \gamma(d(x,y)) -t\,p(dy)  
 = 
\sup_{k\in \N} \int \varphi_k(x,y)\,p(dy),
\end{align*}
with $\varphi_0=0$ and $\varphi_k(x,y) = s_k \gamma(d(x,y))-t_k $, $k\geq 1$ where $(s_k,t_k)_{k\geq1}$ is any dense subset of $\mathrm{epi}(\alpha^*) = \{ (s,t) \in [0,\infty) \times \R ; t \geq \alpha^*(s)\}.$
For all $k\in \N$, $\varphi_k \in  \Phi_\gamma(X\times X)$ and so according to Proposition \ref{prop:C1}, the cost function $c$ verifies $(C_1).$ 

Now assume that  $\alpha:\R\to [0,+\infty]$ is  convex and lower-semi\-con\-ti\-nuous. Then $c$ is also clearly convex with respect to $p$ (hence Condition $(C_2)$ is satisfied). Since $\gamma$ is lower-semicontinuous, there exists an increasing sequence $(\gamma_N)_{N\in \N}$ of Lipschitz continuous functions $\gamma_N:\R_+\to \R_+$ that converges to $\gamma$ (for example $\gamma_N(u)=\inf_{v\in \R}\{\gamma(v)+N|u-v|\}$).  By using  the Fenchel-Legendre duality for $\alpha$ as above and by monotone convergence, one has 
\begin{align*}
c(x,p) & =
 \sup_{(s,t)  \in \mathrm{epi}(\alpha^*)} \sup_{N\in \N}\int s \gamma_N(d(x,y)) -t\,p(dy) \\
 & = \sup_{k\in \N} \int \varphi_k(x,y)\,p(dy),
\end{align*}
with $\varphi_0=0$ and $\varphi_k(x,y) = s_{\ell(k)} \gamma_{N(k)}(d(x,y))-t_{\ell(k)} $, $k\geq 1$, where $(s_l,t_l)_{l\in\N}$ is any dense subset of $\mathrm{epi}(\alpha^*) = \{ (s,t) \in [0,\infty) \times \R ; t \geq \alpha^*(s)\},$ and the map $\N^*\ni k \mapsto (N(k),\ell(k))\in \N\times \N$ is one to one. By Proposition \ref{prop:C1}, the conditions $(C_1)$, and $(C_3')$ are fulfilled when $X$ is compact, and respectively $(C_3'')$  when $X$ is a countable set of isolated points. Condition $(C_4)$ is again a consequence of Jensen's inequality.

The result of the corollary is finally a direct consequence of Theorem \ref{Kantorovich general}.
\endproof

\subsubsection{Proof of Theorem  \ref{Kantorovich T bar}}

\proof[Proof of Theorem \ref{Kantorovich T bar}]\ \\
(1) The proof of the first point is similar to that of Corollary \ref{Kantorovich T tilda}. Namely, if $\theta \colon \R^m\to \R$ is a convex function, assumptions $(C_2), (C_3)$ are satisfied with 
$\gamma=\gamma_{1}$. Since $\theta(x)\geq a \|x\|+b$ for some $a>0$ and $b\in \R$, Condition $(C_4)$ follows easily from Jensen's inequality.
Finally, using Fenchel-Legendre duality for $\theta$, one sees that \[
c(x,p) =\theta\left( x- \int y\,p(dy)\right)= \sup_{(s,t) \in \mathrm{epi}(\theta^*)} \int s\cdot \left(x-y\right) - t\,p(dy),
\]
with $\mathrm{epi}(\theta^*) = \{(s,t) \in \R^m \times \R; \theta^*(s) \leq t\}.$ Taking a dense countable subset $(s_k,t_k)_{k\geq 1}$ of $\mathrm{epi}(\theta^*)$, one concludes that 
$$
c(x,p) = \sup_{k\in \N} \int \varphi_k(x,y)\,p(dy), 
$$
with $\varphi_0=0$ and $\varphi_k(x,y) = s_k (x-y) - t_k$. These functions belong to $\Phi_1(X \times X)$, so according to Proposition \ref{prop:C1}, the cost function $c$ verifies $(C_1)$.

If $\theta \colon \R^m\to (-\infty,+\infty]$ is a lower-semicontinuous convex function, we show similarly that $(C_1)$, $(C_2)$, $(C_4)$ are fulfilled, along with $(C_3')$ when $X$ is compact,  and respectively $(C_3'')$  when $X$ is discrete. 

\noindent (2) Let $\varphi \in \Phi_{1,b}(\R^m)$, it holds for all $x\in \R^m$\,,
\begin{align*}
\overline{Q}_\theta \varphi (x)
& = 
\inf_{p \in \mathcal{P}_1(\R^m)}\left\{ \int \varphi\,dp +\theta\left(x-\int y\,p(dy)\right)\right\}\\
& = 
\inf_{z\in \R^m} \left\{g(z)  +\theta\left(x-z\right) \right\},
\end{align*}
where 
\[
g(z):=\inf \left\{\int \varphi\,dp ; p \in \mathcal{P}_1(\R^m), \int y\,p(dy)=z \right\},\qquad z\in \R^m.
\]
The function $g$ is easily seen to be convex on $\R^m$. This implies that $g\leq \overline{\varphi}$. Let us show that $g\geq \overline{\varphi}$. Since $\varphi$ is bounded from below, there is some $a \in \R$ such that $\varphi(y)\geq a$, for all $y\in \R^m$. Then by the definition of $\overline{\varphi},$ it holds $\overline{\varphi}(y)\geq a$, for all $y\in \R^m.$ Since $\overline{\varphi}\leq \varphi$, it follows that $\overline{\varphi}$ is finite everywhere. As a consequence, one can apply Jensen's inequality: if $p \in \mathcal{P}_1(\R^m)$ is such that $\int y\,p(dy)=z$, then 
\[
\int \varphi(y)\,p(dy) \geq \int \overline{\varphi}(y) \,p(dy) \geq \overline{\varphi}\left( \int y\,p(dy)\right) = \overline{\varphi}(z).
\] 
Optimizing over $p$, one concludes that $g(z)\geq \overline{\varphi}(z)$, for all $z\in \R^m$ and so finally $g=\overline{\varphi}.$ 


\noindent (3) Let $\mu,\nu \in \mathcal{P}_1(\R^m)$ and $\varphi \in \Phi_{1,b}(\R^m)$.
According to Point (2),  since $\overline{\varphi}\leq \varphi$,  it holds
\[
\int \overline{Q}_\theta\varphi\,d\mu -\int \varphi\,d\nu =\int Q_\theta \overline{\varphi}\,d\mu- \int \varphi\,d\nu \leq \int Q_\theta\overline{\varphi}\,d\mu - \int \overline{\varphi}\,d\nu.
\]
The function $\overline{\varphi}$ is convex, bounded from below and, since $\varphi \in \Phi_1(\R^m)$, satisfies $\overline{\varphi}(x)\leq a + b \|x\|$, $x \in \R^m$, for some $a,b\geq 0.$ 
This shows that $\overline{\varphi}\in \Phi_{1,b}(\R^m)$. 
From these considerations, it follows that
\begin{align*}
\Tb_\theta(\nu|\mu)&\leq \sup\left\{ \int Q_\theta\overline{\varphi}\,d\mu - \int \overline{\varphi}\,d\nu; \varphi \in \Phi_{1,b}(\R^m) \right\}\\
& \leq \sup\left\{ \int Q_\theta \psi\,d\mu - \int \psi\,d\nu; \psi \in \Phi_{1,b}(\R^m) \text{ convex}\right\}\\
& \leq \sup\left\{ \int \overline{Q}_\theta \psi\,d\mu - \int \psi\,d\nu; \psi \in \Phi_{1,b}(\R^m) \right\}\\
& = \Tb_\theta(\nu|\mu).
\end{align*}
The third inequality is  a consequence of Point (2), since $\psi=\overline{\psi}$ for all convex functions $\psi\in  \Phi_{1,b}(\R^m)$.
Remarking that a convex function belongs to $\Phi_1(\R^m)$ if and only if it is Lipschitz, the proof of Point (3) is complete.
\endproof

\subsubsection{Proof of Theorem  \ref{Kantorovich T hat}}
We start with an alternative  representation of $c(x,p)$ that will be useful subsequently. We recall that $c : X \times \mathcal{P}_\gamma (X) \to \R_+$ is defined by 
\[
c(x,p) = \int \beta\left( \gamma(d(x,y)) \frac{dp}{d\mu_0}(y)\right)\, \mu_0(dy)\,,\]
if $p\ll \mu_0$ on $X\setminus \{x\}$ and $+\infty$ otherwise, where $\mu_0$ is a reference probability measure and $\beta : \R_+\to [0,\infty]$ is a lower-semicontinuous convex function such that $\beta(0)=0$ and $\beta(x)/x \to \infty$ as $x \to \infty.$ As before $\gamma:\R_+\to\R_+$ is a lower-semicontinuous function satisfying \eqref{gamma}.

\begin{lem}\label{duality cost}
Let $X$ be a metric space being either compact or a countable set of isolated points. The cost function $c$ defined above 
satisfies the following duality identity:
\[
c(x,p)=\sup_{h\in\Phi_0(X), h\geq 0}\left\{\int h(y) \gamma(d(x,y)) \,p(dy) -\int \beta^*(h)(y) \,\mu_0(dy)\right\},
\]
where $\beta^*(y)=\sup_{x\geq 0}\{xy-\beta(x)\}$, $y\in \R$, denotes the Fenchel-Legendre transform of $\beta$.
\end{lem}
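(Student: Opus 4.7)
The plan is to prove the two inequalities separately, with the ``$\leq$'' direction requiring genuine approximation by continuous test functions.

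For the direction $c(x,p)\geq \sup_h\{\cdots\}$, I would apply the pointwise Fenchel--Young inequality. Extending $\beta$ by $+\infty$ on $(-\infty,0)$ and using that $\beta$ is convex, lower-semi\-continuous with $\beta(0)=0$ and superlinear at infinity, one has $\beta(t)=\sup_{s\geq 0}\{st-\beta^*(s)\}$ for every $t\geq 0$. Assuming $p|_{X\setminus\{x\}}\ll\mu_0$ with density $r$, I would apply this at $t=g(y)r(y)$ (where $g(y):=\gamma(d(x,y))$) and $s=h(y)$, then integrate against $\mu_0$. Since $g(x)=0$, any atom of $p$ at $x$ is invisible on both sides, while $\beta^*(h(x))\mu_0(\{x\})\geq 0$ (as $\beta^*\geq 0$ from $\beta(0)=0$) only helps; one obtains $c(x,p)\geq \int hg\,dp - \int \beta^*(h)\,d\mu_0$ for every nonnegative continuous $h$. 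If $p|_{X\setminus\{x\}}$ has a singular part, then $c(x,p)=+\infty$ and the inequality is trivial.

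For the converse I would split into two cases. In the absolutely continuous case, set $t_N:=\min(gr,N)$; by monotone convergence $\int \beta(t_N)\,d\mu_0 \nearrow c(x,p)$, using that $\beta$ is non-decreasing on $\R_+$ because $\beta(0)=0$ is the minimum. Pick a Borel selection $h_N(y)$ from the subdifferential of $\beta$ at $t_N(y)$; it is bounded by $\beta'_+(N)<\infty$ (finite since $\beta$ is finite on $\R_+$), and Fenchel--Young equality gives $\beta(t_N)=h_N t_N-\beta^*(h_N)$ pointwise. The core task is then to replace the merely Borel $h_N$ by a continuous one without losing much. Lusin's theorem (available since $X$ is compact metric with Borel probability $\mu_0$; trivial in the discrete case where every function is continuous) together with Tietze's extension theorem yields, for every $\delta>0$, a continuous $\tilde h_N$ with $0\leq \tilde h_N\leq \|h_N\|_\infty$ agreeing with $h_N$ off a set of joint $(\mu_0+p)$-measure less than $\delta$. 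Direct bookkeeping then gives $\int \tilde h_N g\,dp-\int \beta^*(\tilde h_N)\,d\mu_0 \geq \int \beta(t_N)\,d\mu_0 - \varepsilon(N,\delta)$ where the error vanishes as $\delta\to 0$ for fixed $N$; sending first $\delta\to 0$ and then $N\to\infty$ recovers $c(x,p)$ in the supremum.

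In the singular case $c(x,p)=+\infty$ and I would manufacture test functions with dual value tending to $+\infty$. By the Lebesgue decomposition extract a Borel set $A\subset X\setminus\{x\}$ with $\mu_0(A)=0$ and $p_s(A)>0$; intersecting with $\{d(x,\cdot)\geq 1/n\}$ for large $n$ and passing to a compact subset $A'\subset A$ by inner regularity of $p_s$ secures $\gamma(d(x,\cdot))\geq c_\star>0$ on $A'$. Outer regularity of $\mu_0$ (valid on compact metric spaces) gives an open $U_N\supset A'$ with $\mu_0(U_N)\beta^*(N)\to 0$, and Urysohn's lemma provides a continuous $\psi_N\in[0,1]$ with $\psi_N\equiv 1$ on $A'$ and $\psi_N\equiv 0$ off $U_N$. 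Then $h_N:=N\psi_N$ yields $\int h_N g\,dp-\int \beta^*(h_N)\,d\mu_0 \geq N c_\star p_s(A')-\beta^*(N)\mu_0(U_N)\to +\infty$; the discrete case is immediate since singletons are open. The main obstacle is the joint $(\mu_0,p)$-Lusin approximation in the absolutely continuous case, since we must control one integral against $\mu_0$ (the $\beta^*(h)$ term) and another against $p$ (the $hg$ term) from the $\mu_0$-measure of a single bad set; this is precisely where the absolute continuity $p|_{X\setminus\{x\}}\ll\mu_0$ is essential, as it converts $p$-control into $\mu_0$-control up to a factor depending on $\|r\|_{L^1(\mu_0)}=1$.
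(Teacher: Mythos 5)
Your strategy is sound, and it is considerably more explicit than the paper's own argument, which consists of the single remark that the lemma ``is easily adapted from Theorem B.2 in \cite{LV09}'' (Lott--Villani's representation theorem for internal-energy functionals, where the singular part is handled through the recession slope, here $+\infty$ by superlinearity of $\beta$). Your Fenchel--Young argument for the inequality $c(x,p)\geq\sup_h\{\cdots\}$, the observation that an atom of $p$ at $x$ is harmless because $\gamma(0)=0$, $\beta(0)=0$ and $\beta^*\geq 0$, and the Urysohn/regularity construction producing test functions of arbitrarily large dual value when $p$ has a singular part (trivial in the discrete case) are all correct, as is the Lusin--Tietze replacement for fixed $N$ (note that what you really use there is Lusin with respect to the finite measure $\mu_0+p$ together with absolute continuity of the integral of $g\in L^1(p)$; the closing remark about a ``factor $\|r\|_{L^1(\mu_0)}=1$'' is not the right justification, but this is cosmetic).

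There is, however, one genuine gap: you justify the bound on the subgradient selection by asserting that $\beta'_+(N)<\infty$ ``since $\beta$ is finite on $\R_+$''. Under the hypotheses in force, $\beta:\R_+\to[0,+\infty]$ is only lower-semicontinuous, convex, superlinear with $\beta(0)=0$, and it may equal $+\infty$ beyond a finite threshold $t^*$; this is precisely the case for $\beta_1(u)=(1-u)\log(1-u)+u$ ($u\leq 1$, $+\infty$ otherwise), which the paper needs in Theorem \ref{hamming2} for $t=1$. When $N>t^*$, the subdifferential of $\beta$ at $t_N(y)=\min(g(y)r(y),N)$ is empty wherever $gr(y)\in(t^*,N]$, so the selection step fails. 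To repair it you must truncate inside the domain, $t_N:=\min(gr,s_N)$ with $s_N\uparrow t^*$ (monotonicity plus lower semicontinuity give $\beta(s_N)\to\beta(t^*)$, so this recovers $\int\beta(gr\wedge t^*)\,d\mu_0$), and then treat separately the case $\mu_0(\{gr>t^*\})>0$, where $c(x,p)=+\infty$ and the supremum must be shown infinite. Your singular-part trick does not cover this case: there you could make $\beta^*(N)\mu_0(U_N)\to 0$ because the bad set was $\mu_0$-null, whereas here $\mu_0(\{gr>t^*\})$ is a fixed positive number and $\beta^*(N)\leq Nt^*$ is no longer negligible compared with the gain; one has to work on $\{gr\geq t^*+\delta\}$ and choose the open neighbourhood $U$ with $\mu_0(U)\leq(1+\tfrac{\delta}{2t^*})\,\mu_0(\{gr\geq t^*+\delta\})$ so that the difference is at least $\tfrac{N\delta}{2}\,\mu_0(\{gr\geq t^*+\delta\})\to+\infty$. (Alternatively, approximate $\beta$ from below by the finite piecewise-affine functions $\beta_k(t)=\sup_{0\leq s\leq k}\{st-\beta^*(s)\}$, apply your finite-valued argument to $\beta_k$, and let $k\to\infty$ by monotone convergence.) With this case added, your proof is complete.
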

\proof
The proof is easily adapted from Theorem B.2  in \cite{LV09}.
\endproof

\proof[Proof of Theorem \ref{Kantorovich T hat}]
First, we observe that Condition $(C_2)$ is a simple consequence of the convexity of $\beta$
and Condition $(C_4)$ of Jensen's inequality.
According to Lemma \ref{duality cost}, it holds
\begin{align}
c(x,p)&=\sup_{h\in\Phi_0(X), h\geq 0}\left\{\int h(y) \gamma(d(x,y)) \,p(dy) -\int \beta^*(h)(y) \,\mu_0(dy)\right\}\\ \nonumber
&=\sup_{h\in\Phi_0(X), h\geq 0}\sup_{N\in \N} \int (h(y) \gamma_N(d(x,y)) - B^*(h) )\,p(dy),
\end{align}
where  $(\gamma_N)_{N\in \N}$ is (as in the proof of Corollary \ref{Kantorovich T tilda}) an increasing sequence of Lipschitz continuous functions converging to $\gamma$ and $B^* (h)= \int \beta^*(h)d\mu_0$. 

For all $h \in \Phi_0(X)$ non-negative and all $N\in \N$, the function $(x,y) \mapsto h(y) \gamma_N(d(x,y))$ is continuous. Therefore, $p \mapsto \int h(y) \gamma(d(x,y)) p(dy)$ is a continuous function on $X\times \mathcal{P}_\gamma(X)$. Being a supremum of continuous functions, $c$ is lower-semicontinuous on $X\times \mathcal{P}_\gamma(X).$ In particular, this shows $(C_3')$ and $(C_3'')$.

Next we will check that Condition $(C_1)$ holds (in the compact case).

Since $(X,d)$ is compact, the space $\Phi_0(X)$ of continuous functions (equipped with the norm $\|\,\cdot\,\|_\infty$) on  $X$ is separable (see \cite[Proposition 7.7]{BS78}). Let $\{h_\ell,\ell\in\N\}$ be a countable dense subset of $\Phi_0(X)$. Since $\beta^*$ is convex and finite on $\R$ it is continuous on $\R$. Therefore, the function $\Phi_0(X) \to \R : h\mapsto B^*(h)$ is continuous. It follows that
\begin{equation}\label{cafe}
c(x,p)=\sup_{k\in \N}\int \varphi_k(x,y) \,p(dy),\qquad \forall x\in X,\qquad p \in \Phi_\gamma(X)\,,
\end{equation}
where  $\varphi_0=0$ and $\varphi_k(x,y)= h_{\ell(k)}(y)\gamma_{N(k)}(d(x,y))-B^*(h_\ell(k))$, $k\geq 1$, and $\N^* \ni k \mapsto (\ell(k),N(k))\in \N\times\N$ is one-to-one. Since, for all $k\in \N$, the function $\varphi_k$ belongs to $\Phi_\gamma(X,X)$, the lower-semicontinuity of $I_c$ follows from  Proposition \ref{prop:C1}. 
%
%
%

Corollary \ref{Kantorovich T hat} now follows from Theorem \ref{Kantorovich general}.
\endproof

\subsection{Proof of Proposition \ref{prop:C1}}\label{sec:propC1}


The proof of Proposition \ref{prop:C1} is adap\-ted from \cite[Theorem 2.34]{AFP00}. 
\proof[Proof of Proposition \ref{prop:C1}]
 The function $p\mapsto c(x,p)$ is convex as a supremum of linear functions.
 
For all $n\in \N$, define $c_n(x,p) := \sup_{k\leq n} \int \varphi_k(x,y)\,p(dy)$. When $n$ goes to $\infty$, $c_n(x,p)$ is a nondecreasing sequence converging to $c$. 
Let $\pi \in \Pi(\mu,\,\cdot\,)$, $\pi(dxdy) = p_x(dy)\mu(dx)$ such that \eqref{coco} holds for $\mu$-almost all $x$. 
Defining $I_{c_n}[\pi] = \int c_n(x,p_x)\,\mu(dx)$, the monotone convergence theorem shows that $I_c[\pi] = \sup_{n\in \N} I_{c_n}[\pi]$. 
Since a supremum of lower-semicontinuous functions is itself lower-semicontinuous, it is enough to prove that $ I_{c_n}$ is lower-semicontinuous at point $\pi$. We will now prove such a property.

For $\mu$-almost all $x$, define $\psi_k(x) = \int \varphi_k(x,y)\,p_x(dy)$, $k\leq n.$
Then it holds
\[
I_{c_n}[\pi] = \int  \sup_{k\leq n} \psi_k(x)\,\mu(dx) = \sup_{(f_k)_{k\leq n}} \int \sum_{k=0}^n f_k(x)\psi_k(x)\,\mu(dx),
\]
where the supremum runs over the set of continuous functions $f_k$ taking values in $[0,1]$ and such that $f_0+\cdots+f_n \leq 1$. Let us admit this claim for a moment and finish the proof of the proposition. 
For all $f_0,\ldots,f_n$ as above, it holds
\[
 \int \sum_{k=0}^n f_k(x)\psi_k(x)\,\mu(dx) = \int \sum_{k=0}^n f_k(x)\varphi_k(x,y) \,\pi(dxdy)\,.
\]
Since $\sum_{k=0}^nf_k \varphi_k \in \Phi_\gamma(X\times X)$, the function $\pi \mapsto \int \sum_{k=0}^nf_k\varphi_k\,d\pi$ is continuous on $\Pi(\mu,\,\cdot\,)$. Since a supremum of continuous functions is lower-semicontinuous, this proves that $ I_{c_n}$ is lower-semicontinuous at point $\pi$\,.

It remains to prove the claim.
Obviously, if $f_0,f_1,\ldots,f_n$  take values in $[0,1]$ and are such that $\sum_{k=0}^n f_k \leq 1$, then it holds
\begin{align*}
\int \sum_{k=0}^n f_k(x)\psi_k(x)\,\mu(dx) 
\leq 
\int \sum_{k=0}^n f_k(x)[\psi_k]_+(x)\,\mu(dx) 
&\leq  
\int \sup_j[\psi_j]_+(x)\sum_{k=0}^n f_k(x)\,\mu(dx)\\
&\leq \int \sup_j[\psi_j]_+(x)\,\mu(dx) = I_{c_n}[\pi],
\end{align*}
where the last equality comes from the fact that $\sup_j[\psi_j]_+ = \sup_{j}\psi_j $ since $\varphi_0=0$ and $\psi_0=0$.  This shows that 
$$
I_{c_n}[\pi] \geq \sup_{(f_k)_{k\leq n}} \int \sum_{k=0}^n f_k(x)\psi_k(x)\,\mu(dx).
$$ 

To prove the converse inequality, let for all $k\leq n$, $A_k := \{x\in X ; [\psi_k]_+ = \sup_{j} [\psi_j]_+(x)\}$, and define recursively $B_0 = A_0$, $B_{k} = A_k\setminus (B_0\cup \cdots\cup B_{k-1})$.
Then it holds
\[
I_{c_n}[\pi] = \sum_{k=0}^n \int_{B_k} [\psi_k]_+(x)\,\mu(dx)\,.
\]
When $(X,d)$ is a discrete space, the functions $f_k=\1_{B_k}$ are continuous and $\sum_{k=0}^n f_k=1$. Since 
$\psi_k$ is non-negative on $A_k$, one has
\[I_{c_n}[\pi] = \sum_{k=0}^n \int f_{k}(x)\psi_k(x)\,\mu(dx),\]
and the claim follows in this case.

Assume now that $(X,d)$ is  complete and separable. For all $k\leq n$, consider the finite Borel measure $\mu_k (dx) = [\psi_k]_+(x)\,\mu(dx)$. Let $\varepsilon>0$ ; since finite Borel measures on a complete separable metric space are inner regular (see for instance \cite[Theorems 3.1 and 3.2]{Par67}), for all $k \leq n$ there is a compact set $C_k \subset B_k$ such that $\mu_k(B_k) \leq \mu_k(C_k) + \varepsilon/(n+1)$. So it holds
\begin{align*}
I_{c_n}[\pi] 
 = 
\sum_{k=0}^n \int_{B_k} [\psi_k]_+(x)\,\mu(dx) 
 \leq 
\sum_{k=0}^n \int_{C_k} [\psi_k]_+(x)\,\mu(dx) + \varepsilon 
 = 
\sum_{k=0}^n \int_{C_k} \psi_k(x)\,\mu(dx) + \varepsilon.
\end{align*}
 The compact sets $C_k$ are pairwise disjoint, so $\delta_o = \min_{i\neq j} d(C_i,C_j) >0.$
Consider the family of continuous functions $f_{k,\delta} : X \to [0,1]$ defined by
\[
f_{k,\delta}(x) = \left[1- \frac{d(x,C_k)}{\delta}\right]_+,\qquad x\in X, \qquad k\leq n,\qquad \delta>0.
\]
When $\delta<\delta_o/2$, for any $x\in X$, at most one of the functions is not zero at $x$ and therefore $\sum_{k=0}^n f_{k,\delta}(x) \leq1.$ Passing to the limit when $\delta \to 0$, we see that 
\[
\sum_{k=0}^n \int f_{k,\delta}(x)\psi_k(x)\,\mu(dx) \to \sum_{k=0}^n \int_{C_k} \psi_k(x)\,\mu(dx).
\]
So if $\delta$ is small enough it holds
\[
I_{c_n}[\pi] \leq \sum_{k=0}^n \int f_{k,\delta}(x)\psi_k(x)\,\mu(dx) + 2\varepsilon.
\]
Taking the supremum over all possible functions $f_k$, and then letting $\varepsilon$ go to $0$, gives the desired inequality 
$$
I_{c_n}[\pi] \leq \sup_{(f_k)_{k\leq n}} \int \sum_{k=0}^n f_k(x)\psi_k(x)\,\mu(dx),
$$
and completes the proof.
\endproof


\appendix

\section{Proof of Theorem \ref{tensorization}}\label{appendix-tensorization}
The proof of the tensorization property for transport-entropy inequalities uses the chain rule formula for the entropy on the one hand, and on the other, a similar property for the transport cost, which we now state in the following lemma of independent interest.

\begin{lem}[Chain rule inequality for the transport cost] \label{chainrule}
Let  $\gamma \colon$ $\R_+ \to \R_+$ be a lower-semicontinuous function satisfying \eqref{gamma},
 $(X_1,d_1)$, $(X_2,d_2)$ be complete separable metric spaces equipped with cost functions $c_i:X_i \times \mathcal{P}_\gamma(X_i) \to [0,\infty]$, $i\in\{1,2\}$ such that $c_i(x_i,\delta_{x_i})=0$  and $p_i \mapsto c_i(x_i,p_i)$ is convex for all $x_i\in X_i$. Define $c \colon X_1 \times  X_2 \times \mathcal{P}_\gamma(X_1\times X_2)\to [0,\infty)$ by 
 $c(x,p) = c_1(x_1,p_1) + c_2(x_2,p_2)$, $x=(x_1,x_2) \in X_1 \times X_2$, $p \in \mathcal{P}_\gamma(X_1 \times X_2)$, where $p_i$ denotes the $i$-th marginal distribution of $p$.
 
Then, for all $\nu, \nu' \in \mathcal{P}_\gamma(X_1 \times X_2)$, 
all $\varepsilon >0$, there exists a kernel $p_1^\varepsilon$ such that 
\[
\mathcal{T}_c (\nu' | \nu) \leq \mathcal{T}_{c_1} (\nu_1' | \nu_1) 
+ \int_{X_1 \times X_1} \!\!\!\!\!\!\!\!\!\!\!\mathcal{T}_{c_2} (\nu_2'(y_1,\,\cdot\,) | \nu_2(x_1,\,\cdot\,)) p_1^\varepsilon(x_1,dy_1)\nu_1(dx_1) + 2 \varepsilon\,,
\]
where $\nu_1$ and $\nu'_1$ are the first marginals of $\nu, \nu'$ respectively; the kernels $x_1 \mapsto \nu_2(x_1,\,\cdot\,)$
and  $y_1 \mapsto {\nu'}_2(y_1,\,\cdot\,)$ are such that
\[
\nu(dx_1dx_2)=\nu_1(dx_1)\nu_2(x_1,dx_2) \; \text{and} \;
\nu'(dy_1dy_2)=\nu_1'(dy_1)\nu'_2(y_1,dy_2) ;
\]
and the kernel $p_1^\varepsilon$, defined so that $\pi_1^\varepsilon(dx_1dy_1):=\nu_1(dx_1)p_1^\varepsilon(x_1,dy_1) \in \Pi(\nu_1,\nu_1')$, 
satisfies 
$\mathcal{T}_{c_1} (\nu_1 | \nu'_1) \geq \int_{X_1 \times X_1} c_1(x_1,p_1^\varepsilon(x_1,\,\cdot\,))\,\nu_1(dx_1) - \varepsilon$.
\end{lem}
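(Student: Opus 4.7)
The plan is to build an almost optimal coupling $\pi \in \Pi(\nu,\nu')$ for $\mathcal{T}_c$ by gluing together an almost optimal coupling of the first marginals $\nu_1,\nu'_1$ with, for each pair $(x_1,y_1)$, an almost optimal coupling between the conditionals $\nu_2(x_1,\cdot)$ and $\nu'_2(y_1,\cdot)$. The convexity of $c_2$ in its second argument will then absorb the averaging over $y_1$ so that the total cost splits into the two expected contributions.

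Fix $\varepsilon>0$. First, disintegrate $\nu$ and $\nu'$ as in the statement. By the very definition of $\mathcal{T}_{c_1}(\nu'_1|\nu_1)$, pick an $\varepsilon$-optimal coupling $\pi_1^\varepsilon\in\Pi(\nu_1,\nu'_1)$ with disintegration $\pi_1^\varepsilon(dx_1dy_1)=\nu_1(dx_1)p_1^\varepsilon(x_1,dy_1)$, so that
\[
\int c_1(x_1,p_1^\varepsilon(x_1,\cdot))\,\nu_1(dx_1)\leq \mathcal{T}_{c_1}(\nu'_1|\nu_1)+\varepsilon.
\]
Next, using a standard measurable selection argument for optimal transport between conditional measures on Polish spaces (in the same spirit as Lemma \ref{lem:measurable sel}), choose a jointly measurable family $(x_1,y_1)\mapsto \pi_2^\varepsilon(x_1,y_1;dx_2dy_2)\in\Pi(\nu_2(x_1,\cdot),\nu'_2(y_1,\cdot))$ with disintegration $\pi_2^\varepsilon(x_1,y_1;dx_2dy_2)=\nu_2(x_1,dx_2)\,q^{\varepsilon}_{(x_1,y_1,x_2)}(dy_2)$ such that
\[
\int c_2(x_2,q^{\varepsilon}_{(x_1,y_1,x_2)})\,\nu_2(x_1,dx_2)\leq \mathcal{T}_{c_2}(\nu'_2(y_1,\cdot)|\nu_2(x_1,\cdot))+\varepsilon.
\]

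Now glue: define $\pi$ on $(X_1\times X_2)\times(X_1\times X_2)$ by
\[
\pi(dx_1dx_2dy_1dy_2)=\nu_1(dx_1)\nu_2(x_1,dx_2)p_1^\varepsilon(x_1,dy_1)q^{\varepsilon}_{(x_1,y_1,x_2)}(dy_2).
\]
A direct computation using $\int p_1^\varepsilon(x_1,dy_1)=1$ shows that the first marginal of $\pi$ on $(x_1,x_2)$ is $\nu$, and integrating $x_2$ against $\nu_2(x_1,dx_2)q^{\varepsilon}_{(x_1,y_1,x_2)}(dy_2)$ yields the second marginal $\nu'_2(y_1,dy_2)$, so the second marginal of $\pi$ on $(y_1,y_2)$ is $\nu'$. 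Hence $\pi\in\Pi(\nu,\nu')$. Writing $\pi(dxdy)=\nu(dx)p_{(x_1,x_2)}(dy_1dy_2)$, the $y_1$-marginal of $p_{(x_1,x_2)}$ is $p_1^\varepsilon(x_1,\cdot)$ and its $y_2$-marginal equals $\int p_1^\varepsilon(x_1,dy_1)q^{\varepsilon}_{(x_1,y_1,x_2)}(\cdot)$.

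Finally, by convexity of $p_2\mapsto c_2(x_2,p_2)$ and Jensen's inequality applied to the $y_2$-marginal above,
\[
c(x,p_{(x_1,x_2)})=c_1(x_1,p_1^\varepsilon(x_1,\cdot))+c_2\!\left(x_2,\int p_1^\varepsilon(x_1,dy_1)q^{\varepsilon}_{(x_1,y_1,x_2)}\right)\leq c_1(x_1,p_1^\varepsilon(x_1,\cdot))+\int c_2(x_2,q^{\varepsilon}_{(x_1,y_1,x_2)})\,p_1^\varepsilon(x_1,dy_1).
\]
Integrating against $\nu=\nu_1\otimes \nu_2(x_1,\cdot)$, using the two $\varepsilon$-optimality bounds and Fubini gives
\[
\mathcal{T}_c(\nu'|\nu)\leq \mathcal{T}_{c_1}(\nu'_1|\nu_1)+\varepsilon+\int\!\!\int\bigl(\mathcal{T}_{c_2}(\nu'_2(y_1,\cdot)|\nu_2(x_1,\cdot))+\varepsilon\bigr)p_1^\varepsilon(x_1,dy_1)\nu_1(dx_1),
\]
which is the claimed inequality up to $2\varepsilon$.

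The main obstacle is the joint measurable selection of the inner couplings $\pi_2^\varepsilon(x_1,y_1;\cdot,\cdot)$: one needs to apply an $\varepsilon$-optimal measurable selection theorem to the multifunction $(x_1,y_1)\mapsto\Pi(\nu_2(x_1,\cdot),\nu'_2(y_1,\cdot))$ and the lower-semicontinuous functional $\pi\mapsto I_{c_2}[\pi]$. Once this measurability is secured, the rest is the convexity/Jensen argument sketched above.
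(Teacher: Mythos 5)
Your proposal is correct and follows essentially the same route as the paper's proof: an $\varepsilon$-optimal kernel $p_1^\varepsilon$ for the first marginals, $\varepsilon$-optimal conditional couplings $q^{\varepsilon}_{(x_1,y_1,\cdot)}$ between $\nu_2(x_1,\cdot)$ and $\nu'_2(y_1,\cdot)$, a glued coupling, and convexity of $c_2$ in its second argument plus Jensen to split the cost. Your explicit flagging of the joint measurability of the inner couplings is a point the paper's proof passes over silently, so your remark is a fair (and honest) sharpening rather than a deviation.
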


\begin{rem}
If one assumes that the cost functions $c_1$ and $c_2$ satisfy assumption $(C_1)$, then the error term $\varepsilon$ can be chosen $0$. Indeed, under assumption $(C_1)$ the function $\pi \mapsto \int c_1(x,p_x)\,\nu_1'(dx_1)$ is lower semicontinuous on the set $\Pi(\nu_1',\nu_1)$ which is easily seen to be compact (using Theorem \ref{thm:Prokhorov} below). Therefore it attains its infimum and so there exists some kernel $p_1$ such that $\mathcal{T}_{c_1}(\nu_1|\nu_1') = \int c_1(x_1,p_1(x_1,\,\cdot\,))\,\nu_1'(dx_1).$ The same applies for cost functions based on the cost $c_2$.
\end{rem}

\proof[Proof of Lemma  \ref{chainrule}]
Fix $\nu, \nu' \in \mathcal{P}_\gamma(X_1 \times X_2)$ and $\varepsilon >0$. Our aim is first to define a  probability kernel $p$ appropriately related to $\nu$ and  $\nu'$.

To that purpose, let 
$p_1$ be a probability kernel (that depends on $\varepsilon$ although not explicitly stated for simplicity) so that $\pi_1(dx_1dy_1):=\nu_1(dx_1)p_1(x_1,dy_1) \in \Pi(\nu_1,\nu_1')$ and 
\begin{equation} \label{1}
\int_{X_1 \times X_1} c_1(x_1,p_1(x_1,\,\cdot\,))\, \nu_1(dx_1) 
\leq 
 \mathcal{T}_{c_1} (\nu_1' | \nu_1)  + \varepsilon  .
\end{equation}
Similarly, for all $x_1, y_1 \in X_1$, let $X_2 \ni x_2 \mapsto q_2^{x_1,y_1}(x_2,\,\cdot\,) \in \mathcal{P}(X_2)$  be a probability kernel (that depends also on $\varepsilon$) satisfying
$\pi_2^{x_1,y_1}(dx_2dy_2) := \nu_2(x_1,dx_2)q_2^{x_1,y_1}(x_2,dy_2)  \in \Pi(\nu_2(x_1,\,\cdot\,),\nu'_2(y_1,\,\cdot\,))$ and
\begin{equation} \label{2}
\int_{X_2 \times X_2} c_2(x_2,q_2^{x_1,y_1}(x_2,\,\cdot\,)) \,\nu_2(x_1,dx_2) 
\leq
\mathcal{T}_{c_2} (\nu_2'(y_1,\,\cdot\,) | {\nu}_2(x_1,\,\cdot\,)) 
+
 \varepsilon \,.
\end{equation}
Then observe that, for all $f : X_1 \times X_2 \to \R$,  it holds:
\begin{align*}
 \int f(y_1,y_2)& p_1(x_1,dy_1)  q_2^{x_1,y_1}(x_2,dy_2) \,\nu(dx_1dx_2) \\
& =
\int f(y_1,y_2) p_1(x_1,dy_1)q_2^{x_1,y_1}(x_2,dy_2) \,\nu_2(x_1,dx_2) \nu_1(dx_1)\\
& =
\int f(y_1,y_2) p_1(x_1,dy_1)\nu'_2(y_1,dy_2) \nu_1(dx_1) \\
& =
\int f(y_1,y_2) \nu'_2(y_1,dy_2) \nu'_1(dy_1) \\
& =
\int f(y) \nu'(dy) .
\end{align*}
Hence, $p(x,dy):=p_1(x_1,dy_1)q_2^{x_1,y_1}(x_2,dy_2)$ is a probability kernel satisfying
$\pi(dxdy) :=p(x,dy) \nu(dx) \in \Pi(\nu,\nu')$.
Let 
$$
p_2(x,\,\cdot\,):=\int_{X_1} p_1(x_1,dy_1)q_2^{x_1,y_1}(x_2,\,\cdot\,) \in \mathcal{P}(X_2)
$$ 
be the second marginal of $p(x_1,\,\cdot\,)$,  observing that $p_1(x,\cdot)$ is its first marginal. 

Finally, using the definition of the transport cost, the definition of the cost and
Jensen's inequality, it holds:
\begin{align*}
\mathcal{T}_c (\nu'| \nu) 
& \leq 
\int_{X_1 \times X_2} \! c(x,p)\nu(dx) \\
& =
\int_{X_1} \! c_1(x_1,p_1(x_1,\,\cdot\,)) \nu_1(dx_1) + \int_{X_1 \times X_2} \! c_2(x_2,p_2(x,\,\cdot\,)) \nu(dx) \\
& \leq 
 \mathcal{T}_{c_1} (\nu_1' | \nu_1)  + \varepsilon + 
\int_{X_1^2 \times X_2} c_2(x_2, q_2^{x_1,y_1}(x_2,\,\cdot\,)) p_1(x_1,dy_1) \nu(dx) \\
& =
 \mathcal{T}_{c_1} (\nu_1' | \nu_1)  + \varepsilon + 
\int_{X_1^2} \left( \int_{X_2} c_2(x_2, q_2^{x_1,y_1}(x_2,\,\cdot\,)) \nu_2(x_1,dx_2) \right) p_1(x_1,dy_1) \nu_1(dx_1) \\
& \leq 
\mathcal{T}_{c_1} (\nu_1' | \nu_1)  + \varepsilon  + 
\int_{X_1^2} \left(  \mathcal{T}_{c_2} (\nu_2'(y_1,\,\cdot\,) | \nu_2(x_1,\,\cdot\,)) + \varepsilon \right) p_1(x_1,dy_1) \nu_1(dx_1)\,,
\end{align*}
where the last two inequalities follow from \eqref{1} and \eqref{2} respectively.
The expected result follows and the proof of the lemma is complete.
\endproof

\proof[Proof of Theorem \ref{tensorization}]
By induction, it is enough to consider the case $n=2$. 
Given $\nu, \nu' \in \mathcal{P}_\gamma(X_1 \times X_2)$, thanks to Lemma \ref{chainrule}, for
all $\varepsilon >0$, there exists a kernel $p_1^\varepsilon$ such that 
\[
\mathcal{T}_c (\nu' | \nu) \leq \mathcal{T}_{c_1} (\nu_1' | \nu_1) 
+ \int_{X_1 \times X_1} \!\!\!\!\!\!\!\!\!\! \mathcal{T}_{c_2} (\nu_2'(y_1,\,\cdot\,) | \nu_2(x_1,\,\cdot\,)) p_1^\varepsilon(x_1,dy_1)\nu_1(dx_1) + 2 \varepsilon\,,
\]
where $\nu, \nu_1', \nu_2,\nu_2'$ are defined in Lemma \ref{chainrule}.
Applying the transport-entropy inequalities that hold for $\mu_1$ and $\mu_2$, we get
\begin{align*}
& \mathcal{T}_c (\nu' | \nu) 
 \leq 
a_1^{(1)} H(\nu_1' | \mu_1) + a_2^{(1)} H(\nu_1 | \mu_1 ) + 2 \varepsilon  \\
&  +
\int_{X_1 \times X_1} \left[ a_1^{(2)} H(\nu_2'(y_1,\,\cdot\,) | \mu_2) + a_2^{(2)} 
H(\nu_2(x_1,\,\cdot\,)|\mu_2) \right] p_1^\varepsilon(x_1,dy_1)\nu_1(dx_1) \\
& \leq
a_1 \left[ H(\nu_1' | \mu_1) + \int_{X_1} H(\nu_2'(y_1,\,\cdot\,) | \mu_2) \nu'_1(dy_1) \right] \\
& \quad + 
a_2 \left[ H(\nu_1 | \mu_1 ) + \int_{X_1} H(\nu_2(x_1,\,\cdot\,) | \mu_2) \nu_1(dx_1) \right] 
+ 2 \varepsilon \\
& =
a_1 H(\nu'|\mu) + a_2 H(\nu|\mu) + 2 \varepsilon\,,
\end{align*}
where we used that $\int_{X_1} p_1^\varepsilon(x_1,dx_1') =1$, 
$\int_{X_1} p_1^\varepsilon(x_1,\,\cdot\,)\nu_1(dx_1) = \nu'_1(\,\cdot\,)$ and the chain rule formula for the entropy (recall that we set $a_1:=\max (a_1^{(1)}, a_1^{(2)})$ and $a_2:=\max (a_2^{(1)}, a_2^{(2)})$). Letting $\varepsilon$ go to zero completes the proof of the theorem.
\endproof

\begin{rem}
Alternatively, following \cite{Sam07}, one could give a proof based on the dual characterization of Proposition \ref{prop:bg}.
\end{rem}


\bibliographystyle{amsplain}

\providecommand{\bysame}{\leavevmode\hbox to3em{\hrulefill}\thinspace}
\providecommand{\MR}{\relax\ifhmode\unskip\space\fi MR }
\providecommand{\MRhref}[2]{%
  \href{http://www.ams.org/mathscinet-getitem?mr=#1}{#2}
}
\providecommand{\href}[2]{#2}
\begin{thebibliography}{}

\end{thebibliography}


\begin{thebibliography}{10}

\bibitem{AD05}
R.~Adamczak, \emph{Logarithmic {S}obolev inequalities and concentration of
  measure for convex functions and polynomial chaoses}, Bull. Pol. Acad. Sci.
  Math. \textbf{53} (2005), no.~2, 221--238.

\bibitem{AS08}
N.~Alon and J.~H. Spencer, \emph{The probabilistic method}, third ed.,
  Wiley-Interscience Series in Discrete Mathematics and Optimization, John
  Wiley \& Sons, Inc., Hoboken, NJ, 2008.

\bibitem{AFP00}
L.~Ambrosio, N.~Fusco, and D.~Pallara, \emph{Functions of bounded variation and
  free discontinuity problems}, Oxford Mathematical Monographs, The Clarendon
  Press Oxford University Press, New York, 2000.

\bibitem{AGS14}
L.~Ambrosio, N.~Gigli, and G.~Savar{\'e}, \emph{Calculus and heat flow in
  metric measure spaces and applications to spaces with {R}icci bounds from
  below}, Invent. Math. \textbf{195} (2014), no.~2, 289--391.

\bibitem{BS78}
D.~P. Bertsekas and S.~E. Shreve, \emph{Stochastic optimal control},
  Mathematics in Science and Engineering, vol. 139, Academic Press Inc.
  [Harcourt Brace Jovanovich Publishers], New York, 1978, The discrete time
  case.

\bibitem{Bob97}
S.~G. Bobkov, \emph{An isoperimetric inequality on the discrete cube, and an
  elementary proof of the isoperimetric inequality in {G}auss space}, Ann.
  Probab. \textbf{25} (1997), no.~1, 206--214.

\bibitem{BGL01}
S.~G. Bobkov, I.~Gentil, and M.~Ledoux, \emph{Hypercontractivity of
  {H}amilton-{J}acobi equations}, J. Math. Pures Appl. (9) \textbf{80} (2001),
  no.~7, 669--696.

\bibitem{BG99}
S.~G. Bobkov and F.~G{\"o}tze, \emph{Exponential integrability and
  transportation cost related to logarithmic {S}obolev inequalities}, J. Funct.
  Anal. \textbf{163} (1999), no.~1, 1--28.

\bibitem{BHT06}
S.~G. Bobkov, C.~Houdr{\'e}, and P.~Tetali, \emph{The subgaussian constant and
  concentration inequalities}, Israel J. Math. \textbf{156} (2006), 255--283.

\bibitem{BL97}
S.~G. Bobkov and M.~Ledoux, \emph{Poincar\'e's inequalities and {T}alagrand's
  concentration phenomenon for the exponential distribution}, Probab. Theory
  Related Fields \textbf{107} (1997), no.~3, 383--400.

\bibitem{BL00}
S.~G. Bobkov and M.~Ledoux, \emph{From {B}runn-{M}inkowski to {B}rascamp-{L}ieb and to logarithmic
  {S}obolev inequalities}, Geom. Funct. Anal. \textbf{10} (2000), no.~5,
  1028--1052.

\bibitem{BT06}
S.~G. Bobkov and P.~Tetali, \emph{Modified logarithmic {S}obolev inequalities
  in discrete settings}, J. Theoret. Probab. \textbf{19} (2006), no.~2,
  289--336.

\bibitem{BS09}
A.-I. Bonciocat and K.-T. Sturm, \emph{Mass transportation and rough curvature
  bounds for discrete spaces}, J. Funct. Anal. \textbf{256} (2009), no.~9,
  2944--2966.

\bibitem{Bor75}
C.~Borell, \emph{The {B}runn-{M}inkowski inequality in {G}auss space}, Invent.
  Math. \textbf{30} (1975), no.~2, 207--216.

\bibitem{BLM13}
S.~Boucheron, G.~Lugosi, and P.~Massart, \emph{Concentration inequalities},
  Oxford University Press, Oxford, 2013, A nonasymptotic theory of
  independence, With a foreword by Michel Ledoux.

\bibitem{Dem97}
A.~Dembo, \emph{Information inequalities and concentration of measure}, Ann.
  Probab. \textbf{25} (1997), no.~2, 927--939.

\bibitem{DS89}
J.-D. Deuschel and D.W. Stroock, \emph{Large deviations}, Pure and Applied
  Mathematics, vol. 137, Academic Press Inc., Boston, MA, 1989.

\bibitem{DGW04}
H.~Djellout, A.~Guillin, and L.~Wu, \emph{Transportation cost-information
  inequalities and applications to random dynamical systems and diffusions},
  Ann. Probab. \textbf{32} (2004), no.~3B, 2702--2732.

\bibitem{DP09}
D.~P. Dubhashi and A.~Panconesi, \emph{Concentration of measure for the
  analysis of randomized algorithms}, Cambridge University Press, Cambridge,
  2009.

\bibitem{EM12}
M.~Erbar and J.~Maas, \emph{Ricci curvature of finite {M}arkov chains via
  convexity of the entropy}, Arch. Ration. Mech. Anal. \textbf{206} (2012),
  no.~3, 997--1038.

\bibitem{Fed69}
P.~Federbush, \emph{A partially alternate derivation of a result of nelson}, J.
  Physics \textbf{10} (1969), 50--52.

\bibitem{Goz09}
N.~Gozlan, \emph{A characterization of dimension free concentration in terms of
  transportation inequalities}, Ann. Probab. \textbf{37} (2009), no.~6,
  2480--2498.

\bibitem{GL10}
N.~Gozlan and C.~L{\'e}onard, \emph{Transport inequalities. {A} survey}, Markov
  Process. Related Fields \textbf{16} (2010), no.~4, 635--736.

\bibitem{GRS11}
N.~Gozlan, C.~Roberto, and P.-M. Samson, \emph{From concentration to
  logarithmic {S}obolev and {P}oincar\'e inequalities}, J. Funct. Anal.
  \textbf{260} (2011), no.~5, 1491--1522.

\bibitem{GRS11bis}
N.~Gozlan, C.~Roberto, and P.-M. Samson, \emph{A new characterization of {T}alagrand's transport-entropy
  inequalities and applications}, Ann. Probab. \textbf{39} (2011), no.~3,
  857--880.

\bibitem{GRS13}
N.~Gozlan, C.~Roberto, and P.-M. Samson, \emph{Characterization of {T}alagrand's transport-entropy inequalities
  in metric spaces}, Ann. Probab. \textbf{41} (2013), no.~5, 3112--3139.

\bibitem{GRS14}
N.~Gozlan, C.~Roberto, and P.-M. Samson, \emph{Hamilton {J}acobi equations on metric spaces and transport
  entropy inequalities}, Rev. Mat. Iberoam. \textbf{30} (2014), no.~1,
  133--163.

\bibitem{GRSST15}
N.~Gozlan, C.~Roberto, P.-M. Samson, Y.~Shu, and P.~Tetali,
  \emph{Characterization of a class of weak transport-entropy inequalities on
  the line}, Preprint - submitted, 2015.

\bibitem{GRST14}
N.~Gozlan, C.~Roberto, P.-M. Samson, and P.~Tetali, \emph{Displacement
  convexity of entropy and related inequalities on graphs}, Probab. Theory
  Related Fields \textbf{160} (2014), no.~1-2, 47--94.

\bibitem{GRST16}
N.~Gozlan, C.~Roberto, P.-M. Samson, and P.~Tetali, \emph{Transport-entropy inequalities for some discrete measures},
  Preprint, 2016.

\bibitem{Gro99}
M.~Gromov, \emph{Metric structures for {R}iemannian and non-{R}iemannian
  spaces}, Progress in Mathematics, vol. 152, Birkh\"auser Boston, Inc.,
  Boston, MA, 1999.

\bibitem{GM83}
M.~Gromov and V.~D. Milman, \emph{A topological application of the
  isoperimetric inequality}, Amer. J. Math. \textbf{105} (1983), no.~4,
  843--854.

\bibitem{G75}
L.~Gross, \emph{Logarithmic {S}obolev inequalities}, Amer. J. Math. \textbf{97}
  (1975), no.~4, 1061--1083.

\bibitem{Hil12}
E.~Hillion, \emph{Concavity of entropy along binomial convolutions}, Electron.
  Commun. Probab. \textbf{17} (2012), no. 4, 9.

\bibitem{HUL01}
J.-B. Hiriart-Urruty and C.~Lemar{\'e}chal, \emph{Fundamentals of convex
  analysis}, Grundlehren Text Editions, Springer-Verlag, Berlin, 2001.

\bibitem{Led95}
M.~Ledoux, \emph{On {T}alagrand's deviation inequalities for product measures},
  ESAIM Probab. Statist. \textbf{1} (1995/97), 63--87 (electronic).

\bibitem{Led01}
M.~Ledoux, \emph{The concentration of measure phenomenon}, Mathematical Surveys
  and Monographs, vol.~89, American Mathematical Society, Providence, RI, 2001.

\bibitem{Leo11}
C.~L{\'e}onard, \emph{A saddle-point approach to the {M}onge-{K}antorovich
  optimal transport problem}, ESAIM Control Optim. Calc. Var. \textbf{17}
  (2011), no.~3, 682--704.

\bibitem{LV09}
J.~Lott and C.~Villani, \emph{Ricci curvature for metric-measure spaces via
  optimal transport}, Ann. of Math. (2) \textbf{169} (2009), no.~3, 903--991.

\bibitem{Mar86}
K.~Marton, \emph{A simple proof of the blowing-up lemma}, IEEE Trans. Inform.
  Theory \textbf{32} (1986), no.~3, 445--446.

\bibitem{Mar96b}
K.~Marton, \emph{Bounding {$\overline d$}-distance by informational divergence: a
  method to prove measure concentration}, Ann. Probab. \textbf{24} (1996),
  no.~2, 857--866.

\bibitem{Mar96a}
K.~Marton, \emph{A measure concentration inequality for contracting {M}arkov
  chains}, Geom. Funct. Anal. \textbf{6} (1996), no.~3, 556--571.

\bibitem{Mau91}
B.~Maurey, \emph{Some deviation inequalities}, Geom. Funct. Anal. \textbf{1}
  (1991), no.~2, 188--197.

\bibitem{Mik06}
T.~Mikami, \emph{A simple proof of duality theorem for {M}onge-{K}antorovich
  problem}, Kodai Math. J. \textbf{29} (2006), no.~1, 1--4.

\bibitem{Mil71}
V.~D. Milman, \emph{A new proof of {A}. {D}voretzky's theorem on cross-sections
  of convex bodies}, Funkcional. Anal. i Prilo\v zen. \textbf{5} (1971), no.~4,
  28--37.

\bibitem{Oll09}
Y.~Ollivier, \emph{Ricci curvature of {M}arkov chains on metric spaces}, J.
  Funct. Anal. \textbf{256} (2009), no.~3, 810--864.

\bibitem{OV12}
Y.~Ollivier and C.~Villani, \emph{A curved {B}runn-{M}inkowski inequality on
  the discrete hypercube, or: what is the {R}icci curvature of the discrete
  hypercube?}, SIAM J. Discrete Math. \textbf{26} (2012), no.~3, 983--996.

\bibitem{OV00}
F.~Otto and C.~Villani, \emph{Generalization of an inequality by {T}alagrand
  and links with the logarithmic {S}obolev inequality}, J. Funct. Anal.
  \textbf{173} (2000), no.~2, 361--400.

\bibitem{Par67}
K.~R. Parthasarathy, \emph{Probability measures on metric spaces}, Probability
  and Mathematical Statistics, No. 3, Academic Press Inc., New York, 1967.

\bibitem{Pau14}
D.~Paulin, \emph{The convex distance inequality for dependent random variables,
  with applications to the stochastic travelling salesman and other problems},
  Electron. J. Probab. \textbf{19} (2014), no. 68, 34.

\bibitem{SC97}
L.~Saloff-Coste, \emph{Lectures on finite {M}arkov chains}, Lectures on
  probability theory and statistics ({S}aint-{F}lour, 1996), Lecture Notes in
  Math., vol. 1665, Springer, Berlin, 1997, pp.~301--413.

\bibitem{Sam00}
P.-M. Samson, \emph{Concentration of measure inequalities for {M}arkov chains
  and {$\Phi$}-mixing processes}, Ann. Probab. \textbf{28} (2000), no.~1,
  416--461.

\bibitem{Sam03}
P.-M. Samson, \emph{Concentration inequalities for convex functions on product
  spaces}, Stochastic inequalities and applications, Progr. Probab., vol.~56,
  Birkh\"auser, Basel, 2003, pp.~33--52.

\bibitem{Sam07}
P.-M. Samson, \emph{Infimum-convolution description of concentration properties of
  product probability measures, with applications}, Ann. Inst. H. Poincar\'e
  Probab. Statist. \textbf{43} (2007), no.~3, 321--338.

\bibitem{Shu}
Y.~Shu, \emph{Hamilton jacobi equation on graphs and applications}, In
  preparation, 2014.

\bibitem{Sta59}
A.~J. Stam, \emph{Some inequalities satisfied by the quantities of information
  of {F}isher and {S}hannon}, Information and Control \textbf{2} (1959),
  101--112.

\bibitem{Str65}
V.~Strassen, \emph{The existence of probability measures with given marginals},
  Ann. Math. Statist. \textbf{36} (1965), 423--439.

\bibitem{Stu06}
K.-T. Sturm, \emph{On the geometry of metric measure spaces. {I}}, Acta Math.
  \textbf{196} (2006), no.~1, 65--131.

\bibitem{ST74}
V.~N. Sudakov and B.~S. Cirel{\cprime}son, \emph{Extremal properties of
  half-spaces for spherically invariant measures}, Zap. Nau\v cn. Sem.
  Leningrad. Otdel. Mat. Inst. Steklov. (LOMI) \textbf{41} (1974), 14--24, 165,
  Problems in the theory of probability distributions, II.

\bibitem{Tal95}
M.~Talagrand, \emph{Concentration of measure and isoperimetric inequalities in
  product spaces}, Inst. Hautes \'Etudes Sci. Publ. Math. (1995), no.~81,
  73--205.

\bibitem{Tal96b}
M.~Talagrand, \emph{New concentration inequalities in product spaces}, Invent. Math.
  \textbf{126} (1996), no.~3, 505--563.

\bibitem{Tal96a}
M.~Talagrand, \emph{Transportation cost for {G}aussian and other product measures},
  Geom. Funct. Anal. \textbf{6} (1996), no.~3, 587--600.

\bibitem{TT13}
X.~Tan and N.~Touzi, \emph{Optimal transportation under controlled stochastic
  dynamics}, Ann. Probab. \textbf{41} (2013), no.~5, 3201--3240.

\bibitem{Vil03}
C.~Villani, \emph{Topics in optimal transportation}, Graduate Studies in
  Mathematics, vol.~58, American Mathematical Society, Providence, RI, 2003.

\bibitem{Vil09}
C.~Villani, \emph{Optimal transport}, Grundlehren der Mathematischen
  Wissenschaften [Fundamental Principles of Mathematical Sciences], vol. 338,
  Springer-Verlag, Berlin, 2009, Old and new.

\bibitem{Zal02}
C.~Z{\u{a}}linescu, \emph{Convex analysis in general vector spaces}, World
  Scientific Publishing Co. Inc., River Edge, NJ, 2002.

\end{thebibliography}
\def\cprime{$'$}
\providecommand{\bysame}{\leavevmode\hbox to3em{\hrulefill}\thinspace}
\providecommand{\MR}{\relax\ifhmode\unskip\space\fi MR }
\providecommand{\MRhref}[2]{%
  \href{http://www.ams.org/mathscinet-getitem?mr=#1}{#2}
}
\providecommand{\href}[2]{#2}

\end{document}